\newtheorem{thm}{Theorem}[section]
\newtheorem{theorem}[thm]{Theorem}
\newtheorem{corollary}[thm]{Corollary}
\newtheorem{lemma}[thm]{Lemma}
\newtheorem{proposition}[thm]{Proposition}
\theoremstyle{definition}
\newtheorem{definition}[thm]{Definition}
\newtheorem{example}[thm]{Example}
\newtheorem{examples}[thm]{Examples}
\newtheorem{remark}[thm]{Remark}
\newtheorem{remarks}[thm]{Remarks}
\newtheorem{observation}[thm]{Observation}
\newtheorem{notation}[thm]{Notation}
\newcommand{\N} {\mathbb{N}}
\newcommand{\Z} {\mathbb{Z}}
\newcommand{\R} {\mathbb{R}}
\newcommand{\C} {\mathbb{C}}
\newcommand{\KK} {\mathbb{K}}
\newcommand{\bk} {\Bbbk}
\newcommand{\bt} {\mathbf{t}}
\newcommand{\bu} {\mathbf{u}}
\newcommand{\bG} {\mathbf{G}}
\newcommand{\bT} {\mathbf{T}}
\newcommand{\M} {\mathcal{M}}
\newcommand{\fg} {\mathfrak{g}}
\newcommand{\fh} {\mathfrak{h}}
\newcommand{\fk} {\mathfrak{k}}
\newcommand{\kzg} {K_{\Z}(\fg)} 
\newcommand{\kzgz} {K_\Z(\fg_0)}
\newcommand{\salg} {\mathrm{(salg)}}
\newcommand{\smflds} {\mathrm{(smflds)}}
\newcommand{\sets}{{(\mathrm{sets})}}
\newcommand{\grps} {\mathrm{(groups)}}
\newcommand{\spec}{{\hbox{\sl Spec}\,}}
\newcommand{\uspec}{\underline{\hbox{\sl Spec}\,}}
\newcommand{\Ad}{\mathrm{Ad}}
\newcommand{\ad}{\mathrm{ad}}
\newcommand{\Hom}{\mathrm{Hom}}
\newcommand{\lra} {\longrightarrow}
\newcommand{\ra}{\rightarrow}
\newcommand{\End}{\mathrm{End}}
\newcommand{\rGL}{\mathrm{GL}}
\newcommand{\Lie}{\mathrm{Lie}}
\newcommand{\str}{\mathrm{str}}
\newcommand{\tr}{\mathrm{tr}}
\newcommand{\e}{\mathrm{e}}
\newcommand{\rosp}{\mathfrak{osp}}
\newcommand{\rsl}{\mathfrak{sl}}
\newcommand{\rgl}{\mathfrak{gl}}
\newcommand{\cO}{\mathcal{O}}
\newcommand{\cC}{\mathcal{C}}
\newcommand{\aschemes} {\mathrm{(aschemes)}}
\newcommand{\tF}{\widetilde{F}}
\newcommand{\cF}{{\mathcal F}}
\newcommand{\wiDelta}{\widetilde{\Delta}}
\newcommand{\wA}{{A}}
\newcommand{\walpha}{{\alpha}}
\newcommand{\wbeta}{{\beta}}
\newcommand{\wdelta}{{\delta}}
\newcommand{\wDelta}{{\Delta}}
\newcommand{\lie} {\mathrm{(Lie)}}
\begin{document}


\centerline{\Large \bf CHEVALLEY SUPERGROUPS}

\bigskip

\centerline{R. Fioresi$^\flat$, F. Gavarini$^\#$}

\bigskip

\centerline{\it $^\flat$ Dipartimento di Matematica, Universit\`a
di Bologna } \centerline{\it piazza di Porta San Donato, 5  ---
I-40127 Bologna, Italy} \centerline{{\footnotesize e-mail:
fioresi@dm.unibo.it}}

\bigskip

\centerline{\it $^\#$ Dipartimento di Matematica, Universit\`a di
Roma ``Tor Vergata'' } \centerline{\it via della ricerca
scientifica 1  --- I-00133 Roma, Italy}

\centerline{{\footnotesize e-mail: gavarini@mat.uniroma2.it}}




\bigskip

{\small {\bf Abstract.}    In the framework of algebraic supergeometry, we give a construction of the scheme-theoretic supergeometric analogue of split reductive algebraic group-schemes, namely affine algebraic supergroups associated to simple Lie superalgebras of classical type.  In particular, all Lie superalgebras of both  {\sl basic\/}  and  {\sl strange\/}  types are considered.  This provides a unified approach to most of the algebraic supergroups considered so far in literature, and an effective method to construct new ones.
                                                       \par
   Our method follows the pattern of a suitable scheme-theoretic revisitation of Chevalley's construction of semisimple algebraic groups, adapted to the reductive case.  As an intermediate step, we prove an existence theorem for Chevalley bases of simple classical Lie superalgebras and a PBW-like theorem for their associated Kostant superalgebras.}



\bigskip

\tableofcontents
%
%

\bigskip

{\bf Acknowledgments.}
   The authors thank V.~S.~Varadarajan for suggesting the problem.  Also, they thank M.~Duflo, L.~Frappat, C.~Gasbarri, K.~Iohara, Y.~Koga, A.~Sciarrino, V.~Serga\-nova, P.~Sorba and R.~Steinberg for their valuable hints and suggestions.

\vskip5pt

   Finally, they are deeply grateful to the referee for many helpful comments and remarks.

\bigskip

%
%

 %
 %

\chapter{Introduction}

\bigskip

   In his work of 1955, Chevalley provided a combinatorial construction of all simple algebraic groups over any field. In particular, his method led to a proof of the existence theorem for simple algebraic groups and to new examples of finite simple groups which had escaped the attention of specialists in group theory. The groups that Chevalley constructed are now known as  {\it Chevalley groups}.  Furthermore, Chevalley's construction provided a description of all simple algebraic groups as group schemes over  $ \Z \, $.

\medskip

   In this work we adapt this philosophy to the setup of supergeometry, so to give an explicit construction of algebraic supergroups whose Lie superalgebra is of classical type over an arbitrary ring.  Our construction provides at one stroke the supergroups corresponding to the families  $ A(m,n) $,  $ B(m,n) $,  $ C(n) $,  $ D(m,n) $  of basic Lie superalgebras and to the families of strange Lie superalgebras  $ P(n) $,  $ Q(n) $,  as well as to the exceptional basic Lie superalgebras  $ F(4) $,  $ G(3) $,  $ D(2,1;a) $   --- for  $ \, a \in \Z \, $;  cf.~\cite{ga}  for the general case.  To our knowledge, supergroups corresponding to the exceptional Lie superalgebras have not previously appeared in the literature.

\medskip

   To explain our work, we first revisit the whole classical construction.

\smallskip

   Let  $ \fg $  be a finite dimensional simple (or semisimple)
Lie algebra over an algebraically closed field  $ \KK $
(e.g.~$ \KK = \C \, $).  Fix in  $ \fg $  a Cartan subalgebra  $ \fh \, $;
then a root system is defined, and  $ \fg $  splits into weight spaces indexed by the roots.  Also,  $ \fg $  has a special basis, called  {\sl Chevalley\/}  basis, for which the structure constants are integers, satisfying special conditions in terms of the root system.  This defines an integral form of  $ \fg \, $,  called  {\sl Chevalley Lie algebra}.
                                                          \par
   In the universal enveloping algebra of  $ \fg \, $,  there is a  $ \Z $--integral  form, called  {\sl Kostant algebra},  with a special ``PBW-like'' basis of ordered monomials, whose factors are divided powers of root vectors and binomial coefficients of Cartan generators, corresponding to elements of the Chevalley basis of  $ \fg \, $.
                                                          \par
   If  $ V $  is a faithful  $ \fg $--module, there is a  $ \Z $--lattice  $ \, M \subseteq V \, $,  which is stable under the action of the Kostant algebra.  Hence the Kostant algebra acts on the vector space  $ \, V_\bk := \bk \otimes_\Z M \, $  for any field  $ \bk \, $.  Moreover there exists an integral form  $ \fg_V $  of  $ \fg $  leaving the lattice invariant and depending only on the representation  $ V $  and not on the choice of the lattice.
                                                       \par
   Let  $ B $  be the Chevalley basis we fixed in  $ \fg \, $:  its elements are either root vectors   --- denoted  $ X_\alpha \, $  ---   or ``toral'' ones (i.e., belonging to  $ \fh \, $), denoted  $ H_i \, $.  For any root vector  $ X_\alpha \, $,  we take the exponential  $ \, \exp(t X_\alpha) \in \rGL(V_\bk) \, $   --- such an  $ X_\alpha $  acts nilpotently, so the exponential makes sense ---   for  $ \, t \in \bk \, $,  which altogether form an ``(additive) one-parameter subgroup'' of  $ \rGL(V_\bk) \, $.  The subgroup of  $ \rGL(V_\bk) $  generated by all the  $ \exp(t X_\alpha) $'s,  for all roots
and all  $ t \, $,  is the  {\sl Chevalley group\/}  $ G_V(\bk) $,  as
introduced by Chevalley.  This defines  $ G_V(\bk) $  set-theoretically,
as an abstract group; some extra work is required to show it is an algebraic
group and to construct its functor of points.  We refer the reader to  \cite{st},
\cite{borel},  \cite{hu}  for a comprehensive treatment of all of these aspects;
nevertheless, we briefly outline hereafter one possible strategy to cast this
construction in scheme-theoretical terms.

\smallskip

   The construction of the vector spaces  $ \, V_\bk := \bk \otimes_\Z M \, $,  of the linear groups  $ \rGL(V_\bk) $  and of the corresponding subgroups  $ G_V(\bk) $  generated by the exponentials
$ \, \exp(t X_\alpha) \in \rGL(V_\bk) \, $,  for  $ \, t \in \bk \, $,  can be directly extended, verbatim, by taking any ground (commutative, unital) ring  $ A $  instead of a field  $ \bk \, $.  This yields a group--valued functor  $ \, \mathrm{(alg)} \longrightarrow \grps \, $,  where  $ \mathrm{(alg)} $  stands for the category of commutative unital rings and  $ \grps $  is the category of groups: however, this functor happens to be not ``the right one''.  Thus one enlarges the set of generators of  $ G_V(A) \, $,  taking for  $ G_V(A) $  the subgroup of  $ \, \rGL(V_A) := \rGL(A \otimes_\Z M) \, $  generated by all the ``one-parameter subgroups'' given by the  $ \, \exp(t X_\alpha)  \, $   --- for all root vectors  $ \, X_\alpha $  and all coefficients  $ \, t \in A \, $  ---   {\sl and\/}  by a  ``maximal torus''  $ T_V(A) $  generated by ``multiplicative one-parameter subgroups''   ---  of  $ \rGL(V_A) $  ---   attached to the toral elements  $ H_i $  in the Chevalley basis  $ B \, $.  This defines a new functor  $ \, G_V : \mathrm{(alg)} \longrightarrow \grps \, $.
                                                       \par
   Now, the category  $ \mathrm{(alg)} $  is naturally antiequivalent
--- taking spectra ---   to the category  $ \mathrm{(aschemes)} $
of affine schemes: via this, the functor  $ G_V $  above defines a
functor $ \, G_V : \mathrm{(aschemes)}^\circ \longrightarrow \grps \, $,
i.e.~a ``presheaf'' on  $ \mathrm{(aschemes)} \, $.
%
%
 Then  $ \mathrm{(aschemes)} $  with the Zariski topology is a site, and
there exists a unique sheaf  $ \, \bG_V : \mathrm{(aschemes)}^\circ
\longrightarrow \grps \, $  suitably associated to  $ G_V \, $,
namely the sheafification of  $ G_V \, $.  Then
%
%
%
%
%
one has that  $ \bG_V $  is (the functor of points of) a group-scheme over  $ \Z \, $,  namely the unique one which is reductive and split associated to the Cartan datum encoded by  $ \fg $  and its representation  $ V \, $.

\medskip

   In the present work, following the strategy outlined above, we extend Chevalley's construction to the supergeometric setting.

\medskip

   In supergeometry the best way to introduce supergroups is via their functor of points.  Unlikely the classical setting, the points over a field of a supergroup tell us very little about the supergroup itself.  In fact such points miss the odd coordinates and describe only the classical part of the supergroup.  In other words, over a field we cannot see anything beyond classical geometry.  Thus we cannot generalize Chevalley's recipe as it was originally designed, but we rather need to suitably and subtly modify it introducing the functor of points language right at the beginning   --- reversing the order in which the classical treatment was developed ---   along the pattern we sketched above.  In particular, the functor of points approach  realizes an affine supergroup as a representable functor from the category of commutative superalgebras  $ \salg $  to  $ \grps \, $.  In this work, we shall first construct a functor from  $ \salg $  to  $ \grps \, $,  via the strategy we just mentioned; then we shall prove that this functor is indeed representable.

\medskip

   Our initial datum is a simple Lie superalgebra of classical type
(or a direct sum of finitely many of them, if one prefers), say  $ \fg \, $:  in our construction it plays the role of the simple (or semisimple) Lie algebra in Chevalley's setting.  We start by proving some basic results on  $ \fg $  (previously known only partially, cf.~\cite{ik}  and  \cite{sw})  like the existence of  {\sl Chevalley bases}, and a PBW-like theorem for the Kostant $ \Z $--form of the universal enveloping superalgebra.
                                                          \par
   Next we take a faithful  $ \fg $--module $ V \, $,  and we show that there exists a lattice $ M $  in  $ V $  fixed by the Kostant superalgebra and also by a certain integral form  $ \fg_V $  of  $ \fg \, $,  which again depends on $ V $  only.  We then define a functor  $ G_V \, $  from  $ \salg $  to  $ \grps $  as follows.  For any commutative superalgebra $ A $,  we define  $ G_V(A) $  to be the subgroup of  $ \rGL(V(A)) $   --- with  $ \, V(A) := A \otimes_\Z M \, $  ---   generated by the homogeneous one-parameter unipotent subgroups associated to the root vectors, together with the multiplicative one-parameter subgroups corresponding to elements in the Cartan subalgebra.  In this supergeometric setting, one must carefully define the homogeneous one-parameter subgroups, which may have three possible superdimensions:  $ \, 1|0 \, $,  $ \, 0|1 \, $  and  $ \, 1|1 \, $.  This also will be discussed.

\smallskip

   As a group-theoretical counterpart of the  $ \Z_2 $--splitting  $ \, \fg = \fg_0 \oplus \fg_1 \, $,  we find a factorization  $ \; G_V(A) = G_0(A) \, G_1^<(A) \cong G_0(A) \times G_1^<(A) \, $.  Here  $ G_0(A) $  is (roughly) a classical Chevalley-like group attached to  $ \fg_0 $  and  $ V $,  while  $ G_1^<(A) $  may be euristically thought of as exponential of  $ A_1 \otimes \fg_1 \, $.  In fact, we show that actually the functor  $ \, G_1^< : A \mapsto G_1(A) \, $  is representable and isomorphic to  $ \mathbb{A}_\bk^{0|\text{\it dim}(\fg_1)} \, $.
                                                                       \par
   Indeed, our result is more precise: in fact,  $ \fg_1 $  in turn splits into  $ \, \fg_1 \! = \fg_1^+ \oplus \, \fg_1^- \, $  according to the splitting of odd roots into positive and negative ones, and so at the group level we have  $ \, G_1^<(A) \cong G_1^{-,<}(A) \times G_1^{+,<}(A) \, $  and  $ \, G(A) \cong G_1^{-,<}(A) \times G_0(A) \times G_1^{+,<}(A) \, $,  resembling the classical ``big cell'' decomposition, which however in this context holds  {\sl globally}.

\smallskip

   The key ``drawback'' of the functor  $ G_V $  is that it is not representable, hence it is not (the functor of points of) an algebraic supergroup.  This occurs already at the classical level: one-parameter subgroups and maximal tori, defined via their functor of points, are not enough to generate split reductive algebraic groups (as group-schemes over  $ \Z \, $).  Hence again we need to pass to sheaves, namely taking the sheafification  $ \bG_V $  of the functor  $ G_V \, $,  which coincides with  $ G_V $  on local superalgebras (we provide at the end an appendix with a brief treatment of sheafification of functors).  In particular,  $ \bG_V $  inherits the factorization  $ \; \bG_V = \bG_0 \, \bG_1 \cong \bG_0 \times \bG_1 \, $,  with  $ \, \bG_1 = G_1 \, $  and  $ \bG_0 $  being a classical (split reductive) Chevalley-like group-scheme associated to  $ \fg_0 $  and  $ V \, $. More in detail, we find the finer factorization  $ \, \bG_V(A) = \bG_0(A) \times \bG_1^{-,<}(A) \times \bG_1^{+,<}(A) \, $  with  $ \, \bG_1(A) = \bG_1^{-,<}(A) \times \bG_1^{+,<}(A) \, $  and  $ \, \bG_1^{\pm,<}(A) = G_1^{\pm,<}(A) \, $.  As  $ \, \bG_1 = G_1 \, $  and  $ \bG_0 $  are representable, the above factorization
implies that  $ \bG_V $  is representable too, and so it is an algebraic supergroup.
We then take it to be, by definition, our ``Chevalley supergroup''.
                                                          \par
   In the end, we prove the functoriality in  $ V $  of our construction, and that, over any field  $ \bk \, $,  the Lie superalgebra  $ \Lie(\bG_V) $  is just  $ \, \bk \otimes \fg_V \, $.  In other words, we prove that a suitable super-version of Lie's Third Theorem does hold in the present case.

\bigskip

   We conclude with a description of the paper, which is organised as follows.

\medskip

%
%
%

   Chapter 2 collects a series of definition and results which we need
throughout the paper.  In particular, we set the framework of
``supergeometry'' briefly explaining how our functorial approach
stands in comparison with other approaches appearing in the literature.
We then recall the notion of Lie superalgebra and the description and classification of classical Lie superalgebras.  In the end we introduce ``homogeneous one-parameter supersubgroups'',
which are a natural generalization of the one-parameter subgroups in the ordinary Lie
theory and will be instrumental for our definition of Chevalley supergroups.

\medskip

   In chapter 3, we first recall the machinery, attached to any classical
Lie superalgebra  $ \fg \, $,  of Cartan subalgebras, root systems,
root vectors, root spaces, etc.  We then introduce the notion
of  ``Chevalley basis'' for  $ \fg \, $,  roughly a basis of  $ \fg $
made of Cartan elements and root vectors with some integral
conditions on their brackets.  The main result in this chapter
is the existence theorem for Chevalley bases for classical Lie superalgebras.

\medskip

   In chapter 4 we define the ``Kostant superalgebra''  $ \kzg $  attached
to  $ \fg $  and to a fixed Chevalley basis  $ B \, $,  as the unital
subalgebra of  $ U(\fg) $  generated over $\Z$ by the odd root vectors,
the divided powers in the even root vectors and the binomial coefficients
in the Cartan elements (in  $ B \, $).  We describe the ``commutation rules''
among the generators of  $ \kzg $,  and we use these rules
to prove a ``PBW-like Theorem'' for  $ \kzg \, $.

\medskip

   Chapter 5 is devoted to the construction of ``Chevalley supergroups''.
First we show that any rational faithful semisimple $ \fg $--module  $ V $,
under mild assumptions, contains a  $ \Z $--integral  form  $ M $,
which is  $ \kzg $--stable.  This allows to define a functorial recipe
yielding an  $ (A \otimes \kzg) $--representation  on  $ \, A \otimes M \, $
by scalar extension to any commutative superalgebra  $ A \, $.
Through this, we take all the one-parameter supersubgroups   ---
inside the supergroup functor  $ \rGL(V) $  associated to  $ V $  and  $ M $
---   corresponding to the root vectors and the Cartan elements in
the Chevalley basis, and the subgroup they generate.
This defines a supergroup functor  $ G_V \, $,  whose sheafification
$ \bG_V $ we define {\sl Chevalley supergroup}.
                                                 \par
%
%
 The rest of the chapter is devoted to prove that the supergroup functor
$ \bG_V $  is representable, in other words, it is the functor of
points of an affine algebraic supergroup scheme.  This is a very subtle
point that we settle with a careful analysis of the structure of $\bG_V(A)$.
%
%
                                                 \par
   In the last part of the chapter, we show that the supergroup  $ \bG_V $
actually depends only on $ V $  and not on the many choices made along with the
construction, and moreover has good functoriality properties with respect
to  $ V \, $.  In particular, we explain the relation between different
Chevalley supergroups built upon the same classical Lie superalgebra
$ \fg $  and different  $ \fg $--modules  $ V \, $.
Also, we prove Lie's Third Theorem for  $ \bG_V \, $,
that is to say  $ \, \Lie(\bG_V) = \fg \, $.

\medskip

   Finally, in chapter 6 we treat apart the ``singular'' cases, namely those when  $ \fg $  is of type  $ A(1,1) $,  $ P(3) $  or  $ Q(n) \, $.  These were discarded before, because in these cases some roots have non-trivial multiplicity (i.e., root spaces have multiplicity greater than one), so additional care is needed.  Indeed, almost nothing changes when dealing with  $ A(1,1) $  and  $ P(3) \, $:  roughly, it is enough to adopt a careful notation.  Instead, the definition of Chevalley basis must be carefully rethought in case  $ Q(n) \, $:  the construction and description of  $ \kzg $  change accordingly.  The rest of our construction of Chevalley supergroups then goes through essentially unchanged, up to minor details.
%
%

%


  %
  %

\chapter{Preliminaries}  \label{preliminaries}

 {\it
   In this chapter, we collect a series of definitions and
results which we shall need throughout all the paper.
In particular, we establish the terminology and the framework
of ``supergeometry'', we are going to work in. Supergeometry is quite a
huge domain: here we bound ourselves to mention only those particular
notions which we shall need later on in our work. We are going
to give only few   glimpses to the other parts of the theory
just to make clear what is the link between our point of view
and some of the many other different approaches found in the literature.
For instance, we spend just a few lines on the key notions of superspaces,
supermanifolds and superschemes, and then we quickly get to
explain their relationship with supergroup functors, through the notion
of ``functor of points'', which is our approach to the construction
of Chevalley supergroups.  Similarly, we introduce the notion of
{\sl classical\/}  Lie superalgebras, but we refer the reader to
the literature for further details.
%
%

\smallskip

   Results here are generally quoted without proofs.
For supergeometry, one can refer to the vast literature on the subject;
nonetheless, we prefer to mostly quote the monograph  \cite{ccfd},
when we mention precise statements.  For Lie superalgebras
our main reference will be Kac's works, such as his foundational
work \cite{ka}, but occasionally may quote additional ones.

\smallskip

   At the end of the chapter, we have a section on
``homogeneous one-parameter supersubgroups''.  These are supersubgroups
in a given supergroup  $ \bG $,  which ``integrate''
 a Lie supersubalgebra
$ \frak{k} $  of the tangent Lie superalgebra  $ \, \fg := \Lie(\bG) \, $  of
$ \, \bG $  generated, as a Lie superalgebra, by a single, homogeneous (even or odd)
vector in  $ \fg \, $.  This notion is modelled on the classical one-parameter
group definition in the standard Lie theory,
yet the situation that actually shows up is somewhat richer.
In fact, (super)dimensions  $ 1|0 \, $,  $ 0|1 \, $,  $ 1|1 $  make their appearance, thus
making the terminology  ``one-parameter supersubgroups''
a bit awkward.  We nevertheless prefer to keep it, to mark the striking
analogies with the classical theory.}

%
%

\bigskip

 \section{Superalgebras, superspaces, supergroups}  \label{first_preliminaries}

\medskip

   Let  $ \bk $  be a unital, commutative ring.

\vskip7pt

   We call  {\it  $ \bk $--superalgebra\/}  any associative, unital  $ \bk $--algebra  $ A $  which is  $ \Z_2 $--graded;  that is,  $ A $  is a  $ \bk $--algebra  graded by the two-element group  $ \Z_2 \, $.  Thus  $ A $  splits as  $ \; A = A_0 \oplus A_1 \; $,  and  $ \; A_a \, A_b \subseteq A_{a+b} \; $.  The  $ \bk $--submodule  $ A_0 $  and its elements are called  {\it even},  while  $ A_1 $  and its elements  {\it odd}.  By  $ p(x) $  we denote the  {\sl parity\/}  of any homogeneous element  $ \, x \in A_{p(x)} \, $.  Clearly,
$ \bk $--superalgebras  form a category, whose morphisms are all those in the category of algebras which preserve the unit and the  $ \Z_2 $--grading.  At last, for any  $ \, n \in \N \, $  we call  $ A_1^{\,n} $  the  $ A_0 $--submodule  of  $ A $  spanned by all products  $ \, \vartheta_1 \cdots \vartheta_n \, $  with  $ \, \vartheta_i \in A_1 \, $  for all  $ i \, $,  and  $ A_1^{(n)} $  the unital subalgebra of  $ A $  generated by  $ A_1^{\,n} \, $.
                                                   \par
   A superalgebra  $ A $  is said to be  {\it commutative\/}  iff  $ \; x y = (-1)^{p(x)p(y)} y x \; $  for all homogeneous  $ \, x $,  $ y \in A \; $.  We denote  by $ \salg $  the category of commutative superalgebras; if necessary, we shall stress the role of  $ \bk $  by writing  $ \salg_\bk \, $.

\bigskip

\begin{definition}
  A  {\it superspace}  $ \, S = \big( |S|, \cO_S \big) \, $  is a topological space  $ |S| $  endowed with a sheaf of commutative superalgebras  $ \cO_S $  such that the stalk  $ \cO_{S,x} $  is a local superalgebra for all  $ \, x \in |S| \, $.
                                        \par
   A  {\it morphism}  $ \; \phi: S \lra T \; $  of superspaces consists of a pair $ \; \phi = \big( |\phi|, \phi^* \big) \, $,  where  $ \; \phi : |S| \lra |T| \; $  is a morphism of topological spaces and  $ \; \phi^* : \cO_T \lra \phi_* \cO_S \; $  is a sheaf morphism such that  $ \; \phi_x^* \big( {\mathbf{m}}_{|\phi|(x)} \big)
= {\mathbf{m}}_x \; $  where  $ {\mathbf{m}}_{|\phi|(x)} $ and  $
{\mathbf{m}}_{x} $  are the maximal ideals in the stalks $ \cO_{T,
\, |\phi|(x)} $  and  $ \cO_{S,x} $  respectively and $\phi_x^*$
is the morphism induced by $\phi^*$ on the stalks.
  Here as usual  $ \phi_* \cO_S $  is the sheaf on  $ |T| $  defined as  $ \, \phi_* \cO_S(V) := \cO_S(\phi^{-1}(V)) \, $.
\end{definition}

\medskip

  Given a superspace  $ \, S = \big( |S|, \cO_S \big) \, $,  let  $ \cO_{S,0} $  and  $ \cO_{S,1} $  be the sheaves on  $ |S| $  defined as follows:  $ \; \cO_{S,0}(U) := {\cO_{S}(U)}_0 \; $,  $ \; \cO_{S,1}(U) := {\cO_{S}(U)}_1 \; $  for each open subset  $ U $  in  $ |S| \, $.  Then  $ \cO_{S,0} $  is a sheaf of ordinary commutative algebras,
while  $ \cO_{S,1} $  is a sheaf of  $ \cO_{S,0} $--modules.

\medskip

\begin{definition}
   A  {\it superscheme\/}  is a superspace  $ \, S := \big( |S|, \cO_S \big) \, $  such that $ \, \big( |S|, \cO_{S,0} \big) \, $ is an ordinary scheme and  $ \cO_{S,1} $  is a quasi-coherent sheaf of  $ \cO_{S,0} $--modules.  A  {\it morphism\/}  of superschemes is a morphism of the underlying superspaces.
\end{definition}

\medskip

\begin{definition}  \label{spec}
  Let  $ \, A \in \salg \, $  and let  $ \cO_{A_0} $  be the structural sheaf of the ordinary scheme  $ \, \uspec(A_0) = \big( \spec(A_0), \cO_{A_0} \big) \, $,  where  $ \spec(A_0) $  denotes the prime spectrum of the commutative ring  $ A_0 \; $.  Now  $ A $  is a module over  $ A_0 \, $,  so we have a sheaf  $ \cO_A $  of  $ \cO_{A_0} $--modules  over  $ \spec(A_0) $  with stalk  $ A_p \, $,  the  $ p $--localization of the  $ A_0 $--module  $ A \, $,  at the prime  $ \, p \in \spec(A_0) \, $.
                                       \par
   We define the superspace  $ \; \uspec(A) := \big( \spec(A_0), \cO_A \big) \; $.  By its very definition  $ \uspec(A) $  is a superscheme.

\medskip

   Given  $ \, f : A \lra B \, $  a superalgebra morphism, one can define  $ \, \uspec(f) : \uspec(B) \lra \uspec (A) \, $  in a natural way, very similarly to the ordinary setting, thus making  $ \uspec $  into a functor  $ \, \uspec : \salg \lra \sets \, $,  where  $ \salg $  is the category of (commutative) superalgebras and  $ \sets $  the category of sets (see  \cite{ccfd},  ch.~5, or  \cite{eh},  ch.~1, for more details).
\end{definition}

\medskip

\begin{definition}
  We say that a superscheme  $ X $  is  \textit{affine\/}  if it is isomorphic to  $ \, \uspec(A) \, $  for some commutative superalgebra  $ A \, $.
\end{definition}

\medskip

   Clearly any superscheme is locally isomorphic to an affine superscheme.

\medskip

\begin{example}
   The  {\sl affine superspace\/}  $ \, \mathbb{A}_\bk^{p|q} \, $,  also denoted  $ \, \bk^{p|q} \, $,  is defined   --- for each  $ \, p \, $, $ q \in \N \, $  ---   as  $ \, \mathbb{A}_\bk^{p|q} := \big( \mathbb{A}_\bk^{p} , \cO_{\mathbb{A}_\bk^{p|q}} \big) \, $,  with
 \vskip-3pt
  $$  \cO_{\mathbb{A}_\bk^{p|q}}\big|_U  \, = \;  \cO_{\mathbb{A}^p_\bk}\big|_U \otimes \bk[\xi_1 \dots \xi_q] \;\; ,  \qquad U \hbox{ open in } \bk^p  $$
\noindent
 where  $ \bk[\xi_1 \dots \xi_q] $  is the exterior (or ``Grassmann'') algebra generated by  $ \xi_1 $,  $ \dots $,  $ \xi_q \, $,  and  $ \cO_{\mathbb{A}^p_\bk} $  denotes the sheaf of polynomial functions on the classical affine space  $ \, \mathbb{A}^p_\bk := \bk^p \; $.  Indeed, this is an example of affine superscheme, because  $ \; \mathbb{A}_\bk^{p|q} \, \cong \, \uspec \big( \bk[x_1,\dots,x_p] \otimes_\bk \bk[\xi_1 \dots \xi_q] \big) \; $.
\end{example}

\medskip

   The concept of supermanifold provides another important example of superspace.  While our work is mainly focused on the algebraic category, we neverthless want to briefly introduce the differential setting, since our definition of Chevalley supergroup is modelled on the differential homogeneous
one-parameter subgroups, as we shall see in section  \ref{che-sgroup}.

\medskip

   We start with an example describing the local model of a supermanifold.  Hereafter, when we speak of {\sl supermanifolds},  we assume  $ \bk $  to be  $ \R $  or  $ \C \, $.

\smallskip

\begin{example}
   We define the superspace  $ \bk^{p|q} $  as the topological space  $ \bk^p $  endowed with the following sheaf of superalgebras.  For any open subset  $ \, U \subseteq \bk^p \, $  we set
 $ \; \cO_{\bk^{p|q}}(U) := \cO_{\bk^p}(U) \otimes \bk\big[\xi^1 \dots \xi^q\big] \, \phantom{\Big|} $
%
%
where  $ \cO_{\bk^p} $  denotes here the sheaf of smooth,
resp.~analytic, functions on  $ \bk^p $  when $ \, \bk = \R \, $,
resp.~$ \, \bk = \C \, $.
\end{example}

\smallskip

\begin{definition}  \label{supermanifold}
%
%
 A  {\it supermanifold\/}  of dimension  $ \, p|q \, $  is a superspace
$ \, M = (|M|,\cO_M) \, $  which is locally isomorphic (as
superspace) to  $ \, \bk^{p|q} \, $;  that is, for all  $ \, x \in
|M| \, $ there exist an open neighborhood  $ \, V_x \subset |M| \,
$  of $ x $  and an open subset  $ \, U \subseteq \bk^p \, $  such
that $ \; {\cO_{M}}\big|_{V_x} \cong {\cO_{k^{p|q}}}\big|_U \; $.
A  {\it morphism\/}  of supermanifolds is simply a morphism of
superspaces.  Supermanifolds, together with their morphisms, form
a category that we denote with  $ \smflds \, $.
\end{definition}

\medskip

   The theory of supermanifolds resembles very closely the classical theory.
More details on the basic facts of supergeometry can be found for
example in  \cite{vsv},  Ch.~4, or in \cite{ccfd}, Ch.~3--4.  Here
instead, we turn now to examine the notion of functor of points,
both in the algebraic and the differential category.

\medskip

\begin{definition}
   Let  $ X $  be a superscheme.  Its  {\it functor of points\/}  is the functor  $ \; h_X : \salg \lra \sets \; $  defined as  $ \; h_X(A) \, := \, \Hom \big(\, \uspec(A) \, , X \big) \; $  on the objects and as  $ \; h_X(f)(\phi) := \phi \circ \uspec (f) \; $  on the arrows.
If  $ h_X $  is group valued, i.~e.~it is valued
in the category  $ \grps $  of groups, we say that  $ X $  is a  {\it
supergroup}. When  $ X $  is affine, this is equivalent to the
fact that $ \, \cO(X) \, $   --- the superalgebra of global
sections of the structure sheaf on  $ X $  ---   is a
(commutative)  {\sl Hopf superalgebra}. More in general, we call  {\it supergroup functor\/}  any functor  $ \; G : \salg \lra \grps \; $.
                                                \par
   Any representable supergroup functor is the same as an affine supergroup: indeed, the former corresponds to the functor of points of the latter.
                                                \par
   Following a customary abuse of notation, we shall then use the same letter to denote both the superscheme  $ X $  and its functor of points  $ h_X \, $.
\end{definition}

\smallskip

   Similarly we can define the functor of points for supermanifolds.

\medskip

\begin{definition}
  For any supermanifold  $ M \, $,  we define its  {\it functor of points\/}  $ \; h_M : \smflds^\circ \lra \sets \; $,  where  $ \smflds^\circ $  denotes the opposite category to  $ \smflds \, $,  as follows:
                                                        \par
   ---  $ \; M \mapsto h_M(T) := \Hom \big(\, T , M \big) \; $  for any object  $ M $  in  $ \smflds^\circ \, $,
                                                        \par
   ---  $ \; h_M(f) : \phi \mapsto h_M(f)(\phi) := \phi \circ f \; $  for any arrow  $ \, f \in \Hom \big(\, T' , T \big) \, $  in  $ \smflds^\circ $,  \, and any  $ \, \phi \in \Hom \big(\, T' , M \big) \, $.
                                                        \par
   If the functor  $ h_M $  is group valued we say that  $ M $  is a  {\it Lie supergroup}.
\end{definition}

\medskip

   The importance of the functor of points is spelled out by a version of Yoneda's Lemma, that essentially tells us that the functor of points recaptures all the information carried by the supermanifold or the superscheme.

\smallskip

\begin{proposition} ({\sl Yoneda's Lemma})
                                                     \par
   Let  $ \cC $  be a category,  $ \cC^\circ $  its opposite category, and two objects  $ M $,  $ N $  in  $ \cC \, $.  Consider the functors
 $ \; h_M : \cC^\circ \! \lra \sets \; $  and  $ \; h_N : \cC^\circ \! \lra \sets \; $
defined by  $ \; h_M(T) := \Hom \big(\, T , M \big) \, $,  $ \; h_N(T) := \Hom \big(\, T , N \big) \; $  on any object  $ T $  in  $ \C^\circ $  and
by  $ \; h_M(f)(\phi) := \phi \circ f \, $,  $ \; h_N(f)(\psi) := \psi \circ f \; $  on any arrow  $ \, f \in \Hom \big(\, T' , T \big) \, $  in  $ \smflds^\circ $,  for any  $ \, \phi \in \Hom \big(\, T' , M \big) \, $  and  $ \, \psi \in \Hom \big(\, T' , N \big) \, $.
                                                     \par
   Then there exists a one-to-one correspondence between the natural transformations  $ \, \big\{ h_M \lra h_N \big\} \, $  and the morphisms  $ \, \Hom(M,N) \, $.
\end{proposition}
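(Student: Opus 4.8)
The statement is the classical Yoneda Lemma; the plan is to exhibit explicit maps in both directions and verify they are mutually inverse, with naturality carrying the entire weight of the argument.

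First I would define a map
$$ \Phi : \Hom(M,N) \lra \big\{ \text{natural transformations } h_M \lra h_N \big\} $$
by post-composition: for $\, \phi \in \Hom(M,N) \,$ and any object $T$ of $\cC \,$, let $\, \Phi(\phi)_T : h_M(T) \to h_N(T) \,$ send $\, g \mapsto \phi \circ g \,$. That $\Phi(\phi)$ is a genuine natural transformation is immediate from associativity of composition: for an arrow $f$ in $\cC^\circ$ one has $\, (\phi \circ g) \circ f = \phi \circ (g \circ f) \,$, which is exactly the commutativity of the naturality square for $\Phi(\phi) \,$.

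Conversely I would define
$$ \Psi : \big\{ \text{natural transformations } h_M \lra h_N \big\} \lra \Hom(M,N) \, , \qquad \Psi(\eta) \, := \, \eta_M(\mathrm{id}_M) \, , $$
evaluating the component of $\eta$ at the object $M$ on the identity morphism; this makes sense since $\, \eta_M(\mathrm{id}_M) \in h_N(M) = \Hom(M,N) \,$.

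It then remains to check $\, \Psi \circ \Phi = \mathrm{id} \,$ and $\, \Phi \circ \Psi = \mathrm{id} \,$. The first is a one-liner: $\, \Psi(\Phi(\phi)) = \Phi(\phi)_M(\mathrm{id}_M) = \phi \circ \mathrm{id}_M = \phi \,$. For the second, given a natural transformation $\eta$ I must show $\, \Phi(\Psi(\eta)) = \eta \,$, i.e.\ that $\, \eta_M(\mathrm{id}_M) \circ g = \eta_T(g) \,$ for every object $T$ and every $\, g \in \Hom(T,M) \,$. This is precisely the naturality of $\eta$ applied to $g \,$, regarded as an arrow $\, M \to T \,$ in $\cC^\circ$: evaluating the naturality identity $\, \eta_T \circ h_M(g) = h_N(g) \circ \eta_M \,$ on $\, \mathrm{id}_M \in h_M(M) \,$ yields $\, \eta_T(\mathrm{id}_M \circ g) = \eta_M(\mathrm{id}_M) \circ g \,$, as wanted. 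The injectivity and surjectivity asserted in the statement are then just a reformulation of these two identities.

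The only real obstacle is the bookkeeping of variance: since the hom-functors in play are contravariant in their argument, one must be careful that $h_M(g)$ stands for precomposition by $g$ and that each naturality square is read in the appropriate direction. Once this is pinned down, the proof is a short diagram chase that uses no property specific to $\cC \,$, and in particular applies to the categories of superschemes and of supermanifolds considered above.
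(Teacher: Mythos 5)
Your proof is correct: it is the standard Yoneda argument (post-composition $\Phi$ one way, evaluation $\Psi(\eta):=\eta_M(\mathrm{id}_M)$ the other, with naturality giving $\Phi\circ\Psi=\mathrm{id}$), and your care about the contravariance of $h_M$, $h_N$ is exactly where the bookkeeping matters. The paper itself states this proposition without proof, as one of the background results quoted from the literature, so there is no alternative argument in the text to compare against; your write-up supplies precisely the missing standard verification.
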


\medskip

   This has the following immediate application to supermanifolds: two supermanifolds are isomorphic if and only if their functors of points are.

\smallskip

   The same is true also for superschemes even if, with our definition of
their functor of points, this is not immediately clear.  In fact,
given a superscheme  $ X $,  we can give another definition of
functor of points, equivalent to the previous one, as the functor
from the category of superschemes to the category of sets, defined
as  $ \; T \lra \Hom(T,X) \; $.  Now, Yoneda's Lemma tells us that
two superschemes are isomorphic if and only if their functors of
points are.
                                                   \par
   For more details on functors of points in the two categories, and the equi\-valence of the two given definitions in the algebraic setting, see  \cite{ccfd},  Ch.~3--5.

\smallskip

   In the present work, we shall actually consider only affine supergroups,
which we are going to describe mainly via their functor of points.

\smallskip

   The next examples turn out to be very important in the sequel.

\bigskip

\begin{examples} {\ }
 \vskip9pt
   {\it (1)} \,  Let  $ V $  be a super vector space.  For any superalgebra  $ A $  we define  $ \; V(A) \, := \, {(A \otimes V)}_0 \, = \, A_0 \otimes V_0 \oplus A_1 \otimes V_1 \; $.  This is a representable functor in the category of superalgebras, whose representing object is  $ {\hbox{Pol}}(V) \, $,  the algebra of polynomial functions on  $ V \, $. Hence any super vector space can
be equivalently viewed as an affine superscheme.
 \vskip5pt
   {\it (2)} \,  {\sl  $ \rGL(V) $  as an algebraic supergroup}.  Let  $ V $  be a finite dimensional super vector space of dimension $p|q$.  For any superalgebra  $ A \, $,  let  $ \, \rGL(V)(A) := \rGL\big(V(A)\big) \, $  be the set of isomorphisms  $ \; V(A) \lra V(A) \; $.  If we fix a homogeneous basis for  $ V \, $,  we see that  $ \, V \cong \bk^{p|q} \; $;  \, in other words,  $ \, V_0 \cong \bk^p \, $  and  $ \, V_1 \cong \bk^q \, $.  In this case, we also denote $ \, \rGL(V) \, $  with  $ \, \rGL(p|q) \, $.  Now,  $ \rGL(p|q)(A) $  is the group of invertible matrices of size  $ (p+q) $  with diagonal block entries in  $ A_0 $  and off-diagonal block entries in $ A_1 \, $.  It is well known that the functor  $ \rGL(V) $  is representable; see (e.g.),  \cite{vsv}, Ch.~3,  for further details.
 \vskip5pt
   {\it (3)} \,  {\sl  $ \rGL(V) $  as a Lie supergroup}.  Let  $ V $  be a super vector space of dimension  $ p|q $  over  $ \R $  or  $ \C \, $.  For any supermanifold  $ T $,  define  $ \rGL(V)(T) $  as the set of isomorphisms  $ \, V(T) \lra V(T) \, $;  by an abuse of notation we shall use the same symbol to denote  $ \rGL $  in the algebraic and the differential setting.  When we are writing  $ V(T) $,  we are taking  $ V $  as a supermanifold, hence  $ \, V(T) = \Hom(T,V) \, $.  By a result in  \cite{ko}  (\S 2.15, page 208), we have that $\; \Hom(T,V) = \Hom \big( \cO_V(V), \cO_T(T) \big) \; $.  If we fix a homogeneous basis for  $ V \, $,  $ \Hom(T,V) $
can be identified with the set of all  $
(p+q)$--uples  with entries in  $ \cO_T(T) $,  the first  $ p $
entries being even and the last  $ q $  odd.  As before,  $
\rGL(V)(T) $  can be identified with the group of  $ (p+q) \times
(p+q) $  invertible matrices, whose diagonal blocks have entries
in  $ \cO_T(T)_0 $,  while the off-diagonal blocks have entries in
$ \cO_T(T)_1 \, $.  Now again,  $ \rGL(V) $  is a representable functor
(see \cite{vsv},  Ch.~6), i.e.~there exists a supermanifold   ---
again denoted by  $ \rGL(V) $  ---   whose functor of points is exactly
$ \rGL(V) \, $.
\end{examples}

\bigskip
\bigskip
\bigskip

 \section{Lie superalgebras}  \label{Lie-superalgebras}

\medskip

   From now on, we assume  $ \bk $  to be such that 2 and 3 are not zero and they are
not zero divisors in  $ \bk \, $.
Moreover,  {\sl all\/  $ \bk $--modules  hereafter will be assumed to have no  $ p \, $--torsion  for  $ \, p \in \{2,3\} \, $}.
%
%
%

\medskip

\begin{definition}
  Let  $ \, \fg = \fg_0 \oplus \fg_1 \, $  be a super (i.e.,  $ \Z_2 $--graded)  $ \bk $--module  (with no  $ p \, $--torsion  for  $ \, p \in \{2,3\} \, $,  as mentioned above).  We say that  $ \fg $  is a Lie superalgebra, if we have a bracket  $ \; [\ ,\ ] : \fg \times \fg \lra \fg \; $  which satisfies the following properties (for all  $ x $,  $ y \in \fg \, $  homogeneous):
 \vskip5pt
   {\it (1)} \  {\sl Anti-symmetry:}  $ \qquad \displaystyle{ [x,y] \, + \, {(-1)}^{p(x) \, p(y)}[y,x] \; = \; 0 } $
 \vskip5pt
   {\it (2)} \  {\sl Jacobi identity:}
  $$  {(-\!1)}^{p(x) \, p(z)} \, [x,[y,z]] \, + \, {(-\!1)}^{p(y) \, p(x)} \, [y,[z,x]] \, + \, {(-\!1)}^{p(z) \, p(y)} \, [z,[x,y]] \; = \; 0  $$
\end{definition}

\medskip

\begin{example}
  Let  $ \, V = V_0 \oplus V_1 \, $  be a  {\sl free\/}  super  $ \bk $--module,  and consider  $ \End(V) \, $,  the endomorphisms of  $ V $  as an ordinary  $ \bk $--module.  This is again a free super  $ \bk $--module,  $ \; \End(V) = \End(V)_0 \oplus \End(V)_1 \, $,  \; where $ \End(V)_0 $  are the morphisms which preserve the parity, while  $ \End(V)_1 $  are the morphisms which reverse the parity.  If  $ V $  has finite rank, and we choose a basis for  $ V $  of homogeneous elements (writing first
the even ones), then  $ \End(V)_0 $  is the set of all diagonal block matrices, while  $ \End(V)_1 $  is the set of all off-diagonal block matrices.  Thus  $ \End(V) $  is a Lie superalgebra with bracket
  $$  [A,B]  \; := \;  A B - {(-1)}^{|A||B|} \, B A
\eqno \text{for all homogeneous \ }  A, B \in \End(V) \; .  $$
   The standard example is $ \, V := \bk^{p|q} = \bk^p \oplus \bk^q \, $,  with  $ \, V_0 := \bk^p \, $  and  $ \, V_1 := \bk^q \, $.  In this case, we also write  $ \; \End\big(\bk^{p|q}\big) := \, \End(V) \; $  or  $ \; \rgl(p\,|q) := \End(V) \; $.
                                                                     \par
   In  $ \End(V) $  we can define the  {\it supertrace\/}  as follows:
  $$  \str \begin{pmatrix}
              A  &  B  \\
              C  &  D
           \end{pmatrix}  := \;  \tr(A) - \tr(D)  \quad .  $$
\end{example}

\medskip

   For the rest of this section, we assume  $ \, \bk \, $  to be an algebraically closed field of characteristic zero   --- though definitions make sense in general.

\medskip

\begin{definition}
  A non-Abelian Lie superalgebra  $ \fg $  is called a  \textit{classical Lie superalgebra\/}  if it is simple, i.e.~it has no nontrivial (homogeneous) ideals, and  $ \fg_1 $  is completely reducible as a  $ \fg_0 $--module.  Furthermore,  $ \fg $  is said to be  \textit{basic\/}  if, in addition, it admits a non-degenerate, invariant bilinear form.
\end{definition}

\medskip

\begin{examples} (cf.~\cite{ka}, \cite{sc})
 \vskip2pt
   {\it (1)} ---  $ \rsl(m|\,n) \, $.  Define  $ \, \rsl(m|n) \, $  as the subset of  $ \rgl(m|n)  $  all matrices with supertrace zero.  This is a Lie subalgebra of  $ \rgl(m|n) \, $,  with  $ \Z_2 $--grading
  $$  \rsl(m|\,n)_0 \, = \, \rsl(m) \oplus \rsl(n) \oplus \rgl(1)  \;\; ,  \qquad
\rsl(m|\,n)_1 \, = \, f_m \otimes f'_n \, \oplus \, f'_m \otimes f_n  $$
where  $ f_r $  is the defining representation of  $ \rsl(r) $
and  $ f'_r $  is its dual (for any  $ r \, $).  When  $ \, m \neq
n \, $  this Lie superalgebra is a  {\sl classical\/}  one.

\smallskip

   {\it (2)} ---  $ \rosp(p\,|q) \, $.  Let  $ \phi \, $  denote a nondegenerate consistent supersymmetric bilinear form in  $ \, V := \bk^{p|q} \, $.  This means that  $ V_0 $  and  $ V_1 $  are mutually orthogonal and the restriction of  $ \phi $  to  $ V_0 $  is symmetric and to  $ V_1 $  is skewsymmetric (in particular,  $ \, q = 2\,n \, $  is even).  We define in  $ \rgl(p\,|q) $  the subalgebra  $ \; \rosp(p\,|q) := {\rosp(p ,|q)}_0 \oplus {\rosp(p\,|q)}_1 \; $  by setting, for all  $ \, s \in \{0,1\} \, $,
  $$  {\rosp(p\,|q)}_s  \, := \,  \Big\{ \ell \in \rgl(p\,|q) \;\Big|\; \phi\big(\ell(x),y\big) = -{(-1)}^{s\,|x|} \, \phi\big(x, \ell(y)\big) \; \forall \, x, y \in \bk^{p|q} \,\Big\}  $$
and we call  $ \rosp(p\,|q) $  an  {\it orthosymplectic\/}  Lie superalgebra.  Again, all the  $ \rosp(p|\,q)$'s  are  {\sl classical\/}  Lie superalgebras, actually Lie supersubalgebras of  $ \rgl(p\,|q) \, $.  Note also that  $ \, \rosp(0|\,q) \, $  is the symplectic Lie algebra, while  $ \rosp(p|\,0) $  is the  orthogonal Lie algebra.
                                                              \par
   We can also describe the explicit matrix form of the elements of  $ \rosp(p\,|q) \, $.  First, note that, in a suitable block form, the bilinear form  $ \phi $  has matrix
  $$  \phi  \; = \,  \begin{pmatrix}
               0   &  I_m  &  0  &   0   &  0  \\
              I_m  &   0   &  0  &   0   &  0  \\
                0  &   0   &  1  &   0   &  0  \\
                0  &   0   &  0  &   0   &  I_n  \\
                0  &   0   &  0  & -I_n  &  0
                     \end{pmatrix}
  \quad ,  \qquad
      \phi  \; = \,  \begin{pmatrix}
               0   &  I_m  &   0   &  0  \\
              I_m  &   0   &   0   &  0  \\
               0   &   0   &   0   & I_n \\
               0   &   0   & -I_n  &  0
                     \end{pmatrix}  $$
according to whether  $ \; (p,q) = (2 m \! + \! 1, 2n) \, $  \; or  $ \; (p,q) = (2m,2n) \; $.  Then, in the block form given by the partition of rows and columns according to  $ \; (p+q) = m \! + \! m \! + \! 1 \! + \! n \! + \! n \; $  or to  $ \; (p+q) = m \! + \! m \! + \! n \! + \! n \; $  (depending on the parity of  $ p \, $),  the orthosymplectic Lie superalgebras  $ \rosp(p\,|q) $  read as follows:   %
 \vskip-19pt
  $$  \displaylines{
   \rosp(p\,|q)  =  \rosp(2 m + 1 | 2  n)  \, = \,  \left\{\!
     \begin{pmatrix}
        A  &    B    &    u   &   X   &   X_1   \\
        C  &   -A^t  &    v   &   Y   &   Y_1   \\
     -v^t  &   -u^t  &    0   &  z^t  &  z_1^t  \\
    Y_1^t  &  X_1^t  &   z_1  &   D   &    E    \\
     -Y^t  &   -X^t  &   -z   &   F   &   -D
     \end{pmatrix}  \! :
        \begin{matrix}
           B = -B^T  \\
           C = -C^T  \\
           E = E^T  \\
           F = F^T
        \end{matrix}  \,\right\}^{\phantom{\Big|}}  \cr
   \rosp(p\,|q)  \, = \,  \rosp(2 \, m \,| 2\, n)  \; = \;  \left\{
     \begin{pmatrix}
        A  &    B    &  X  &  X_1  \\
        C  &   -A^t  &  Y  &  Y_1  \\
    Y_1^t  &  X_1^t  &  D  &   E  \\
     -Y^t  &   -X^t  &  F  &  -D
     \end{pmatrix}  \; : \;
        \begin{matrix}
           B = -B^T  \\
           C = -C^T  \\
           E = E^T  \\
           F = F^T
        \end{matrix}  \;\right\}^{\phantom{\Big|}}  }  $$
   Moreover, if  $ \, m, n \geq 2 \, $,  then we have   --- with notation like in  {\it (1)}  ---   that
  $$  \displaylines{
   {\rosp(2m\!+\!1|2n)}_0 = \, \frak{o}(2m\!+\!1) \oplus \frak{sp}(2n) \; ,
\quad  {\rosp(2m|2n)}_0 = \, \frak{o}(2m) \oplus \frak{sp}(2n)  \cr
%
%
   {\rosp(p\,|2n)}_1 \, = \; f_p \otimes f_{2n}  \quad \forall \; p > 2 \; ,  \;\;\qquad  {\rosp(2|2n)}_1 \, = \, f_{2n}^{\,\oplus 2}  }  $$
\end{examples}

\bigskip

\begin{definition}
   Define the following Lie superalgebras:
 \vskip4pt
\noindent
 \; {\it (1)} \;   $ A(m,n) := \rsl(m \! + \! 1 |\, n \! + \! 1) \, $,  \,
$ A(n,n) := \rsl(n \! + \! 1 |\, n \! + \! 1) \Big/ {\bk I_{2n}} \; $,   \,\;  $ \forall \; m \! \neq \! n \, $;
 \vskip4pt
\noindent
 \; {\it (2)} \hfill   $ B(m,n) := \rosp(2m+1|\,2n) \; $,   \hfill   $ \forall \;\;\; m \geq 0 \, $,  $ \, n \geq 1 \; $; \quad {\ }
  \vskip4pt
\noindent
 \; {\it (3)} \hfill   $ C(n) := \rosp(2|\,2n-2) \, $,   \hfill   for all  $ \, n \geq 2 \, $; \quad {\ }
 \vskip4pt
\noindent
 \; {\it (4)} \hfill   $ D(m,n) := \rosp(2m|\,2n) \; $,   \hfill   for all  $\, m \geq 2 \, $,  $ \, n \geq 1 \, $; \quad {\ }
 \vskip4pt
\noindent
 \; {\it (5)} \hfill   $ \quad \displaystyle{
 P(n) := \, \left\{ \begin{pmatrix}
                         A  &  B  \\
                         C  & -A^t
\end{pmatrix} \in \rgl(n\!+\!1|\,n\!+\!1) \;\bigg|\;
  \begin{matrix}  A \in \rsl(n\!+\!1)  \\
           B^t = B \, , \; C^t = -C
  \end{matrix}   \;\right\} } $   \hfill {\ }
 \vskip4pt
\noindent
 \; {\it (6)} \hfill   $ \displaystyle{
 Q(n) := \, \bigg\{\! \begin{pmatrix}
                         A  &  B  \\
                         B  &  A
\end{pmatrix} \in \rgl(n\!+\!1|\,n\!+\!1) \;\bigg|\;\, B \in \rsl(n\!+\!1) \;\bigg\} \Bigg/ \bk I_{2(n+1)} } $  \hfill {\ }
\end{definition}

\bigskip

   The importance of these examples lies in the following (cf.~\cite{ka}, \cite{sc}):

\bigskip

\begin{theorem} \label{classificationtheorem}
   Let  $ \bk $  be an algebraically closed field of characteristic zero.  Then the classical Lie superalgebras over\/  $ \bk $  are either isomorphic to a simple Lie algebra or to one of the following classical Lie superalgebras:
  $$  \displaylines{
   A(m,n) \, ,  \;\; m \! \geq \! n \! \geq \! 0 \, , \, m+n > 0 \, ;  \quad  B(m,n) \, ,  \,\; m \geq 0, n \geq 1 \, ;   \quad  C(n) \, ,  \,\; n \geq 3  \phantom{{}_\big|}  \cr
   D(m,n) \, ,  \;\; m \geq 2 , \, n \geq 1 \, ;  \qquad  P(n) \, ,  \;\; n \geq 2 \, ;   \quad  Q(n) \, ,  \;\; n \geq 2  \phantom{{}_\big|}  \cr
   F(4) \; ;  \qquad  G(3) \; ;  \qquad  D(2,1;a) \, ,  \;\; a \in \bk \setminus \{0, -1\}  }  $$
(for the definition of the third line items, and for a proof, we refer to \cite{ka}).
%
%
\end{theorem}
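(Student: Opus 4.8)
The plan is to follow the structure-theoretic argument of Kac; a complete proof necessarily retraces it, so I will only describe its skeleton, deferring the details --- and in particular the construction of the three exceptional Lie superalgebras on the last line --- to \cite{ka}. First I would dispose of the non-super case: if $\fg_1 = 0$ then $\fg$ is an ordinary simple Lie algebra, as in the first alternative, so assume $\fg_1 \neq 0$. The subspace $\fk := \big\{ x \in \fg_0 \mid [x,\fg_1] = 0 \big\}$ is $\fg_0$-stable by the Jacobi identity and satisfies $[\fg_1,\fk] = 0$, hence is a homogeneous ideal of $\fg$; it is proper because $\fk \subseteq \fg_0$ while $\fg_1 \neq 0$, so by simplicity $\fk = 0$ and the adjoint action embeds $\fg_0 \hookrightarrow \rgl(\fg_1)$. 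As $\fg_1$ is then a faithful, completely reducible module over a field of characteristic zero, $\fg_0$ is reductive; and since the ideal generated by $\fg_1$ is the homogeneous subspace $\fg_1 \oplus [\fg_1,\fg_1]$, it equals $\fg$, whence $[\fg_1,\fg_1] = \fg_0$ and, the bracket being symmetric on $\fg_1$, the $\fg_0$-module $\fg_0$ is a quotient of $S^2\fg_1$. Next I would establish the standard dichotomy: decomposing $\fg_1$ into $\fg_0$-irreducible summands and analysing the equivariant symmetric maps they induce into $\fg_0$, one shows that exactly one of the following holds --- either (type II) $\fg_1$ is $\fg_0$-irreducible, or (type I) $\fg_1 = \fg_1^+ \oplus \fg_1^-$ with $\fg_1^\pm$ irreducible, $[\fg_1^\pm,\fg_1^\pm] = 0$ and $[\fg_1^+,\fg_1^-] = \fg_0$, in which case $\fg = \fg_1^- \oplus \fg_0 \oplus \fg_1^+$ carries a consistent $\Z$-grading.

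Then comes the enumeration. In the type I case I would classify the pairs $(\fg_0, \fg_1^+)$ --- a reductive Lie algebra together with a faithful irreducible module --- for which there exists a simple Lie superalgebra with a consistent $\Z$-grading having grade-zero part $\fg_0$ and grade-one part $\fg_1^+$, and hence vanishing grade-two part; highest-weight theory and dimension estimates should then pin these down to $A(m,n)$, with the quotient $A(n,n)$ in the degenerate case, together with $C(n)$ and $P(n)$. In the type II case $\fg_1$ is irreducible and $\fg_0$ is a quotient of $S^2\fg_1$; here the Jacobi identity applied to three copies of a single odd vector gives $\big[x,[x,x]\big] = 0$, which forces the composite $S^2\fg_1 \to \fg_0 \to \End(\fg_1)$ to be very degenerate, and, combined with the reductivity of $\fg_0$ and rank and dimension bounds, this should reduce the possibilities to the orthosymplectic series $B(m,n)$ and $D(m,n)$, the series $Q(n)$, and three sporadic solutions --- of superdimensions $24|16$, $17|14$ and $9|8$ --- realized by the contragredient Lie superalgebras $F(4)$, $G(3)$ and $D(2,1;a)$ attached to explicit Cartan matrices.

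Finally I would tidy up: verify that each algebra so produced is simple with $\fg_1$ completely reducible, match the module-theoretic descriptions with the explicit matrix models of $\rosp(p|q)$, $P(n)$ and $Q(n)$ recorded above, and discard repetitions --- the low-rank exceptional isomorphisms (for instance $C(2) \cong A(1,0)$ and $D(2,1;1) \cong \rosp(4|2)$) fix the parameter ranges appearing in the statement, while the degenerations of $D(2,1;a)$ at $a \in \{0,-1\}$, where it fails to be simple, account for the two excluded values.

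The hard part is clearly the enumeration, and above all the type II case: there is no shortcut around a case-by-case study of the irreducible representations of reductive Lie algebras, and the emergence of the genuine one-parameter family $D(2,1;a)$ shows that the outcome cannot be foreseen from general principles alone. For this reason I would not reproduce the argument in full, but rather cite \cite{ka}.
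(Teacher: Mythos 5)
The paper gives no proof of this theorem at all: it simply quotes the classification from Kac's paper \cite{ka}, exactly as the statement itself indicates. Your outline is a faithful skeleton of Kac's argument (the ideal $\fk$, reductivity of $\fg_0$, $[\fg_1,\fg_1]=\fg_0$, the type I/II dichotomy, and the enumeration with the exceptional isomorphisms fixing the parameter ranges), and since you, like the paper, defer the hard enumeration to \cite{ka}, your approach is essentially the same as the paper's.
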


\medskip

\begin{remark}   \label{cycl-Lie-ssalg}
 Let  $ \bk $  be a commutative unital ring
%
%
 as at the beginning of the section,
and  $ \fg $  a Lie  $ \bk $--superalgebra.  A Lie supersubalgebra  $ \fk $  of  $ \fg $  is called  {\it cyclic\/}  if it is generated by a single element  $ \, x \in \fg \, $:  then we write  $ \, \fk = \langle x \rangle \, $.
                                                                \par
   In contrast to the classical case, one has  {\sl not\/}  a priori  $ \, \langle x \rangle = \bk.x \, $,
 because one may have
$ \, [x,x] \not= 0 \, $.  For  {\sl homogeneous\/}  $ \, x \in \fg \, $,  three cases may occur:
  $$  \displaylines{
   \hfill   x \in \fg_0  \;\;\; \Longrightarrow \;\;\;  [x,x] = 0  \;\;\; \Longrightarrow \;\;\;  \langle x \rangle =  \bk.x   \hfill (2.1)  \cr
   \hfill   x \in \fg_1 \; ,  \,\; [x,x] = 0  \;\;\; \Longrightarrow \;\;\;  \langle x \rangle =  \bk.x  \phantom{\Big|}   \hfill (2.2)  \cr
   \hfill   x \in \fg_1 \; ,  \,\; [x,x] \not= 0  \;\;\; \Longrightarrow \;\;\;  \langle x \rangle \, = \, \bk.x \oplus \bk.[x,x]  \hfill (2.3)  }  $$
In particular, the sum in (2.3) is direct because  $ \, [x,x] \in \fg_0 \, $,  and  $ \, \fg_0 \, \cap \, \fg_1 \! = \! \{0\} \, $.  Moreover, this sum exhausts the Lie supersubalgebra generated by  $ x $  because  $ \, \big[x,[x,x]\big] = 0 \, $,  by the (super) Jacobi identity.  The Lie superalgebra structure is trivial in the first two cases; in the third instead, setting  $ \, y := [x,x] \, $,  it is
  $$  |x| = 1 \, ,  \;\; |y| = 0 \, ,  \qquad  [x,x] = y \; ,  \quad  [y,y] = 0 \; ,  \quad  [x,y] = 0 = [y,x].  $$
\end{remark}

\bigskip
\bigskip
\bigskip

 \section{Homogeneous one-parameter supersubgroups}
\label{1-param-sgrp}

\medskip

   A one-parameter subgroup of a Lie group is the unique (connected) subgroup  $ K $  which corresponds, via Frobenius theorem, to a specific one-dimensional Lie subalgebra  $ \fk $  of the tangent Lie algebra  $ \fg $  of the given Lie group  $ G \, $.  To describe such  $ K $  one can use the exponential map, which gives  $ \; K = \exp(\fk) \; $:  thus,  $ \fk $ is generated by some non-zero vector  $ \, X \in \fk \, $,  which actually  {\sl spans\/}  $ \fk \, $,  and using  $ X $  and the scalars in  $ \bk $  one describes  $ K $  via the exponential map.  Finally, when  $ \fg $  is linearized and expressed by matrices, the exponential map is described by the usual formal series on matrices  $ \; \exp(X) := \sum_{n=0}^{+\infty} X^n \big/ n! \; $.
 \vskip3pt
   We shall now adapt this approach to the context of Lie supergroups.
 \vskip5pt
   Let  $ G $  be a Lie supergroup over $ \bk \, $ (as usual, for supermanifolds we take $ \, \bk = \R \, $  or  $ \, \bk = \C \, $),  and  $ \, \fg = \Lie\,(G) \, $  its Lie superalgebra (for the construction of the latter, see for example  \cite{vsv},  Ch.~6).  Assume furtherly that  $ G $  is embedded as a supergroup into  $ \rGL(V) $  for some suitable supervector space  $ V \, $;  in other words,  $ G $  is realized as a matrix Lie supergroup.  Consequently its Lie superalgebra $ \fg $  is embedded into  $ \rgl(V) \, $.  As customary in supermanifold theory, we denote with  $ G $  also
the functor of points of the Lie supergroup  $ G \, $.  In the differential setting,  $ \, G : \smflds \lra \grps \, $,  $ \, G(T) = \, \Hom(T,G) \, $,  \, where  $ \smflds $  denotes the category of supermanifolds.

\medskip

   Recall that   --- see Definition \ref{cycl-Lie-ssalg}  ---   in the super context the role of one-dimensional Lie subalgebras is played by cyclic Lie subalgebras.

\medskip

\begin{definition}
   Let  $ \, X \in \fg = \Lie\,(G) \, $  be a homogeneous element.  We define  {\it one-parameter subgroup\/}  associated to  $ X $  the Lie subgroup of  $ G $  corresponding to the cyclic Lie supersubalgebra  $ \langle X \rangle $,  generated by  $ X $  in  $ \fg \, $,  via the Frobenius theorem for Lie supergroups (see  \cite{vsv},  Ch.~4,  and  \cite{ccfd}, Ch.~4).
\end{definition}

\medskip

   Now we describe these one-parameter subgroups.  Fix a supermanifold  $ T \, $,  and set  $ \, A := \cO(T) \, $  (the superalgebra of global sections).  Let  $ \, t \in A_0 \, $,  $ \, \theta \in A_1 \, $,  and  $ \, X \in \fg_0 \, $,  $ \, Y \in \fg_1 \, $,  $ \, Z \in \fg_0 \, $  such that  $ \, [Y,Z] = 0 \, $.  We define
  $$  \displaylines{
   \hskip14pt   \exp \big( t \, X\big)  \, := \,  {\textstyle \sum\limits_{n=0}^{+\infty}} \, t^n X^n \big/ n!  \; = \;  1 + t \, X + {\frac{\,t^2}{2!}} \, X^2 + \cdots  \quad\!\!\! \in {\rGL\big(V(T)\big)}   \hfill (2.5)  \cr
   \hskip13pt   \exp \big( \vartheta \, Y \big)  \; :=  \phantom{\Big|}
1 \, + \, \vartheta \, Y  \quad\!\! \in \, \rGL\big(V(T)\big)   \hfill (2.6)  \cr
   \hskip13pt   \exp \big( t \, Z + \vartheta \, Y \big)  \; :=  \;  \exp \big( t \, Z \big) \cdot \exp \big( \vartheta \, Y \big)  \; = \;  \exp \big( \vartheta \, Y \big) \cdot \exp \big( t \, Z \big)  \; =   \hfill  \cr
   \hskip27pt   =  \;  \exp \big( t \, Z \big) \cdot \big( 1 + \vartheta \, Y \big)  \; = \;  \big( 1 + \vartheta \, Y \big) \cdot \exp \big( t \, Z \big)  \quad\!\!\! \in \, \rGL\big(V(T)\big)   \hfill (2.7)  }  $$
All these expressions single out well-defined elements in  $
\rGL\big(V(T)\big) \, $.  In particular,  $ \exp\big(t\,X\big) $
in (2.5) belongs to the subgroup of  $ \rGL\big(V(T)\big) $  whose
elements are all the block matrices whose off-diagonal blocks are
zero.  This is the standard group of matrices  $ \, \rGL \big (A_0
\otimes V_0 \big) \times \rGL \big( A_1 \otimes V_1 \big) \, $,
\, and  $ \exp\big(t\,X\big) $  is defined inside here as the
usual exponential of a matrix.
                                                        \par
   More in general, one can define the matrix exponential as a natural transformation between the functors of points of the Lie superalgebra  $ \fg $  and of the Lie supergroup  $ G \, $;  see also  \cite{be},  Part II, Ch.~2, for yet another approach.  Our interest lies in the algebraic category, so we do not pursue this point of view.
 \vskip4pt
   Note that the set  $ \; \exp\big(A_0\,X\big) = \big\{ \exp\big(t\,X\big) \,\big|\, t \in A_0 \big\} \; $  is clearly a subgroup of  $ G \, $, once we define, very naturally, the multiplication as
  $$  \exp\big(t\,X\big) \cdot \exp\big(s\,X\big)  \; = \;  \exp\big((t+s)\,X\big)  $$
   On the other hand, if we consider the same definition for
  $$  \exp\big(A_1\,Y\big)  \, := \,  \big\{ \exp\big(\vartheta\,Y\big) \,\big|\, \vartheta \! \in \! A_1 \big\} $$
we see it is not a subgroup: in fact,
  $$  \exp\big(\vartheta_1\,Y\big) \cdot \exp\big(\vartheta_2\,Y\big)  \, = \,  \big( 1 + \vartheta_1 \, Y \big) \, \big( 1 + \vartheta_2 \, Y \big)  \, = \,  1 + \vartheta_1 \, Y \! + \vartheta_2 \, Y \! + \vartheta_1 \vartheta_2 \, Y^2  $$

formally in the universal enveloping algebra, while on the other hand:
  $$  \exp\big((\vartheta_1+\vartheta_2)\,Y\big)  \, = \,  1 + \big( \vartheta_1 + \vartheta_2 \big) \, Y  \, = \,  1 + \vartheta_1 \, Y + \vartheta_2 \, Y  $$
So, recalling that  $ \, Y^2 = [Y,Y] \big/ 2 \; $,  \, we see that
$ \exp\big(A_1\,Y\big) $  is a subgroup if and only if  $ \, [Y,Y]
= 0 \, $  or  $ \, \vartheta_1 \, \vartheta_2 = 0 \, $  for all  $
\, \vartheta_1, \vartheta_2 \in A_1 \, $.  This reflects the fact
that the  $ \bk $--span  of  $ \, X \in \fg_0 \, $  is always a
Lie supersubalgebra of  $ \fg \, $,  but the  $ \bk $--span  of  $
Y \in \fg_1$  is a Lie supersubalgebra iff  $ \; [Y,Y] = 0 \, $,
\, by (2.1--3).
                                                          \par
   Thus, taking into account (2.3), when  $ \, [Y,Y] \not= 0 \, $
we must consider  $ \, \exp \big( \langle Y \rangle (T) \big) =
\exp \big( A_1 \, Y + A_0 \, Y^2 \big) \, $,  as the one-parameter
subgroup corresponding to the  Lie supersubalgebra  $ \langle Y
\rangle \, $.  The outcome is the following:

\medskip

\begin{proposition}  \label{class_1-param-sgrps}
   \hskip-1pt   There are three distinct types of one-parameter subgroups
associated to an homogeneous element in  $ \fg \, $. Their functor
of points are:
  $$  \displaylines{
   \text{\it (a)} \quad  \text{for any \ }  X \in \fg_0 \; ,  \;
\text{we have}   \hfill  \cr
   \quad   x_X(T) \, = \, \big\{ \exp(tX) \,\big|\, t \in {\cO_T(T)}_0 \,\big\} \, = \; \bk^{1|0}(T) \; = \;
 \Hom\big(C^\infty(\R),\cO_T(T)\big)   \hfill  \cr
   \text{\it (b)} \quad  \text{for any \ }  Y \in \fg_1 \; ,  \,\; [Y,Y] = 0 \, ,  \; \text{we have}   \phantom{\Big|}   \hfill  \cr
   \quad   x_Y(T) \, = \, \big\{ \exp(\vartheta\,Y) = 1 + \vartheta \, Y \,\big|\, \vartheta \in {\cO_T(T)}_1 \,\big\} \, =   \hfill  \cr
   \hfill   = \; \bk^{0|1}(T) \; = \; \Hom\big(\bk[\xi],\cO_T(T)\big)  \cr
   \text{\it (c)} \quad  \text{for any \ }  Y \in \fg_1 \; ,  \,\; Y^2 := [Y,Y] \big/ 2 \not= 0 \, ,  \; \text{we have}   \phantom{\Big|}   \hfill  \cr
   \quad   x_Y(T) \, = \, \big\{ \exp\big( t\,Y^2 + \vartheta\,Y \big) \,\big|\, t \in {\cO_T(T)}_0 \, , \, \vartheta \in {\cO_T(T)}_1 \,\big\} \, =   \hfill  \cr
   \hfill   = \; \bk^{1|1}(T) \; = \; \Hom\big(C^\infty(\R)[\xi],\cO_T(T)\big)  }  $$
where  $ C^\infty(\R) $  denotes the global sections of the differential functions on  $ \R \, $   --- if  $ \, \Bbbk = \R \, $;  if  $ \, \Bbbk = \C \, $  instead we shall similarly take analytic functions.

   \indent   In cases (a) and (b) the multiplication structure is obvious, and in case (c) it is given by  $ \;\; (t,\vartheta) \cdot \big(t',\vartheta'\big)  \, = \,
\big(\, t + t' - \vartheta \, \vartheta' \, , \, \vartheta +
\vartheta' \,\big) \, $.
\end{proposition}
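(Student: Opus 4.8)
The plan is to run through the trichotomy for the cyclic Lie supersubalgebra $\langle X\rangle$ recorded in Remark~\ref{cycl-Lie-ssalg}: either (2.1) $X$ is even and $\langle X\rangle=\bk.X$; or (2.2) $X$ is odd with $[X,X]=0$ and again $\langle X\rangle=\bk.X$; or (2.3) $X$ is odd with $[X,X]\neq 0$ and $\langle X\rangle=\bk.X\oplus\bk.[X,X]$ is the $1|1$-dimensional Lie supersubalgebra with the structure constants listed there. In each case one identifies, via the Frobenius theorem for Lie supergroups, the connected Lie supersubgroup of $G$ integrating $\langle X\rangle$, describes it explicitly through the matrix exponential inside $\rGL\big(V(T)\big)$, and recognizes the resulting supermanifold as $\bk^{1|0}$, $\bk^{0|1}$ or $\bk^{1|1}$ respectively. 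The identities with $\Hom\big(C^\infty(\R),\cO_T(T)\big)$, $\Hom\big(\bk[\xi],\cO_T(T)\big)$ and $\Hom\big(C^\infty(\R)[\xi],\cO_T(T)\big)$ are then just the functor-of-points descriptions of these affine superspaces recalled in Section~\ref{first_preliminaries}, via the identification $\Hom(T,M)=\Hom\big(\cO_M(M),\cO_T(T)\big)$ of \cite{ko}.

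Case (a) is essentially the classical statement: a one-dimensional even subalgebra integrates to an ordinary one-parameter subgroup, whose $T$-points are $\big\{\exp(tX)\mid t\in\cO_T(T)_0\big\}$ with the additive law $\exp(tX)\exp(sX)=\exp\big((t+s)X\big)$, i.e.\ (the functor of points of) $\bk^{1|0}$. For case (b) the key observation is that $\vartheta^2=0$ for every $\vartheta\in A_1$, by supercommutativity of $A$ together with the absence of $2$-torsion, so the exponential series truncates to $\exp(\vartheta Y)=1+\vartheta Y$; moreover $[Y,Y]=0$ forces $Y\cdot Y=\tfrac12[Y,Y]=0$ in $U(\fg)$, whence $\exp(\vartheta_1 Y)\exp(\vartheta_2 Y)=1+(\vartheta_1+\vartheta_2)Y=\exp\big((\vartheta_1+\vartheta_2)Y\big)$, so that $x_Y$ is additively parametrized by $\cO_T(T)_1$, i.e.\ it is $\bk^{0|1}$.

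Case (c) carries the real content. Here $Y^2:=\tfrac12[Y,Y]$ is even, nonzero, and central in $\langle Y\rangle$ (since $\big[Y,[Y,Y]\big]=0$ by the super Jacobi identity), so by (2.7) the element $\exp\big(tY^2+\vartheta Y\big)$ is unambiguously $\exp(tY^2)\cdot(1+\vartheta Y)$, with $\exp(tY^2)$ an ordinary exponential in the commuting variable $t$ and the odd factor truncating as before. One checks that $(t,\vartheta)\mapsto\exp(tY^2)(1+\vartheta Y)$ is injective by separating the even part $\exp(tY^2)$ from the odd part $\exp(tY^2)\,\vartheta Y$ of the resulting operator and using faithfulness of $V$, so that this parametrization identifies $x_Y$ with $\bk^{1|1}$. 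For the group law one uses centrality of $Y^2$ to collect $\exp\big((t+t')Y^2\big)$ to the front and expands $(1+\vartheta Y)(1+\vartheta' Y)=1+(\vartheta+\vartheta')Y+\vartheta Y\cdot\vartheta' Y$; computed inside $A\otimes U(\fg)$ with the Koszul sign rule, the last term equals $(-1)^{|Y|\,|\vartheta'|}\,\vartheta\vartheta'\,Y^2=-\vartheta\vartheta' Y^2$, and this is reabsorbed by $\exp(sY^2)\big(1-\vartheta\vartheta' Y^2\big)=\exp\big((s-\vartheta\vartheta')Y^2\big)$ (valid since $(\vartheta\vartheta')^2=0$). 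The outcome is $\exp\big(tY^2+\vartheta Y\big)\exp\big(t'Y^2+\vartheta' Y\big)=\exp\big((t+t'-\vartheta\vartheta')Y^2+(\vartheta+\vartheta')Y\big)$, i.e.\ exactly the claimed law $(t,\vartheta)\cdot(t',\vartheta')=(t+t'-\vartheta\vartheta',\vartheta+\vartheta')$.

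I expect the main obstacle to be precisely this sign bookkeeping in case (c): one must be scrupulous about whether one computes inside $A\otimes U(\fg)$ (equivalently $A\otimes\End(V)$) with the Koszul sign rule or directly with operators on $V(T)=(A\otimes V)_0$, since the minus sign in $\vartheta Y\cdot\vartheta' Y=-\vartheta\vartheta' Y^2$ --- coming from the odd parities of $Y$ and $\vartheta'$ --- is exactly what makes the group law nonabelian. A more routine but still necessary point is to justify that all these exponentials are well defined (truncation in the odd directions; in the even direction nilpotency of the relevant operators in the cases of interest, or the genuine Lie-group exponential in general) and that the explicitly exponentiated set coincides with the integral subgroup furnished by the Frobenius theorem.
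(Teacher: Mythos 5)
Your proposal is correct and follows essentially the same route as the paper: the trichotomy of cyclic subalgebras from Remark \ref{cycl-Lie-ssalg}, Frobenius theorem to pin down the superdimension in each case, and the same explicit exponential computation in case (c), where the Koszul sign in $(1+\vartheta Y)(1+\vartheta' Y)=1+(\vartheta+\vartheta')Y-\vartheta\vartheta'\,Y^2$ is reabsorbed into the even exponential to give the law $(t,\vartheta)\cdot(t',\vartheta')=(t+t'-\vartheta\vartheta',\,\vartheta+\vartheta')$. The only additions beyond the paper's argument are your explicit injectivity remark and the truncation justifications, which are harmless refinements of the same proof.
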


\begin{proof}
  The case  {\it (a)},  namely when  $ X $  is even, is clear.  When instead  $ X $  is odd we have two possibilities: either  $ \, [X,X] = 0 \, $  or  $ \, [X,X] \neq 0 \, $.  The first possibility corresponds, by Frobenius theorem, to a  $ 0|1 $--dimensional  subgroup, whose functor of points, one sees immediately, is representable and of the form  {\it (b)}.  Let us now examine the second possibility.  The Lie subalgebra  $ \langle X \rangle $  generated by  $ X $  is of dimension  $ 1|1 $,  by (2.3); thus by Frobenius theorem it corresponds to a Lie subgroup of the same dimension, isomorphic to  $ \bk^{1|1} $.
                                              \par
   Now we compute the group structure on this  $ \bk^{1|1} \, $,  using the usual functor of points notation to give the operation of the supergroup.  For any commutative superalgebra  $ A \, $,  we have to calculate  $ \, t'' \in A_0 \, $,  $ \, \vartheta'' \in A_1 \, $ such that
  $$  \exp \big( t \, X^2 + \vartheta \, X \big) \cdot \exp \big( t' \, X^2 + \vartheta' \, X \big) \, = \, \exp \big( t'' \, X^2 + \vartheta'' \, X \big)  $$
where  $ \, t, t' \in A_0 \, $,  $ \, \vartheta, \vartheta' \in
A_1 \, $.  The direct calculation gives
  $$  \displaylines{
   \exp\big( t \, X^2 + \vartheta \, X \big) \cdot \exp \big( t' \, X^2 + \vartheta' \, X \big) \; =   \hfill  \cr
   = \; \big( 1 + \vartheta \, X \big) \, \exp\big( t \, X^2 \big) \cdot \exp \big( t' \, X^2 \big) \, \big( 1 + \vartheta' \, X \big) \; =  \cr
   \hfill   = \; \big( 1 + \vartheta \, X \big) \, \exp\big( (t+t') \, X^2 \big) \, \big( 1 + \vartheta' \, X \big) \; =  \cr
   = \; \exp\big( (t+t') \, X^2 \big) \, \big( 1 + \vartheta \, X \big) \, \big( 1 + \vartheta' \, X \big) \; =   \hfill  \cr
   = \; \exp\big( (t+t') \, X^2 \big) \, \big( 1 + (\vartheta + \vartheta') \, X - \vartheta \, \vartheta' \, X^2 \big) \; =  \cr
   \hfill   = \; \exp\big( (t+t') \, X^2 \big) \, \big( 1 - \vartheta \, \vartheta' \, X^2 \big) \, \big( 1 + (\vartheta + \vartheta') \, X \big) \; =  \cr
   = \; \exp\big( (t+t') \, X^2 \big) \, \exp\big(\!-\vartheta \, \vartheta' \, X^2 \big) \, \big( 1 + (\vartheta + \vartheta') \, X \big) \; =   \hfill  \cr
   = \; \exp\big( (t + t' - \vartheta \, \vartheta') \, X^2 \big) \, \big( 1 + (\vartheta + \vartheta') \, X \big) \; =  \cr
   \hfill   = \; \exp\big( (t + t' - \vartheta \, \vartheta') \, X^2 + (\vartheta + \vartheta') \, X \big)  }  $$
where we use several special properties of the formal exponential.
\end{proof}

\medskip

\begin{remark}
   The supergroup structure on  $ \bk^{1|1} $  in  Proposition \ref{class_1-param-sgrps}{\it (c)\/}  was introduced in \cite{dm}.  See also  \cite{ro},  \S 6.5 and \S 9.6 for a treatment of one-parameter (super)subgroups in a differential setting (with the same outcome as ours).
\end{remark}

%

  %
  %

\chapter{Chevalley bases and Chevalley algebras}  \label{che-bas_alg}

 {\it
   In this chapter we introduce one of the main ingredients of our
construction, namely the notion of ``Chevalley basis''. We shall formulate
such concept in a way that directly extends the classical approach due
to Chevalley.  First, we recall the combinatorial machinery attached to any
classical Lie superalgebra  $ \fg \, $,  namely:  Cartan subalgebras,
root systems, root vectors, root spaces and so on.  Then we define,  a
``Chevalley basis'' for  $ \fg $  as a basis of  $ \, \fg $
made of Cartan elements and root vectors, satisfying certain integral
conditions on their brackets.

\smallskip

   The main concern of this chapter is to prove an existence result
for Chevalley bases of classical Lie superalgebras.  We provide a
complete proof, going through a case-by-case analysis and we give
an explicit example of a Chevalley basis for all classical Lie superalgebras,
but for the exceptional ones for which we refer to  \cite{ik}.
In addition, we present a sketch of an alternative approach,
which expands an argument kindly communicated to us
by our referee, who suggested a uniform proof encompassing
all cases, but  $ P(n) $,  at once,
so that our explicit treatment still remains necessary for that
particular case.

\smallskip

   Both the definition and the existence problem for Chevalley bases were
somehow known in the community of Lie superalgebra theorists
(see, e.g., \cite{ik},  \cite{sw}), since they are a very natural
generalization of the corresponding ordinary setting.
Nevertheless, the results achieved so far   --- to the best of our knowledge
---   did not cover  {\sl all\/}  of classical Lie superalgebras,
and so  our results appear to be more general
that then ones available in the literature.
 }

\bigskip

   Let our ground ring be an algebraically closed field  $ \KK $  of
characteristic zero.

\smallskip

   We assume  $ \fg $  to be a  {\sl classical\/}  Lie superalgebra:
the whole construction will clearly extend to any direct sum of finitely many
summands of this type.  We now prove that  $ \fg $  has a very remarkable
basis, the analogue of what Chevalley found for finite dimensional
(semi)simple Lie algebras.

\bigskip
\bigskip
\bigskip

  \section{Root systems}  \label{root-syst}

\medskip

   Fix once and for all a  {\sl Cartan subalgebra\/}  $ \fh $  of  $ \fg_0 \, $.  The adjoint action of  $ \fh $  splits  $ \fg $  into eigenspaces, namely
  $$  \fg_\alpha  \; := \;  \big\{\, x \in \fg \;\big|\; [h,x] = \alpha(h) x \, , \; \forall \; h \! \in \! \fh \big\}   \eqno  \forall \;\; \alpha \in \fh^*   \qquad  $$
so that  $ \; \fg  \, = \, \bigoplus_{\alpha \in \fh^*} \fg_\alpha \; $.  Then we define
  $$  \displaylines{
   \Delta_0  \, := \,  \big\{\, \alpha \in \fh^* \setminus \{0\} \;\big|\; \fg_\alpha \cap \fg_0 \not= \{0\} \big\}  \, = \,  \{\,\text{{\sl even roots\/}  of  $ \fg $} \,\}  \cr
   \Delta_1  \, := \,  \big\{\, \alpha \in \fh^* \;\big|\; \fg_\alpha \cap \fg_1 \not= \{0\} \big\}  \, = \,  \{\,\text{{\sl odd roots\/}  of  $ \fg $} \,\}  \cr
   \Delta  \, := \,  \Delta_0 \, \cup \Delta_1  \, = \,  \{\,\text{{\sl roots\/}  of  $ \fg $} \,\}  }  $$
$ \Delta $  is called the  {\sl root system\/}  of  $ \fg \, $,  and each  $ \fg_\alpha $  is called a  {\sl root space}.
                                                       \par
   In particular,  $ \Delta_0 $  is the root system of the (reductive) Lie algebra  $ \fg_0 \, $,  and  $ \Delta_1 $  is the set of weights of the representation of  $ \fg_0 $  in  $ \fg_1 \, $.

\medskip

   If  $ \fg $  is not of type  $ P(n) $  nor  $ Q(n) $,  there is an even non-degenerate, invariant bilinear form on  $ \fg \, $,  whose restriction to  $ \fh $  is in turn an invariant bilinear form on  $ \fh \, $.  If instead  $ \fg $  is of type  $ P(n) $  or  $ Q(n) $,  then such a form on  $ \fh $  exists because  $ \fg_0 $  is simple (of type  $ A_n $).  In any case, we denote this form by $ \big( x, y \big) \, $,  and we use it to identify  $ \fh^* $  to  $ \fh $,  via  $ \, \alpha \mapsto H_\alpha \, $,  and then to define a similar form on  $ \fh^* $,  such that  $ \; \big( \alpha' , \alpha'' \big) = \big( H_{\alpha'}, H_{\alpha''} \big) \; $.  Each  $ H_\alpha $  is called the {\sl coroot\/}  associated to  $ \alpha \, $;  these coroots can be explicitly described as in  \cite{ik},  \S 2.5; in particular, one has  $ \, \alpha(H_\alpha) = 2 \, $  whenever  $ \, (\alpha,\alpha) \not= 0 \, $.
                                                       \par
   For  $ \fg $  of type  $ P(n) \, $  we shall adopt the following abuse of notation.  For any even root  $ \alpha_{i,j} $  (notation of  \cite{fss},  \S 2.48), by  $ H_{\alpha_{i,j}} $  we shall mean the coroot mentioned above; for any odd root  $ \beta_{i,j} $  instead, we shall set  $ \, H_{\beta_{i,j}} := H_{\alpha_{i,j}} \, $.
%
%

\medskip

   The main properties of the root system of  $ \fg $  are collected in the following:

\medskip

\begin{proposition}  \label{prop-root-syst}
   (see \cite{ka,sc,se})  Assume  $ \fg $  is  {\sl classical},  and  $ \, n \in \N \, $.
 \vskip9pt
   \textit{(a)} \;\;  $ \fg \not= Q(n) \; \Longrightarrow \; \Delta_0 \cap \Delta_1 = \emptyset \; ,  \qquad  \fg = Q(n) \; \Longrightarrow \; \Delta_1 = \Delta_0 \cup \{0\} \; $.
 \vskip8pt
   \textit{(b)} \;  $ -\Delta_0 = \Delta_0 \; ,  \;\; -\Delta_1 \subseteq \Delta_1 \; $.  \quad  If  $ \, \fg \not= P (n) \; $,  then  $ \; -\Delta_1 = \Delta_1 \; $.
 \vskip8pt
   \textit{(c)} \;  Let  $ \, \fg \not= P(2) \, $,  and  $ \, \alpha $,  $ \beta \in \Delta \, $,  $ \; \alpha = c \, \beta \, $,  \, with  $ \, c \in \KK \setminus \{0\} \, $.  Then
 \vskip-21pt
  $$  \alpha , \beta \in \Delta_r \;\; (r=0,1)  \, \Rightarrow \,  c = \pm 1 \;\; ,  \,\quad\,
      \alpha \in \Delta_r \, , \, \beta \in \Delta_s \, ,  \; r \not= s  \, \Rightarrow \,  c = \pm 2 \;\; .  $$
 \vskip1pt
   \textit{(d)} \;  If  $ \; \fg \not\in \big\{ A(1,1) \, , P(3) \, , Q(n) \big\} \, $,  then  $ \; \dim_\KK(\fg_\alpha) = 1 \; $  for each  $ \, \alpha \in \Delta \, $.
                                                       \par
   As for the remaining cases, one has:

\begin{itemize}
   \item
  If  $ \; \fg = A(1,1) \, $,  then  $ \; \dim_\KK(\fg_\alpha) = 1 \! + \! r \; $  for each  $ \, \alpha \in \Delta_r \, $;
   \item
  If  $ \; \fg = P(3) \, $,  then  $ \; \dim_\KK(\fg_\alpha) = 1 \, $  for  $ \, \alpha \in \Delta_0 \cup \big( \Delta_1 \setminus (-\Delta_1) \big) \, $,  \, and  $ \; \dim_\KK(\fg_\alpha) = 2 \, $  for  $ \, \alpha \in \Delta_1 \cap (-\Delta_1) \, $;
   \item
  If  $ \; \fg = Q(n) \, $,  then  $ \; \dim_\KK\big( \fg_\alpha \cap \fg_i \big) = 2 \; $  for  $ \, \alpha \in \Delta \setminus \{0\} \, $,  $ \, i \in \{0,1\} \, $,  and  $ \; \dim_\KK\big( \fg_{\alpha=0} \cap \fg_i \big) = n \; $  for  $ \, i \in \{0,1\} \, $,  with  $ \; \fg_{\alpha=0} \cap \fg_0 = \fh \, $.
\end{itemize}
\end{proposition}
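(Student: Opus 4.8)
The plan is to run a case-by-case verification over the families of Theorem~\ref{classificationtheorem}. Since every assertion concerns a single simple classical $\fg$ and is compatible with finite direct sums, it is enough to treat $A(m,n)$, $B(m,n)$, $C(n)$, $D(m,n)$, $P(n)$, $Q(n)$ separately, quoting the explicit root tables of \cite{ka} for the exceptional $F(4)$, $G(3)$, $D(2,1;a)$. For the basic types and for $Q(n)$ I would work inside the matrix realizations recalled in Section~\ref{Lie-superalgebras}; for $P(n)$ I would use its defining matrix model.

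First I would settle part (b) by a uniform argument. Whenever $\fg$ admits an even non-degenerate invariant bilinear form --- that is, for every type except $P(n)$ --- that form restricts to a perfect pairing $\fg_\alpha \times \fg_{-\alpha} \to \KK$, so that $\fg_\alpha \ne 0$ forces $\fg_{-\alpha} \ne 0$ and $\dim_\KK \fg_\alpha = \dim_\KK \fg_{-\alpha}$; being even, the form respects the $\Z_2$--grading, so this gives $-\Delta_0 = \Delta_0$, $-\Delta_1 = \Delta_1$, and the ``$\pm$--symmetry'' of the multiplicities in (d), all at once. For $P(n)$, $-\Delta_0 = \Delta_0$ is automatic since $\fg_0 \cong \rsl(n+1)$ carries an $A_n$ root system, while $-\Delta_1 \subsetneq \Delta_1$ will drop out of the description of $\Delta_1$ below.

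For parts (a), (c), (d) on the basic types I would then just read the $\epsilon$--$\delta$ descriptions of $\Delta_0$ (the root system of the reductive Lie algebra $\fg_0$) and of $\Delta_1$ (the weight set of $\fg_0$ on $\fg_1$): disjointness of $\Delta_0$ and $\Delta_1$, the impossibility of two proportional roots of the same parity other than $\pm$ each other, and the occurrence of the coefficient $\pm 2$ between roots of opposite parity --- e.g.\ from the pair $\{\delta_i, 2\delta_i\}$ in $B(m,n)$, where $\delta_i$ is odd and $2\delta_i$ is even --- are all immediate from those lists. Likewise each root space is $1$--dimensional unless two weights of $\fg_1$ coincide, and I would check that this happens only for $A(1,1)$ and $P(3)$: in $\rsl(2|2)$ the identity $\epsilon_1 + \epsilon_2 = \delta_1 + \delta_2$ forces $\epsilon_i - \delta_j = -(\epsilon_{i'} - \delta_{j'})$, collapsing the eight odd roots to four of multiplicity $2$, whence $\dim_\KK \fg_\alpha = 1 + r$ on $\Delta_r$; for $P(3)$, writing $\fg_1 \cong S^2(V) \oplus \Lambda^2(V)^*$ with $V$ the standard $\rsl(4)$--module, the identity $\epsilon_i + \epsilon_j = -(\epsilon_k + \epsilon_l)$ on complementary index pairs makes the two summands share precisely the weights of $\Delta_1 \cap (-\Delta_1)$, giving multiplicity $2$ there and $1$ elsewhere.

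It remains to treat $P(n)$ and $Q(n)$, which I expect to be the delicate part. For $Q(n)$ the main input is that $\fg_1$ is the adjoint $\fg_0$--module, so $\Delta_1 = \Delta_0 \cup \{0\}$ --- the second half of (a) --- and the multiplicities that differ from the generic case (the value $2$ on $\fg_\alpha \cap \fg_i$ for $\alpha \ne 0$, and $n$ on the zero weight) follow from the explicit matrix realization; since $\fg_0$ is of type $A_n$ the remaining claims of (b) and (c) reduce to the $A_n$ situation, bearing in mind that for $Q(n)$ the parity of a nonzero root is not well defined, $\Delta_0$ and $\Delta_1$ no longer being disjoint. For $P(n)$ one has $\fg_1 \cong S^2(V) \oplus \Lambda^2(V)^*$ with $V$ the standard $\rsl(n+1)$--module, hence $\Delta_1 = \{\epsilon_i + \epsilon_j : i \le j\} \cup \{-\epsilon_i - \epsilon_j : i < j\}$; the ``diagonal'' weights $2\epsilon_i$ have no opposite in $\Delta_1$, which is exactly $-\Delta_1 \subsetneq \Delta_1$, and for $n \ne 3$ the $S^2$-- and $\Lambda^2$--weights are pairwise distinct, so all root spaces are $1$--dimensional. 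Finally, the exclusion of $P(2)$ in (c) is genuine, and I would explain it as follows: for $n + 1 = 3$ the relation $\epsilon_1 + \epsilon_2 + \epsilon_3 = 0$ turns $\epsilon_1 = -\epsilon_2 - \epsilon_3$ into an odd root, so $\epsilon_1$ and $2\epsilon_1$ are two proportional odd roots with $c = 2$, contradicting the first alternative of (c); for $n \ge 3$, with at least four indices available, the relation $\sum_i \epsilon_i = 0$ no longer forces such coincidences, and a short inspection of the sums of two $\epsilon$'s yields $c = \pm 1$ within a parity class and $c = \pm 2$ across the two. The real obstacles, as this shows, are the weight bookkeeping for $P(n)$ and $Q(n)$ --- where no invariant form is available and where genuine multiplicities appear --- and pinning down the $P(2)$ exception in part (c).
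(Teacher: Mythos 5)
The paper offers no proof of this proposition at all --- it is imported wholesale from \cite{ka}, \cite{sc}, \cite{se} --- so there is no internal argument to compare with; your case-by-case verification through the matrix models and the $\epsilon$--$\delta$ root tables, with the even invariant form supplying the perfect pairing $\fg_\alpha\times\fg_{-\alpha}\to\KK$ and hence the $\pm$--symmetry of roots and multiplicities in the basic cases, is the standard self-contained route and is essentially what those references do. Your diagnosis of the degenerate families is correct: the relation $\epsilon_1+\epsilon_2=\delta_1+\delta_2$ doubling the odd root spaces of $A(1,1)$, the relation $\sum_i\epsilon_i=0$ producing the multiplicity-$2$ odd roots of $P(3)$ and the proportional odd pair $\epsilon_1,\,2\epsilon_1$ that forces the exclusion of $P(2)$ in part (c), and $\fg_1\cong\fg_0$ (adjoint module) giving $\Delta_1=\Delta_0\cup\{0\}$ for $Q(n)$.

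Three points to fix before writing it up. First, $Q(n)$ admits no even non-degenerate invariant form either --- that is precisely why it is strange rather than basic (cf.\ the beginning of \S \ref{root-syst}) --- so your uniform pairing argument for (b) covers only the basic types; this costs nothing, since for $Q(n)$ the symmetry of $\Delta_0$ and of $\Delta_1=\Delta_0\cup\{0\}$ is immediate from the $A_n$ picture, but the parenthetical ``every type except $P(n)$'' must become ``every basic type''. Second, for $P(n)$ the correct assertion is $-\Delta_1\neq\Delta_1$, indeed $-2\epsilon_i\in(-\Delta_1)\setminus\Delta_1$; a proper inclusion $-\Delta_1\subsetneq\Delta_1$ of finite sets is impossible, since negation is a bijection. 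Third, when you actually carry out the $Q(n)$ check you will find exactly one even vector $E_{i,j}$ and one odd vector $F_{i,j}$ for each nonzero root, i.e.\ dimension $1$ in each parity and total root-space dimension $2$, plus a totally odd zero-root space of dimension $n$ alongside $\fh\,$; state the $Q(n)$ multiplicities in that form --- as the paper itself does when it revisits $Q(n)$ in Chapter 6 --- rather than asserting that the matrix realization yields the value $2$ for $\fg_\alpha\cap\fg_i$, which it does not.
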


\bigskip

   We fix a  {\sl distinguished simple root system\/}  for  $ \fg \, $,  say  $ \, \Pi = \{ \alpha_1, \dots, \alpha_\ell \} \, $,  as follows.  If  $ \, \fg \not\in \big\{ A(1,1), P(3), Q(n) \big\} \, $,  we take as  $ \Pi $  a subset  of  $ \Delta $  in which  $ \, \alpha_i \in \Delta_0 \, $  for all but one index  $ i \, $,  and such that any  $ \, \alpha \in \Delta \, $  is either a sum of some  $ \alpha_i $'s   --- then it is called positive ---   or the opposite of such a sum   --- then it is called negative (when  $ \fg $  is  {\sl basic},  fixing  $ \Pi $  is equivalent to fix a special triangular decomposition  $ \, \fg = \mathfrak{n}_- \oplus \fh \oplus \mathfrak{n}_+ \, $;  see  \cite{fss}, \S 2.45).  If  $ \, \fg = Q(n) \, $,  then  $ \, \Delta = \Delta_0 \cup \{0\} \, $,  we take as  $ \Pi $  any simple root system of  $ \Delta_0 \, $,  and we define positive and negative roots accordingly, letting the special odd root  $ \, \zeta_0 = 0 \, $  be (by definition) both positive and negative.  Finally, if  $ \, \fg \in \big\{ A(1,1), P(3) \big\} \, $,  then there exist linear dependence relations among the  $ \alpha_i $'s,  so that any odd root which is negative can also be seen as positive; we shall then deal with these cases in a different way.
                                                                   \par
   As for notation, we denote by  $ \Delta^+ $,  resp.~$ \Delta^- $,  the set of positive, resp.~negative, roots; also, we set  $ \; \Delta^\pm_r := \Delta^\pm \cap \Delta_r \; $  for  $ \, r \in \{0,1\} \, $.  Finally, we define:

\bigskip

\begin{definition}  \label{alpha-string}
   Given  $ \, \alpha, \beta \in \Delta \, $,  we call  {\sl  $ \, \alpha $--string  through  $ \beta \, $}  the set
  $$  \Sigma^\alpha_\beta  \; := \;  \big\{\, \beta - r \, \alpha \, , \dots , \, \beta - \alpha \, , \, \beta \, , \, \beta + \alpha \, , \dots , \, \beta + q \, \alpha \,\big\}  \quad\; \big( \subset \fh^* \big)  $$
with  $ \, r, q \in \N \, $  uniquely determined by the conditions
  $ \, \big( \beta - (r+1) \, \alpha \big) \not\in \Delta \, $,  $ \; \big( \beta + (q+1) \, \alpha \big) \not\in \Delta \, $,  \, and  $ \; (\beta + j \, \alpha) \in \Delta \cup \{0\} \; $  for all  $ \, -r \leq j \leq q \, $.
\end{definition}

\medskip

   One knows   --- cf.~for instance  \cite{se}  ---   that  $ \; \Sigma^\alpha_\beta \subseteq \Delta \cup \{0\} \; $.  Indeed, one has that  $ \; 0 \in \Sigma^\alpha_\beta \; $  if and only if  $ \; \alpha \in \big\{\! \pm 2 \, \beta \, , \, \pm \beta \, , \, \pm \beta / 2 \,\big\} \; $.

\bigskip
\bigskip
\bigskip

  \section{Chevalley bases and algebras}  \label{che_bas-alg}

\medskip

   The key result of this subsection is an analogue, in the super setting, of a classical result due to Chevalley.  It is the starting point we shall build upon later on.  We keep the notation and terminology of the previous subsections.

\smallskip

  {\sl From now on, we shall consider  $ \fg $  to be classical but  {\it not}  of any of the following types:  $ A(1,1) \, $,  $ P(3) \, $,  $ Q(n) $  or  $ D(2,1;a) $  with  $ \, a \not\in \Z \, $.  The cases  A--P--Q  will be treated separately in  \S \ref{cases A-P-Q};  the case  $ D(2,1;a) $  with  $ \, a \not\in \Z \, $  instead is dealt with in  \cite{ga}.}

\medskip

\begin{definition}  \label{def_che-bas}
 Let  $ \fg $  be a  {\sl classical\/}  Lie superalgebra  as above.  We call  {\it Chevalley basis\/}  of  $ \fg $  any homogeneous  $ \KK $--basis  $ \; B = {\big\{ H_i \big\}}_{1,\dots,\ell} \coprod {\big\{ X_\alpha \big\}}_{\alpha \in \Delta} \; $  such that:
 \vskip9pt
   \textit{(a)}  \quad   $ \, \big\{ H_1 , \dots , H_\ell \big\} \, $  is a  $ \KK $--basis  of  $ \fh \, $;  moreover, with  $ \, H_\alpha \! \in \fh \, $  as in  \S \ref{root-syst},

 \vskip4pt
   \centerline{ \qquad\qquad  $ \, \fh_\Z  \, := \,  \text{\it Span}_{\,\Z} \big( H_1 , \dots , H_\ell \big)  \, = \,  \text{\it Span}_{\,\Z} \big( \big\{ H_\alpha \,\big|\, \alpha \! \in \! \Delta \! \cap \! (-\Delta) \big\}\big) \; $; }
%
%
 \vskip8pt
   \textit{(b)}  \hskip4pt   $ \big[ H_i \, , H_j \big] = 0 \, ,   \hskip9pt
 \big[ H_i \, , X_\alpha \big] = \, \alpha(H_i) \, X_\alpha \, ,   \hskip15pt  \forall \; i, j \! \in \! \{ 1, \dots, \ell \,\} \, ,  \; \alpha \! \in \! \Delta \; $;
 \vskip11pt
   \textit{(c)}  \hskip7pt   $ \big[ X_\alpha \, , \, X_{-\alpha} \big]  \, = \,  \sigma_\alpha \, H_\alpha  \hskip25pt  \forall \;\; \alpha \in \Delta \cap (-\Delta) $
 \vskip4pt
\noindent
 with  $ H_\alpha $  as in  \S \ref{root-syst},  and  $ \; \sigma_\alpha := -1 \; $  if  $ \, \alpha \in \Delta_1^- \, $,  $ \; \sigma_\alpha := 1 \; $  otherwise;
 \vskip13pt
   \textit{(d)}  \quad  $ \, \big[ X_\alpha \, , \, X_\beta \big]  \, = \, c_{\alpha,\beta} \; X_{\alpha + \beta}  \hskip17pt   \forall \;\, \alpha , \beta \in \Delta \, : \, \alpha \not= -\beta \, , \; \beta \not= -\alpha \, $,  \, with
 \vskip9pt
   \hskip11pt   \textit{(d.1)} \,  if  $ \;\; (\alpha + \beta) \not\in \Delta \, $,  \; then  $ \;\; c_{\alpha,\beta} = 0 \; $,  \; and  $ \; X_{\alpha + \beta} := 0 \; $,
 \vskip7pt
   \hskip11pt   \textit{(d.2)} \,  if  $ \, \big( \alpha, \alpha \big) \not= 0 \, $  or  $ \, \big( \beta, \beta \big) \not= 0 \, $,  and (cf.~Definition \ref{alpha-string})  if  $ \; \Sigma^\alpha_\beta := \big\{ \beta - r \, \alpha \, , \, \dots \, , \, \beta + q \, \alpha \big\} \; $  is the  $ \alpha $--string  through  $ \beta \, $,  \, then  $ \; c_{\alpha,\beta} = \pm (r+1) \, $,  \, with the following exceptions: if  $ \, \fg = P(n) \, $,  with  $ \, n \not= 3 \, $,  and  $ \; \alpha = \beta_{j,i} \, $,  $ \, \beta = \alpha_{i,j} \; $  (notation of  \cite{fss},  \S 2.48, for the roots of  $ P(n) $),  then  $ \; c_{\alpha,\beta} = \pm (r+2) \; $;
 \vskip5pt
   \hskip11pt   \textit{(d.3)} \,  if  $ \, \big( \alpha \, , \alpha \big) = \, 0 = \big( \beta \, , \beta \big) $,  \, then  $ \; c_{\alpha,\beta} = \pm \beta\big(H_\alpha\big) \; $.
 \vskip10pt
   {\sl N.B.:}  this definition clearly extends to direct sums of finitely many  $ \fg $'s.
\end{definition}

\medskip

\begin{remarks}   \label{rem-Chev_1}  {\ }
 \vskip5pt
   {\it (1)} \,  Our definition extends to the super setup the same name notion for (semi)simple Lie algebras.  For type  $ A $  it was
 essentially known as ``folklore'',\break
\noindent
 but we cannot provide any reference.  In the orthosymplectic case (types  $ B \, $,  $ C $  and  $ D \, $)  it was considered, in weaker form, in  \cite{sw}.  More in general, it was previously introduced in  \cite{ik}  for all  {\sl basic\/}  types, i.e.~missing types  $ P $  and  $ Q \, $.
                                                                     \par
   {\sl N.B.:\/}  when reading  \cite{ik}  for  $ G(3) $  one should do a slight change, namely use the Cartan matrix   --- and Dynkin diagram, etc.~---   as in Kac's paper  \cite{ka}.
%
%
 \vskip4pt
   {\it (2)} \,  If  $ B $  is a Chevalley basis of  $ \fg \, $,  the definition implies that all structure coefficients of the (super)bracket in  $ \fg $  w.r.t.~$ B $  belong to  $ \Z \, $.
\end{remarks}

\medskip

\begin{definition}
   If  $ B $  is a Chevalley basis of  $ \fg \, $,  we set  $ \; \fg^\Z := \Z\text{\sl --span of}~B \, $,  and we call it the  {\it Chevalley superalgebra\/}  (of  $ \fg $).
\end{definition}

\medskip

\begin{remarks}   \label{rem-Chev_2}  {\ }
 \vskip5pt
   {\it (1)} \,  By  Remark \ref{rem-Chev_1}{\it (2)},  $ \fg^\Z $  is a Lie superalgebra over  $ \Z \, $.  One can check that a Chevalley basis  $ B $  is unique up to a choice of a sign for each root vector and the choice of the  $ H_i $'s:  thus  $ \fg^\Z $  is independent of the choice of  $ B \, $.
 \vskip4pt
%
%
%
   {\it (2)} \,  With notation as in  Definition \ref{def_che-bas}{\it(e)},  let  $ \fg $  be of type $ A $,  $ B $,  $ C $,  $ D $  or  $ P \, $.  Then
%
%
 if  $ \; \big( \alpha \, , \alpha \big) = 0 = \big( \beta \, , \beta \big) \; $  one has  $ \; \beta \big(H_\alpha\big) = \pm (r\!+\!1) \, $,  \, with the following exceptions: if  $ \, \fg = \rosp(M|2n) \, $  with  $ \, M \geq 1 \, $  (i.e.~$ \fg $  is orthosymplectic, not of type  $ B(0,n) \, $)  and   --- with notation of  \cite{fss},  \S 2.27 ---
  $$  (\alpha,\beta) = \pm \big( \varepsilon_i + \delta_j \, ,
\, -\varepsilon_i + \delta_j \big)  \qquad  \text{or}  \qquad  (\alpha,\beta) =
\pm \big( \varepsilon_i - \delta_j \, , \, -\varepsilon_i - \delta_j \big)  $$
then  $ \; \beta\big(H_\alpha\big) = \pm (r\!+\!2) \; $.  Therefore:
   {\sl If  $ \fg $  is of type $ A $,  $ B $,  $ C $,  $ D $  or  $ P \, $,  then condition  {\it (d.3)\/}  in  Definition \ref{def_che-bas}  reads just like  {\it (d.2)},  with the handful of exceptions mentioned before.}
 \vskip4pt
   {\it (3)} \,  For notational convenience, in the following we shall also set  $ \, X_\delta := 0 \, $  whenever  $ \delta $  belongs to the  $ \Z $--span  of  $ \Delta $  but  $ \, \delta \not\in \Delta \; $.
\end{remarks}

\bigskip
\bigskip
\bigskip

 \section{Existence of Chevalley bases}  \label{exist_Chev-bases}

\medskip

   The existence of a Chevalley basis for the types  $ A $,  $ B $,  $ C $,  $ D $  is a more or less known result; for example an (almost) explicit Chevalley basis for types  $ B \, $,  $ C $  and  $ D \, $ can be found in \cite{sw}.  More in general, an (abstract) existence result, with a uniform proof, is given in  \cite{ik}  for all  {\sl basic\/}  types   --- thus missing the  {\sl strange\/}  types,  $ P $  and  $ Q \, $.  In this section we present an existence theorem which covers  {\sl all\/}  cases, i.e.~including both basic and strange cases: our proof is constructive, in that we explicitly present a concrete Chevalley basis, for all cases but  $ F(4) $  and  $ G(3) $   --- for which we refer to  \cite{ik} ---   by a case-by-case analysis.  A sketch of a  {\sl uniform\/}  proof is presented in  Remark \ref{unif_exist_Chev-bases}  later on.

\medskip

 \vskip3pt

\begin{theorem}  \label{exist_che-bas}
   Every classical Lie superalgebra has a Chevalley basis.
\end{theorem}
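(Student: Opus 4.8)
The plan is to prove existence of a Chevalley basis for every classical Lie superalgebra by a case-by-case analysis, organised according to the classification (Theorem \ref{classificationtheorem}): the basic types $A(m,n)$, $B(m,n)$, $C(n)$, $D(m,n)$, the exceptional basic types $F(4)$, $G(3)$, $D(2,1;a)$ with $a\in\Z$, and the strange type $P(n)$ (the cases $A(1,1)$, $P(3)$, $Q(n)$ and $D(2,1;a)$ with $a\notin\Z$ having been explicitly set aside). For each family I would write down an explicit homogeneous basis of $\fg$ inside its natural matrix realisation $\fg\subseteq\rgl(p|q)$ — the diagonal toral elements $H_i=E_{i,i}-E_{i+1,i+1}$ (suitably modified by signs in the symplectic block, and working in the quotient for $A(n,n)$), together with the obvious elementary-matrix root vectors $X_\alpha$ — and then verify conditions \textit{(a)}--\textit{(d)} of Definition \ref{def_che-bas} directly by computing commutators of elementary matrices. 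The key inputs are the explicit description of the coroots $H_\alpha$ and of the invariant bilinear form from \S\ref{root-syst}, and the facts about root strings recalled after Definition \ref{alpha-string}, in particular that $\Sigma^\alpha_\beta\subseteq\Delta\cup\{0\}$ and the characterisation of when $0\in\Sigma^\alpha_\beta$.

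In more detail, for the $\rsl$-type case $A(m,n)$ one checks that with $X_{\varepsilon_i-\varepsilon_j}:=E_{i,j}$ the bracket $[E_{i,j},E_{k,l}]=\delta_{j,k}E_{i,l}-(-1)^{|E_{i,j}||E_{k,l}|}\delta_{l,i}E_{k,j}$ immediately yields integral structure constants $c_{\alpha,\beta}=\pm1$ whenever $\alpha+\beta\in\Delta$, which matches \textit{(d.2)} (here $r=0$ always for type $A$) and, via the explicit form of $H_\alpha$, also \textit{(d.3)}; condition \textit{(c)} with the sign $\sigma_\alpha$ is a direct diagonal computation. For the orthosymplectic types $B(m,n)$, $C(n)$, $D(m,n)$ the root vectors are the appropriate skew/symmetric combinations of elementary matrices dictated by the bilinear form $\phi$ displayed in the Examples after the definition of $\rosp$; the verification of \textit{(d)} reduces again to commutators of elementary matrices, with the only delicate point being the strings of length two ($r=1$, giving $c_{\alpha,\beta}=\pm2$) that occur among roots of the form $\pm\varepsilon_i\pm\delta_j$ — exactly the exceptional situation flagged in Remark \ref{rem-Chev_2}\textit{(2)}. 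For $P(n)$ one uses the matrix model in Definition 2.?? \textit{(5)}, noting that here $-\Delta_1\ne\Delta_1$, so condition \textit{(c)} only concerns $\Delta\cap(-\Delta)$, and one must verify the genuinely exceptional structure constant $c_{\beta_{j,i},\alpha_{i,j}}=\pm(r+2)$ appearing in \textit{(d.2)}; this is again a finite elementary-matrix computation. For $F(4)$ and $G(3)$ I would simply invoke the bases constructed in \cite{ik} (with the Cartan-matrix convention of \cite{ka} for $G(3)$, as noted in Remark \ref{rem-Chev_1}\textit{(1)}), and for $D(2,1;a)$ with $a\in\Z$ appeal to \cite{ga}.

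The main obstacle, and the part that requires genuine care rather than bookkeeping, is condition \textit{(d)} for pairs of roots in the ``mixed'' situations — when exactly one of $\alpha,\beta$ is isotropic, or when both are isotropic (case \textit{(d.3)}). In the classical (non-super) setting the sign/magnitude of $c_{\alpha,\beta}$ is controlled by the well-known formula $c_{\alpha,\beta}=\pm(r+1)$ coming from $\rsl_2$-representation theory on the $\alpha$-string through $\beta$; in the super case the isotropic odd roots generate $\rsl(1|1)$- or Heisenberg-type subalgebras instead of $\rsl_2$, so one cannot blindly quote the classical string argument and must instead track the structure constants directly in the chosen matrix model, checking consistency of signs across a root with the convention forced by \textit{(c)}. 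A secondary subtlety is the appearance of the length-two strings in the orthosymplectic and $P(n)$ cases, which produce the listed exceptions to the ``$c_{\alpha,\beta}=\pm(r+1)$'' rule; these must be identified exhaustively, which is precisely why a fully case-by-case treatment is unavoidable here (the uniform argument sketched in Remark \ref{unif_exist_Chev-bases} covering all basic types at once, but not $P(n)$). Finally, one should remark that the construction visibly extends to finite direct sums of classical Lie superalgebras, as already noted in the N.B. of Definition \ref{def_che-bas}.
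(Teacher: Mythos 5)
Your proposal follows essentially the same route as the paper: a case-by-case construction of explicit Chevalley bases in the standard matrix models for $A(m,n)$, $A(n,n)$, $B(m,n)$, $C(n)$, $D(m,n)$ and $P(n)$ (including the exceptional $\pm(r+2)$ constants), with $F(4)$ and $G(3)$ delegated to \cite{ik}, exactly as the paper does. The only divergences are minor: the paper treats $D(2,1;a)$ with $a\in\Z$ explicitly via the contragredient presentation (renormalizing $e'_{1,1,2,3}$ and $f'_{3,2,1,1}$ by $(1+a)^{-1}$, which is where $a\neq -1$ enters) rather than citing \cite{ga}, and in the orthosymplectic cases the short-root vectors $E_{\pm\varepsilon_i}$, $E_{\pm\delta_j}$ need a $\sqrt{2}$ normalization beyond the ``obvious'' elementary-matrix combinations --- both points fall within your stated plan of fixing normalizations, so there is no gap.
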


\begin{proof}
 The proof is case-by-case, by direct inspection of each type.  Only cases  $ A(1,1) \, $,  $ P(3) \, $,  $ Q(n) $  are postponed to  \S \ref{cases A-P-Q}.

\smallskip

   In general, for the root vectors  $ X_\alpha $'s, we must carefully fix a proper normalization.  For the  $ H_i $'s  in the Chevalley basis, belonging to  $ \fh \, $,  one sees that, in the  {\sl basic\/}  cases, one can almost always take simple coroots (for a distinguished system of simple roots); case  $ P(n) $  is just slightly different.

\smallskip

   We call  $ \fg $  our classical Lie superalgebra.  As a matter of notation, from now on we denote by  $ \; \e_{i,j} \, := \, {\big( \delta_{h,i} \, \delta_{k,j} \big)}_{h,k=1,\dots,r+s} \; $  for all  $ \, i, j \in \{1,\dots,m+n\} \, $,  \, the elementary matrix in  $ \rgl(r|s) $  with a 1 in position  $ (i,j) $  and 0 elsewhere.
 \vskip4pt
   $ \underline{A(m,n)} \phantom{\bigg|}\, , \; m \not= n \; $:  \;  In this case  $ \, \fg = \rsl(m+1|\,n+1) \subseteq \rgl(m+1|\,n+1) \, $.  We fix the distinguished simple root system  $  \, \Pi := \{\alpha_1, \dots, \alpha_{m+n+1}\} \, $  with  $ \; \alpha_i := \varepsilon_i - \varepsilon_{i+1} \, \big(\, i=1,\dots,m \big) \, $,  $ \, \alpha_{m+1} := \varepsilon_{m+1} - \delta_1 \, $,  $ \, \alpha_{m+1+j} := \delta_j - \delta_{j+1} \, \big(\, j=1,\dots,n \big) \, $,  \, using standard notation (like in   \cite{fss},  say).  Then we define the  $ H_k $'s  as  $ \, H_k := H_{\alpha_k} = \e_{k,k} - \e_{k+1,k+1} \, $  for  $ \, k \not\in \{m\!+\!1,m\!+\!n\!+\!2\,\} \, $,  and  $ \, H_{m+1} := H_{\alpha_{m+1}} = \e_{m+1,m+1} + \e_{m+2,m+2} \; $.   For the root vectors  $ X_\alpha $'s  instead we just take all the  $ \e_{i,j} $'s  with  $ \, i \not= j \, $.  It is then a routine matter to check that these vectors form a Chevalley basis.
 \vskip3pt
   $ \underline{A(n,n)} \phantom{\bigg|} \; $:  \;  In this case  $ \, \fg = \rsl(n\!+\!1|\,n\!+\!1) \Big/ \KK \, I_{2(n+1)} \, $.  Keeping notation as above, we set  $ \; \overline{x} := x + \KK \, I_{2(n+1)} \; $  in  $ \; \rsl(n\!+\!1|\,n\!+\!1) \Big/ \KK \, I_{2(n+1)} \, = \fg \; $.  Then  $ \; I_{2(n+1)} = {\textstyle \sum_{i=1}^n} \, i \, H_i \, + \, (n\!+\!1) \, H_{n+1} \, - \, {\textstyle \sum_{j=1}^n} \, j \, H_{2(n+1)-j} \, $,  \, so that we have
  $$  {\textstyle \sum_{i=1}^n} \, i \, \overline{H}_i \, + \, (n\!+\!1) \, \overline{H}_{n+1} \, - \, {\textstyle \sum_{j=1}^n} \, j \, \overline{H}_{2(n+1)-j}  \, = \,  0  $$
a  $ \Z $--linear  dependence relation among simple coroots which reflects a similar relation among simple roots; thus we can get  $ \overline{H}_{2n+1} $  from the other simple coroots.  Then  $ \; {\big\{\, \overline{H}_i \big\}}_{i=1,\dots,2n} \bigcup \, {\big\{\, \overline{X}_{i,j} \big\}}_{i \not= j} \; $  is a  $ \KK $--basis  of  $ \fg $  which satisfies all properties in  Definition \ref{def_che-bas}.
 \vskip6pt
   $ \underline{B(m,n) \, , \, C(n) \, , \, D(m,n)} \phantom{\bigg|}\, , \; m \not= n \; $:  \;  Here  $ \, \fg = \rosp(M|\,N) \subseteq \rgl(M|\,N) \, $  for some  $ \,
M \in \N \, $,  which is odd, zero or even positive according to
whether we are in case  $ B $,  $ C $  or  $ D $  respectively,
and some even  $ \, N \in \N \, $.  Then  $ \, M \in \{2\,m + 1,
2\,m\} \,  $  and  $ \, N = 2\,n \, $  for suitable  $ \, m, n \in
\N \, $.  In any case,  $ \fg $  is an orthosymplectic Lie
superalgebra, and we can describe all cases at once.
                                                                          \par
   With notation as before, we consider the following root vectors (for all  $ \; 1 \leq i,j \leq m \, $,  $ \; M+1 \leq i', j' \leq  M+n \, $):
  $$  \displaylines{
   \hfill   E_{+\varepsilon_i - \varepsilon_j} \, := \, \e_{i,j} - \e_{j+m,i+m} \;\; ,  \hskip-5pt \qquad  E_{-\varepsilon_i + \varepsilon_j} \, := \, \e_{i+m,j+m} - \e_{j,i}   \qquad \hfill \big(\, i < j \,\big)  \cr
   \hfill   E_{+\varepsilon_i + \varepsilon_j} \, := \, \e_{i,j+m} - \e_{j,i+m} \;\; ,  \hskip-5pt \qquad  E_{-\varepsilon_i - \varepsilon_j} \, := \, \e_{j+m,i} - \e_{i+m,j}   \qquad \hfill \big(\, i < j \,\big)  \cr
   E_{+\varepsilon_i} := \, +\sqrt{2} \, \big( \e_{i,2m+1} - \e_{2m+1,i+m} \big) \;\; ,  \qquad  E_{-\varepsilon_i} := \, -\sqrt{2} \, \big( \e_{i+m,2m+1} - \e_{2m+1,i} \big)   \qquad  \cr
   \hfill   E_{+\delta_{i'} - \delta_{j'}} \, := \, \e_{i',j'} - \e_{j'+n,i'+n} \;\; ,  \qquad  E_{-\delta_{i'} + \delta_{j'}} \, := \, \e_{i'+m,j'+m} - \e_{j',i'}   \qquad \hfill \big(\, i' < j' \,\big)  \cr
   \hfill   E_{+\delta_{i'} + \delta_{j'}} \, := \, \e_{i',j'+n} + \e_{j',i'+n} \;\; ,  \qquad  E_{-\delta_{i'} - \delta_{j'}} \, := \, \e_{i'+n,j'} + \e_{j'+n,i'}   \qquad \hfill \big(\, i' \not= j' \,\big)  \cr
   E_{+2\,\delta_{i'}} \, := \, \e_{i',i'+n} \quad ,  \hskip25pt \qquad  E_{-2\,\delta_{i'}} \, := \, \e_{i'+n,i'}   \hskip79pt \quad  \cr
   E_{+\varepsilon_i + \delta_{j'}} \, := \, \e_{i,j'+n} + \e_{j',i+n} \;\; ,  \qquad  E_{-\varepsilon_i - \delta_{j'}} \, := \, \e_{i+m,j'} - \e_{j'+m,i}   \hskip29pt \qquad  \cr
   E_{+\varepsilon_i - \delta_{j'}} \, := \, \e_{i,j'} - \e_{j'+n,i+m} \;\; ,  \qquad  E_{-\varepsilon_i + \delta_{j'}} \, := \, \e_{i+m,j'+n} + \e_{j',i}   \hskip29pt \qquad  \cr
   E_{+\delta_{j'}} := \, +\sqrt{2} \, \big( \e_{2m+1,j'+n} + \e_{j',2m+1} \big) \;\; ,  \quad  E_{-\delta_{j'}} := \, -\sqrt{2} \, \big( \e_{j'+n,2m+1} - \e_{2m+1,j'} \big)  }  $$
where  $ \, \pm(\varepsilon_i \pm \varepsilon_j) \, $,  $ \, \pm \varepsilon_i \, $,  $ \, \pm(\delta_{i'} \pm \delta_{j'}) \, $,  $ \, \pm 2 \delta_{i'} \, $,  $ \, \pm(\varepsilon_i \pm \delta_{j'}) \, $,  $ \, \pm \delta_{j'} \, $  are the roots of the orthosymplectic Lie superalgebra  $ \fg $  as in  \cite{fss},  \S 2.27.  The  $ H_i $'s  are just  $ H_\alpha $,  with  $ \, \alpha \in \Pi'_\fg \, $  where  $ \Pi'_\fg $  is chosen as follows:
  $$  \displaylines{
   \Pi'_{B(m,n)} \, := \, \big\{ \delta_1 - \delta_2 \, , \dots , \, \delta_{n-1} - \delta_n \, , \, 2 \, \delta_n \, , \, \varepsilon_1 - \varepsilon_2 \, , \dots , \, \varepsilon_{m-1} - \varepsilon_m \, , \, \varepsilon_m \big\}   \hfill  \cr
   \hfill   \text{if  $ \, m \not= 0 \, $,}  \quad  \cr
   \Pi'_{B(0,n)} \, := \, \big\{ \delta_1 - \delta_2 \, , \dots , \, \delta_{n-1} - \delta_n \, , \, 2 \, \delta_n \big\}   \hfill  \cr
   \Pi'_{C(n)} \, := \, \big\{ \varepsilon_1 \, , \, \delta_1 - \delta_2 \, , \dots , \, \delta_{n-2} - \delta_{n-1} \, , \, 2 \, \delta_{n-1} \big\}   \hfill  \cr
   \Pi'_{D(m,n)} \! := \! \big\{ \delta_1 \! - \delta_2 \, , \dots , \delta_{n-1} \! - \delta_n \, , \dots , 2 \, \delta_n \, , \varepsilon_1 \! - \varepsilon_2 \, , \dots , \varepsilon_{m-1} \! - \varepsilon_m \, , \varepsilon_{m-1} \! + \varepsilon_m \big\}   \hfill  }  $$
(using standard notation).  Note that in all cases but  $ B(0,n) $  the chosen  $ \Pi'_\fg $  is just a distinguished set of simple roots.
%
%
 Setting  $ \, j' := M \! + j = 2\,m \! + 1 \, $,  with  $ \, m = 1 \, $  in case  $ C(n) \, $,  the corresponding coroots are
  $$  \displaylines{
  \quad   H_{\delta_j - \delta_{j+1}}  \, := \,  \big( \e_{j',j'} - \e_{j'+n,j'+n} \big) - \big( \e_{j'+1,j'+1} - \e_{j'+1+n,j'+1+n} \big)   \hfill  \text{in all cases}  \cr
  \quad   H_{\varepsilon_i - \varepsilon_{i+1}}  \, := \,  \big( \e_{i,i} - \e_{i+m,i+m} \big) - \big( \e_{i+1,i+1} - \e_{i+1+m,i+1+m} \big)   \hfill  \text{in all cases}  \cr
  \quad   H_{2\,\delta_n}  \, := \,  \big( \e_{n',n'} - \e_{n'+n,n'+n} \big)   \hfill  \text{in all cases} \cr
  \quad   H_{\varepsilon_1}  \, := \,  \big( \e_{1,1} - \e_{2,2} \big)   \hfill  \text{for  $ \, C(n) $}  \cr
  \quad   H_{2\,\delta_{n-1}}  \, := \,  \big( \e_{(n-1)', \, (n-1)'} - \e_{(n-1)'+n, \, (n-1)'+n} \big)   \hfill  \text{for  $ \, C(n) $}  \cr
  \quad\!   H_{\varepsilon_{m-1} + \varepsilon_m}  \! := \!  \big( \e_{m-1,m-1} \! - \e_{m-1+m,m-1+m} \big) \! + \!  \big( \e_{m,m} \! - \e_{m+m,m+m} \big)   \hfill  \text{for  \!\! $ D(m,\!n) $}  }  $$
 \vskip3pt
   Now, a Chevalley basis is formed by the root vectors  $ \, X_\alpha := E_\alpha \, $  and the Cartan generators (simple coroots)  $ \, H_\alpha \, $  as above: the verification follows by a careful, yet entirely straightforward, calculation.
 \vskip7pt
   $ \underline{F(4) \, , \, G(3)} \phantom{\bigg|} \; $:  \;  See  \cite{ik},  Theorem 3.9 (which applies to every  {\sl basic\/}  type).
 \vskip4pt
   $ \underline{D(2,1;a)} \, ,  \; a \in \Z \;\, $:  \;  Recall that  $ \, \fg = D(2,1;a) \, $  is a contragredient Lie superalgebra.  To describe it, we fix a specific choice of Dynkin diagram and corresponding Cartan matrix, like in  \cite{fss}, \S 2.28  (first choice), namely
  $$  {\buildrel 2 \over {\textstyle \bigcirc}} \hskip-3,5pt
\joinrel\relbar\joinrel\relbar\hskip-7pt{\buildrel 1 \over -}\hskip-7pt\relbar\joinrel\relbar\joinrel
\hskip-3,5pt
{\buildrel 1 \over {\textstyle \bigotimes}}
\hskip-3,5pt
\joinrel\relbar\joinrel\relbar\hskip-7pt{\buildrel a \over -}\hskip-7pt\relbar\joinrel\relbar\joinrel
\hskip-3,5pt
{\buildrel 3 \over {\textstyle \bigcirc}}  \hskip17pt ,   \hskip35pt
  {\big( a_{i,j} \big)}_{i,j=1,2,3;}  \, :=  \begin{pmatrix}
                                              0  &  1  &  a \,  \\
                                             -1  &  2  &  0 \,  \\
                                             -1  &  0  &  2 \,
                                             \end{pmatrix}  $$
Then  $ \, \fg = D(2,1;a) \, $  is defined as the Lie superalgebra over  $ \KK $  with generators  $ \; h_i \, $,  $ \, e_i \, $,  $ \, f_i \;\; (i=1,2,3) \, $,  \, with degrees  $ \; p(h_i) := 0 \, $,  $ \, p(e_i) := \delta_{1,i} \, $,  $ \, p(f_i) := \delta_{1,i} \;\; (i=1,2,3) \, $,  \, and with relations (for all  $ \, i,j=1,2,3 \, $)
  $$  \displaylines{
   \big[ h_i, h_j \big] = 0 \;\; ,  \qquad  \big[ e_1, e_1 \big] = 0 \;\; ,  \qquad  \big[ f_1, f_1 \big] = 0 \;\; ,  \cr
   \big[ h_i, e_j \big] = +a_{i,j} \, e_j \;\; ,  \qquad  \big[ h_i, f_j \big] = -a_{i,j} \, f_j \;\; ,  \qquad  \big[ e_i, f_j \big] = \delta_{i,j} \, h_i \;\; .  }  $$
Moreover, the root system is given by  $ \; \Delta_-  =  -\Delta_+ \; $  and
  $$  \Delta_+  \, = \,  \{ \alpha_1 \, , \, \alpha_2 \, , \, \alpha_3 \, , \, \alpha_1 + \alpha_2 \, , \, \alpha_1 + \alpha_3 \, , \, \alpha_1 + \alpha_2 + \alpha_3 \, , \, 2 \, \alpha_1 + \alpha_2 + \alpha_3 \}  $$
   \indent   Now we introduce the following elements:
  $$  \begin{matrix}
   e_{1,2} := \big[ e_1 , e_2 \big] \; ,  &  e_{1,3} := \big[ e_1 , e_3 \big] \; ,  &  e_{1,2,3} := \big[ e_{1,2} , e_3 \big] \; ,  &  e'_{1,1,2,3} := \big[ e_1 , e_{1,2,3} \big]  \phantom{\Big|}  \\
   f_{2,1} := \big[ f_2 , f_1 \big] \; ,  &  f_{3,1} := \big[ f_3 , f_1 \big] \; ,  &  e_{3,2,1} := \big[ f_3, f_{2,1} \big] \; ,  &  f'_{3,2,1,1} := \big[ f_{3,2,1} , f_1 \big]  \phantom{\Big|}
      \end{matrix}  $$
All these are root vectors, say  $ \, e_1 \! = \! X_{\alpha_1} \, $,  $ \, f_{3,1} \! = \! X_{-(\alpha_1 + \alpha_3)} \, $,  $ \, e_{1,2,3} \! = \! X_{\alpha_1 + \alpha_2 + \alpha_3} \, $,  and so on.  These, together with the original generators, do form a  $ \KK $--basis  of  $ \fg \, $.  The relevant new brackets among all these basis elements   --- dropping the zero ones, those coming from others by (super-)skewcommuta\-tivity, and those involving the  $ h_i $'s  (which are given by the fact that the  $ e_\bullet $'s  and the  $ f_\bullet $'s  are root vectors, involving all roots of  $ \fg \, $)  ---   are the following:
  $$  \displaylines{
   \quad   \big[ e_1 , e_2 \big] = e_{1,2} \; ,  \quad  \big[ e_1 , e_3 \big] = e_{1,3} \; ,  \quad  \big[ e_1 , e_{1,2,3} \big] = e'_{1,1,2,3}   \hfill  \cr
   \quad   \big[ e_1 , f_{2,1} \big] = f_2 \; ,  \quad  \big[ e_1 , f_{3,1} \big] = a \, f_3 \; ,  \quad   \big[ e_1 , f'_{3,2,1,1} \big] = -(1\!+\!a) f_{3,2,1}  \cr
   \quad   \big[ e_2 , e_{1,3} \big] = -e_{1,2,3} \; ,  \quad  \big[ e_2 , f_{2,1} \big] = f_1 \; ,  \quad  \big[ e_2 , f_{3,2,1} \big] = f_{3,1}   \phantom{\Big|}   \hfill  \cr
   \quad   \big[ e_3 , e_{1,2} \big] = -e_{1,2,3} \; ,  \quad  \big[ e_3 , f_{3,1} \big] = f_1 \; ,  \quad  \big[ e_3 , f_{3,2,1} \big] = f_{2,1}   \phantom{\Big|}   \hfill  \cr
   \quad   \big[ f_1 , f_2 \big] = -f_{2,1} \; ,  \quad  \big[ f_1 , f_3 \big] = -f_{3,1} \; ,  \quad  \big[ f_1 , f_{3,2,1} \big] = f'_{3,2,1,1}   \hfill  \cr
   \quad   \big[ f_1 , e_{1,2} \big] = e_2 \; ,  \quad  \big[ f_1 , e_{1,3} \big] = a \, e_3 \; ,  \quad   \big[ f_1 , e'_{1,1,2,3} \big] = (1\!+\!a) \, e_{1,2,3}  \cr
   \quad   \big[ f_2 , f_{3,1} \big] = f_{3,2,1} \; ,  \quad  \big[ f_2 , e_{1,2} \big] = -e_1 \; ,  \quad  \big[ f_2 , e_{1,2,3} \big] = -e_{1,3}   \phantom{\Big|}   \hfill  \cr
   \quad   \big[ f_3 , f_{2,1} \big] = f_{3,2,1} \; ,  \quad  \big[ f_3 , e_{1,3} \big] = -e_1 \; ,  \quad  \big[ f_3 , e_{1,2,3} \big] = -e_{1,2}   \phantom{\Big|}   \hfill  \cr
   \quad   \big[ e_{1,2} , e_{1,3} \big] = -e'_{1,1,2,3} \;\; ,  \qquad  \big[ e_{1,2} , f_{2,1} \big] \, = \, h_1 \! - \! h_2 \;\; ,   \phantom{\Big|}   \hfill  \cr
   \hfill   \big[ e_{1,2} , f_{3,2,1} \big] = a \, f_3 \;\; ,  \qquad  \big[ e_{1,2} , f'_{3,2,1,1} \big] = (1\!+\!a) f_{3,1}  \qquad  \phantom{\Big|}  \qquad  \cr
   \quad   \big[ e_{1,3} , f_{3,1} \big] \, = \, h_1 \! - \! a h_3 \; ,  \quad  \big[ e_{1,3} , f_{3,2,1} \big] = f_2 \; ,  \quad  \big[ e_{1,3} , f'_{3,2,1,1} \big] = (1\!+\!a) f_{2,1}   \phantom{\Big|}   \hfill  \cr
   \quad   \big[ f_{2,1} , f_{3,1} \big] = -\!f'_{3,2,1,1} \; ,  \quad  \big[ f_{2,1} , e_{1,2,3} \big] = a \, e_3 \; ,  \quad  \big[ f_{2,1} , e'_{1,1,2,3} \big] = -(1\!+\!a) \, e_{1,3}   \phantom{\Big|}   \hfill  \cr
   \quad   \big[ f_{3,1} , e_{1,2,3} \big] = e_2 \; ,  \quad  \big[ f_{3,1} , e'_{1,1,2,3} \big] = -(1\!+\!a) \, e_{1,2}   \phantom{\Big|}   \hfill  \cr
   \quad   \big[ e_{1,2,3} , f_{3,2,1} \big] = h_1 \! - \! h_2 \! - \! a h_3 \; ,  \quad  \big[ e_{1,2,3} , f'_{3,2,1,1} \big] = -(1\!+\!a) f_1 \;\; ,   \phantom{\Big|}   \hfill  \cr
   \hfill   \big[ f_{3,2,1} , e'_{1,1,2,3} \big] = -(1\!+\!a) \, e_1 \; ,  \quad\!  \big[ e'_{1,1,2,3} , f'_{3,2,1,1} \big] = -(1\!+\!a) \big( 2 h_1 \! - \! h_2 \! - \! a h_3 \big)  }  $$
Now we modify just two root vectors taking (recall  $ \, a \not= -1 \, $  by assumption)
  $$  e_{1,1,2,3} \, := \, +{(1\!+\!a)}^{-1} \, e'_{1,1,2,3} \;\;\; ,  \qquad  f_{3,2,1,1} \, := \, -{(1\!+\!a)}^{-1} f'_{3,2,1,1} \;\;\; ;  $$
then the above formulas has to be modified accordingly (many coefficients  $ (1\!+a) $  cancel out).  Looking at the final outcome it is then easy to check that
  $$  B  \, := \,  {\big\{ H_i \, , \, e_i \, , \, f_i \big\}}_{i=1,2,3} \,{\textstyle \bigcup}\, \big\{ e_{1,2} \, , e_{1,3} \, , e_{1,2,3} \, , e_{1,1,2,3} \, , f_{2,1} \, , f_{3,1} \, , f_{3,2,1} \, , f_{3,2,1,1} \big\}  $$
with  $ \; H_1 := h_1 \, $,  $ \; H_2 := {(1\!+a)}^{-1} \big( 2 h_1 \! - \! h_2 \! - \! a h_3 \big) \, $,  $ \; H_3 := h_3 \; $,  \, is indeed a Chevalley basis for  $ \, \fg = D(2,1;a) \, $.
 \vskip7pt
   $ \underline{P(n)} \, ,  \; n \not= 3 \;\, $:  \;  We fix the distinguished set of even simple roots
  $$  \Pi'_{P(n)}  \, := \,  \big\{\, \varepsilon_1 - \varepsilon_2 \, , \dots , \, \varepsilon_{n-1} - \varepsilon_n \, , \, \varepsilon_n - \varepsilon_{n+1} \, , \, 2 \, \varepsilon_{n+1} \,\big\}  $$
and the corresponding  {\sl even\/}  (simple) coroots, which are  $ \; H_i := H_{\varepsilon_i - \varepsilon_{i+1}} =
\, \big( \e_{i,i} - \e_{i+1,i+1} \big) -
\big( \e_{i+n+1,i+n+1} - \e_{i+1+n+1,i+1+n+1} \big) \; $
($ \, \forall \; 1 \! \leq \! i \! \leq \! n \, $),
$ \, H_{n+1} := H_{2\,\varepsilon_{n+1}} =
\big( \e_{n+1,n+1} - \e_{2(n+1),2(n+1)} \big) \; $.
As root vectors (the odd roots being   $ \, \pm \beta_{i,j} := \pm (\varepsilon_i
+ \varepsilon_j) \;\; \forall \; i \not= j \, $,  and  $ \, \gamma_i :=
2 \, \varepsilon_i \;\; \forall \; i \, $,  as in  \cite{fss},  \S 2.48) we take 
 \vskip5pt
   \centerline{ $ \displaystyle \qquad  \text{\sl (even)}  \hfill   E_{\alpha_{i,j}} := \, \e_{i,j} - \e_{n+1+j,n+1+i}   \hfill  \forall \;\, 1 \leq i \not= j \leq n $ }
 \vskip4pt
   \centerline{ $ \displaystyle \qquad  \text{\sl (odd)}  \hfill   E_{\gamma_i} := \, \e_{i,n+1+i}   \hfill  \forall \;\, 1 \leq i \leq n+1 $ }
 \vskip4pt
   \centerline{ $ \displaystyle \qquad  \text{\sl (odd)}  \hfill   E_{+\beta_{i,j}} := \, \e_{i,n+1+j} + \e_{j,n+1+i} =: E_{+\beta_{j,i}}   \hfill  \forall \;\, 1 \leq i < j \leq n $ }
 \vskip4pt
   \centerline{ $ \displaystyle \qquad  \text{\sl (odd)}  \hfill   E_{-\beta_{i,j}} := \, \e_{n+1+j,i} - \e_{n+1+i,j} =: -E_{-\beta_{j,i}}   \hfill  \forall \;\, 1 \leq i < j \leq n $ }
 \vskip5pt
\noindent
 Direct check shows that the above elements  $ H_i $  and root vectors form a basis as required.  This follows from the commutation formulas in  \cite{fss},  \S 2.48,  which only need the following correction:  $ \, \big[ E_{\alpha_{i,j}} \, , E_{\beta_{i,j}} \big] = 2 \, E_{\gamma_i} \; $  ($ i \! \not= \! j $).
\end{proof}

\bigskip

\begin{remark}  \label{unif_exist_Chev-bases}
 {\it A uniform proof of  Theorem \ref{exist_che-bas}}.

\smallskip

   We sketch here, quite roughly (and up to some details) another possible proof of  Theorem \ref{exist_che-bas},  kindly suggested by the referee.  This works by a {\sl uniform argument\/}  for all basic cases   --- like in  \cite{ik},  but with different arguments ---   and can also be adopted again (once the definition of Chevalley basis is set up) for the strange case  $ Q(n) $  as well.  Thus in the end only the strange case  $ P(n) $  is left apart: therefore, we assume hereafter that  $ \fg $  is  of  {\sl basic\/}  type.
 \vskip5pt
   To begin with, for the  $ H_i $'s  in the Chevalley basis
   (belonging to  $ \fh $)  one proceeds like in the proof above.
     For the  {\sl even\/}  root vectors  $ X_\alpha $  ($ \alpha \in \Delta_0 $)
   one takes them as they are given in a ``standard'' Chevalley basis
   of the Lie algebra  $ \fg_0 $   ---
 with easy adaptations when  $ \fg_0 $  is reductive.
   Finally, for the  {\sl odd\/}  root vectors  $ X_\beta $  ($ \beta \in
   \Delta_1 $)  one of course has to choose each one of them in the
   root space  $ \fg_\beta \, $,  which is one-dimensional, yet then
   one also must carefully fix a proper normalization to get integral
   coefficients in the expression for the Lie brackets.
 \vskip3pt
   As a first step, note that in all basic cases the  $ \fg_0 $--module  $ \fg_1 $  is a direct sum of simple  $ \fg_0 $--modules  whose highest weight is  {\sl minuscule\/}  (or  ``{\sl nonzero minimal\/} dominant'' in Humphreys' terminology, cf.~\cite{hu},  \S 13, exercise 13).
                                                                \par
   Now, a simple  $ \fg_0 $--module  $ V(\lambda) $  with minuscule
   highest weight  $ \lambda $ is as follows (cf.~\cite{ja},  Ch.~5A.1).  First, the set
of weights of such a  $ V(\lambda) $  is just  $ W.\lambda \, $,  the
$ W $--orbit  of  $ \lambda \, $,  where  $ W $  is the Weyl group of
$ \fg_0 \, $.  Then each weight space  $ {V(\lambda)}_\mu $  in  $
V(\lambda) $ is one-dimensional, hence given by a single basis vector
$ v_\mu $  so that  $ \, {V(\lambda)}_\mu = \KK \, v_\mu \; $.
 Thus we start with a  $ \KK $--vector  space  $ V $  having basis  $ {\big\{ v_\mu \big\}}_{\mu \in W.\lambda} \; $:  then a  $ \fg_0 $--module  structure on  $ V $,  for which it has highest  $ \lambda \, $,  is given by the following simple formulas:
  $$  \displaylines{
   \hfill   H.v_\mu := \mu(H) \, v_\mu \;\; ,  \;\qquad  \forall \;\; H \in \fh \; ,  \quad  \forall \;\; \mu \in W.\lambda   \hfill \phantom{(3.1)}  \cr
   \hfill   X_{+\alpha}.v_\mu := 0 \; ,  \quad  X_{-\alpha}.v_\mu := 0 \; ,  \;\;\quad  \forall \;\; \alpha \in \Delta_0^+ \, , \, \mu \in W.\lambda \; : \; \mu(H_\alpha) = 0   \hfill \phantom{(3.1)}  \cr
   \hfill   X_{+\alpha}.v_\mu := 0 \; ,  \;\;  X_{-\alpha}.v_\mu := v_{\mu-\alpha} \; ,  \quad  \forall \; \alpha \in \Delta_0^+ \, , \, \mu \in W.\lambda \, : \, \mu(H_\alpha) = +1   \hfill (3.1)  \cr
   \hfill   X_{+\alpha}.v_\mu := v_{\mu+\alpha} \; ,  \;\;  X_{-\alpha}.v_\mu := 0 \; ,  \quad  \forall \; \alpha \in \Delta_0^+ \, , \, \mu \in W.\lambda \, : \, \mu(H_\alpha) = -1   \hfill \phantom{(3.1)} }  $$
                                                             \par
We now shall use these remarks in order to construct at the same time
an isomorphic copy of  $ \fg $  and a Chevalley basis inside it.
 \vskip3pt
   First, we know that  $ \fg_1 $  as an  $ \fh $--module  splits into  $ \, \fg_1 = \oplus_{\beta \in \Delta_1} \fg_\beta \, $  where each odd root space  $ \fg_\beta $  is one-dimensional.  Now we fix a nonzero vector  $ \, y_\beta \in \fg_\beta \! \setminus \{0\} \, $  for each  $ \, \beta \in \Delta_1 \, $:  we shall find our odd root vector  $ X_\beta $  in the Chevalley basis by a suitable normalization of  $ y_\beta \, $,  which is fixed by imposing relations  {\it (c)\/}  and  {\it (d)\/}  in  Definition \ref{def_che-bas}.
                                                             \par
   Now note that  $ \, [\, y_\beta \, , y_{-\beta}] \in \fh \, $.
Let us call $H(\beta):=[\, y_\beta \, , y_{-\beta}]$.
Therefore, for all  $ \, \alpha \in \Delta_0 \, $  one has
  $$  \big[X_\alpha , [\, y_\beta \, , y_{-\beta}] \big] \, = \, \big[ X_\alpha \, , H(\beta) \big] \, = \, - \alpha\big(H(\beta)\big) \, X_\alpha  $$
while the   Jacobi identity yields
  $$  \big[ X_\alpha \, , [\, y_\beta \, , y_{-\beta}] \big] \, = \, \big[ [X_\alpha \, , y_\beta] \, , y_{-\beta} \big] + \big[\, y_\beta \, , [X_\alpha \, , y_{-\beta}] \big] \, = \, 0  $$
when  $ \, \alpha(H_b) = \beta(H_\alpha) = 0 \, $,  since in that case one has  $ \, (\alpha \pm \beta) \not\in \Delta \, $.  This means that  $ \; \alpha\big(H(\beta)\big) = 0 \iff \alpha(H_\beta) = 0 \, $, \, so that  $ H(\beta) $  is a scalar multiple of  $ H_\beta \, $,  say  $ \, H(\beta) = n_\beta \, H_\beta \, $  for some  $ \, n_\beta \in \KK \, $;  moreover, one has  $ \, n_\beta \not= 0 \, $  as the same analysis gives  $ \, \big[ X_\alpha \, , [\, y_\beta \, , y_{-\beta}] \big] \not= 0 \, $  when $ \, \beta(H_\alpha) \not= 0 \, $.
                                                             \par
   Therefore, we shall fix our odd root vectors  $ \, X_\beta \; (\beta \in \Delta_1) \, $  as given by  $ \; X_\beta := n_\beta^{-1/2} \, y_\beta \; $  (which makes sense because  $ \KK $  is algebraically closed): it follows that  $ \; \big[ X_\beta \, , \, X_{-\beta} \big] = H_\beta \, $,  \, so relations  {\it (c)\/}  in  Definition \ref{def_che-bas}  do hold.
 \vskip3pt
   Now we modify the Lie superalgebra structure on  $ \fg \, $,
   keeping the same vector space structure but changing the Lie
   bracket  $ \, [\,\ ,\ ] \, $  as follows.  We keep  $ \, [\,\ ,\ ]
   \, $  untouched when restricted to  $ \fg_0 $  (hence  $ \fg_0 $
   keeps the same Lie algebra structure) and to  $ \fg_1 \, $,
   i.e.~when computed on elements which are homogeneous of the same
   parity.  On the other hand, we modify the bracket on elements of
   different parities, simply by re-defining the (adjoint) action of  $ \fg_0 $  onto  $ \fg_1 $  using formulas (3.1)  {\sl with the  $ X_\beta $'s  playing the role of the  $ v_\mu $'s}.
 In other words, we (re)normalize the  $ \fg_0 $--action  on  $ \fg_1
 \, $,  so to have an  {\sl isomorphic copy\/}  $ \fg'_1 $  of the  $
 \fg_0 $--module  $ \fg_1 \, $,  whose structure is described by (3.1)
 with the  $ X_\beta $'s  replacing the  $ v_\mu $'s.
                                                            \par
   In addition, the Lie  bracket of  $ \fg $  defines on the  $ \fg_0 $--module  $ \fg_1 $  a  $ \fg_0 $--valued,  symmetric bilinear form  $ \; \psi : (\eta\,,\zeta) \mapsto \psi(\eta\,,\zeta) := [\,\eta\,,\zeta\,] \; $,  \, for which the Jacobi identity reads
  $$   x.\psi(\eta\,,\zeta)  \; = \;  \psi(x.\eta\,,\zeta) + \, \psi(\eta\,,x.\zeta)   \eqno \forall \;\; x \in \fg_0 \, , \; \eta, \zeta \in \fg_1  \qquad  (3.2)  $$
(where the  $ \fg_0 $--action  on  $ \fg_0 $  itself is again the adjoint action).  Using any  $ \fg_0 $--module  isomorphism  $ \, \Phi : \fg_1 \, {\buildrel \cong \over \longrightarrow} \, \fg'_1 \, $,  we define a form  $ \, \psi' : \fg'_1 \times \fg'_1 \longrightarrow \fg_0 \, $  by  $ \, \psi' := \psi \circ \Phi^{\times 2} \, $  which again will enjoy the similar properties as in (3.2).  Then the formula  $ \; \big[\, \eta', \zeta' \big] := \psi'\big( \eta', \zeta' \big) \; $   --- for all  $ \eta' , \zeta' \in \fg'_1 \, $  ---   defines a  $ \fg_0 $--valued  bracket on  $ \fg'_1 \, $:  along with the  $ \fg_0 $--action  on  $ \fg'_1 $  and the Lie bracket on  $ \fg_0 $  itself, this uniquely determines an overall bracket on  $ \, \fg' := \fg_0 \oplus \fg'_1 \, $.  By construction,  $ \fg' $  with this bracket is a Lie superalgebra isomorphic to  $ \fg \, $.
 \vskip3pt
%
%
%
 \vskip3pt
   Finally, we still have to check that the new odd root vectors  $ X_\beta $  we chose do satisfy   --- for the Lie superalgebra  $ \fg' $  ---   conditions  {\it (d)\/}  in  Definition \ref{def_che-bas}.  This follows by direct check: indeed, if  $ \, \gamma, \delta \in \Delta_1 \, $  with  $ \, \gamma + \delta \not= 0 \, $  then we have  $ \; [X_\gamma \, , \, X_\delta] = c_{\gamma,\delta} \, X_{\gamma+\delta} \; $  for some  $ \, c_{\gamma,\delta} \in \KK \, $,  and so
  $$  \big[ X_{-(\gamma+\delta)} \, , \, [X_\gamma \, , \, X_\delta] \big]  \; = \;  c_{\gamma,\delta} \, \big[ X_{-(\gamma+\delta)} \, , \, X_{\gamma+\delta} \big]  \; = \;  c_{\gamma,\delta} \, H_{-(\gamma+\delta)}   \eqno (3.3)  $$
On the other hand, the Jacobi identity gives
  $$  \displaylines{
   \big[ X_{-(\gamma+\delta)} \, , [X_\gamma \, , X_\delta] \big]  \; = \;  \big[ [X_{-(\gamma+\delta)} \, , X_\gamma] \, , X_\delta \big] \, + \, \big[ X_\gamma \, , [X_{-(\gamma+\delta)} \, , X_\delta] \big]  \; =   \hfill  \cr
   \hfill   = \,  [X_{-\delta} \, , X_\delta] + [X_\gamma \, , X_{-\gamma}]  \, = \,  -H_{-\delta} + H_\gamma  \, = \,  H_\delta + H_\gamma  \, = \,  H_{\gamma+\delta}  \, = \,  -H_{-(\gamma+\delta)}  \cr
 }  $$
Comparing with (3.3), this gives  $ \, c_{\gamma,\delta} = -1 \, $,  which actually proves that the conditions required in  Definition \ref{def_che-bas}{\it (d)\/}  actually do hold.
 \vskip3pt
   Tiding everything up, we eventually find that   --- by construction ---   the new odd root vectors actually complete our Chevalley basis, q.e.d.
\end{remark}

%

 %
 %

\chapter{Kostant superalgebras}  \label{kost-superalgebra}

 {\it
   In this chapter, we introduce the ``Kostant superalgebra''  $ \kzg $
attached to  $ \fg $  and to a fixed Chevalley basis  $ B $  of  $ \fg \, $.
This is a  $ \Z $--integral  form of  $ \, U(\fg) $,  defined as the unital
$ \, \Z $--subalgebra of  $ \, U(\fg) $  generated by odd root vectors,
divided powers in the even root vectors and binomial coefficients
in the Cartan elements (in  $ B \, $),  very much the same as in the
ordinary setting.
                                                              \par
   The properties of Lie brackets of elements in the
Chevalley basis  $ B $,  allow us to describe the ``commutation rules''
among the generators of  $ \, \kzg \, $.  Using these rules we can prove
``PBW-like Theorem'' for  $ \kzg \, $,  in the following terms: we fix any
total order of  $ B \, $,  and then   

\medskip

   ``$ \, \kzg $  is a free  $ \Z $--module,  and a  $ \Z $--basis
is given by the set of ordered   monomials in the generators of
$ \, \kzg \, $''.

\medskip

   The proof proceeds by standard arguments, using the commutation rules
to expand any unordered monomial in the generators as a
$ \Z $--linear  combination of ordered ones.

\smallskip

   As in ordinary setting, the Kostant form inside the
enveloping superalgebra of $\fg$ and its PBW--theorem play an
absolutely crucial role in the definition and construction of
Chevalley supergroups.

\smallskip

   Again the notion of Kostant superalgebra, and its description via a
PBW-like theorem  --- though not for all types ---
have appeared previously in the literature in the
works  \cite{ik},  \cite{sw}.  Here again, we are providing a unified
treatment and a more general version of the results available in
the literature.
 }

\bigskip

   Let $\KK$ be an algebraically closed field of characteristic zero.

\medskip

   Throughout this section we assume $\fg$ to be a classical Lie superalgebra, with  $ \fg $  not of type  $ A(1,1) $,  $ P(3) $,  $ Q(n )$  or  $ D(2,1;a) $  with  $ \, a \not\in \Z \, $.  We treat cases  {\it A-P-Q\/}  in  \S~\ref{cases A-P-Q},  while  $ D(2,1;a) $  with  $ \, a \not\in \Z \, $  is disposed of in  \cite{ga}.

\bigskip
\bigskip
\bigskip

  \section{Kostant's  $ \Z $--form}  \label{kost-form}

\bigskip

   For any  $ \KK $--algebra  $ A \, $,  we define
the  {\it binomial coefficients}
  $$  \bigg(\! {y \atop n} \!\bigg)  \; :=  \;  {\frac{\, y (y \! - \! 1) \cdots (y \! - \! n \! + \! 1) \,}{n!}}  $$
for all $ \, y \! \in \! A \, $,  $ \, n \in \N \, $.  We recall a (standard) classical result, concerning
$ \Z $--integral  valued polynomials in a polynomial algebra  $ \KK \big[ y_1, \dots, y_\ell \big] \, $:

\bigskip

\begin{lemma}  \label{int-polynomials}
   (cf.~\cite{hu},  \S 26.1)  Let  $ \, \KK[y_1, \dots, y_t] \, $  be the\/  $ \KK $--algebra  of polynomials in the indeterminates  $ y_1 \, $, $ \dots \, $,  $ y_t \, $.  Let also
  $$  \text{\it Int}_{\Z} \Big( \KK \big[ y_1, \dots, y_t \big] \Big)  \; := \;  \Big\{\, f \! \in \KK \big[ y_1, \dots, y_t \big] \;\Big|\, f(z_1, \dots, z_t) \in \Z \;\; \forall \, z_1, \dots, z_t \in \! \Z \,\Big\}  $$
Then  $ \, \text{\it Int}_{\Z} \big( \KK[y_1, \dots, y_t] \big) \, $  is a  $ \Z $--subalgebra  of  $ \, \KK[y_1, \dots, y_t] \, $,  which is free as a  $ \Z $--(sub)module,  with  $ \Z $--basis  $ \; \big\{\, {\textstyle \prod_{i=1}^t} \, \big({y_i \atop n_i}\big) \;\big|\; n_1, \dots, n_t \in \N \,\big\} \; $.
\end{lemma}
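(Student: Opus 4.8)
The plan is to establish the one-variable case first and then bootstrap to $\, t \, $ variables by means of finite differences. For $\, t = 1 \, $: the polynomials $\, \binom{y}{n} \, $, $\, n \in \N \, $, have degree exactly $\, n \, $, hence are $\KK$-linearly independent and form a $\KK$-basis of $\, \KK[y] \, $. Each of them is integer-valued --- at a non-negative integer $\, z \, $ this is the usual binomial coefficient, while at a negative integer $\, z = -m \, $ one has $\, \binom{-m}{n} = (-1)^n \binom{m+n-1}{n} \in \Z \, $ --- so the $\Z$-span of $\, \big\{ \binom{y}{n} \big\}_{n \in \N} \, $ is contained in $\, \text{\it Int}_\Z(\KK[y]) \, $. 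For the reverse inclusion, introduce the difference operator $\, (\Delta f)(y) := f(y+1) - f(y) \, $, which by Pascal's rule satisfies $\, \Delta\binom{y}{n} = \binom{y}{n-1} \, $ (reading $\, \binom{y}{m} := 0 \, $ for $\, m < 0 \, $). Given $\, f \in \text{\it Int}_\Z(\KK[y]) \, $, write $\, f = \sum_{n=0}^{\deg f} c_n \binom{y}{n} \, $ with $\, c_n \in \KK \, $; applying $\, \Delta^k \, $ and evaluating at $\, 0 \, $ isolates the coefficient, $\, c_k = (\Delta^k f)(0) \, $, because $\, \binom{0}{m} = \delta_{m,0} \, $ for $\, m \geq 0 \, $. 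Since $\, f \, $ is integer-valued, so is each iterated difference $\, \Delta^k f \, $, whence $\, c_k \in \Z \, $; this proves the claim for $\, t = 1 \, $.

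For general $\, t \, $ I would run the same argument variable by variable (an induction on $\, t \, $ works equally well). The products $\, \prod_{i=1}^t \binom{y_i}{n_i} \, $ form a $\KK$-basis of $\, \KK[y_1,\dots,y_t] \cong \KK[y_1] \otimes_\KK \cdots \otimes_\KK \KK[y_t] \, $ and are each integer-valued. Conversely, expanding $\, f \in \text{\it Int}_\Z(\KK[y_1,\dots,y_t]) \, $ as $\, f = \sum c_{n_1,\dots,n_t}\prod_i \binom{y_i}{n_i} \, $ and letting $\, \Delta_i \, $ be the difference operator acting on the $\, i \, $-th variable, the operators $\, \Delta_i \, $ commute and each preserves integer-valuedness, and iterating the one-variable computation gives $\, c_{n_1,\dots,n_t} = \big( \Delta_1^{\,n_1}\cdots\Delta_t^{\,n_t} f \big)(0,\dots,0) \in \Z \, $. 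Hence $\, \text{\it Int}_\Z(\KK[y_1,\dots,y_t]) \, $ is $\Z$-spanned by the $\, \prod_i \binom{y_i}{n_i} \, $, which are $\Z$-linearly independent (being even $\KK$-linearly independent), so it is free as a $\Z$-module on the asserted basis. That it is a $\Z$-subalgebra is then immediate: it contains $\, 1 \, $ and is closed under sums and products, since sums and products of integer-valued polynomials are integer-valued.

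The computation is entirely routine; the only points needing a little care are the identity $\, \Delta\binom{y}{n} = \binom{y}{n-1} \, $ and the triangularity it yields --- so that $\, (\Delta^k f)(0) \, $ recovers precisely the coefficient $\, c_k \, $ --- together with, in the multivariable step, the bookkeeping that the $\, \Delta_i \, $ commute and preserve $\Z$-valuedness. There is no genuine obstacle here: this is the classical fact recalled in \cite{hu}, \S 26.1, included for completeness because it underpins the very definition of the Kostant superalgebra $\, \kzg \, $.
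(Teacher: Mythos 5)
Your proof is correct: the triangularity $\Delta\big({y \atop n}\big) = \big({y \atop n-1}\big)$, the recovery of coefficients via $c_k = (\Delta^k f)(0)$ (and its multivariable iterate), and the preservation of integer-valuedness under $\Delta$ are exactly the classical finite-difference argument. The paper itself gives no proof of this lemma, quoting it as a standard fact from \cite{hu}, \S 26.1, and your argument is precisely that standard one, so there is nothing to add.
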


\bigskip

   Let  $ U(\fg) $  be the universal enveloping superalgebra of  $ \fg \, $.  We fix a Chevalley basis  $ \; B \, = \, \{ H_1 , \dots , H_\ell \} \, \coprod \, {\big\{ X_\alpha \big\}}_{\alpha \in \Delta} \, $  of  $ \fg $  as in  \S \ref{che_bas-alg},  and let  $ \, \fh_\Z \, $  be the free  $ \Z $--module  with basis  $ \, \{ H_1, \dots, H_\ell \} \phantom{\big|} $.  Given  $ \, h \in U(\fh) \, $,  we denote by  $ \, h(H_1,\dots,H_\ell) \, $  the expression of  $ h $  as a function of the  $ H_i $'s.  As immediate consequence of  Lemma \ref{int-polynomials},  we have the following:

\bigskip

\begin{corollary}  \label{int-polynom_kost} {\ }
 \vskip5pt
    {\it (a)} \,  $ \mathbb{H}_\Z := \Big\{\, h \! \in \! U(\fh) \;\Big|\; h\big(z_1,\dots,z_\ell\big) \in \Z \, , \,\; \forall \, z_1, \dots, z_\ell \in \Z \,\Big\} \, $  is a  {\sl free}  $ \Z $--sub\-module  of  $ \, U(\fh) \, $,  with basis  $ \, B_{U(\fh)} := \Big\{\, {\textstyle \prod_{i=1}^\ell \! \Big(\! {H_i \atop n_i} \!\Big)} \,\Big|\, n_1, \dots, n_\ell \! \in \! \N \,\Big\} \, $.
 \vskip3pt
   {\it (b)} \, The  $ \Z $--subalgebra  of  $ \, U(\fg) $  generated by all the elements  $ \, \Big(\! {{H - z} \atop n} \!\Big) \, $  with  $ \, H \in \fh_\Z \, $,  $ \, z \in \Z \, $,  $ \, n \in \N \, $,  coincides with  $ \, \mathbb{H}_\Z \, $.
\end{corollary}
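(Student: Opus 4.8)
The plan is to reduce the whole statement to Lemma~\ref{int-polynomials}. Since $\fh$ is abelian, the PBW theorem identifies $U(\fh)$ with the polynomial algebra $\KK[H_1,\dots,H_\ell]$, and under this identification the ``evaluation'' $h\mapsto h(z_1,\dots,z_\ell)$ introduced just above the corollary is simply the usual substitution $H_i\mapsto z_i$ into a polynomial. Hence $\mathbb{H}_\Z$ is literally $\text{\it Int}_\Z\big(\KK[H_1,\dots,H_\ell]\big)$, and part~\textit{(a)} is Lemma~\ref{int-polynomials} verbatim, taken with $t=\ell$ and $y_i=H_i$: in particular $\mathbb{H}_\Z$ is a $\Z$--subalgebra of $U(\fh)$ and the products $\prod_{i=1}^\ell\binom{H_i}{n_i}$ form a $\Z$--basis of it.

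For part~\textit{(b)}, write $R$ for the $\Z$--subalgebra of $U(\fg)$ generated by all the $\binom{H-z}{n}$ with $H\in\fh_\Z$, $z\in\Z$, $n\in\N$. To get $R\subseteq\mathbb{H}_\Z$, observe that each such generator is integer--valued: if $H=\sum_i a_iH_i$ with $a_i\in\Z$ (Definition~\ref{def_che-bas}\textit{(a)}), then at any integer point $(z_1,\dots,z_\ell)$ the element $H$ takes the integer value $m:=\sum_i a_iz_i$, so $\binom{H-z}{n}$ takes the value $\binom{m-z}{n}\in\Z$. Thus every generator of $R$ lies in $\mathbb{H}_\Z$, and since $\mathbb{H}_\Z$ is a subalgebra by \textit{(a)}, we conclude $R\subseteq\mathbb{H}_\Z$. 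For the reverse inclusion, specialize $H=H_i\in\fh_\Z$ and $z=0$: this shows $\binom{H_i}{n_i}\in R$ for every $i$ and every $n_i\in\N$. Hence each basis element $\prod_{i=1}^\ell\binom{H_i}{n_i}$ of $\mathbb{H}_\Z$ belongs to $R$, and as $R$ is closed under $\Z$--linear combinations, $\mathbb{H}_\Z\subseteq R$; so $R=\mathbb{H}_\Z$.

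I do not anticipate any genuine obstacle here; the statement is a bookkeeping consequence of Lemma~\ref{int-polynomials}. The only points needing minimal care are the well--definedness of the substitution $H_i\mapsto z_i$ on $U(\fh)$ — which holds precisely because $U(\fh)$ is a polynomial algebra in the $H_i$ — and the remark that elements of $\fh_\Z$, being $\Z$--combinations of the $H_i$, take integer values at integer points, so that the generators $\binom{H-z}{n}$ of $R$ genuinely sit inside $\mathbb{H}_\Z$.
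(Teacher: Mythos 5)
Your proof is correct and follows exactly the route the paper intends: the paper states the corollary as an immediate consequence of Lemma~\ref{int-polynomials}, via the identification $U(\fh)\cong\KK[H_1,\dots,H_\ell]$, and your two inclusions for part \textit{(b)} (generators $\binom{H-z}{n}$ are integer-valued since $\fh_\Z=\mathrm{Span}_\Z(H_1,\dots,H_\ell)$, and conversely the basis monomials $\prod_i\binom{H_i}{n_i}$ are products of generators with $z=0$) are precisely the bookkeeping the paper leaves implicit.
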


\bigskip

   Now, mimicking the classical construction, we define a  $ \Z $--form  of  $ U(\fg) \, $:

\bigskip

\begin{definition}  \label{def-kost-superalgebra}
   We call ``divided powers'' all elements  $ \;  X_\alpha^{(n)} \! := X_\alpha^n \big/ n! \, $,  \, for  $ \, \alpha \! \in \! \Delta_0 \, $,  $ \, n \! \in \! \N \, $.
 We call  {\sl Kostant superalgebra},  or Kostant's  $ \Z $--form  of  $ U(\fg) \, $,  the unital  $ \Z $--subsuperalgebra  of  $ U(\fg) \, $, denoted by  $ \kzg \, $,  generated by
  $$  X_\alpha^{(n)} \; ,  \!\quad X_\gamma \; ,  \!\quad  {\textstyle \Big(\! {H_i \atop n} \!\Big)}
\qquad \hfill \forall \;\; \alpha \! \in \! \Delta_0 \; , \; n \! \in \! \N \, ,
\; \gamma \! \in \! \Delta_1 \, , \; i = 1, \dots, \ell \, .  $$
\end{definition}

\bigskip

\begin{remarks}
   Let  $ \, \kzgz \, $  be the unital  $ \Z $--subalgebra  of  $
U(\fg_0) $  generated by the elements  $ \; X_\alpha^{(n)} \, $,  $
\, \Big(\! {H_i \atop n} \!\Big) \; $  with  $ \, \alpha \in \Delta_0
\, $,  $ \, n \in \N \, $.  This gives back ``almost'' a classical
object, namely the Kostant's  $ \Z $--form  of  $ U(\fg_0) \, $:  the
latter is defined (and well-known) in terms of a classical Chevalley
basis when  $ \fg_0 $  is semisimple   --- which is often, but not
always, the case for simple Lie superalgebras  $ \fg $  of classical
type; moreover, that definition can be easily extended to the
reductive case (depending on a choice ).  Nevertheless, in general the algebra
$ \, \kzgz \, $  we are considering is slightly different, namely a bit larger,
because of the way we chose the Cartan elements  $ H_1 \, $,  $ \dots $, 
$ H_\ell $  in the Chevalley basis.
\end{remarks}

\bigskip

   Another important classical result is the following:

\bigskip

\begin{lemma}  \label{comm_class-div-pow}
   (cf.~\cite{hu},  \S 26.2)  Let  $ \, \alpha \in \Delta_0 \, $,  and  $ \, m, n \in \N \, $.  Then
  $$  X_\alpha^{(n)} \, X_{-\alpha}^{(m)}  \,\; = \;\,  {\textstyle \sum\limits_{k=0}^{min(m,n)}} \; X_{-\alpha}^{(m-k)} \, {\textstyle \Big({{H_\alpha - m - n + 2 \, k} \atop k}\Big)} \, X_\alpha^{(n-k)}  $$
\end{lemma}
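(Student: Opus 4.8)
The plan is to reduce the statement to the classical rank-one case and then either invoke or reproduce the standard $\mathfrak{sl}_2$-identity. First I would note that, since $\alpha$ is an \emph{even} root, the elements $e := X_\alpha$, $h := H_\alpha$, $f := X_{-\alpha}$ satisfy $\,[h,e] = 2\,e\,$, $\,[h,f] = -2\,f\,$ and $\,[e,f] = \sigma_\alpha H_\alpha = H_\alpha = h\,$: here the sign $\sigma_\alpha$ equals $1$ because $\alpha \notin \Delta_1^-$, and $\alpha(H_\alpha)=2$ because $(\alpha,\alpha)\neq 0$ for every $\alpha\in\Delta_0$ (see \S\ref{root-syst}). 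Hence $\mathfrak{s} := \KK\,e \oplus \KK\,h \oplus \KK\,f$ is a Lie subalgebra of $\fg_0$ isomorphic to $\mathfrak{sl}_2$, and all the quantities appearing in the asserted formula — the divided powers $X_{\pm\alpha}^{(k)}$ and the Cartan binomials $\binom{H_\alpha - c}{k}$ — already lie in the subalgebra $U(\mathfrak{s})\subseteq U(\fg)$. Thus the identity is purely a statement inside $U(\mathfrak{sl}_2)$, where it is precisely \cite{hu}, \S26.2; one may cite that reference directly, or reproduce the short argument as sketched below.

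Working in $U(\mathfrak{sl}_2)$, I would first establish the two elementary ``one-step'' commutation rules that suffice to move a single $e$ leftward through a monomial. From $[e,f]=h$ and $[h,f]=-2f$ one gets, by induction on $m$,
$$ e\, f^{(m)} \;=\; f^{(m)}\, e \;+\; f^{(m-1)}\,(h - m + 1) \,; $$
and from $h\,e = e\,(h+2)$ one gets, for all $c\in\Z$ and $k\in\N$,
$$ {\textstyle\binom{h - c}{k}}\, e \;=\; e\, {\textstyle\binom{h - c + 2}{k}} \,, \qquad {\textstyle\binom{h - c}{k}}\, f \;=\; f\, {\textstyle\binom{h - c - 2}{k}} \,. $$
These record exactly how the argument of a Cartan binomial is shifted by $\pm 2$ at each passage through $f^{(\,\cdot\,)}$ or $e^{(\,\cdot\,)}$.

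Next I would prove the identity by induction on $n$, uniformly in $m$. The base case $n=0$ is immediate: only the $k=0$ summand survives on the right, and both sides equal $X_{-\alpha}^{(m)}$. For the inductive step, write $e^{(n)} = \frac1n\, e\, e^{(n-1)}$, apply the inductive hypothesis to $e^{(n-1)} f^{(m)}$, and push the leftmost $e$ through each resulting summand $f^{(m-k)}\binom{h-c_k}{k} e^{(n-1-k)}$ using the two rules above. This produces two sums of monomials of the shape $f^{(m-j)}\binom{h-\ast}{j} e^{(n-j)}$ whose binomial arguments, after re-indexing, differ only in the lower index; collecting them via the Pascal rule $\binom{y}{j} + \binom{y}{j-1} = \binom{y+1}{j}$ — together with the factor $\frac1n$, the multiplicities $n-j$, and the ``$h-m+\cdots$'' terms coming from the first rule — assembles them into the single sum $\sum_{k}\, f^{(m-k)}\binom{h - m - n + 2k}{k}\, e^{(n-k)}$, as claimed.

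The main obstacle is purely bookkeeping: tracking the $\pm 2$ shifts in the arguments of the Cartan binomials as $e$ is moved past the divided powers, and verifying that the two sums arising in the inductive step really do recombine — after the correct re-indexing — into the asserted closed form, with the delicate interplay between the divided-power normalizations $1/n!$, $1/m!$ and the binomial arguments. Nothing here is conceptually subtle (which is exactly why the classical treatment in \cite{hu} serves as the template), but the combinatorial identities must be handled with care.
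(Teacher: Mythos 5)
Your proposal is correct and matches the paper's treatment: the paper gives no proof of this lemma, simply invoking the classical identity from \cite{hu}, \S 26.2, which is legitimate since (as you observe) $X_\alpha$, $H_\alpha$, $X_{-\alpha}$ span an $\mathfrak{sl}_2$-triple for any even root $\alpha$ (even roots being non-isotropic, so $\alpha(H_\alpha)=2$ and $\sigma_\alpha=1$), and the whole identity lives in $U(\mathfrak{sl}_2)\subseteq U(\fg)$. Your sketched induction on $n$ with the one-step rules $e\,f^{(m)}=f^{(m)}e+f^{(m-1)}(h-m+1)$ and the $\pm2$ binomial shifts is exactly the standard argument behind that citation.
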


\bigskip
\bigskip
\bigskip

  \section{Commutation rules}  \label{comm-rul_kost}

\medskip
 \vskip7pt   

   In the classical setup, a description of  $ \kzgz $  comes from a ``PBW-like'' theorem: namely,  $ \kzgz $  is a free  $ \Z $--module  with  $ \Z $--basis  the set of ordered monomials (w.~r.~to any total order) whose factors are divided powers in the root vectors  $ X_\alpha $  ($ \alpha \in \Delta_0 \, $)  or binomial coefficients in the  $ H_i $  ($ \, i = 1, \dots, \ell \, $).
                                        \par
   We shall prove a similar result in the ``super-framework''.  Like in the classical case, this follows from a direct analysis of commutation rules among divided powers in the even root vectors, binomial coefficients in the  $ H_i $'s  and odd root vectors.  To perform such an analysis, we list hereafter all such rules; in particular, we also need to consider slightly more general relations.
%
%
                                        \par
   The relevant feature is that all coefficients in these relations are in  $ \Z \, $.

 \vskip5pt

   We split the list into two sections:  {\it (1)\/}  relations involving only  {\sl even\/}  generators (known by classical theory);  {\it (2)\/}  relations involving also  {\sl odd\/}  generators.

 \vskip5pt

   All these relations are proved via simple induction arguments: the classical ones (in the first list) are well-known (see  \cite{hu},  \S 26),  and the new ones are proved in a similar way, using  Theorem \ref{exist_che-bas}.  Details are left to the reader.
%
%
%
%

\noindent
 {\bf (1) Even generators only  {\rm  (that is  $ \, \Big(\! {H_i \atop m} \!\Big) $'s  and  $ \, X_\alpha^{(n)} $'s  only,  $ \, \alpha \in \Delta_0 \, $)}:}
 \vskip1pt
  $$  \displaylines{
   \hfill   {\textstyle \Big({H_i \atop n}\Big)} \, {\textstyle \Big({H_j \atop m}\Big)}  \; = \;  {\textstyle \Big({H_j \atop m}\Big)} \, {\textstyle \Big({H_i \atop n}\Big)}   \qquad    \hfill (4.1)  \cr
   \hfill   \phantom{{}_{\big|}} \forall \;\; i,j \in \{1,\dots,\ell\} \, , \;\;\; \forall \;\; n, m \in \N   \qquad \qquad  \cr
   \hfill   X_\alpha^{(n)} \, f(H)  \; = \;  f\big(H - n \; \alpha(H)\big) \, X_\alpha^{(n)}   \qquad   \hfill (4.2)  \cr
   \hfill   \phantom{{}_{\big|}} \forall \;\; \alpha \in \Delta_0 \, , \; H \in \fh \, , \; n \in \N \, , \; f(T) \in \KK[T]   \qquad \qquad  \cr
   \hfill   X_\alpha^{(n)} \, X_\alpha^{(m)}  \; = \;  {\textstyle \Big(\! {{n \, + \, m} \atop m} \!\Big)} \, X_\alpha^{(n+m)}   \quad \qquad  \forall \; \alpha \in \Delta_0 \, , \;\; \forall \;\; n, m \in \N  \phantom{{}_{\Big|}}   \hfill (4.3)  \cr
   \hfill   X_\alpha^{(n)} \, X_\beta^{(m)}  \; = \;  X_\beta^{(m)} \, X_\alpha^{(n)} \, + \, \textit{l.h.t}   \quad \qquad  \forall \; \alpha, \beta \in \Delta_0 \, , \;\; \forall \;\; n, m \in \N   \hfill (4.4)  }  $$
where  {\it l.h.t.}~stands for a  $ \Z $--linear  combinations of monomials in the  $ X_\delta^{(k)} $'s  and in the $ \Big(\! {H_i \atop c} \!\Big) $'s  whose ``height''   --- that is, by definition, the sum of all ``exponents''  $ k $ occurring in such a monomial ---   is less than  $ \, n+m \, $.
                                         \par
   A  {\sl special case\/}  is the following (already seen in  Lemma \ref{comm_class-div-pow}):
 \vskip-7pt
  $$  \displaylines{
   \hfill   X_\alpha^{(n)} \, X_{-\alpha}^{(m)}  \,\; = \;\,  {\textstyle \sum_{k=0}^{min(m,n)}} \; X_{-\alpha}^{(m-k)} \, {\textstyle \Big( {{H_\alpha \, - \, m \, - \, n \, + \, 2 \, k} \atop k} \Big)} \, X_\alpha^{(n-k)}   \qquad   \hfill (4.5)  \cr
   \hfill   \forall \; \alpha \in \Delta_0 \, , \;\; \forall \;\; m, n \in \N   \qquad \qquad  }  $$
 \vskip21pt

\noindent
 {\bf (2) Odd and even generators  {\rm  (also involving the  $ X_\gamma $'s,  $ \, \gamma \in \Delta_1 \, $)}:}
 \vskip1pt
  $$  \displaylines{
   \hfill   X_\gamma \, f(H)  \; = \;  f\big(H - \gamma(H)\big) \, X_\gamma   \qquad   \hfill (4.6)  \cr
   \hfill   \phantom{{}_{\big|}} \forall \;\; \gamma \in \Delta_1 \, , \; h \in \fh \, , \; f(T) \in \KK[T]   \qquad \qquad  \cr
   \hfill   c_{\gamma,\gamma} \, X_{2\,\gamma}  \; = \;  \big[ X_\gamma, X_\gamma \big]  \; = \;  2 \, X_\gamma^2 \quad ,  \qquad  \forall \;\; \gamma \in \Delta_1   \hfill (4.7)  \cr
   \text{hence}   \hfill   2 \, \gamma \not\in \Delta  \;\; \Longrightarrow \;\;  X_\gamma^n = 0 \; ,  \qquad  \forall \;\; n \geq 2  \hfill (4.8) \cr
   \text{and}   \hfill   2 \, \gamma \in \Delta  \;\; \Longrightarrow \;\;  X_\gamma^2 = c_{\gamma,\gamma} \big/ 2 \cdot X_{2\,\gamma} = \pm 2 \, X_{2\,\gamma}   \hfill (4.9)  \cr
   \big(\, \text{because} \;\;\;  c_{\gamma,\gamma} = \pm 4  \quad  \text{if}  \quad  \gamma \, , \, 2 \, \gamma \in \Delta \, , \;\; \text{see  Definition \ref{def_che-bas}} \,\big)  \phantom{{}_{\big|}}   \hfill  \cr
   \hfill   X_{-\gamma} \, X_\gamma  \; = \;  - X_\gamma \, X_{-\gamma} \, + \, H_\gamma   \hskip29pt  \forall \;\, \gamma \in \Delta_1 \cap \big(\! -\Delta_1 \big)   \hfill (4.10)  \cr
   \text{with}  \quad  H_\gamma := \big[ X_\gamma \, , \, X_{-\gamma} \big] \, \in \, \fh_\Z \; ,  \phantom{{}_{\big|}}  \cr
   \hfill   X_\gamma \, X_\delta  \; = \;  - X_\delta \, X_\gamma \, + \, c_{\gamma,\delta} \, X_{\gamma + \delta} \;\, ,   \hfill   \;\; \phantom{{}_{|}} \forall \;\, \gamma , \delta \in \Delta_1 \, , \; \gamma + \delta \not= 0   \hskip11pt (4.11)  \cr
   \text{with  $ \, c_{\gamma,\delta} \, $  as in  Definition \ref{def_che-bas},}  \phantom{{}_{\big|}}  \cr
   \hfill   X_\alpha^{(n)} \, X_\gamma  \; = \;  X_\gamma \, X_\alpha^{(n)} \, + \, {\textstyle \sum_{k=1}^n} \, \Big( {\textstyle \prod_{s=1}^k} \, \varepsilon_s \Big) \, {\textstyle \Big(\! {{r \, + \, k} \atop k} \!\Big)} \, X_{\gamma + k \, \alpha} \, X_\alpha^{(n-k)}   \hfill (4.12)  \cr
   \hfill   \forall \;\; n \in \N \, ,  \;\;\; \forall \;\; \alpha \in \Delta_0 \, , \; \gamma \in \Delta_1  \, : \;  \alpha \not= \pm 2 \, \gamma \, ,   \phantom{{}_{\big|}}  \quad  \cr
   \text{with} \hskip9pt  \sigma^\alpha_\gamma = \big\{ \gamma - r \, \alpha \, , \dots , \gamma \, , \dots , \gamma + q \, \alpha \,\big\} \; ,  \hskip7pt  X_{\gamma + k \, \alpha} := 0  \,\;\text{\ if\ }\;  (\gamma \! + \! k \, \alpha) \not\in \Delta \, ,   \hfill  \cr
   \text{and}  \hskip11pt   \varepsilon_s = \pm 1  \text{\;\;\ such that \ }  \big[ X_\alpha \, , X_{\gamma + (s-1) \, \alpha} \big] = \, \varepsilon_s \, (r+s) \, X_{\gamma + s \, \alpha} \;\; ,  \phantom{{}_{\big|}}  \hfill  \cr
   \hfill   X_\gamma \, X_\alpha^{(n)}  \; = \;  X_\alpha^{(n)} \, X_\gamma \;\; ,  \qquad  X_{-\gamma} \, X_{-\alpha}^{\,(n)}  \; = \;  X_{-\alpha}^{\,(n)} \, X_{-\gamma}   \hskip21pt   \hfill (4.13)  \cr
   \hfill   X_{-\gamma} \, X_\alpha^{(n)}  \; = \;  X_\alpha^{(n)} \, X_{-\gamma} \, + \, z_\gamma \; \gamma(H_\gamma) \; X_\alpha^{(n-1)} \, X_\gamma   \hskip21pt   \hfill (4.14)  \cr
   \hfill   X_\gamma \, X_{-\alpha}^{\,(n)}  \; = \;  X_{-\alpha}^{\,(n)} \, X_\gamma \, - \, z_\gamma \; \gamma(H_\gamma) \; X_{-\alpha}^{\,(n-1)} \, X_{-\gamma}   \hskip21pt \phantom{{}_{|}}   \hfill (4.15)  \cr
   \hfill   \forall \;\; n \in \N \; ,  \;\;\; \forall \;\; \gamma \in \Delta_1 \; , \;\; \alpha = 2 \, \gamma \in \Delta_0 \; , \;\; z_\gamma := c_{\gamma,\gamma} / 2 = \pm 2   \qquad  }  $$

\bigskip

\begin{remark}
   In  \cite{sw}   --- dealing with the orthosymplectic case only ---   the following commutation formula is given
%
  $$  X_\gamma \, {H_i \choose t}  \; = \;  \sum\nolimits_{r=0}^t \, {(-1)}^{t-r} \, {\gamma(H_i) \choose {t \, - \, r}} \cdot {H_i \choose r} \, X_\gamma  $$
for all  $ \, H_i = H_{\alpha_i} \, $,  with  $ \, \alpha_i \in
\Delta_0 \, $  simple, and  $ \, \gamma \in \Delta_1 \, $.
Actually, this is equivalent to (4.6), because the  $ \Big(\! {H_i
\atop m} \!\Big) $'s  generate the polynomials in  $ H_i \, $,
and in general the following identity holds:
%
%
  $$  {{H_\alpha \, - \, \gamma(H_\alpha)} \choose t}  \; = \;  \sum\nolimits_{r=0}^t \, {(-1)}^{t-r} \, {\gamma(H_\alpha) \choose {t \, - \, r}} \cdot {H_\alpha \choose r}  $$
\end{remark}

\bigskip
\bigskip
\bigskip

  \section{Kostant's PBW-like theorem}  \label{kost-theo}

\bigskip

   We are now ready to state and prove our super-version of Kostant's theorem:

\bigskip

\begin{theorem}  \label{PBW-Kost}
 The Kostant superalgebra  $ \, \kzg \, $  is a free\/  $ \Z $--module.  For any given total order  $ \, \preceq \, $  of the set  $ \, {\Delta} \cup \big\{ 1, \dots, \ell \big\} \, $,  a  $ \, \Z $--basis  of  $ \kzg $  is the set  $ {\mathcal B} $  of ordered ``PBW-like monomials'', i.e.~all products (without repeti\-tions) of factors of type  $ X_\alpha^{(n_\alpha)} $,  $ \Big(\! {H_i \atop n_i} \!\Big) $  and  $ \, X_\gamma $   --- with  $ \, \alpha \in \Delta_0 \, $,  $ \, i \in \big\{ 1, \dots, \ell \big\} \, $,  $ \, \gamma \in \Delta_1 \, $  and  $ \, n_\alpha $,  $ n_i  \in \N $  ---   taken in the right order with respect to  $ \, \preceq \; $.
\end{theorem}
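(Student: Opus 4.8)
The plan is to mimic the classical PBW argument for Kostant's $\Z$-form, using the commutation rules collected in \S\ref{comm-rul_kost} as the engine of a straightening algorithm. First I would establish that $\mathcal{B}$ \emph{spans} $\kzg$ over $\Z$. By Definition \ref{def-kost-superalgebra}, $\kzg$ is generated as a unital $\Z$-algebra by the $X_\alpha^{(n)}$ (for $\alpha\in\Delta_0$), the $X_\gamma$ (for $\gamma\in\Delta_1$) and the $\binom{H_i}{n}$; hence $\kzg$ is $\Z$-spanned by arbitrary (unordered, possibly repeated) monomials in these generators. I would then show any such monomial is a $\Z$-linear combination of elements of $\mathcal{B}$ by induction. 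The relations (4.3) and (4.9) (together with (4.8)) let me collapse repeated occurrences of the same even root vector, resp.\ of the same odd root vector, into a single divided power, resp.\ into a single $X_\gamma$ or an even divided power of $X_{2\gamma}$; similarly Corollary \ref{int-polynom_kost} shows any polynomial expression in the $H_i$'s that is $\Z$-integral-valued is a $\Z$-combination of the basis monomials $\prod\binom{H_i}{n_i}$, so products of ``Cartan'' factors can be reorganized into the prescribed form. The relations (4.1), (4.2), (4.4), (4.5), (4.6), (4.10), (4.11), (4.12), (4.13), (4.14), (4.15) allow me to transpose any two adjacent out-of-order factors at the cost of (a) a sign, and (b) lower terms — here ``lower'' must be measured by a carefully chosen well-ordered complexity, typically the pair (total height, number of inversions with respect to $\preceq$) ordered lexicographically, where ``height'' is the sum of all exponents as in \S\ref{comm-rul_kost}. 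Since every commutation rule replaces a monomial by its reordering plus strictly lower-complexity monomials, a double induction (outer on height, inner on inversions) terminates, proving the spanning claim.

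Second I would prove $\mathcal{B}$ is $\Z$-\emph{linearly independent}. The clean way is to pass to the $\KK$-span: the ordinary PBW theorem for $U(\fg)$ says that ordered monomials in any homogeneous basis of $\fg$ are a $\KK$-basis of $U(\fg)$, and a Chevalley basis $B$ of $\fg$ is such a homogeneous basis (Remark \ref{rem-Chev_1}(2), \ref{rem-Chev_2}(1)). Each element of $\mathcal{B}$, after clearing the factorials and binomial denominators, is a nonzero rational scalar times an ordered PBW monomial of $U(\fg)$ in the $X_\alpha$, $X_\gamma$, $H_i$; distinct elements of $\mathcal{B}$ (which differ in their multiset of exponents $n_\alpha$, $n_i$, or in the presence of $X_\gamma$) give distinct ordinary PBW monomials — using (4.7)/(4.9) to identify which odd-root monomials survive — so they are $\KK$-linearly independent inside $U(\fg)$, hence a fortiori $\Z$-linearly independent. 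Combined with the first part, $\mathcal{B}$ is a $\Z$-basis, so $\kzg$ is free over $\Z$.

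The main obstacle, and the part demanding genuine care rather than routine bookkeeping, is the termination of the straightening algorithm in the presence of the odd generators. Classically one only needs the height to strictly drop when straightening (4.4)/(4.5); here the odd-sector relations (4.11) and especially (4.12), (4.14), (4.15) produce terms like $X_{\gamma+k\alpha}X_\alpha^{(n-k)}$ that trade one divided power of height $n$ for a product of height $1+(n-k)<1+n$ but which may reintroduce odd root vectors attached to \emph{different} roots — so one must check that the complexity measure (height, then inversion count) is genuinely reduced and that no infinite regress among odd roots is possible; this is where the finiteness of $\Delta$ and the precise $\pm(r{+}1)$-type bounds in Definition \ref{def_che-bas}(d) are used. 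One also has to handle with slight extra attention the degenerate configuration $\alpha=2\gamma$, governed by (4.13)--(4.15), where $X_\gamma$ and $X_\alpha^{(n)}$ do \emph{not} simply commute; but since $2\gamma\in\Delta_0$ forces $z_\gamma=\pm2\in\Z$, all coefficients remain integral and the same inductive scheme applies. Once termination and integrality of all intermediate coefficients are secured, the theorem follows exactly as in the classical case (cf.\ \cite{hu}, \S26).
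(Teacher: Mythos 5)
Your proposal is correct and follows essentially the same route as the paper's proof: a straightening algorithm driven by the commutation rules of \S\ref{comm-rul_kost}, with termination controlled by a height/inversion-type complexity (the paper additionally tracks the number of factors, which is what strictly drops when adjacent divided powers $X_\alpha^{(n)}X_\alpha^{(m)}$ are merged via (4.3)), followed by linear independence over $\Z$ deduced from the super-PBW theorem for $U(\fg)$ over $\KK$. The differences are only in bookkeeping, not in substance.
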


\begin{proof}  Let us call ``monomial'' any product in any possible order, possibly with repetitions, of several  $ X_\alpha^{(n_\alpha)} $'s,  several  $ \Big(\! {{H_i - z_i} \atop s_i} \!\Big) $'s   --- with  $ z_i \! \in \! \Z $  ---   and several  $ X_\gamma^{\,m_\gamma} $'s, $m_\gamma \in \N$.
For any such monomial  $ \M \, $,  we define three numbers, namely:
 \vskip4pt
%
%
   --- its ``height''  $ \, \textit{ht}(\M) \, $,  \, namely the sum of all  $ n_\alpha $'s  and  $ m_\gamma $'s  occurring in  $ \M $  (so it does not depend on the binomial coefficients in the  $ H_i $'s);
 \vskip3pt
   --- its ``factor number''  $ \, \textit{fac}(\M) \, $,  \, defined to be the total number of factors (namely  $ X_\alpha^{(n_\alpha)} $,  $ \Big(\! {{H_i - z_i} \atop n_i} \!\Big) $  or  $ X_\gamma \, $)  within  $ \M $  itself;
 \vskip3pt
   --- its ``inversion number''  $ \, \textit{inv}(\M) \, $,  \, which is the sum of all inversions of the order  $ \, \preceq \, $  among the indices of factors in  $ \M $  when read from left to right.
 \vskip5pt
   We can now act upon any such  $ \M $  with any of the following operations:
 \vskip4pt
   {\it --\,(1)} \,  we move all factors  $ \Big(\! {H_i - z_i \atop s} \!\Big) $  to the leftmost position, by repeated use of relations (4.2) and (4.6): this produces a new monomial  $ \M' $  multiplied on the left by a suitable product of several (new) factors  $ \Big(\! {{H_i - \check{z}_i} \atop s_i} \!\Big) \, $;
 \vskip4pt
   {\it --\,(2)} \,  we re-write any power of an odd root vector, say  $ \, X_\gamma^{\,n_\gamma} $,  $ \, n_\gamma \! > \! 1 \, $,  \, as
  $$  X_\gamma^{\,n_\gamma}  \, = \;  X_\gamma^{\,2 \, d_\gamma + \epsilon_\gamma}  \, = \;  z \, X_{2 \gamma}^{\,d_\gamma} \, X_\gamma^{\,\epsilon_\gamma}  \, = \;  z \, d_\gamma! \; X_{2 \gamma}^{(d_\gamma)} \, X_\gamma^{\,\epsilon_\gamma}  $$
for some  $ \, z \in \Z \, $,  $ \, d_\gamma \in \N \, $,  $ \, \epsilon_\gamma \in \{0,1\} \, $  with
$ \, n_\gamma = 2 \, d_\gamma + \epsilon_\gamma \, $,  using (4.7--9);
 \vskip4pt
   {\it --\,(3)} \,  whenever two or more factors  $ X_\alpha^{(n)} $'s  get side by side, we splice them together into a single one, times an integral coefficient, using (4.3);
 \vskip4pt
   {\it --\,(4)} \,  whenever two factors within  $ \M $  occur side by side  {\sl in the wrong order w.~r.~to\/}  $ \preceq \; $,  i.e.~we have any one of the following situations
  $$  \displaylines{
   \M  \; = \;  \cdots \, X_\alpha^{(n_\alpha)} \, X_\beta^{(n_\beta)} \cdots \quad ,
 \qquad  \M  \; = \;   \cdots \, X_\alpha^{(n_\alpha)} \, X_\gamma^{\,m_\gamma} \cdots  \cr
   \M  \; = \;  \cdots \, X_\delta^{\,m_\delta} \, X_\alpha^{(n_\alpha)} \cdots \quad ,
 \qquad  \M  \; = \;  \cdots \, X_\gamma^{\,m_\gamma} \, X_\delta^{\,m_\delta} \cdots  }  $$
with  $ \, \alpha \succneqq \beta \, $,  $ \, \alpha \succneqq \gamma \, $,  $ \, \delta \succneqq \alpha \, $  and  $ \, \gamma \succneqq \delta \, $  respectively, we can use all relations (4.4--5) and (4.10--18) to re-write this product of two distinguished factors, so that all of  $ \M $  expands into a  $ \Z $--linear  combination of new monomials.
 \vskip7pt
   By definition,  $ \kzg $  is spanned over  $ \Z $  by all (unordered) monomials in the  $ X_\alpha^{(n)} $'s,  the $\Big( {{H_i} \atop n_i} \Big)$
and the  $ X_\gamma $'s.
Let us consider one of these, say  $ \M \, $.
                                            \par
   First of all,  $ \M $  is a PBW-like monomial, i.e.~one in  $ {\mathcal B} \, $,  if and only if no one of steps  {\it (2)\/}  to  {\it (4)\/}  may be applied.  But if not, we now see what is the effect of applying such steps.  We begin with  {\it (1)\/}:  applied to  $ \M \, $,  it gives
  $$  \M  \; = \;  {\mathcal H} \, \M'  $$
where  $ {\mathcal H} $  is some product of  $ \Big(\! {H_i - \check{z}_i \atop s_i} \Big) \, $'s,  and  $ \M' $  is a new monomial such that
  $$  \textit{ht}\big(\M'\big) = \textit{ht}\big(\M\big) \; ,  \quad  \textit{fac}\big(\M'\big) \leq \textit{fac}\big(\M\big) \; ,  \quad  \textit{inv}\big(\M'\big) \leq \textit{inv}\big(\M\big)  $$
and the strict inequality in the middle holds if and only if  $ \, {\mathcal H} \not= 1 \, $,  \, i.e.~step  {\it (1)\/}  is non-trivial.  Indeed, all this is clear when one realizes that  $ \M' $  is nothing but  ``$ \M $  ripped off of all factors  $ \Big(\! {{H_i - z_i} \atop s_i} \!\Big) $'s.''
 \vskip4pt
   Then we apply any one of steps  {\it (2)},  {\it (3)\/}  or  {\it (4)\/}  to  $ \M' \, $.  Step  {\it (2)\/}
gives
  $$  \M'  \; = \;  z \, \M''  \quad ,   \eqno  \text{with\ \ } \textit{ht}\big(\M''\big) \, \lneqq \, \textit{ht}\big(\M'\big)  \hskip1pt \qquad  $$
for some  $ \, z \in \Z \, $  and some monomial  $ \M'' \, $ (possibly zero).
%
%
   Step  {\it (3)\/}  yields
  $$  \M'  \; = \;  z \, \M^\vee  \;\; ,   \eqno  \text{with\ \ } \textit{ht}\big(\M^\vee\big) \! = \textit{ht}\big(\M'\big) \, ,  \,\; \textit{fac}\big(\M^\vee\big) \lneqq \textit{fac}\big(\M'\big)  \quad  $$
for some  $ \, z \in \Z \, $  and some monomial  $ \M^\vee \, $.  Finally, step  {\it (4)\/}  instead gives
  $$  \displaylines{
   \hskip35pt   \M'  \; = \;\,  \M^\curlyvee + \; {\textstyle \sum_k} \; z''_k \, \M_k \;\; ,   \hfill  \text{with\ \ } \; \textit{ht}\big(\M_k\big) \lneqq \textit{ht}\big(\M'\big) \quad \forall \;\, k \; ,  \quad  \cr
   \hfill   \text{and\ \ } \; \textit{ht}\big(\M^\curlyvee\big) = \textit{ht}\big(\M'\big) \; ,  \;\; \textit{inv}\big(\M^\curlyvee\big) \lneqq \textit{inv}\big(\M'\big)  \quad  }  $$
where  $ \, z_k \in \Z \, $  (for all  $ k \, $),  and  $ \M^\curlyvee $  and the  $ \M_k $'s  are monomials.
 \vskip5pt
   In short, through either step  {\it (2)},  or  {\it (3)},  or  {\it (4)},  we achieve an expansion
  $$  \M'  \; = \;  {\textstyle \sum_k} \; z''_k \, {\mathcal H} \, \M'_k \;\; ,  \qquad  z_k \in \Z   \eqno (4.16)  $$
where   --- unless the step is trivial, for then we get all equalities ---   we have
  $$  \Big( \textit{ht}\big(\M'_k\big) \! \lneqq \textit{ht}\big(\M'\big) \!\Big) \vee \Big( \textit{fac}\big(\M'_k\big) \! \lneqq \textit{fac}\big(\M'\big) \!\Big) \vee \Big( \textit{inv}\big(\M'_k\big) \! \lneqq \textit{inv}\big(\M'\big) \!\Big)   \eqno (4.17)  $$
   \indent   Now we can repeat, namely we apply step  {\it (1)\/}  and step  {\it (2)\/}  or  {\it (3)\/}  or  {\it (4)\/}  to every monomial  $ \M_k $  in (4.16); and then we iterate.  Thanks to (4.17), this process will stop after finitely many iterations.  The outcome then will read
 \vskip-7pt
  $$  \M'  \; = \;  {\textstyle \sum_j} \; \dot{z}_j \, {\mathcal H''_j} \, \M''_j \;\; ,  \qquad  \dot{z}_j \in \Z   \eqno (4.18)  $$
where  $ \; \textit{inv}\big(\M''_j\big) = 0 \; $  for every index
$ j \, $,  i.e.~all monomials  $ \M''_j $  are ordered and without
repetitions, that is they belong to  $ {\mathcal B} \, $.  Now
each  $ {\mathcal H''_j} $  belongs to  $ \mathbb{H}_\Z $  (notation of  Corollary \ref{int-polynom_kost}),  just by construction.  Then  Corollary \ref{int-polynom_kost}  ensures that each  $ {\mathcal H''_j} $  expand into a  $ \Z $--linear  combination of ordered monomials in the  $ \Big(\! {H_i \atop n_i} \!\Big) $'s.  Therefore (4.18) yields
  $$  \M  \; = \;  {\textstyle \sum_s} \;\, \hat{z}_s \; {\mathcal H^\wedge_s} \, \M^\wedge_s \;\; ,  \qquad  \hat{z}_s \in \Z   \eqno (4.19)  $$
where every  $ {\mathcal H^\wedge_s} $  is an ordered monomial,
without repetitions, in the  $ \Big(\! {H_i \atop n_i} \!\Big) $'s,
while for each  index  $ s $  we have  $ \, \M^\wedge_s = \M''_j
\, $  for some  $ j \, $   --- those in (4.18).
                                            \par
   Using again   --- somehow ``backwards'', say ---   relations (4.2) and (4.6), we can ``graft'' every factor  $ \Big(\! {H_i \atop n_i} \!\Big) \, $  occurring in each  $ {\mathcal H^\wedge_s} $  in the right position inside the monomial  $ \M^\wedge_s \, $,  so to get a new monomial  $ \M^\circ_s $  which is ordered, without repetitions, but might have factors of type  $ \Big(\! {{H_i - z_i} \atop m_i} \!\Big) $  with  $ \, z_i \in \Z \setminus \{0\} \, $   --- thus not belonging to  $ {\mathcal B} \, $.  But then  $ \, \Big(\! {{H_i - z_i} \atop m_i} \!\Big) \in \mathbb{H}_\Z \, $,  hence again  by  Corollary \ref{int-polynom_kost}  that factor expands into a  $ \Z $--linear  combination of ordered monomials, without repetitions, in the  $ \Big(\! {H_j \atop \ell_j} \!\Big) $'s.  Plugging every such expansion inside each monomial  $ \M^\wedge_s $  instead of each  $ \Big(\! {{H_i - z_i} \atop m_i} \!\Big) $   ---  $ \, i = 1, \dots, \ell \, $  ---   we eventually find
  $$  \M  \; = \;  {\textstyle \sum_r} \;\, z_r \; \M^!_r \;\; ,  \qquad  z_r \in \Z   \eqno (4.20)  $$
 \vskip3pt
\noindent
 where now every  $ \M^!_r $  is a PBW-like monomial,  i.e.~$ \, \M^!_r \in {\mathcal B} \, $  for every  $ r \, $.
 \vskip5pt
   Since  $ \kzg \, $,  by definition, is spanned over  $ \Z $  by all monomials in the  $ X_\alpha^{(n)} $'s,  the  $ X_\gamma $'s  and the  $ \Big(\! {H_i \atop m} \!\Big) $'s,  by the above  $ \kzg $  is contained in  $ \, \textit{Span}_{\,\Z}({\mathcal B}) \, $.  On the other hand, by definition and by  Corollary \ref{int-polynom_kost},  $ \textit{Span}_{\,\Z}({\mathcal B}) $  in turn is contained in  $ \kzg \, $.  So  $ \; \kzg = \textit{Span}_{\,\Z}({\mathcal B}) \, $,  i.e.~$ {\mathcal B} $  spans  $ \kzg $  over  $ \Z \, $.
 \vskip5pt
   At last, the ``PBW theorem'' for Lie superalgebras over fields ensures that  $ {\mathcal B} $  is a  $ \KK $--basis  for  $ U(\fg) \, $,  because  $ \; B := \big\{ H_1 , \dots , H_\ell \big\} \, {\textstyle \coprod} \, \big\{ X_\alpha \,\big|\, \alpha \in \Delta \big\} \; $  is a  $ \KK $--basis  of  $ \fg \, $  (cf.~\cite{vsv}).  Thus  $ {\mathcal B} $  is linearly independent over  $ \KK \, $,  hence over  $ \Z \, $.  Therefore  $ {\mathcal B} $  is a  $ \Z $--basis  for  $ \kzg \, $,  and the latter is a free  $ \Z $--module.
\end{proof}

\bigskip

\begin{remarks}  {\ }
 \vskip3pt
   {\it (a)} \,  To give an example, let us fix any total order  $ \preceq $  in  $ \, \Delta \cup \big\{ 1, \dots, \ell \big\} \, $  such that  $ \; \Delta_0 \preceq \big\{ 1, \dots, \ell \big\} \preceq \Delta_1 \;\, $.  Then the basis  $ {\mathcal B} $  from  Theorem \ref{PBW-Kost}  reads
  $$  {\mathcal B}  =  \Big\{\, {\textstyle \prod_{\alpha \in \Delta_0}} X_\alpha^{(n_\alpha)} \, {\textstyle \prod_{i=1}^\ell} {\textstyle \Big(\! {H_i \atop n_i} \!\Big)} \, {\textstyle \prod_{\gamma \in \Delta_1}} X_\gamma^{\epsilon_\gamma} \,\;\Big|\;\, n_\alpha \, , \, n_i \! \in \! \N \, , \, \epsilon_\gamma \! \in \! \{0,1\} \Big\}   \eqno (4.21)  $$
 \vskip1pt
   {\it (b)} \,  For  $ \, \fg = \mathfrak{gl}(m|n) \, $,  a  $ \Z $--basis  like (4.21) was more or less known in literature (see, e.g.,  \cite{bku}).  For  $ \, \fg = \mathfrak{osp}(n|m) \, $,  it is given in  \cite{sw},  Theorem 3.6.
\end{remarks}

\bigskip

   Theorem \ref{PBW-Kost}  has a direct consequence.  To state it, note that  $ \fg_1^\Z $,  the odd part of  $ \, \fg^\Z \, $,  has  $ \, \big\{ X_\gamma \,\big|\, \gamma \! \in \! \Delta_1 \big\} \, $  as  $ \Z $--basis,  by construction; then let  $ \, \bigwedge \fg_1^\Z \, $  be the exterior\/  $ \Z $--algebra  over  $ \fg_1^\Z \, $.  Then we have an integral version of the well known factorization  $ \; U(\fg) \, \cong \, U(\fg_0) \otimes_{\KK} \bigwedge \fg_1 \; $  (see  \cite{vsv}):

\vskip19pt

\begin{corollary}  \label{kost_tens-splitting}
   There exists an isomorphism of  $ \, \Z $--modules
  $$  \kzg  \,\; \cong \;\,  K_\Z(\fg_0) \, \otimes_\Z {\textstyle \bigwedge} \, \fg_1^\Z  $$
\end{corollary}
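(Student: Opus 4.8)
The plan is to read the asserted isomorphism off directly from the explicit $\Z$--basis of $\kzg$ produced by Theorem \ref{PBW-Kost}, in the special form (4.21). First I would fix once and for all a total order $\preceq$ on $\Delta \cup \{1,\dots,\ell\}$ with $\Delta_0 \preceq \{1,\dots,\ell\} \preceq \Delta_1$, so that by Theorem \ref{PBW-Kost} the set
$$ \mathcal{B} \; = \; \Big\{\, {\textstyle \prod_{\alpha\in\Delta_0}} X_\alpha^{(n_\alpha)} \, {\textstyle \prod_{i=1}^\ell} \binom{H_i}{n_i} \, {\textstyle \prod_{\gamma\in\Delta_1}} X_\gamma^{\epsilon_\gamma} \;\Big|\; n_\alpha, n_i \in \N \, , \; \epsilon_\gamma \in \{0,1\} \,\Big\} $$
is a $\Z$--basis of $\kzg$, all products being taken in the $\preceq$--order; for the odd segment this means $\prod_{\gamma\in\Delta_1} X_\gamma^{\epsilon_\gamma} = X_{\gamma_1}\cdots X_{\gamma_k}$ where $\{\gamma_1 \prec \cdots \prec \gamma_k\}$ is the set of $\gamma$ with $\epsilon_\gamma = 1$.

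Next I would identify the two tensor factors on the right-hand side with the two ``halves'' of a monomial in $\mathcal{B}$. For the even half: the subset $\mathcal{B}_0 := \big\{\, \prod_{\alpha\in\Delta_0} X_\alpha^{(n_\alpha)} \, \prod_{i=1}^\ell \binom{H_i}{n_i} \,\big\}$ of $\mathcal{B}$ is a $\Z$--basis of $\kzgz$; this is Theorem \ref{PBW-Kost} applied to the even generators alone — its proof only invokes the relations (4.1)--(4.5), which live inside $U(\fg_0)$ — so it holds even though $\fg_0$ is merely reductive rather than semisimple. For the odd half: by construction $\fg_1^\Z$ is the free $\Z$--module on $\{X_\gamma \mid \gamma\in\Delta_1\}$ (recall we have excluded $A(1,1)$, $P(3)$, $Q(n)$, so every odd root space is one-dimensional), hence $\bigwedge \fg_1^\Z$ is free over $\Z$ with basis the ordered wedges $\mathcal{B}_1 := \{\, X_{\gamma_1} \wedge \cdots \wedge X_{\gamma_k} \mid k \geq 0 \, , \; \gamma_1 \prec \cdots \prec \gamma_k \, , \; \gamma_i \in \Delta_1 \,\}$, the empty wedge $( k = 0 )$ being the unit.

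Then I would define $\, \Phi : \kzgz \otimes_\Z \bigwedge \fg_1^\Z \lra \kzg \,$ on basis elements by $\, \Phi\big( u \otimes (X_{\gamma_1} \wedge \cdots \wedge X_{\gamma_k}) \big) := u \cdot X_{\gamma_1} \cdots X_{\gamma_k} \,$ for $u \in \mathcal{B}_0$ and $\gamma_1 \prec \cdots \prec \gamma_k$, extended $\Z$--linearly. This is well defined and lands in $\kzg$ because $\kzgz \subseteq \kzg$ and each $X_\gamma \in \kzg$; and, comparing with the description of $\mathcal{B}$ above, $\Phi$ carries the $\Z$--basis $\{\, u \otimes w \mid u \in \mathcal{B}_0 \, , \; w \in \mathcal{B}_1 \,\}$ of $\kzgz \otimes_\Z \bigwedge \fg_1^\Z$ bijectively onto the $\Z$--basis $\mathcal{B}$ of $\kzg$. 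Hence $\Phi$ is an isomorphism of $\Z$--modules, which is the assertion.

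The only points that are not completely automatic are the two identifications in the second paragraph — the $\Z$--freeness of $\fg_1^\Z$ (already guaranteed by our standing assumptions on the type of $\fg$) and the fact that $\mathcal{B}_0$ is a $\Z$--basis of $\kzgz$ when $\fg_0$ is only reductive — and I expect the latter to be the sole place where one must re-examine the proof of Theorem \ref{PBW-Kost} rather than merely quote it; no genuinely new difficulty arises, the corollary being essentially a bookkeeping consequence of (4.21).
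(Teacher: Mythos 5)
Your proof is correct and follows essentially the same route as the paper: both arguments take the ordered PBW basis (4.21), split each monomial into its even part (which spans $K_\Z(\fg_0)$) and its ordered odd part (which spans $\bigwedge \fg_1^\Z$), and match the two $\Z$--bases under the multiplication map. Your extra caution about $\fg_0$ being merely reductive (so that $\kzgz$ is slightly larger than the classical Kostant form) is in line with the paper's own remark following Definition \ref{def-kost-superalgebra}, and does not alter the argument.
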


\begin{proof}  Let us choose a PBW-like basis  $ {\mathcal B} $  of  $ \kzg $   --- from  Theorem \ref{PBW-Kost}  ---   as in (4.21).  Then each PBW-like monomial can be factorized into a left factor  $ \lambda $  times a right factor  $ \rho \, $,  namely
 $ \; \prod\limits_{\alpha \in \Delta_0} \! X_\alpha^{(n_\alpha)} \, \prod\limits_{i=1}^\ell \Big(\! {H_i \atop n_i} \!\Big) \, \prod\limits_{\gamma \in \Delta_1} \!
X_\gamma^{\epsilon_\gamma} = \lambda \cdot \rho \; $
 with  $ \, \lambda := \prod_{\alpha \in \Delta_0} X_\alpha^{(n_\alpha)} \, \prod_{i=1}^\ell \Big(\! {H_i \atop n_i} \!\Big) \, $  and  $ \, \rho := \! \prod_{\gamma \in \Delta_1} \! X_\gamma^{\epsilon_\gamma} \, $.  But all the  $ \lambda $'s  span  $ K_\Z(\fg_0) $  over  $ \Z $  (by classical Kostant's theorem) while the  $ \rho $'s  span  $ \bigwedge \fg_1^\Z \, $.
\end{proof}

\bigskip

\begin{remarks}  {\ }
 \vskip5pt
   {\it (a)}  Following a classical pattern (and  cf.~\cite{bkl},  \cite{bku},  \cite{sw}  in the super context) we can define the  {\sl superalgebra of distributions\/}  $ \, {\mathcal D}\textit{ist}\,(G) \, $  on any supergroup  $ G \, $.
 Then  $ \; {\mathcal D}\textit{ist}\,(G) = \kzg \otimes_\Z \KK \; $,  when  $ \; \fg := \Lie\,(G) \; $  is classical.
 \vskip4pt
   {\it (b)}  In this section we proved that the assumptions of Theorem 2.8 in  \cite{sw}  do hold for any supergroup  $ G $  whose tangent Lie superalgebra is classical.  Therefore,  {\sl all results in  \cite{sw}  do apply to such supergroups}.
\end{remarks}

%

 %
 %

\chapter{Chevalley supergroups}  \label{che-sgroup}

\bigskip
 {\it
   This chapter is the core of our work: it presents the
construction of  ``Chevalley supergroups'' and gives their main
and fundamental properties.

\smallskip

   Our construction goes through some main steps we are
going to outline here.

\smallskip

   The first step demands to fix a specific  $ \fg $--module,  which
has to be faithful, semisimple, finite-dimensional and rational.

\smallskip

   Then any such  $ \fg $--module  $ V $,
contains a  $ \Z $--integral  form  $ M $  which is  $ \kzg $--stable;
such an  $ M $  will then be called an ``admissible lattice'' of  $ \, V \, $.
The choice of an admissible lattice  $ M $  for  $ V $  provides a functorial recipe yielding a representation of  $ \, A \otimes \kzg \, $  on  $ \, A \otimes M \, $,  for each  $ \, A \in \salg \, $  by scalar extension.

\smallskip


   Our second step consists in taking the supergroup functor  $ \, \rGL(M) : \! A \mapsto \rGL(M)(A) := \rGL(A \otimes M) \, $.  Inside it, we consider all the one-parameter supersubgroups corresponding to the root vectors and the Cartan elements in the Chevalley basis.
Then we take the (full) subgroup functor  $ \, G_V \, $, inside $ \, \rGL(M) \, $,
such that  $ \, G_V(A) \, $  is the subgroup generated by all these one-parameter supersubgroups.
We then define the sheafification of  $ \, G_V \, $,  denoted  $ \, \bG_V \, $,
to be our ``Chevalley supergroup'' attached to  $ \fg $  and to  $ V \, $.

\medskip

                                                 \par
   The third step consists in the proof of  the representability of  $ \, \bG_V $,
in other words we show that  $ \bG_V $  is the functor of points of an affine algebraic
supergroup, thus deserving truly the denomination of Chevalley supergroup.
We achieve this via a factorization result: namely, we show that  $ \bG_V $
factors, as a set-valued functor, into two set-valued functors  $ \, \bG_0 $
and  $ \, \bG_1 \, $,  and in order to do this we do through a fine analysis
of commutation rules between one-parameter supersubgroups.
The factors are then explicitly described:  $ \bG_0 $  is essentially a
classical reductive algebraic group realized through the classical Chevalley
construction, hence it is representable,  while $ \bG_1 $  is just an affine,
purely odd superspace, so it is also representable. We believe that this
description is in itself interesting, since it sheds light on the structure
of simple algebraic supergroups.
At this point it follows immediately that  the direct product
$ \, \bG_0 \times \bG_1 \cong \bG_V \, $  is also representable.
                                                 \par
   Notice that any affine supergroup   
admits such a factorization  $ \, \bG \cong \bG'_0 \times \bG'_1 \, $  
However in our construction we need to prove it, since this is our way to reach
the representability result.

\smallskip

   In the last part of the chapter, we show that  $ \bG_V $  depends only on  $ V \, $,  i.e.~it is independent of choice of an admissible lattice  $ M \, $.  Moreover, we prove that it has good functoriality properties with respect to  $ V \, $;  in particular, we explain the relation between different Chevalley supergroups  built upon the same classical Lie superalgebra  $ \fg $  and different  $ \fg $--modules  $ V \, $.  The outcome is that this relation is much the same as in the classical framework: the representation  $ V $  bears some topological data encoded into  $ \bG_0 \, $,  whereas the ``purely odd'' factor  $ \bG_1 $  carries nothing new.  Also, we prove Lie's Third Theorem for  $ \bG_V \, $:  namely, letting  $ \, \Lie(\bG_V) : \salg \longrightarrow \lie \, $  being the Lie algebra valued functor associated to the supergroup  $ \bG_V \, $,  we prove that  $ \, \Lie(\bG_V) = \fg \, $.
 }

 \section{Admissible lattices}  \label{adm-lat}

   Let  $ \KK $  be an algebraically closed field of characteristic zero.  If  $ R $  is a unital subring of  $ \KK \, $,  and  $ V $  a finite dimensional  $ \KK $--vector  space, a subset  $ \, M \! \subseteq \! V \, $  is called  $ R $--{\it lattice\/}  (or  $ R $--form)  of  $ V $  if  $ \, M \! = \text{\it Span}_R(B) \, $  for some basis  $ B $  of  $ V \, $.
                                                              \par
   Let  $ \fg $ be a classical Lie superalgebra (as above) over  $ \KK \, $,  with  $ \, {\text rk}(\fg) = \ell \, $,  let a Chevalley basis $ B $  of  $ \fg $  and the Kostant algebra  $ \kzg $  be as in \S\S 3--4.

\bigskip

\begin{definition}
 Let  $ V $  be a  $ \fg $--module,  and let  $ M $  be a  $ \Z $--lattice  of it.
 \vskip7pt
\noindent
 \hskip7pt  {\it (a)} \,  We call  $ V $  {\it rational\/}  if  $ \, \fh_\Z := \text{\it Span}_\Z\big(H_1,\dots,H_\ell\big) \, $  acts diagonally on  $ V $  with eigenvalues in  $ \Z \, $;  in other words, one has
  $$  V  \; = \;  {\textstyle \bigoplus_{\mu \in \fh^*}} V_\mu  \qquad  \text{with}  \quad  V_\mu  \, := \, \big\{ v \! \in \! V \,\big|\, h.v = \mu(h) \, v \; \forall \, h \! \in \! \fh \big\} $$
{\sl and}
  $$  \mu(H_i) \in \Z  \qquad  \text{for all  $ \; i=1, \dots, \ell \; $  and all  $ \; \mu \in \fh^* \; $  such that  $ \, V_\mu \not= \{0\} \, $.}  $$
 \vskip5pt
\noindent
 \hskip7pt  {\it (b)} \,  We call  $ M $  {\it admissible (lattice)\/}
--- of  $ V $  ---   if it is  $ \kzg $--stable.
\end{definition}

\bigskip

\begin{remark}
 If  $ \fg $  is  {\sl not\/}  of either types  $ A(m,n) $,  $ C(n) $,  $ Q(n) $  or  $ D(2,1;a) \, $,  then any  {\sl finite dimensional}  $ \, \fg $--module  $ V $  is automatically rational.  However in the other cases the rationality assumption  {\sl is\/}  actually restrictive.
\end{remark}

\bigskip

\begin{theorem}
  Let  $ \fg $  be a classical Lie superalgebra.
 \vskip7pt
   {\it (a)} \,  Any rational, finite dimensional, semisimple  $ \fg $--module  $ V $  contains an admissible lattice  $ M \, $.
 \vskip4pt
   {\it (b)} \,  Any admissible lattice  $ M $  as in  {\it (a)\/}  is the sum of its weight components,  that is to say  $ \, M = \bigoplus\limits_{\mu \in \fh^*} \big( M \cap V_\mu \big) \; $.
\end{theorem}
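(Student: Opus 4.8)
The plan is to imitate Kostant's classical construction (cf.~\cite{hu}, \S 27) in the super setting, using the PBW-like Theorem \ref{PBW-Kost}. For part (a) I would first reduce to the case in which $V$ is irreducible: since $V$ is semisimple, write $\, V = \bigoplus_i V_i \,$ with each $V_i$ an irreducible $\fg$--submodule; each $V_i$ is again rational (its set of weights is contained in that of $V$, so the $H_i$ still act with integral eigenvalues), and if $M_i$ is an admissible lattice of $V_i$ then $\, M := \bigoplus_i M_i \,$ is a $\Z$--lattice of $V$ which is $\kzg$--stable, because $\kzg \subseteq U(\fg)$ acts diagonally on $\, \bigoplus_i V_i \,$. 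So assume $V$ irreducible and fix the triangular decomposition $\, \fg = \mathfrak{n}_- \oplus \fh \oplus \mathfrak{n}_+ \,$ attached to the distinguished simple system $\Pi$ of \S\ref{root-syst}; let $\, v^+ \in V_\lambda \,$ be a highest weight vector (with respect to the Borel $\, \fh \oplus \mathfrak{n}_+ \,$), so that $v^+$ is annihilated by $X_\alpha$ for every $\, \alpha \in \Delta^+ \,$, even or odd, and $\, V = U(\mathfrak{n}_-)\,v^+ \,$. Then set $\; M := \kzg \cdot v^+ \,$.

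To check that this $M$ is an admissible lattice I would apply Theorem \ref{PBW-Kost} with a total order $\preceq$ of $\, \Delta \cup \{1,\dots,\ell\} \,$ satisfying $\, \Delta^- \preceq \{1,\dots,\ell\} \preceq \Delta^+ \,$. When an ordered PBW monomial is applied to $v^+$, the rightmost factors $X_\alpha^{(n_\alpha)}$ ($\alpha \in \Delta_0^+$) and $X_\gamma$ ($\gamma \in \Delta_1^+$) annihilate $v^+$ unless trivial; the factors $\binom{H_i}{n_i}$ then act by the scalar $\, \prod_i \binom{\lambda(H_i)}{n_i} \,$, which lies in $\Z$ \emph{precisely because $V$ is rational}; what survives is a monomial in negative root vectors applied to $v^+$. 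Hence $M$ is the $\Z$--span of the weight vectors $\, \prod_{\alpha \in \Delta_0^-} X_\alpha^{(n_\alpha)} \prod_{\gamma \in \Delta_1^-} X_\gamma^{\epsilon_\gamma} \cdot v^+ \,$. Only finitely many weights occur in the finite-dimensional $V$, and for each prescribed weight only finitely many of these monomials can contribute ($\epsilon_\gamma \in \{0,1\}$, and the tuples $(n_\alpha)_{\alpha \in \Delta_0^-}$ with $\, \sum_\alpha n_\alpha \alpha \,$ fixed form a finite set, all the $\alpha$ lying in an open half-space); thus $M$ is finitely generated over $\Z$ and equals $\, \bigoplus_\mu (M \cap V_\mu) \,$. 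Finally $\, M \otimes_\Z \KK = U(\fg)\,v^+ = V \,$ by irreducibility, since the PBW basis of Theorem \ref{PBW-Kost} is simultaneously a $\Z$--basis of $\kzg$ and a $\KK$--basis of $U(\fg)$, whence $\, \kzg \otimes_\Z \KK \cong U(\fg) \,$. So $M$ is an admissible lattice, which proves (a).

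For part (b), let $M$ now be \emph{any} admissible lattice of $V$; its $\kzg$--stability entails stability under the subalgebra $\mathbb{H}_\Z$ of Corollary \ref{int-polynom_kost}. Let $\, \mu_1,\dots,\mu_r \,$ be the (finitely many) weights of $V$; by rationality each $\mu_j(H_i) \in \Z$, so every $p \in \mathbb{H}_\Z$ acts on $V_{\mu_j}$ by the integer $p(\mu_j(H_1),\dots,\mu_j(H_\ell))$. It suffices to exhibit, for each $j$, an element $\, e_j \in \mathbb{H}_\Z \,$ acting as the identity on $V_{\mu_j}$ and as zero on $V_{\mu_l}$ for $l \ne j$: then $\, v_{\mu_j} = e_j\,v \in M \,$ for every $\, v = \sum_\mu v_\mu \in M \,$, giving $\, M = \bigoplus_\mu (M \cap V_\mu) \,$. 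Such $e_j$ do exist: the $r$ points $(\mu_j(H_1),\dots,\mu_j(H_\ell)) \in \Z^\ell$ being distinct, one separates them by a $\Z$--linear form and reduces to the one-variable fact that the integer-valued polynomials $\binom{y}{0},\dots,\binom{y}{N}$ already span $\Z^{N+1}$ under evaluation at $\{0,1,\dots,N\}$, the evaluation matrix $\big(\binom{a}{k}\big)_{0\le a,k\le N}$ being unitriangular; hence $\mathbb{H}_\Z$ surjects onto $\Z^r$ under evaluation at the $\mu_j$, and preimages of the standard basis vectors serve as the $e_j$. (For the lattice $M$ built in part (a) this is superfluous, since that $M$ is spanned by weight vectors by construction.)

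The main obstacle in both parts is the control of integrality dictated by the rationality hypothesis: in (a) one needs $\binom{\lambda(H_i)}{n} \in \Z$ so that the divided-power monomials keep $v^+$ inside a finitely generated $\Z$--module, and in (b) one needs the weight projections to be realized by integer-valued polynomials in the $H_i$; both break down when some $\mu(H_i) \notin \Z$, which is exactly why rationality cannot be dropped for the types where it is a genuine restriction. By contrast, the reduction to the irreducible case, the per-weight finiteness count, and the identification $\, \kzg \otimes_\Z \KK \cong U(\fg) \,$ are routine consequences of semisimplicity and of Theorem \ref{PBW-Kost}.
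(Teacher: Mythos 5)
Your strategy for part (a) is the same as the paper's (reduce to an irreducible highest weight module and take $M := \kzg\cdot v^+$), and the reduction to the irreducible case, the per-weight finiteness count, and all of part (b) are fine — indeed your projector argument for (b), separating the finitely many integral weight tuples by a $\Z$--linear form $L \in \fh_\Z$ and taking one-variable integer-valued polynomials $f_j(L) \in \mathbb{H}_\Z \subseteq \kzg$ with $f_j(c_l)=\delta_{jl}$, is essentially the standard one. But in (a) there is a genuine gap at the decisive point: you pass from ``$M$ is finitely generated over $\Z$'' and ``$M$ spans $V$ over $\KK$'' to ``$M$ is a lattice''. That implication is false in general: a finitely generated $\Z$--submodule spanning $V$ need not be the $\Z$--span of a $\KK$--basis (e.g.\ $\Z + \Z\,i$ inside $V = \C$ is free of rank $2$ and spans, but is not a lattice). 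Spanning only gives $\mathrm{rk}(M) \geq \dim_\KK V$; what is missing is the opposite bound, i.e.\ that $\mathrm{rk}\big(M \cap V_\mu\big) \leq \dim_\KK V_\mu$ for every weight $\mu$, and this does not follow from finite generation, because the $\KK$--linear relations among the vectors $\prod X_\alpha^{(n_\alpha)} \prod X_\gamma^{\epsilon_\gamma}\, v^+$ are not a priori defined over $\Q$ — that is precisely what the theorem is trying to establish.

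The classical way to close this gap (Kostant--Steinberg, and this is exactly the content of \cite{st}, \S 2, Corollary 1, which the paper invokes ``with a few minor changes'') is: first note $M \cap V_\lambda = \Z\, v^+$, since only the empty monomial contributes in weight $\lambda$; then, for each weight $\mu$, pair $M \cap V_\mu$ against the weight-$(\lambda-\mu)$ part of the positive half of $\kzg$: irreducibility makes the pairing $(w,u) \mapsto u\cdot w \in V_\lambda = \KK\, v^+$ nondegenerate in $w$, and since $u \cdot m \in M \cap V_\lambda = \Z\, v^+$ for all $m \in M$ and all integral $u$, the coordinates of any $m \in M \cap V_\mu$ with respect to a $\KK$--basis of $V_\mu$ chosen inside $M$ have denominators bounded by a fixed determinant $D \neq 0$; hence $M \cap V_\mu \subseteq \tfrac{1}{D}\big(\Z\text{--span of that basis}\big)$ has rank $\leq \dim_\KK V_\mu$, and $M$ is a lattice. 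In the super setting one must also check that this pairing argument goes through when the PBW monomials of Theorem \ref{PBW-Kost} contain odd root vectors (which occur with exponents $\leq 1$); this is the ``minor change'' the paper alludes to. Without this step your proof of (a) establishes only that $M$ is a finitely generated, $\kzg$--stable, $V$--spanning $\Z$--module, not that it is an admissible \emph{lattice}.
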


\begin{proof}
 The proof is the same as in the classical case.  Without loss of generality, we can assume that  $ V $  be irreducible of highest weight.  Letting  $ v $  be a highest weight vector, take  $ \, M := \kzg.v \; $;  then  $ M $  spans  $ V $  over  $ \KK \, $,  and it is clearly  $ \kzg $--stable  because  $ \kzg $  is a subalgebra of  $ U(\fg) \, $.  The proof that  $ M $  is actually a  $ \Z $--lattice  of  $ V $  and that  $ M $  splits into  $ \, M = \oplus_\mu \big( M \cap V_\mu \big) \, $  is detailed in  \cite{st},  \S 2, Corollary 1, and it applies here with just a few minor changes.
\end{proof}

\medskip

   We also need to know the stabilizer of an admissible lattice:

\medskip

\begin{theorem}  \label{stabilizer}
   Let  $ V $  be a rational, finite dimensional\/  $ \fg $--module,  $ M $  an ad\-missible lattice of  $ \, V $,  and  $ \, \fg_V \! = \! \big\{ X \! \in \! \fg \,\big|\, X.M \! \subseteq \! M \big\} \, $.  If  $ \, V $  is faithful, then
  $$  \fg_V  \, = \,  \fh_V \; {\textstyle \bigoplus \big( \oplus_{\alpha \in \Delta}} \,
\Z \, X_{\alpha} \big) \;\; ,  \qquad  \fh_V \, := \,
\big\{ H \in \fh \;\big|\; \mu(H) \in \Z \, ,
\,\; \forall \; \mu \in \Lambda \big\}  $$
where  $ \Lambda $  is the set of all weights of  $ \, V $.  In particular,  $ \fg_V $  is a lattice in  $ \fg \, $,  and it is independent of the choice of the admissible lattice  $ M $  (but not of  $ \, V $).
\end{theorem}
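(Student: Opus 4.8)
The strategy is to adapt Steinberg's analysis of admissible lattices in the classical case (\cite{st}, \S 2): one organizes the computation root by root and extracts the integrality constraints from the faithfulness of $V$.

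Two inclusions are immediate. Every $X_\alpha$ ($\alpha\in\Delta$) and every $H_i$ belongs to $\kzg$, hence stabilizes $M$; and if $H\in\fh$ has $\mu(H)\in\Z$ for all weights $\mu$ of $V$, then $H$ acts on each $V_\mu$ by an integer scalar, so $H.M\subseteq M$ because $M=\bigoplus_\mu(M\cap V_\mu)$ — a fact valid for any admissible lattice of a rational module, the weight projectors being integral polynomials in the $\binom{H_i}{n}$. Thus $\fg^\Z\subseteq\fg_V$ and $\fh_V\subseteq\fg_V$; in particular $\fg_V$ is a finitely generated, torsion-free $\Z$-submodule of $\fg$ of full rank, hence a lattice. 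Moreover, once the claimed equality is known, its right-hand side depends only on the weight set $\Lambda$ and on the Chevalley basis, which by Remark \ref{rem-Chev_2}(1) is unique up to signs; so $\fg_V$ is independent of $M$, as asserted.

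For the reverse inclusion the weight decompositions of $V$ and $M$ do the bookkeeping. Write $X=H+\sum_{\alpha\in\Delta}c_\alpha X_\alpha$ with $H\in\fh$, $c_\alpha\in\KK$ (legitimate since $\dim_\KK\fg_\alpha=1$ under our running hypotheses, by Proposition \ref{prop-root-syst}(d)). Fix a weight $\mu$ and $v\in M\cap V_\mu$: the summand $\mu(H)v$ lies in $V_\mu$ while $c_\alpha X_\alpha.v$ lies in $V_{\mu+\alpha}$, and the weight spaces $V_\mu$ and $V_{\mu+\alpha}$ ($\alpha\in\Delta$) are pairwise distinct. Hence $X.v\in M$ forces, after projecting to weight components and using $M=\bigoplus_\mu(M\cap V_\mu)$, that $\mu(H)v\in M$ and $c_\alpha X_\alpha.v\in M$ for each $\alpha$. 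Varying $\mu$ and $v$, the first condition says $\mu(H)\in\Z$ for every weight, i.e.\ $H\in\fh_V$, and the second says $c_\alpha X_\alpha\in\fg_V$. Everything is thereby reduced to the implication $c_\alpha X_\alpha\in\fg_V\Rightarrow c_\alpha\in\Z$.

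This last implication is the crux, and it is precisely where faithfulness is used. I would prove it by a case analysis on $\alpha$, each time studying $M$ as a module over the rank-one sub(super)algebra determined by the root data at $\alpha$. If $\alpha$ is even, $\{X_\alpha,H_\alpha,X_{-\alpha}\}$ spans an $\rsl_2$, and faithfulness forces $X_\alpha$ to act non-trivially on $V$, so some $\rsl_2$-string has length $\geq 2$; on such a string Steinberg's description of admissible lattices exhibits primitive weight vectors $v,v'$ of $M$ with $X_\alpha.v=v'$, whence $c_\alpha v'=(c_\alpha X_\alpha).v\in M$ gives $c_\alpha\in\Z$. If $\alpha=\gamma$ is odd with $2\gamma\in\Delta$, then $X_\gamma$ and $X_{-\gamma}$ generate a copy of $\rosp(1|2)$ (the even root $2\gamma$ being already disposed of) and the argument runs identically via the $\rosp(1|2)$-version of admissible-lattice theory (cf.\ \cite{sw}, \cite{ik}). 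If $\gamma$ is isotropic, then $X_\gamma^2=0$ by (4.8), and the relevant subalgebra is either the Heisenberg Lie superalgebra $\langle X_\gamma,X_{-\gamma}\rangle$ — with $H_\gamma=[X_\gamma,X_{-\gamma}]$ central — or, in the one-sided case that arises only for $P(n)$, the square-zero line $\KK X_\gamma$; in either situation the $\kzg$-stability of $M$ together with $X_\gamma\neq 0$ on $V$ yields a primitive $v$ with $X_\gamma.v$ primitive, hence $c_\alpha\in\Z$. Putting the cases together gives $\fg_V=\fh_V\oplus\bigl(\oplus_{\alpha\in\Delta}\Z X_\alpha\bigr)$. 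The per-root primitive-vector analysis of this last step is the principal obstacle — routine in outline but requiring genuine care in the odd-root cases.
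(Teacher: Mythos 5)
Your first two steps (the easy inclusions and the weight-space bookkeeping reducing everything to ``$c\,X_\alpha\in\fg_V\Rightarrow c\in\Z$'', i.e.\ to $\fg_V\cap\KK X_\alpha=\Z X_\alpha$) coincide with what the paper simply imports from Steinberg, and they are fine; the gap is in your crux, precisely in the odd-root cases you flag as ``requiring genuine care''. For an \emph{isotropic} odd root $\gamma$ one has $\gamma(H_\gamma)=0$ and $X_{\pm\gamma}^2=0$, so the rank-one sub(super)algebra you propose to work with is $\KK X_\gamma\oplus\KK X_{-\gamma}\oplus\KK H_\gamma$ with $H_\gamma$ central and \emph{no divided powers} available in $\kzg$; on an $H_\gamma$-eigenspace of eigenvalue $m$ the pair acts as a Clifford pair, $X_\gamma X_{-\gamma}+X_{-\gamma}X_\gamma=\pm m$. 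At this rank-one level your claim is simply false: take the two-dimensional module $\KK v\oplus\KK w$ with $X_\gamma v=w$, $X_\gamma w=0$, $X_{-\gamma}v=0$, $X_{-\gamma}w=\pm m\,v$, and the lattice $L=\Z\,p\,v\oplus\Z\,w\,$; if $p\mid m$ then $L$ is stable under $X_{\pm\gamma}$ (and under all $\binom{H_\gamma}{n}$, which act by integer scalars), yet $X_\gamma L\subseteq p\,L$, so $p^{-1}X_\gamma$ stabilizes $L$ and no ``primitive $v$ with $X_\gamma.v$ primitive'' exists. Faithfulness only guarantees some weight $\mu$ with $\mu(H_\gamma)\neq 0$, not one with value $\pm1$ or coprime to a given prime (already classically, the adjoint module of $\rsl_2$ has all $H_\alpha$-eigenvalues in $2\Z$), so it cannot repair this. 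The same objection applies, even more strongly, to the one-sided square-zero roots of $P(n)$, where there is not even an $X_{-\gamma}$ to play against. In short, the integrality at an isotropic odd root is \emph{not} determined by the restriction of $M$ to the rank-one subalgebra at that root: some interaction with other roots must enter, and your outline never supplies it.

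The paper closes exactly this hole by a cross-root bracket argument instead of rank-one representation theory. After Steinberg's reduction one knows $\fg_V\cap\KK X_\alpha=\Z\,\tfrac{1}{n_\alpha}X_\alpha$ with $n_\alpha=1$ for all \emph{even} $\alpha$; for $\alpha\in\Delta_1$ one checks (case by case) that there is $\alpha'\in\Delta_1$ with $\alpha+\alpha'\in\Delta_0$ and $\alpha-\alpha'\notin\Delta$, so that $c_{\alpha,\alpha'}=\pm1$ by Definition \ref{def_che-bas}. Since $\fg_V$ is closed under the bracket and contains $X_{\alpha'}\in\fg^\Z$, one gets $\big[X_{\alpha'},\tfrac{1}{n_\alpha}X_\alpha\big]=\pm\tfrac{1}{n_\alpha}X_{\alpha+\alpha'}\in\fg_V\cap\KK X_{\alpha+\alpha'}=\Z X_{\alpha+\alpha'}$ (the even case, already settled), forcing $n_\alpha=1$. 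If you want to keep your framework, you must inject some such cross-root input --- e.g.\ this commutator with a suitably chosen odd root vector --- since the isotropic (and one-sided) roots cannot be handled by $\rosp(1|2)$-, Heisenberg- or Clifford-type rank-one lattice theory alone; your even-root case and the two easy inclusions can stand as written.
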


\begin{proof} The classical proof in  \cite{st}, \S 2, Corollary 2, applies again, with some additional arguments to take care of odd root spaces.  Indeed, with the same arguments as in  [{\it loc.~cit.}]  one shows that  $ \; \fg_V = \fh_V \oplus \big(\! \oplus_{\alpha \in \Delta} \! \big(\, \fg_V \! \cap \KK\,X_{\alpha} \big) \big) \; $;  then one still has to prove that  $ \; \fg_V \! \cap \KK\,X_{\alpha} = \, \Z \, X_\alpha \; $  for all  $ \, \alpha \in \Delta \, $.  The arguments in  [{\it loc.~cit.}]  also show that  $ \; \fg_V \! \cap \KK \, X_{\alpha} = \, \Z \, X_\alpha \; $  is a cyclic  $ \Z $--submodule  of  $ \fg_V \, $,  hence it may be spanned (over  $ \Z $)  by some  $ \, {{\,1} \over {\,n_\alpha}} \, X_\alpha \, $  with  $ \, n_\alpha \in \N_+ \, $  (for all  $ \, \alpha \in \Delta $).  What is left to prove is that  $ \, n = 1 \, $.
                                                             \par
   For every even root  $ \, \alpha \in \Delta_0 \, $,  one can repeat once more the argument in  [{\it loc.~cit.}]  and eventually find  $ \, n_\alpha = 1 \, $.  Instead, for each  $ \, \alpha \in \Delta_1 \, $  one sees   --- by an easy case by case analysis, for instance ---   that there exists  $ \, \alpha' \in \Delta_1 \, $  such that  $ \; \big( \alpha + \alpha' \big) \in \Delta_0 \, $,  $ \; \big( \alpha - \alpha' \big) \not\in \Delta \; $.  Then (notation of  Definition \ref{def_che-bas}{\it (d)\/})  $ \; c_{\alpha,\alpha'} = \pm 1 \; $,  \, and so  $ \; \Big[ X_{\alpha'} \, , \, {{\,1} \over {\,n_\alpha}} \, X_\alpha \Big] \, = \, {{\,1} \over {\,n_\alpha}} \, X_{\alpha + \alpha'} \; $.  On the other hand, clearly  $ \, X_{\alpha'} \in \fg^\Z \subseteq \fg_V \, $,  hence  $ \; \Big[ X_{\alpha'} \, , \, {{\,1} \over {\,n_\alpha}} \, X_\alpha \Big] \in \big(\, \fg_V \! \cap \KK \, X_{\alpha + \alpha'} \big) \; $,  with  $ \; \big(\, \fg_V \! \cap \KK \, X_{\alpha + \alpha'} \big) = \, \Z \, X_{\alpha + \alpha'} \; $  because  $ \, \big( \alpha + \alpha' \big) \in \Delta_0 \, $.  Then the outcome is  $ \; {{\,1} \over {\,n_\alpha}} \, X_{\alpha + \alpha'} \, \in \, \Z \, X_{\alpha + \alpha'} \; $,  \, which eventually yields  $ \, n = 1 \, $,  \, q.e.d.
\end{proof}

\smallskip

%
%
%

\begin{remark}
   Let  $ Q $  and  $ P $  respectively be the root lattice and the weight lattice of  $ \fg \, $;  one knows that there exists simple, rational, finite dimensional  $ \fg $--mo\-dules  $ V_Q $  and  $ V_P $  whose weights span  $ Q $  and  $ P $  respectively.  Then by  Theorem \ref{stabilizer}  one clearly has  $ \; \fg_{V_Q} \subseteq \fg_V \subseteq \fg_{V_P} \; $  for any rational, finite dimensional  $ \fg $--module  $ V \, $.

\end{remark}

\bigskip
\bigskip
\bigskip

 \section{Construction of Chevalley supergroups}  \label{const-che-sgroup}

\medskip

%
%
%
%
   From now on, we retain the notation of  \S \ref{che-sgroup}:  in particular,  $ V $  is a rational, finite dimensional  $ \fg $--module,  and  $ M $  is an admissible lattice of it.
                                                                      \par
   We fix a commutative unital  $ \Z $--algebra  $ \bk \, $:  as in  \S \ref{Lie-superalgebras},  we assume  $ \bk $  to be such that 2 and 3 are not zero and they are not zero divisors in  $ \bk \, $.
%
%
%
 We set
  $$  \fg_\bk \, := \, \bk \otimes_\Z \fg_V \;\; , \qquad  V_\bk \, := \, \bk \otimes_\Z M \;\; ,  \qquad  U_\bk(\fg) \, := \, \bk \otimes_\Z \kzg \;\; .  $$
%
%
Then  $ \fg_\bk $  acts faithfully on  $ V_\bk \, $,  which yields
an embedding of  $ \fg_\bk $  into  $ \rgl(V_\bk) \, $.

\medskip

   For any  $ \, A \in \salg_\bk = \salg \, $,  the Lie superalgebra  $ \, \fg_A := A \otimes_\bk \fg_\bk \, $  acts faithfully on  $ \, V_\bk(A) := \, A \otimes_\bk V_\bk \, $,  hence it embeds into  $ \rgl\big(V_\bk(A)\big) \, $,  etc.

\bigskip

   Let  $ \, \alpha \in \Delta_0 \, $,  $ \, \beta, \gamma \in \Delta_1 \, $,  and let  $ X_\alpha \, $,  $ X_\beta $  and  $ X_\gamma $  be the associated root vectors (in our fixed Chevalley basis of  $ \fg \, $).  Assume also that  $ \; \big[ X_\beta \, , X_\beta \big] = 0 \; $  and  $ \; \big[ X_\gamma \, , X_\gamma \big] \neq 0 \; $;  \, we recall that the latter occurs if and only if  $ \, 2\,\gamma \in \Delta \, $.
                                                         \par
   Every one of  $ X_\alpha \, $,  $ X_\beta $  and  $ X_\gamma $
acts as a nilpotent operator on  $ V \, $,  hence on  $ M \, $
and  $ V_\bk \, $,  i.e.~it is represented by a nilpotent matrix in  $
\rgl(V_\bk) \, $;  the same holds for
  $$  t \, X_\alpha \; ,  \;\; \vartheta \, X_\beta \; ,
\;\; \vartheta \, X_\gamma + t \, X_\gamma^{\,2} \;\; \in \;
\End\big(V_\bk(A)\big)   \eqno \forall \;\; t \in A_0 \, ,  \; \vartheta \in A_1 \;\, .   \qquad (5.1)  $$
   Taking into account that  $ X_\gamma $  and  $ X_\gamma^{\,2} $  commute, and  $ \, X_\gamma^{\,2} = \pm 2 \, X_{2\gamma} \, $   --- by (4.9) ---   we see at once that, for any  $ \, n \in \N \, $,  we have  $ \, Y^n \! \big/ n! \in \big(\kzg\big)(A) \, $  for any  $ Y $  as in (5.1); moreover,  $ \, Y^n \! \big/ n! = 0 \, $  for  $ \, n \gg 0 \, $,  by nilpotency.  Therefore, the formal power series  $ \; \exp(Y) := \sum_{n=0}^{+\infty} \, Y^n\big/n! \; $,  \, when computed for  $ Y $  as in (5.1), gives a well-defined element in  $ \rGL\big(V_\bk(A)\big) \, $,  expressed as finite sum.
                                                         \par
   In addition, expressions like (2.5--7) again make sense in this purely algebraic framework   --- up to taking  $ \rGL\big(V_\bk(A)\big) $  instead of  $ \rGL\big(V(T)\big) \, $.
 \vskip7pt

   For any  $ \, \alpha \in \Delta \subseteq \fh^* \, $,  let  $ \, H_\alpha \in \fh_\Z \, $  be the corresponding coroot  (cf.~\ref{root-syst}).  Let  $ \, V = \oplus_\mu V_\mu \, $  be the splitting of  $ V $  into weight spaces; as  $ V $  is rational, we have  $ \, \mu(H_\alpha) \in \Z \, $  for all  $ \, \alpha \in \Delta \, $.  Now, for any  $ \, A \in \salg \, $  and  $ \, t \in U(A_0) \, $   --- the group of invertible elements in  $ A_0 $  ---   we set
  $$  h_\alpha(t).v  \; := \;  t^{\mu(H_\alpha)} \, v   \eqno \forall \;\; v \in V_\mu \, ,   \; \mu \in \fh^* \; ;  \qquad  $$
this defines another operator (which also can be locally expressed by exponentials)
  $$  h_\alpha(t) \, \in \, \rGL\big(V_\bk(A)\big)   \eqno \forall \;\; t \in U(A_0) \;\; .  \qquad (5.2)  $$
More in general,  if  $ \; H = \sum_{i=1}^\ell a_i H_{\alpha_i} \in \fh_\Z \; $  we define  $ \; h_H(t) := \prod_{i=1}^\ell h_{\alpha_i}^{a_i}(t) \;\, $.

\bigskip

\begin{definition}  \label{chevalley}  {\ }
 \vskip4pt
   {\it (a)} \,  Let  $ \, \alpha \! \in \! \Delta_0 \, $,  $ \, \beta, \gamma \! \in \! \Delta_1 \, $,  and  $ X_\alpha \, $,  $ X_\beta \, $,  $ X_\gamma $  as above.  We define the supergroup functors  $ x_\alpha \, $,  $ x_\beta \, $  and  $ x_\gamma $  from  $ \salg $  to  $ \grps $  as
  $$  \begin{array}{rl}
   x_\alpha(A) \!  &  := \,  \big\{ \exp\!\big( t \, X_\alpha \big) \;\big|
\; t \in A_0 \,\big\} \; =\big\{ \big( 1 + t \, X_\alpha + t^2 \, X_\alpha^{\,(2)} + \cdots \big) \;\big| \; t \in A_0 \,\big\}  \\
  \\
   x_\beta(A) \!  &  := \,  \big\{ \exp\!\big( \vartheta \, X_\beta \big)
\;\big|\; \vartheta \in A_1 \,\big\}  \; =\big\{ \big( 1 +  \vartheta \, X_\beta \big) \;\big|\; \vartheta \in A_1 \,\big\}  \\
  \\
   x_\gamma(A) \!  &  := \,  \big\{ \exp\!\big( \vartheta \, X_\gamma + t \, X_\gamma^{\,2} \big) \;\big|\; \vartheta \in A_1 \, , \, t \in A_0 \,\big\}  \, =  \\
  \phantom{\bigg|}  &  \phantom{:}= \,  \big\{ \big( 1+\vartheta \, X_\gamma\big) \exp\!\big( t \, X_\gamma^{\,2} \big) \;\big|\; \vartheta \in A_1 \, , \, t \in A_0 \,\big\}
      \end{array}  $$
(notice that  $ \, x_\alpha(A) \, $,  $ \, x_\beta(A) \, $,  $ \, x_\gamma(A) \subseteq \rGL\big(V_\bk(A)\big) \, $,  \, by construction).
 \vskip4pt
   {\it (b)} \,  Let  $ \, H \in \fh_\Z \, $.  We define the supergroup functor  $ h_H $  (also referred to as a ``multiplicative one-parameter supersubgroup'')  from  $ \salg $  to  $ \grps $
  $$  h_H(A)  \; := \;  \big\{\, t^H := h_H(t) \;\big|\; t \in \! U(A_0) \big\}
\subset \rGL\big(V_\bk(A)\big)  $$
We also write  $ \; h_i := h_{H_i} \, $  for  $ \, i = 1, \dots, \ell \, $,  and  $ \; h_\alpha := h_{H_\alpha} \; $  for  $ \, \alpha \in \Delta \, $.

\medskip

   Note that, as the  $ H_i $'s  form a  $ \Z $--basis  of  $ \fh_\Z \, $,  the subgroup of  $ \rGL\big(V_\bk(A)\big) $  generated by all the  $ h_H(t) $'s   ---  $ \, h \in \fh_\Z \, $,  $ \, t \in U(A_0) \, $  ---   is in fact generated by the  $ h_i(t) $'s   ---  $ \, i = 1, \dots, \ell \, $,  $ \, t \in U(A_0) \, $.
\end{definition}

\vskip13pt

\begin{notation} \label{oneparameter}
 By a slight abuse of language we will also write
  $$  x_\alpha(t) \! := \exp\!\big( t X_\alpha \big) \, , \;\;\;
      x_\beta(\vartheta) \! := \exp\!\big( \vartheta X_\beta \big) \, ,  \;\;\;
      x_\gamma(t,\vartheta) \! := \exp\!\big( \vartheta X_\gamma + t X_\gamma^{\,2} \big)  $$
Moreover, to unify the notation,  $ \, x_\delta(\bt) \, $  will denote, for  $ \, \delta \in
\Delta \, $,  any one of the three possibilities above, so that  $ \bt $  denotes a pair  $ \, (t,\theta) \in A_0 \times A_1 \, $,  with  $ \vartheta $  or  $ t $  equal to zero   --- hence dropped ---   when either  $ \, \delta \in \Delta_0 \, $,  or  $ \, \delta \in \Delta_1 \, $  with  $ \, \big[ X_\delta \, , X_\delta \big] = 0 \, $,  \, i.e.~$ \, 2 \, \delta \not\in \Delta \, $.  Finally, for later convenience we shall also formally write  $ \, x_\zeta(\bt) := 1 \, $  when  $ \zeta  $  belongs to the  $ \Z $--span  of  $ \Delta $  but  $ \, \zeta \not\in \Delta \, $.
\end{notation}

\bigskip

   Definition \ref{chevalley}  is modeled in analogy with the Lie supergroup setting (see Section \ref{1-param-sgrp}):  this yields the first half of

\bigskip

\begin{proposition} \label{hopf-alg} {\ }
                                                                      \par
   {\it (a)} \,  The supergroup functors  $ x_\alpha \, $,  $ x_\beta \, $  and  $ x_\gamma $  in  Definition \ref{chevalley}{\it (a)\/}  are representable, and they are  {\sl affine}  supergroups.  Indeed, for each  $ \, A \in \salg \, $,
  $$  \begin{matrix}
   x_\alpha(A) \, = \, \Hom\big(\,\bk[x],A\big) \; ,  &  \varDelta_\alpha(x) \, = \, x \otimes 1 + 1 \otimes x  \\
 \hskip-7pt \phantom{\bigg|}
   x_\beta(A) \, = \, \Hom\big(\,\bk[\xi],A\big) \; ,  &  \varDelta_\beta(\xi) \, = \, \xi \otimes 1 + 1 \otimes \xi  \\
   x_\gamma(A) \, = \, \Hom\big(\,\bk[x,\xi],A\big) \; ,  &  \quad  \varDelta_{\,\gamma}(x) \, = \, x \otimes 1 + 1 \otimes x - \xi \otimes \xi \; ,  \\
                                                      &  \varDelta_{\,\gamma}(\xi) \, = \, \xi \otimes 1 + 1 \otimes \xi
      \end{matrix}  $$
where  $ \, \varDelta_{\,\delta} \, $  denotes the comultiplication in the Hopf superalgebra of the one-parameter subgroup corresponding to the root  $ \, \delta \in \Delta \, $.
 \vskip5pt
   {\it (b)} \,  Every supergroup functor  $ h_H $  in  Definition \ref{chevalley}{\it (b)\/}  is representable, and it is an  {\sl affine}  supergroup.  More precisely,

  $$  h_H(A) \, = \, \Hom\big(\,\bk\big[z,z^{-1}\big],A\big) \;\; ,  \qquad
 \varDelta\big(z^{\pm 1}\big) \, = \, z^{\pm 1} \otimes z^{\pm 1}  \;\; .  $$
\end{proposition}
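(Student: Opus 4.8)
The plan is to realize each of the functors $x_\alpha$, $x_\beta$, $x_\gamma$ and $h_H$ as the functor of points of an explicit affine superscheme, by exhibiting a natural isomorphism onto it, and then to read off the comultiplication from the group law. For part \textit{(a)} I would first note that each $x_\delta$ ($\delta\in\Delta$) is not merely a subfunctor but a \emph{subgroup} functor of $\rGL\big(V_\bk\big)$: the formal‑exponential identities and the multiplication computations carried out in \S\ref{1-param-sgrp} (in particular Proposition \ref{class_1-param-sgrps}) were valid over an arbitrary commutative superalgebra, hence apply verbatim with $\rGL\big(V_\bk(A)\big)$ in place of $\rGL\big(V(T)\big)$. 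Next I would use the exponential formulas themselves to define natural transformations $t\mapsto\exp(tX_\alpha)$ from $\Hom(\bk[x],-)$ to $x_\alpha$, $\vartheta\mapsto 1+\vartheta X_\beta$ from $\Hom(\bk[\xi],-)$ to $x_\beta$, and $(t,\vartheta)\mapsto(1+\vartheta X_\gamma)\exp\big(tX_\gamma^{\,2}\big)$ from $\Hom(\bk[x,\xi],-)$ to $x_\gamma$ (recall that a $\bk$‑superalgebra map out of $\bk[x]$, resp.\ $\bk[\xi]$, is the same as a choice of an even, resp.\ odd, element of the target). Each is surjective by the very definition of $x_\delta(A)$, and it is injective because $V$, hence $M$ and $V_\bk$, is a faithful $\fg$‑module: since $X_\delta$ and $X_\gamma^{\,2}$ act by nonzero nilpotent matrices, the parameters are recovered from the lowest‑degree homogeneous components of the matrix (e.g.\ $t$ from the coefficient of $X_\alpha$ in $\exp(tX_\alpha)=\mathrm{id}+tX_\alpha+\cdots$, and analogously for the others). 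A natural transformation which is bijective on every object is an isomorphism, so $x_\alpha$, $x_\beta$, $x_\gamma$ are representable, and being spectra of polynomial/Grassmann algebras they are affine supergroups.

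To obtain the comultiplications I would simply transport the explicit multiplication rules through these identifications. For $x_\alpha$ one has $\exp(tX_\alpha)\exp(sX_\alpha)=\exp\big((t+s)X_\alpha\big)$, the additive group law, whence $\varDelta_\alpha(x)=x\otimes1+1\otimes x$; for $x_\beta$, since $X_\beta^{\,2}=[X_\beta,X_\beta]/2=0$ one gets $(1+\vartheta X_\beta)(1+\vartheta'X_\beta)=1+(\vartheta+\vartheta')X_\beta$, again additive, whence $\varDelta_\beta(\xi)=\xi\otimes1+1\otimes\xi$; and for $x_\gamma$ the group structure is precisely that of $\bk^{1|1}$ as computed in Proposition \ref{class_1-param-sgrps}\textit{(c)}, namely $(t,\vartheta)\cdot(t',\vartheta')=(t+t'-\vartheta\vartheta',\,\vartheta+\vartheta')$, which encodes exactly $\varDelta_{\gamma}(x)=x\otimes1+1\otimes x-\xi\otimes\xi$ together with $\varDelta_{\gamma}(\xi)=\xi\otimes1+1\otimes\xi$. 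In each case the counit and antipode are forced and routine to verify, and compatibility of these structure maps with the group operations is automatic from the identification.

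For part \textit{(b)} I would argue that $t\mapsto h_H(t)$ defines a natural transformation from $\Hom\big(\bk[z,z^{-1}],-\big)$ (note that $\Hom(\bk[z,z^{-1}],A)=U(A_0)$) to $h_H$, which moreover is a group homomorphism: since $h_H(t)$ acts on each weight space $V_\mu$ by the scalar $t^{\mu(H)}$, one has $h_H(t)\,h_H(s)=h_H(ts)$. It is surjective by the definition of $h_H(A)$, and injective by faithfulness of $V$ — if $h_H(t)=\mathrm{id}$ then $t^{\mu(H)}=1$ for every weight $\mu$ of $V$, and faithfulness ensures the exponents $\mu(H)$ are fine enough to force $t=1$ — so $h_H\cong\Hom\big(\bk[z,z^{-1}],-\big)$; since the transported group law is multiplication of the parameter, $\varDelta(z^{\pm1})=z^{\pm1}\otimes z^{\pm1}$, and $h_H$ is an affine supergroup.

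The genuinely routine ingredients here are the formal‑exponential identities and the verification of counit and antipode. The two points that require care are, in \textit{(a)}, the $x_\gamma$ multiplication rule leading to $\varDelta_\gamma(x)=x\otimes1+1\otimes x-\xi\otimes\xi$ — but this is exactly the computation already carried out in \S\ref{1-param-sgrp} over arbitrary commutative superalgebras, so it can be quoted — and, in \textit{(b)}, the injectivity of $t\mapsto h_H(t)$, which is where the faithfulness hypothesis on $V$ is indispensable; this is the super‑analogue of the classical subtlety concerning the behaviour of maximal tori in Chevalley's construction.
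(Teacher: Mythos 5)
Your proposal takes essentially the route the paper intends: the paper gives no written proof of this Proposition, merely remarking that Definition \ref{chevalley} is modeled on \S\ref{1-param-sgrp}, so that the group laws already computed there (addition in the two nilpotent cases, the $\bk^{1|1}$ law $(t,\vartheta)(t',\vartheta')=(t+t'-\vartheta\,\vartheta',\vartheta+\vartheta')$, multiplication for $h_H$) transport into the displayed comultiplications, and representability is the identification with $\mathbb{A}_\bk^{1|0}$, $\mathbb{A}_\bk^{0|1}$, $\mathbb{A}_\bk^{1|1}$ and the multiplicative supergroup. Your surjectivity-plus-group-law discussion reproduces exactly this, so for part \textit{(a)} the comparison ends there, up to one remark below.

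The step where your justification overstates what the hypotheses give is injectivity, i.e.\ the literal equalities $x_\delta(A)=\Hom(\cdots,A)$ and $h_H(A)=\Hom(\bk[z,z^{-1}],A)$. In \textit{(a)}, comparing the $V_\mu\to V_{\mu+\alpha}$ blocks gives $t\,X_\alpha|_{V_\mu}=s\,X_\alpha|_{V_\mu}$; since $A_0$ may have $n$-torsion for $n>3$, this forces $t=s$ only because $X_\alpha$ has a unimodular matrix entry on the admissible lattice $M$ (equivalently $X_\alpha M\not\subseteq pM$ for every prime $p$), a classical fact about admissible lattices proved via $\mathfrak{sl}_2$-theory over $\Z$ --- it is not a formal consequence of faithfulness over $\KK$, so you should cite or prove it. In \textit{(b)} the gap is genuine: faithfulness of $V$ does \emph{not} ensure that the integers $\mu(H)$, for $\mu$ a weight of $V$, generate $\Z$. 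For instance $H=2H_i\in\fh_\Z$ gives $h_H(t)=h_{H_i}(t^2)$, so $h_H(-1)=\mathrm{id}$ and $t\mapsto h_H(t)$ is not injective (the same phenomenon occurs for $H=H_\alpha$ on a module all of whose weights pair evenly with $H_\alpha$, as for the adjoint module of an $\mathfrak{sl}_2$). Thus the displayed equality in \textit{(b)} must be read as a parametrization (a quotient of $\mathbf{G}_m$ by a finite $\mu_d$, whose sheafification is again a torus), or one must restrict to those $H$ and $V$ with $\gcd\{\mu(H)\}=1$. The paper itself asserts \textit{(b)} without proof, so your argument mirrors the source's omission, but the phrase ``faithfulness ensures the exponents are fine enough'' is not a valid deduction and is the one point that needs repair.
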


\bigskip

   Before we define the Chevalley supergroups, we give the definition of the  {\it reductive group\/}  $ \bG_0 $  associated to a given classical Lie superalgebra  $ \fg \, $.

\smallskip

   First of all, note that, by construction, any  $ h_H(A) $  and any  $ x_\alpha(A) $  for  $ \, \alpha \in \Delta_0 \, $  depends only on  $ A_0 \, $:  thus, these are indeed  {\sl classical\/}  affine groups (in other words, they are ``super'' only in a formal sense).
 Moreover, the  $ h_H(A) $'s  generate a  {\sl classical torus\/}  $ T(A_0) $  inside  $ \rGL\big(V_\bk(A_0)\big) $
  $$  T(A_0)  \, := \,  \big\langle\, h_H(A) \;\big|\, H \! \in \! \fh_\Z \,\big\rangle  \, = \,  \big\langle\, h_i(A) \;\big|\; i \! = \! 1,\dots,\ell \,\big\rangle.  $$
   \indent   Now,  $ \fg_0 $  is a reductive Lie algebra (semisimple iff  $ \fg $  has no direct summands of type  $ A \, $),  whose Cartan subalgebra is  $ \fh \, $.  The Chevalley basis of  $ \fg $  contains a basis of  $ \fg_0 $  which has all properties of a classical Chevalley basis for  $ \fg_0 $,  except for the fact
that the  $ H_i $'s  are associated to integral weights which
{\sl may not be even\/}  (simple) roots.
In other words a classical Chevalley basis for  $ \fg_0 $  is not a subset
of a Chevalley basis for  $ \fg \, $.
%
%
 In general one can
show that we can always choose a basis for  $ \fh $  in such a way that only
one of the  $ H_i $'s  is associated to an odd root.  It is however
important to stress that the integral lattice generated by the
elements  $ \, H_1, \dots, H_l \, $  inside  $ \, \fh \, $  is strictly larger
than the lattice generated inside the Cartan by just the even elements,
and this despite the fact that in most cases  $ \, \fh = \fh_0 \, $.
                                            \par
   By construction the stabilizer of  $ V $  in  $ \fg_0 $  is  $ \; {(\fg_0)}_V = \fh_V \bigoplus \big(\! \oplus_{\alpha \in \Delta_0} \Z X_\alpha \,\big) \; $  with  $ \fh_V $  as in  Theorem \ref{stabilizer} and so we  can mimic the classical construction of
Chevalley proceeding in the following way.

\smallskip

   Consider the group functor  $ \; G_0 : \text{(alg)} \relbar\joinrel\longrightarrow \grps \; $
--- where  $ \text{(alg)} $  is the category of commutative
$ \bk $--algebras  ---   with $ \, G_0(A_0) \, $,
for  $ \, A_0 \in \text{(alg)} \, $,  being the subgroup of
$ \rGL\big(V_\bk(A_0)\big) $  generated by the torus  $ T(A_0) $
%
%
 and the  $ x_\alpha(A) $'s  with  $ \, \alpha \in \Delta_0 \, $.  By the definition of  $ T(A_0) \, $,  we can also say that  $ G_0(A_0) $  is generated by the  $ h_i(A) $'s  and the  $ x_\alpha(A) $'s  with  $ \, \alpha \in \Delta_0 \, $,  \, i.e.
  $$  G_0(A_0)  \; := \;  \Big\langle T(A_0) \, , \, x_\alpha(A) \;\Big|\; \alpha \! \in \! \Delta_0 \Big\rangle  \; = \;  \Big\langle\, h_i(A) \, , \, x_\alpha(A) \;\Big|\; i \! = \! 1,\dots,\ell ; \, \alpha \! \in \! \Delta_0 \Big\rangle  $$
   \indent   By construction, the group-functors  $ G_0 $
and  $ T $ are subfunctors of the representable group functor
$ \rGL(V_\bk) \, $,  hence they both are presheaves (see Appendix A).
Let  $ \bG_0 $  and  $ \bT $  be their sheafification (see Appendix A).
Then  $ \bT $  is representable and we shall now show that also  $ \bG_0 $  is
representable.
                                                       \par
   We consider  $ G_0 $,  $ \bG_0 $,  $ T_0 $  and  $ \bT_0 $  as group-functors defined on  $ \salg $  which factor through  $ \text{(alg)} $,  setting  $ \, G_0(A) := G_0(A_0) \, $,  and so on.
                                                                         \par
   Consider $ \, \fh^0_\Z := \text{\it Span}_{\,\Z} \big( \big\{ H_\alpha \,\big|\, \alpha \! \in \! \Delta_0 \big\}\big) \, $;  this is another  $ \Z $--form  of  $ \fh \, $,  with  $ \, \fh^0_\Z \subseteq \fh_\Z \, $.  Now define
  $$  T'(A_0)  \; := \;  \Big\langle\, h_H(A) \;\Big|\, H \! \in \! \fh^0_\Z \,\Big\rangle  \;\; ,  \quad
   G'_0(A_0)  \; := \;  \Big\langle\, T'(A_0) \, , \, x_\alpha(A) \;\Big|\; \alpha \! \in \! \Delta_0 \Big\rangle  $$
The assignments  $ \, A \mapsto T'(A) := T'(A_0) \, $  and  $ \, A \mapsto G'_0(A) := G'_0(A_0) \, $  provide new group-functors defined on  $ \salg $,  which clearly factor through  $ \text{(alg)} $,  like  $ T $  and  $ G_0 $  above; also, they are presheaves too.  Then we define the functors  $ \bT' $  and  $ \bG'_0 $  as the sheafifications of  $ T' $  and  $ G'_0 $  respectively.
                                                                         \par
   On  {\sl local\/}  algebras   --- in  $ \text{(alg)} $  ---   the functor  $ G'_0 $  is isomorphic via a natural transformation with the functor of points of the Chevalley group-scheme associated with  $ \fg_0 $  and  $ V \, $.  Therefore, we have that  $ \bG'_0 $  {\sl is representable}.
 \vskip3pt
   The group  $ G'_0(A_0) $  and  $ T(A_0) $  are subgroups of  $ \rGL(V_\bk)(A_0) \, $,  whose mutual intersection is  $ T'(A_0) \, $.  The subgroup  $ G_0(A_0) \, $,  generated by  $ G'_0(A_0) $  and  $ T(A_0) $ inside  $ \rGL(V_\bk)(A_0) \, $,  can be seen as the fibered coproduct of  $ G'_0(A_0) $  and  $ T(A_0) $  over  $ T'(A_0) \, $.  In more down-to-earth terms, we can describe it as follows.  Inside  $ \rGL(V_\bk)(A_0) \, $,  the subgroup  $ T(A_0) $  acts by adjoint action over  $ G'_0(A_0) \, $,  hence the subgroup  $ G_0(A_0) \, $,  being generated by  $ T(A_0) $  and  $ G'_0(A_0) \, $,  is a quotient of the semi-direct product  $ \, T(A_0) \ltimes G'_0(A_0) \, $.  To be precise,
we have the functorial isomorphism:
  $$  G_0(A_0)  \,\; \cong \;\,  \big( T(A_0) \ltimes G'_0(A_0) \big) \! \Big/ K(A_0)  $$
where the assignment
  $$  A  \; \mapsto \;  K(A_0) \, := \, \big\{ \big(t,t^{-1} \big) \,\big|\, t \in T'(A_0) \,\big\}  $$
(on objects) defines   --- on  $ \salg $,  through  $ \text{(alg)} $  ---   a subgroup(-functor) of  $ \; T \ltimes G'_0 \; $.  Therefore  $ \; G_0 \cong \big( T \ltimes G'_0 \big) \! \Big/ K \; $  as group-functors, hence  $ \; G_0 \cong \big( T \times G'_0 \big) \! \Big/ K \; $  because  $ \; T \ltimes G'_0 \, \cong \, T \times G'_0 \; $  as set-valued functors (i.e., forgetting the group structure).  Taking sheafifications, we get  $ \; \bG_0 \cong \big( \bT \times \bG'_0 \big) \! \Big/ \mathbf{K} \; $:  \, but since both  $ \bT $  and  $ \bG'_0 $  are representable, the direct product  $ \, \bT \times \bG'_0 \, $  is representable too, hence its quotient  $ \; \big( \bT \times \bG'_0 \big) \Big/ \mathbf{K} \;\; \big(\! \cong \, \bG_0 \big) \; $  is representable as well,  that is   --- eventually ---   $  \bG_0 $  {\sl is representable},  q.e.d.

\bigskip

   We now define the Chevalley supergroups in a superscheme-theoreti\-cal way, through sheafification of a suitable functor from  $ \salg $  to  $ \text{(grps)} \, $,  ``generated''  inside  $ \rGL\big(V_\bk(A)\big) $  by the one-parameter supersubgroups given above.

\medskip

\begin{definition} \label{def_Che-sgroup_funct} {\ }
 Let  $ \fg $  and  $ V $  be as above.  We call  {\it Chevalley supergroup functor},  associated to  $ \fg $  and  $ V \, $,  the functor  $ \; G : \salg \lra \text{(grps)} \; $  given by:
 \vskip5pt
 \noindent \quad   --- if  $ \, A \! \in \! \text{\it Ob}\big(\salg\big) \, $  we let  $ \, G(A) \, $  be the subgroup of  $ \rGL\big(V_\bk(A)\big) $  generated by  $ G_0(A) $  and the one-parameter subgroups  $ x_\beta(A) $  with  $ \, \beta \in \Delta_1 \, $,  that is
  $$  G(A)  \; := \;  \Big\langle\, G_0(A) \, , \, x_\beta(A) \,\;\Big|\;\, \beta \in \Delta_1 \, \Big\rangle  $$
By the previous description of  $ G_0 \, $,  we see that  $ G(A) $  is also generated by the  $ h_i(A) $'s  and {\sl all\/}  the one-parameter subgroups  $ x_\delta(A) $'s   ---  $ \, \delta \in \Delta $  ---   or even by  $ T(A) $  and the  $ x_\delta(A) $'s,  that is
  $$  G(A)  \; := \;  \Big\langle h_i(A) \, , \, x_\delta(A) \;\Big|\; i = 1, \dots, \ell \, , \; \delta \in \Delta \Big\rangle  \; = \;  \Big\langle T(A) \, , \, x_\delta(A) \;\Big|\; \delta \in \Delta \Big\rangle  $$
 \vskip5pt
 \noindent \quad   --- if  $ \, \phi \in \text{Hom}_{\salg}\big(A\,,B\big) $,  then  $ \; \End_\bk(\phi) : \End_\bk\big(V_\bk(A)\big) \! \lra \End_\bk\big(V_\bk(B)\big) \,$  (given on matrix entries by  $ \phi $  itself)  respects the sum and the associative product of matrices.  Then  $ \End_\bk( \phi) $  clearly restricts to a group morphism  $ \, \rGL\big(V_\bk(A)\big) \lra \rGL\big(V_\bk(B)\big) \, $.
 The latter maps the generators of  $ G(A) $  to those of  $ G(B) $,  hence restricts to a group morphism  $ \; G(\phi) : G(A) \lra G(B) \; $.
 \vskip5pt
   We call {\it Chevalley supergroup\/}  the sheafification  $ \bG $  of $ G \, $.  By Appendix A, Theorem \ref{sheaf-iso},  $ \; \bG : \salg \lra \text{(grps)} \; $  is a functor such that  $ \, \bG(A) = G(A) \, $  when  $ \, A \in \salg \, $  is  {\sl local\/}  (i.e., it has a unique maximal homogeneous ideal).
\end{definition}

\medskip

\begin{remark}
   The sheafification is already necessary at the classical level, that is when we construct semisimple algebraic groups (from semisimple Lie algebras), as it is explained in  \cite{sga},  \S 5.7.  In fact, it is clearly stated in 5.7.6 that in general the one-parameter subgroups and the torus generate the algebraic group only over  {\sl local algebras}.
\end{remark}

\bigskip
\bigskip
\bigskip

 \section{Chevalley supergroups as algebraic supergroups}  \label{che-sgroup_alg}

\bigskip

   The way we defined the Chevalley supergroup  $ \bG $  does not imply   ---
at least not in an obvious way ---   that  $ \bG $  is  {\it representable},
in other words, that  $ \bG $  is the functor of points of an
algebraic supergroup scheme.  The aim of this section is to
prove this important property.

\medskip

   We shall start by studying the commutation relations of the generators
and derive a decomposition formula for  $ G(A) $  resembling the classical
Cartan decomposition in the Lie theory, and one reminding the classical
``big cell'' decomposition in the theory of reductive algebraic groups.

\medskip

   We begin with some more definitions:

\bigskip

\begin{definition}  \label{subgrps}
 For any  $ \, A \in \salg \, $,  we define the subsets of  $ G(A) $
  $$  \begin{array}{rl}
   G_1(A)  &  \! := \;  \left\{\, {\textstyle \prod_{\,i=1}^{\,n}} \,
x_{\gamma_i}(\vartheta_i) \;\Big|\; n \in \N \, , \; \gamma_1, \dots, \gamma_n \in \Delta_1 \, , \; \vartheta_1, \dots, \vartheta_n \in A_1 \,\right\}  \\
  \\
   G_0^\pm(A)  &  \! := \;  \left\{\, {\textstyle \prod_{\,i=1}^{\,n}}
\, x_{\alpha_i}(t_i) \;\Big|\; n \in \N \, , \; \alpha_1, \dots, \alpha_n \in \Delta^\pm_0 \, , \; t_1, \dots, t_n \in A_0 \,\right\}  \\
  \\
   G_1^\pm(A)  &  \! := \;  \left\{\, {\textstyle \prod_{\,i=1}^{\,n}} \,
x_{\gamma_i}(\vartheta_i) \;\Big|\; n \in \N \, , \; \gamma_1, \dots,
\gamma_n \in \Delta^\pm_1 \, , \; \vartheta_1, \dots, \vartheta_n \in A_1
\,\right\}  \\
   \\
   G^\pm(A)  &  \! := \;  \left\{\, {\textstyle \prod_{\,i=1}^{\,n}} \,
x_{\beta_i}(\bt_i) \;\Big|\; n \! \in \! \N \, , \, \beta_1, \dots, \beta_n \! \in \! \Delta^\pm , \, \bt_1, \dots, \bt_n \! \in \! A_0 \! \times \! A_1 \right\}
=  \\
   \phantom{\Bigg|}  &  \hfill   \, = \; \big\langle G_0^\pm(A) \, , \, G_1^\pm(A) \big\rangle
      \end{array}  $$
   \indent   Moreover, fixing any total order  $ \, \preceq \, $  on $ {\wDelta}_1^\pm \, $,  and letting  $ \, N_\pm = \big| {\wDelta}_1^\pm \big| \, $,  we set
  $$  G_1^{\pm,<}(A)  \,\; := \;  \left\{\; {\textstyle \prod_{\,i=1}^{\,N_\pm}} \, x_{\gamma_i}(\vartheta_i) \,\;\Big|\;\, \gamma_1 \prec \cdots \prec \gamma_{N_\pm} \in \Delta^\pm_1 \, , \; \vartheta_1, \dots, \vartheta_{N_\pm} \in A_1 \,\right\}  $$
and for any total order  $ \, \preceq \, $  on  $ {\wDelta}_1 \, $,  and letting  $ \, N := \big| \Delta \big| = N_+ + N_- \, $,  we set
  $$  G_1^<(A)  \,\; := \;  \left\{\, {\textstyle \prod_{\,i=1}^N} \, x_{\gamma_i}(\vartheta_i) \,\;\Big|\;\, \gamma_1 \prec \cdots \prec \gamma_{\scriptscriptstyle N} \in \Delta_1 \, , \; \vartheta_1, \dots, \vartheta_N \in A_1 \,\right\}  $$
   \indent   Note that for special choices of the order, one has  $ \; G_1^<(A) = G_1^{-,<}(A) \cdot G_1^{+,<}(A) $  \; or  $ \; G_1^<(A) = G_1^{+,<}(A) \cdot G_1^{-,<}(A) \;\, $.

\smallskip

   Similar notations will denote the sheafifications  $ \bG_1 \, $,
$ \bG^\pm $,  $ \bG_0^\pm $,  $ \bG_1^\pm $,  etc.
\end{definition}

\vskip9pt

\begin{remark}
 Note that  $ G_1(A) $,  $ G_0^\pm(A) $,  $ G_1^\pm(A) $  and  $ G^\pm(A) $  are subgroups of  $ G(A) \, $,  while  $ G_1^{\pm,<}(A) $  and  $ G_1^<(A) $  instead are  {\sl not},  in general.  And similarly  with  ``$ \, \bG \, $''  instead of  ``$ \, G \, $''  everywhere.
\end{remark}

\vskip7pt

   For the rest of our analysis, next result   --- whose proof is straightforward ---   will be crucial. Hereafter, as a matter of notation, when  $ \varGamma $  is any (abstract) group and  $ \, g, h \in \varGamma \, $  we denote by  $ \, (g,h) := g \, h \, g^{-1} \, h^{-1} \, $  their commutator in  $ \varGamma \, $.

\bigskip

\begin{lemma}  \label{comm_1-pssg} {\ }
 \vskip7pt
   (a) \,  Let  $ \; \alpha \in \Delta_0 \, $,  $ \, \gamma \in \Delta_1 \, $,  $ \, A \in \salg \, $  and  $ \, t \in A_0 \, $,  $ \, \vartheta \in A_1 \, $.  Then there exist  $ \, c_s \! \in \! \Z \, $  such that
  $$  \big( x_\gamma(\vartheta) \, , \, x_\alpha(t) \big)  \, = \,  {\textstyle \prod_{s>0}} \, x_{\gamma + s \, \alpha}\big(c_s \, t^s \vartheta\big)  \;\; \in \;\;  G_1(A)  $$
(the product being finite).  More precisely, with  $ \, \varepsilon_k = \pm 1 \, $  and  $ \, r \, $  as in (4.12),
  $$  \big( 1 + \vartheta \, X_\gamma \, , \, x_\alpha(t) \big)  \, = \,
{\textstyle \prod_{s>0}} \, \Big( 1 + {\textstyle \prod_{k=1}^s \varepsilon_k \cdot {{s+r} \choose r}} \cdot t^s \vartheta \, X_{\gamma + s \, \alpha} \Big)  $$
where the factors in the product are taken in any order (as they do commute).
 \vskip7pt
   (b)  Let  $ \; \gamma , \delta \! \in \! \Delta_1 \, $,  $ \, A \! \in \! \salg \, $,  $ \, \vartheta, \eta \! \in \! A_1 \, $.  Then (notation of  Definition \ref{def_che-bas})
  $$  \big( x_\gamma(\vartheta) \, , \, x_\delta(\eta) \big)  \; = \;\,  x_{\gamma + \delta}\big(\! - \! c_{\gamma,\delta} \; \vartheta \, \eta \big)  \; = \;  \big(\, 1 \! - \! c_{\gamma,\delta} \; \vartheta \, \eta \, X_{\gamma + \delta} \,\big)  \;\; \in \;\;  G_0(A)  $$
if  $ \; \delta \not= -\gamma \; $;  otherwise, for  $ \; \delta = -\gamma \, $,  we have
  $$  \big( x_\gamma(\vartheta) \, , \, x_{-\gamma}(\eta) \big)  \; = \;  \big(\, 1 \! - \vartheta \, \eta \, H_\gamma \,\big)  \; = \;  h_\gamma\big( 1 \! - \vartheta \, \eta \big)  \;\; \in \;\;  G_0(A)  $$
 \vskip7pt
   (c)  Let  $ \; \alpha , \beta \in \Delta \, $,  $ \, A \in \salg \, $,  $ \, t \in U(A_0) \, $,  $ \, \bu \in A_0 \! \times \! A_1 = A \, $.  Then
  $$  \hskip31pt   h_\alpha(t) \; x_\beta(\bu) \; {h_\alpha(t)}^{-1}  \; = \;  x_\beta\big( t^{\beta(H_\alpha)} \, \bu \big)  \;\; \in \;\;  G_{p(\beta)}(A)  $$
where  $ \; p(\beta) := s \, $,  by definition, if and only if  $ \, \beta \in \Delta_s \; $.
\end{lemma}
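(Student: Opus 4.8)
The whole statement is an identity between elements of $\rGL\big(V_\bk(A)\big)$, so the plan is to compute directly, feeding in the commutation rules of Section \ref{comm-rul_kost} and exploiting three square‑zero facts that make every expansion terminate: in a commutative superalgebra with no $2$--torsion one has $\vartheta^2=0$ for every $\vartheta\in A_1$, hence $(\vartheta X)^2=-\vartheta^2X^2=0$ for any odd operator $X$, and $(\vartheta\,\eta)^2=0$ for $\vartheta,\eta\in A_1$; moreover any product of matrices of the form $\vartheta\,X_\gamma$, $\eta\,X_\delta,\dots$ containing the same odd scalar twice vanishes. In particular $x_\gamma(\vartheta)^{-1}=1-\vartheta\,X_\gamma$. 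For \textbf{(a)} I would write $\big(x_\gamma(\vartheta),x_\alpha(t)\big)=(1+\vartheta X_\gamma)\,\exp(tX_\alpha)\,(1-\vartheta X_\gamma)\,\exp(-tX_\alpha)$ and first conjugate: multiplying relation (4.12) by $t^n$ and summing over $n\in\N$ collapses the divided powers into exponentials and yields $\exp(tX_\alpha)\,X_\gamma\,\exp(-tX_\alpha)=X_\gamma+\sum_{s\ge 1}\big(\textstyle\prod_{k=1}^{s}\varepsilon_k\big)\binom{r+s}{s}\,t^{s}\,X_{\gamma+s\alpha}$ (a finite sum, since only finitely many $\gamma+s\alpha$ lie in $\Delta$). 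Substituting this, multiplying out $(1+\vartheta X_\gamma)(\,\cdot\,)$ and discarding the terms quadratic in $\vartheta$ leaves $1+\sum_{s>0}c_s\,t^s\vartheta\,X_{\gamma+s\alpha}$ with $c_s=\prod_{k=1}^{s}\varepsilon_k\binom{s+r}{s}$; since these summands have mutually annihilating products, this is exactly $\prod_{s>0}\big(1+c_s\,t^s\vartheta\,X_{\gamma+s\alpha}\big)=\prod_{s>0}x_{\gamma+s\alpha}(c_s\,t^s\vartheta)\in G_1(A)$.

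\textbf{Part (b).} Expanding $(1+\vartheta X_\gamma)(1+\eta X_\delta)(1-\vartheta X_\gamma)(1-\eta X_\delta)$ and keeping only the monomials that are linear in $\vartheta$ \emph{and} linear in $\eta$ (all others carry a repeated odd scalar and vanish) leaves $1+\big(\vartheta X_\gamma\,\eta X_\delta-\eta X_\delta\,\vartheta X_\gamma\big)$; reordering the odd scalars, with the attendant Koszul signs, turns the bracket into $-\,\vartheta\,\eta\,\big(X_\gamma X_\delta+X_\delta X_\gamma\big)=-\,\vartheta\,\eta\,[X_\gamma,X_\delta]$. If $\delta\neq-\gamma$, relation (4.11), equivalently Definition \ref{def_che-bas}{\it(d)}, gives $[X_\gamma,X_\delta]=c_{\gamma,\delta}X_{\gamma+\delta}$, so the commutator equals $1-c_{\gamma,\delta}\,\vartheta\,\eta\,X_{\gamma+\delta}=x_{\gamma+\delta}\big(-c_{\gamma,\delta}\,\vartheta\,\eta\big)\in G_0(A)$. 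If $\delta=-\gamma$, relation (4.10) gives $[X_\gamma,X_{-\gamma}]=H_\gamma$, so the commutator is $1-\vartheta\,\eta\,H_\gamma$; this makes sense as $h_\gamma(1-\vartheta\,\eta)$ because $1-\vartheta\,\eta\in U(A_0)$ (inverse $1+\vartheta\,\eta$), and one checks equality on a weight vector $v\in V_\mu$: since $(\vartheta\,\eta)^2=0$ one has $(1-\vartheta\,\eta)^{\mu(H_\gamma)}=1-\mu(H_\gamma)\,\vartheta\,\eta$, whence $h_\gamma(1-\vartheta\,\eta).v=v-\mu(H_\gamma)\,\vartheta\,\eta\,v=(1-\vartheta\,\eta\,H_\gamma).v$.

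\textbf{Part (c).} Here the point is purely the weight grading. The operator $h_\alpha(t)$ acts on each weight space $V_\mu$ as the scalar $t^{\mu(H_\alpha)}$, while $X_\beta$ carries $V_\mu$ into $V_{\mu+\beta}$ (since $[H,X_\beta]=\beta(H)X_\beta$); therefore $h_\alpha(t)\,X_\beta\,h_\alpha(t)^{-1}=t^{\beta(H_\alpha)}X_\beta$, and likewise $h_\alpha(t)\,X_\beta^{\,2}\,h_\alpha(t)^{-1}=t^{2\beta(H_\alpha)}X_\beta^{\,2}$, as operators on $V_\bk(A)$. Conjugation by $h_\alpha(t)$ is an algebra automorphism, hence commutes with the formal exponential and with $1+(\,\cdot\,)$; applying it to each of the three shapes of $x_\beta(\bu)$ in Notation \ref{oneparameter} rescales the relevant slot by $t^{\beta(H_\alpha)}$ (the even, $X_\beta^{\,2}$--slot by $t^{2\beta(H_\alpha)}$), giving $x_\beta\big(t^{\beta(H_\alpha)}\bu\big)\in G_{p(\beta)}(A)$.

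\textbf{Expected obstacle.} None of this is deep — the result is essentially a bookkeeping lemma — and no sheafification subtleties intervene, since everything happens inside $\rGL\big(V_\bk(A)\big)$ for a fixed $A$. The only place that demands care is the sign accounting in (a) and (b): one must track the Koszul signs produced when the odd scalars $\vartheta,\eta$ are moved past the odd root operators, and use the vanishing facts $\vartheta^2=0$, $(\vartheta X_\gamma)^2=0$, $(\vartheta\,\eta)^2=0$ at each step so that all higher‑order terms in the expansions drop out; once these are in place the three identities follow by direct computation from the rules recorded in Section \ref{comm-rul_kost}.
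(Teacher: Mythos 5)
Your proposal is correct and follows essentially the same route as the paper's (sketched) proof: a direct computation from the relations of \S\,\ref{comm-rul_kost} together with the nilpotency facts $\vartheta^2=0$, $(\vartheta X_\gamma)^2=0$, $(\vartheta\eta)^2=0$ and the diagonal action of $h_\alpha(t)$ on root vectors and weight spaces, which you simply carry out in more detail than the paper does. One minor remark: with the paper's convention $(g,h)=g\,h\,g^{-1}h^{-1}$, the expansion you describe in (a) actually yields $1-\sum_{s>0}\big(\prod_{k=1}^{s}\varepsilon_k\big)\binom{s+r}{r}\,t^{s}\vartheta\,X_{\gamma+s\alpha}$, i.e.\ $c_s=-\prod_{k=1}^{s}\varepsilon_k\binom{s+r}{r}$; the ``$+$'' you state (which matches the paper's display) corresponds to the commutator taken in the opposite order, a sign that is immaterial both to the statement (only $c_s\in\Z$ is used later) and to the rest of the argument.
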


\begin{proof} The result follows directly from the classical results in  \cite{st},  pg.~22 and 29, and simple calculations, using the relations in  \S \ref{comm-rul_kost}  and the identity  $ \, {(\vartheta \eta)}^2 = - \vartheta^2 \, \eta^2 = 0 \, $.  In particular, like in the classical setting, every  $ h_\alpha(t) $  acts   --- via the adjoint representation ---   diagonally on each root vector  $ X_\beta \, $,  with weight  $ t^{\beta(H_\alpha)} \, $.  Also, we point out that the nilpotency of any  $ \, \vartheta, \eta \in A_1 \, $  implies that of  $ \, \vartheta \, \eta \, \big(\! \in \! A_0 \big) $,  which has several consequences.
                                                                            \par
   The first consequence to stress in that  $ \; \big( 1 \! - \vartheta \, \eta \big) \, , \, \big( 1 \! - 2 \, \vartheta \, \eta \big) \in U(A_0) \, $.  The second is that we have the identity  $ \; \big(\, 1 \! - \vartheta \, \eta \, H_\gamma \,\big) \, = \, h_\gamma\big( 1 \! - \vartheta \, \eta \big) \; $   --- to be thought of as an identity among operators on  $ V_\bk(A) \, $  ---   which follows from  $ \, {\big( 1 \! - \vartheta \, \eta \big)}^{\mu(H_\gamma)} \! = 1 \! - \vartheta \, \eta \; $:  \, this is mentioned (and used) in the second instance of  {\it (b)}.
\end{proof}

\medskip

\begin{remark}  \label{som-theta}
 A direct consequence of the previous Lemma is the following.
Assume  $ \, g_j \in x_{\delta_j}\big(A_1^{\,n_j}\big) :=
\big\{ x_{\delta_j}(\bu) \,\big|\, \bu \! \in \! A_1^{\,n_j} \big\} \, $
--- cf.~\S \ref{first_preliminaries}  ---   for  $ \, j = 1, 2 \, $
and  $ \delta_1 $  in  $ \Delta_1 \, $.
Then we have  $ \; \big(\, g_1 \, , \, g_2 \big) \in \prod_{s>0} x_{\delta_1 + s \, \delta_2}\big(A_1^{n_1 + s \, n_2}\big) \; $  if  $ \, \delta_2 \in \Delta_0 \, $  and  $ \; \big(\, g_1 \, , \, g_2 \big) \in x_{\delta_1 + \delta_2}\big(A_1^{n_1 + n_2}\big) \; $  or  $ \; \big(\, g_1 \, , \, g_2 \big) \in T\big(A_1^{(n_1 + n_2)}\big) \; $  if  $ \, \delta_2 \in \Delta_1 \, $.
\end{remark}

\bigskip

   Next result is a group-theoretical counterpart of the splitting  $ \, \fg = \fg_0 \oplus \fg_1 \, $.  It is a super-analogue of the classical Cartan decomposition (for reductive groups).  In the differential setting, it was   --- somewhat differently ---   first pointed out by Berezin  (see  \cite{be},  Ch.~2, \S 2).

\bigskip

\begin{theorem} \label{g0g1}
 Let  $ \, A \in \salg \, $.  There exist set-theoretic factorizations
  $$  \begin{matrix}
    G(A)  \; = \;  G_0(A) \; G_1(A) \quad ,  \phantom{\Big|}  &  \qquad  G(A)  \; = \;  G_1(A) \; G_0(A)  \\
    G^\pm(A)  \; = \;  G^\pm_0(A) \; G^\pm_1(A) \quad ,  \phantom{\Big|}  &  \qquad  G^\pm(A)  \; = \;  G^\pm_1(A) \; G^\pm_0(A)
      \end{matrix}  $$
\end{theorem}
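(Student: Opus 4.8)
The plan is to deduce the factorizations from a single structural fact: inside $G(A)$ the subgroup $G_0(A)$ \emph{normalizes} the subgroup $G_1(A)$. Once this is known, $G_0(A)\,G_1(A)$ is automatically a subgroup of $G(A)$ (since $(g_0 g_1)(g_0' g_1') = g_0 g_0'\,(g_0'^{-1} g_1 g_0')\,g_1'$ and $(g_0 g_1)^{-1} = g_0^{-1}(g_0 g_1^{-1}g_0^{-1})$ both lie in $G_0(A)\,G_1(A)$), and it contains $G_0(A)$ and $G_1(A)$, hence all of $G(A) = \langle G_0(A),G_1(A)\rangle$; so $G(A) = G_0(A)\,G_1(A)$, and then $G_1(A)\,G_0(A) = G_0(A)\,G_1(A)$ follows formally from normality. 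The same mechanism handles the $G^\pm$ statements.

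First I would dispose of the bookkeeping. By Definition \ref{def_Che-sgroup_funct}, $G(A)$ is generated by $G_0(A)$ and the one-parameter subgroups $x_\beta(A)$, $\beta \in \Delta_1$. For $\vartheta \in A_1$ one has $\vartheta^2 = 0$ (no $2$-torsion), so $\exp(\vartheta X_\beta) = 1 + \vartheta X_\beta$; and when $2\beta \in \Delta$ the extra factor $\exp(t X_\beta^{\,2}) = x_{2\beta}(\pm 2t)$ of $x_\beta(t,\vartheta)$ already lies in $G_0(A)$ because $2\beta \in \Delta_0$. Hence $G(A) = \big\langle G_0(A),\, \{\,1 + \vartheta X_\gamma \mid \gamma \in \Delta_1,\ \vartheta \in A_1\,\}\big\rangle = \langle G_0(A),\, G_1(A)\rangle$ with $G_1(A)$ as in Definition \ref{subgrps}. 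I would also note that $G_1(A)$, being generated by the elements $1 + \vartheta X_\gamma$ whose inverses are $1 - \vartheta X_\gamma = 1 + (-\vartheta)X_\gamma$, really is a subgroup, and likewise $G_0^\pm(A)$ and $G_1^\pm(A)$.

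Next, the normalization. It suffices to check that each generator of $G_0(A)$ — a torus element $h_i(t)$ or an even one-parameter element $x_\alpha(t)$ with $\alpha \in \Delta_0$ — conjugates each generator $x_\gamma(\vartheta) = 1 + \vartheta X_\gamma$ of $G_1(A)$ back into $G_1(A)$. For $h_i(t)$: the weight-space computation behind Lemma \ref{comm_1-pssg}(c) gives $h_i(t)\,X_\gamma\,h_i(t)^{-1} = t^{\gamma(H_i)}X_\gamma$, so $h_i(t)(1 + \vartheta X_\gamma)h_i(t)^{-1} = 1 + (t^{\gamma(H_i)}\vartheta)\,X_\gamma \in G_1(A)$, using $t^{\gamma(H_i)}\vartheta \in A_1$. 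For $x_\alpha(t)$: writing the conjugate as $\big(x_\gamma(\vartheta),\,x_\alpha(t)\big)^{-1}\,x_\gamma(\vartheta)$ and invoking Lemma \ref{comm_1-pssg}(a), $\big(x_\gamma(\vartheta),x_\alpha(t)\big) = \prod_{s>0} x_{\gamma + s\alpha}(c_s t^s\vartheta)$; each $\gamma + s\alpha$ is odd, hence lies in $\Delta_1$ whenever it is a root (and $x_{\gamma+s\alpha} = 1$ otherwise), so the commutator, and therefore the conjugate, belongs to $G_1(A)$. Thus $G_0(A)$ normalizes $G_1(A)$, which gives $G(A) = G_0(A)\,G_1(A) = G_1(A)\,G_0(A)$ as explained above. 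The $G^\pm$ case is verbatim the same with $\Delta,\Delta_0,\Delta_1$ replaced by $\Delta^\pm,\Delta_0^\pm,\Delta_1^\pm$: Definition \ref{subgrps} gives $G^\pm(A) = \langle G_0^\pm(A),G_1^\pm(A)\rangle$ (the $\exp(tX_\beta^{\,2})$-part of $x_\beta(t,\vartheta)$ for $\beta \in \Delta_1^\pm$ lands in $G_0^\pm(A)$ since $2\beta \in \Delta_0^\pm$), $G_0^\pm(A)$ is generated only by the $x_\alpha(A)$ with $\alpha \in \Delta_0^\pm$, and for such $\alpha$ and $\gamma \in \Delta_1^\pm$ every root $\gamma + s\alpha$ in Lemma \ref{comm_1-pssg}(a) is again a positive (resp.\ negative) odd root, so the conjugation argument stays in $G_1^\pm(A)$.

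The argument is the super-analogue of the classical fact that a (split reductive) group is generated by its torus and root subgroups, with the Bruhat-style reordering collapsed to a single normalization step thanks to Lemma \ref{comm_1-pssg}. There is no serious obstacle here: the only point requiring care is the bookkeeping emphasized above — keeping straight that $G_1(A)$ is built from the elements $1 + \vartheta X_\gamma$ alone, whereas the full one-parameter group $x_\gamma(A)$ for $2\gamma \in \Delta$ also carries a piece lying in $G_0(A)$, and verifying that $\gamma + s\alpha$ has the correct parity and (for $G^\pm$) the correct sign. All the genuine algebraic content is already packaged in Lemma \ref{comm_1-pssg}, whose proof is the elementary computation with the commutation rules of \S\ref{comm-rul_kost}.
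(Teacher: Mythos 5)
Your proposal is correct and takes essentially the same route as the paper: both arguments boil down to the commutation formulas of Lemma \ref{comm_1-pssg}, used to move the even generators (torus elements and $x_\alpha(t)$, $\alpha\in\Delta_0$) past the odd factors $1+\vartheta X_\gamma$, after observing that the $\exp\big(tX_\gamma^{\,2}\big)$-part of the odd one-parameter subgroups already lies in $G_0(A)$. The paper phrases this as closure of $G_0(A)\,G_1(A)$ under multiplication, proved by induction on the number of odd factors, whereas you package the identical computation as the statement that $G_0(A)$ normalizes $G_1(A)$ --- a purely organizational difference.
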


\begin{proof}  The proof for  $ G(A) $  works for  $ G^\pm(A) $  as well, so we stick to the former.
                                                 \par
   It is enough to prove either one of the equalities, say the first one.  Also, it is enough to show that  $ \, G_0(A) \, G_1(A) \, $ is closed by multiplication, since it contains all generators of  $ G(A) $  and their inverses.  So we have to show that
  $$  g_0 g_1 \cdot g_0' g_1'  \, \in \,  G_0(A) \, G_1(A)   \eqno  \forall\;\;  g_0 , g_0' \in G_0(A) \, , \; g_1 , g_1' \in G_1(A) \;\;\, .   \qquad  $$
   \indent   By the very definitions, we need only to prove that
  $$  \displaylines{
   \big( 1 + \vartheta_1 \, X_{{\wbeta}_1} \big) \cdots \big( 1 + \vartheta_{n-1} \, X_{{\wbeta}_{n-1}} \big) \, \big( 1 + \vartheta_n \, X_{{\wbeta}_n} \big) \; x_{\walpha}(u) \,\; \in \; G_0(A) \, G_1(A)  \cr
   \big( 1 + \vartheta_1 \, X_{\beta_1} \big) \cdots \big( 1 + \vartheta_{n-1} \, X_{{\wbeta}_{n-1}} \big) \, \big( 1 + \vartheta_n \, X_{{\wbeta}_n} \big) \; h_{\delta}(t) \,\; \in \, G_0(A) \; G_1(A)  }  $$
for all  $ \, {\wbeta}_1 , \dots , {\wbeta}_n \! \in {\wDelta}_1 \, $,  $ \, \walpha \! \in {\wDelta}_0 \, $,  $ \, \delta \! \in \Delta \, $,  $ \, \vartheta_1, \dots, \vartheta_n \! \in \! A_1 \, $,  $ \, u \! \in \! A_0 \, $,  $ \, t \! \in U(A_0) \, $.  But this comes from an easy induction on  $ n \, $,  via the formulas in  Lemma \ref{comm_1-pssg}.
\end{proof}

\medskip

\begin{lemma} \label{precrucial}
   Let  $ \, A \in \salg \, $.  Then
  $$  \displaylines{
   G_1(A)  \; \subseteq \;  G_0\big(A_1^{(2)}\big) \, G_1^<(A)  \quad ,  \qquad  G_1(A) \; \subseteq \;  G_1^<(A) \, G_0\big(A_1^{(2)}\big)  \phantom{\Big|}  \cr
   G_1^\pm(A)  \; \subseteq \;  G_0^\pm\big(A_1^{(2)}\big) \, G_1^{\pm,<}(A)  \quad ,  \qquad  G_1^\pm(A) \; \subseteq \;  G_1^{\pm,<}(A) \, G_0^\pm\big(A_1^{(2)}\big)  \phantom{\Big|}  }  $$
\end{lemma}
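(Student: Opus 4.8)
The plan is to prove all four inclusions by the same ``sorting'' argument; I will carry out $G_1(A)\subseteq G_0\!\big(A_1^{(2)}\big)\,G_1^<(A)$ in detail and indicate the variants at the end. First I would reduce to the case in which $A_1$ is nilpotent. An element of $G_1(A)$ is, by definition, a \emph{finite} product $g=x_{\gamma_1}(\vartheta_1)\cdots x_{\gamma_n}(\vartheta_n)$ with $\gamma_i\in\Delta_1$ and $\vartheta_i\in A_1$; letting $B\subseteq A$ be the unital $\bk$-subsuperalgebra generated by $\vartheta_1,\dots,\vartheta_n$, one has $B_1^{\,n+1}=0$ — indeed each $\vartheta_i^{\,2}=0$ (as $2$ is a non-zero-divisor), the $\vartheta_i$ pairwise anticommute, and the $A_0$-coefficients are central, so any product of more than $n$ of them repeats a factor and vanishes. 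Since $B\hookrightarrow A$ induces compatible morphisms of the functors $G_1$, $G_1^<$ and $G_0\!\big(B_1^{(2)}\big)\to G_0\!\big(A_1^{(2)}\big)$ (because $B_1^{(2)}\subseteq A_1^{(2)}$), it suffices to treat $B$; so from now on assume $A_1^{\,K+1}=0$. Then $A_1^{\,k}=0$ for $k>K$, the ideal $A_1^{\,2}$ of $A$ is nilpotent, and $1+a\in U\!\big(A_1^{(2)}\big)$ for every $a\in A_1^{\,2}$.

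Next I would rewrite $g$ by three elementary moves drawn from Lemma \ref{comm_1-pssg}, always sweeping ``even'' correction factors to the right. (i) Two adjacent factors on the same root get merged: $x_\gamma(\vartheta)\,x_\gamma(\eta)=x_\gamma(\vartheta+\eta)\cdot x_{2\gamma}\!\big(\mp 2\,\vartheta\eta\big)$, the last factor being $1$ when $2\gamma\notin\Delta$ and otherwise an element of $G_0\!\big(A_1^{(2)}\big)$ (here $2\gamma\in\Delta_0$ and $\vartheta\eta\in A_1^{\,2}$). (ii) Two adjacent factors whose roots are in the wrong order for $\preceq$ get transposed: by Lemma \ref{comm_1-pssg}(b),
\[
x_\gamma(\vartheta)\,x_\delta(\eta)\;=\;x_\delta(\eta)\,x_\gamma(\vartheta)\cdot\big(x_\gamma(-\vartheta),x_\delta(-\eta)\big),
\]
and the commutator is $x_{\gamma+\delta}\!\big(-c_{\gamma,\delta}\,\vartheta\eta\big)$ if $\delta\neq-\gamma$ and $h_\gamma\!\big(1-\vartheta\eta\big)$ if $\delta=-\gamma$, hence lies in $G_0\!\big(A_1^{(2)}\big)$ in either case (again $\vartheta\eta\in A_1^{\,2}$, and $1-\vartheta\eta\in U\!\big(A_1^{(2)}\big)$). (iii) Such a correction is pushed rightwards past a remaining odd factor $x_\mu(\xi)$ using Lemma \ref{comm_1-pssg}(a),(c) applied to the generators $x_\alpha(t)$ ($t\in A_1^{\,2}$) and $h_\alpha(u)$ ($u\in U\!\big(A_1^{(2)}\big)$) of $G_0\!\big(A_1^{(2)}\big)$: each generator emerges unchanged on the right, leaves $x_\mu(\xi)$ in place, and creates only finitely many further factors — odd ones $x_{\mu+s\alpha}(\cdots)$ and, after re-applying (i), possibly even ones $x_{2\mu}(\cdots)\in G_0\!\big(A_1^{(2)}\big)$ — whose arguments, by Remark \ref{som-theta}, lie strictly deeper in the filtration $\{A_1^{\,k}\}_k$ than $\xi$. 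Thus the ``even part'' grows only by multiplication inside the group $G_0\!\big(A_1^{(2)}\big)$.

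For termination I would induct on a complexity measure attached to the ``shallowest'' filtration layer containing an odd factor of the current expression (for instance: the least $m$ with all odd factors in $A_1^{\,m}$, paired with the number of such factors plus the number of $\preceq$-inversions among them): moves (i)–(ii) performed at that layer strictly reduce it, while every correction they spawn — and everything created by (iii) — lies strictly deeper; since $A_1^{\,k}=0$ for $k>K$ the recursion bottoms out. When no move applies, the odd factors are in strictly increasing $\preceq$-order with pairwise distinct roots, i.e.\ $g=g_0\cdot h$ with $g_0\in G_0\!\big(A_1^{(2)}\big)$ and $h\in G_1^<(A)$, as wanted. Sweeping the corrections leftwards instead — using $x_\gamma(\vartheta)x_\delta(\eta)=\big(x_\gamma(\vartheta),x_\delta(\eta)\big)\,x_\delta(\eta)\,x_\gamma(\vartheta)$ — yields the inclusion with the factors interchanged. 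For the two $\pm$-versions one restricts all roots to $\Delta_1^\pm$: a sum of two positive (resp.\ negative) odd roots lies in $\Delta_0^\pm$, and $\delta=-\gamma$ never occurs for $\gamma,\delta\in\Delta_1^\pm$, so only the $x_{\gamma+\delta}$-type corrections arise and they land in $G_0^\pm\!\big(A_1^{(2)}\big)$; everything else is verbatim. The real obstacle I anticipate is exactly this termination/complexity bookkeeping — for a general $A$ the filtration $\{A_1^{\,k}\}$ does not vanish, which is precisely why the reduction to a subsuperalgebra with nilpotent odd part is indispensable — together with the (routine but lengthy) verification that through every move each correction factor genuinely stays inside $G_0\!\big(A_1^{(2)}\big)$ (respectively $G_0^\pm\!\big(A_1^{(2)}\big)$).
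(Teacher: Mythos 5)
Your argument is essentially the paper's own proof: after reducing to a superalgebra whose odd part is nilpotent (the paper assumes $A_1$ finitely generated over $A_0$, you pass to the subsuperalgebra generated by the finitely many odd coefficients — same effect), you bubble-sort the factors using the commutation rules of Lemma \ref{comm_1-pssg} and Remark \ref{som-theta}, all corrections landing in $G_0\big(A_1^{(2)}\big)$ with coefficients strictly deeper in the filtration $\{A_1^{\,k}\}_k$, so nilpotency forces termination exactly as in the paper. The only deviations are cosmetic: you make explicit the even correction $x_{2\gamma}(\mp 2\,\vartheta\eta)$ when merging equal odd roots (which the paper's step \emph{(4)} suppresses), and your phrase about sweeping corrections ``to the right'' should read ``to the left'' for the first inclusion — as you yourself observe, the two sweeping directions produce the two stated factorizations.
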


\begin{proof}
 We deal with the first identity, the other ones are similar.  Indeed, we shall prove the slightly stronger result
  $$  \big\langle G_1(A) \, , \, G_\bullet \! \big(A_1^{(2)}\big) \big\rangle  \; \subseteq \;  G_0\big(A_1^{(2)}\big) \, G_1^<(A)   \eqno (5.3)  $$
where  $ \, \big\langle G_1(A) \, , \, G_\bullet \! \big(A_1^{(2)}\big) \big\rangle \, $  is the subgroup generated by  $ G_1(A) $  and  $ G_\bullet\!\big(A_1^{(2)}\big) \, $,  where the latter in turns denotes the subgroup
  $$  G_\bullet\!\big(A_1^{(2)}\big)  \; := \;  \Big\langle \big\{ h_\alpha(u), x_\alpha(t) \,\big|\, \alpha \in \Delta_0, u \in U\!\big(A_1^{(2)}\big), t \in A_1^{\,2} \,\big\} \Big\rangle  $$
   \indent   Any element of  $ \big\langle G_1(A) \, , \, G_\bullet \! \big(A_1^{(2)}\big) \big\rangle $  is a product  $ \; g \, = \, g_1 \, g_2 \cdots g_k \; $  in which each factor  $ g_i $  is either of type  $ \, h_{\alpha_i}(u_i) \, $,  or  $ \, x_{\alpha_i}(t_i) \, $,  or  $ \, x_{\gamma_i}(\vartheta_i) \, $,  with  $ \, \alpha_i \in \Delta_0 \, $, $ \, \gamma_i \in \Delta_1 \, $  and  $ \, u_i \in U\big(A_1^{(2)}\big) \, $,  $ \, t_i \in A_1^2 \, $,  $ \, \vartheta_i \in A_1 \, $.  Such a product belongs to  $ \, G_0\big(A_1^{(2)}\big) \, G_1^<(A) \, $  if all factors indexed by the $ \, \alpha_i \in \Delta_0 \, $  are on the left of those indexed by the $ \, \gamma_j \, \in \Delta_1 \, $, and moreover the latter occur in the order prescribed by  $ \, \preceq \, $.  In this case, we say that the factors of  $ g $  are ordered.  We shall now re-write  $ g $  as a product of ordered factors, by repeatedly commuting the original factors, as well as new factors which come in along this process.
 \vskip5pt
   Since we have only a finite number of odd coefficients in the expression for  $ g \, $,  we can assume without loss of generality that  $ A_1 $  is finitely generated as an $ A_0 $--module.  This implies that  $ \, A_1^{\,n} = \{0\} \, $  and  $ \, A_1^{\,(n)} = 0 \, $  for  $ n $  larger than the
number of odd generators of  $ A_1 \, $.
 \vskip7pt
   Let us consider two consecutive factors  $ \; g_i \, g_{i+1} \; $  in  $ g \, $.  If they are already ordered, we are done.  Otherwise, there are four possibilities:
 \vskip5pt
   {\it --- (1)} \,  $ \, g_i = x_{\alpha_i}(t_i) \, $,  $ \, g_{i+1} = h_{\alpha_i}(u_i) \, $,  \; or
$ \; g_i = x_{\gamma_i}(\vartheta_i) \, $,  $ \, g_{i+1} = h_{\alpha_i}(u_i) \, $.  \quad  In this case we rewrite
  $$  \displaylines{
   g_i \, g_{i+1}  \; = \;  x_{\alpha_i}(t_i) \, h_{\alpha_i}(u_i)  \;
= \; h_{\alpha_i}(u_i) \, x_{\alpha_i}(t'_i)  \cr
 \hbox{or}   \hfill
   g_i \, g_{i+1}  \; = \;  x_{\gamma_i}(\vartheta_i) \, h_{\alpha_i}(u_i)  \;
= \; h_{\alpha_i}(u_i) \, x_{\gamma_i}(\vartheta'_i)  \hfill  }  $$
%
%
%
 with  $ \; t'_i \in A_1^{\,n_i} \, $,  $ \, \vartheta'_i \in \wA_1^{\,m_i} \, $,  if  $ \; t_i \in A_1^{\,n_i} \, $,  $ \, \vartheta_i \in A_1^{\,m_i} \, $,  thanks to  Lemma \ref{comm_1-pssg}{\it (c)}.
In particular we replace a pair of unordered factors with a new pair of ordered factors.  Even more, this shows that any factor of type  $ h_{\alpha_i}\!(u_i) $  can be flushed to the left of our product so to give a new product of the same nature, but with all factors of type $h_{\alpha_i}(u_i)$
on the left-hand side.
 \vskip5pt
   {\it --- (2)} \,  $ \, g_i = x_{\alpha_i}(t_i) \, $,  $ \, g_{i+1} = x_{\gamma_{i+1}}(\vartheta_{i+1}) \, $.
\quad  In this case we rewrite
  $$  g_i \, g_{i+1}  \, = \,  g_{i+1} \, g_i \, g'_i  \;\quad  \text{with}  \;\quad  g'_i := \big(\, {g_i}^{\!-1} , \, {g_{i+1}}^{\!\!\!-1} \big) = \big(\, x_{\alpha_i}(-t_i) \, , \, x_{\gamma_{i+1}}(-\vartheta_{i+1}) \big)  $$
so we replace a pair of unordered (consecutive) factors with a pair of ordered (consecutive) factors followed by another, new factor  $ g'_i \, $.  {\sl However},  letting  $ \, n_1, n_2 \in \N_+ \, $  be such that  $ \, t_i \in A_1^{\,n_1} \, $,  $ \, \vartheta_{i+1} \in A_1^{\,n_2} \, $,  by  Remark \ref{som-theta}  this  $ \, g'_i \, $  {\sl is a product of new factors of type  $ \, x_{\alpha_j}(t'_j) \, $  with  $ \, t'_j \in A_1^{\;n_j} \, $,  $ \, n_j \geq n_i + n_{i+1} \, $}.
 \vskip5pt
   {\it --- (3)} \,  $ \, g_i = x_{\gamma_i}(\vartheta_i) \, $,  $ \, g_{i+1} = x_{\gamma_{i+1}}(\vartheta_{i+1}) \, $.  \quad  In this case we rewrite
  $$  g_i \, g_{i+1}  \, = \,  g_{i+1} \, g_i \, g'_i  \;\quad  \text{with}  \;\quad  g'_i := \big(\, {g_i}^{\!-1} , \, {g_{i+1}}^{\!\!\!-1} \big) = \big(\, x_{\gamma_i}(-\vartheta_i) \, , \, x_{\gamma_{i+1}}(-\vartheta_{i+1}) \big)  $$
so we replace a pair of unordered factors with a pair of ordered factors followed by a new factor  $ g'_i $  which  --- again by  Remark \ref{som-theta}  ---
 is again of type  $ \, x_\alpha(t) \, $  or  $ \, h_\alpha(u) \, $  with
$ \, t \in \wA_1^{\;n} \, $,  $ \, u \in U\big(A_1^{\,(n)}\big) \, $,
%
%
where  $ \, n \geq n_i + n_{i+1} \, $  for  $ \, n_1, n_2 \in \N_+ \, $  such that  $ \, \vartheta_i \in A_1^{\,n_1} \, $  and  $ \, \vartheta_{i+1} \in A_1^{\,n_2} \, $.
 \vskip5pt
   {\it --- (4)} \,  $ \, g_i = x_{\gamma}(\vartheta_i) \, $,  $ \, g_{i+1} = x_{\gamma}(\vartheta_{i+1}) \; $.
%
%
 \quad  In this case we rewrite
  $$  g_i \, g_{i+1}  \; = \;  x_{\gamma_i}(\vartheta_i) \, x_{\gamma_{i+1}}(\vartheta_{i+1})  \; = \;
x_\gamma(\vartheta_i) \, x_\gamma(\vartheta_{i+1})  \; = \;  x_\gamma(\vartheta_i \! + \! \vartheta_{i+1})  $$
so we replace a pair of unordered factors with a single factor.  In addition, each of the pairs  $ \, g_{i-1} \, g'_i \, $  and  $ \, g'_i \, g_{i+2} \, $  respects or violates the ordering according to what the corresponding
   \hbox{old pair  $ \, g_{i-1} \, g_i \, $  and  $ \, g_{i+1} \, , g_{i+2} \, $  did.}
 \vskip7pt
   Now we iterate this process: whenever we have any unordered pair of
consecutive factors in the product we are working with,
we perform any one of steps  {\it (1)\/}  through  {\it (4)\/}  explained
above.  At each step, we substitute an unordered pair with a single factor
(step {\it (4)\/}), which does not form any more unordered pairs than
the ones we had before,  or with an ordered pair (steps {\it (1)--(4)\/}),
possibly introducing new additional factors.  However, any new factor is either
 of type  $ \, x_\alpha(t) \, $,  with  $ \, t \in \wA_1^{\;n} \, $, or
or of type  $ \, h_\alpha(u) \, $,  with  $ \, u \in U\big(\wA_1^{\,(n)}\big) \, $,
%
%
 for values of  $ n $ which are (overall) strictly increasing after
each iteration of this procedure.  As  $ \, \wA_1^{\,n} = \{0\} \, $,  for
$ \, n \gg 0 \, $,  after finitely many steps such new factors are trivial,
i.e.~eventually all unordered (consecutive) factors will commute with each
other and will be re-ordered without introducing any new factors.
Therefore the process stops after finitely many steps, thus proving (5.3).
\end{proof}

\medskip

\begin{theorem} \label{realcrucial}
 For any  $ \, A \in \salg \, $  we have
  $$  G(A)  \; = \;  G_0(A) \, G_1^<(A) \quad ,  \qquad
      G(A)  \; = \;  G_1^<(A) \, G_0(A)  $$
\end{theorem}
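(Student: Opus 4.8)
The statement is an immediate consequence of the Cartan-type decomposition of Theorem \ref{g0g1} together with the reordering result of Lemma \ref{precrucial}; the plan is simply to chain these two facts, the only thing to check being that the auxiliary even factors stay inside the relevant subgroups.

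First I would record the two trivial inclusions. Since every generator $x_\gamma(\vartheta)$ (with $\gamma \in \Delta_1$, $\vartheta \in A_1$) of an element of $G_1^<(A)$ lies in $G_1(A)$, we have $G_1^<(A) \subseteq G_1(A)$; hence $G_0(A)\,G_1^<(A) \subseteq G_0(A)\,G_1(A) = G(A)$ and likewise $G_1^<(A)\,G_0(A) \subseteq G_1(A)\,G_0(A) = G(A)$, the last equalities being exactly Theorem \ref{g0g1}. Moreover $A_1^{(2)}$, being the unital subalgebra of $A$ generated by products of pairs of odd elements, is contained in $A_0$, so by functoriality of $G_0$ (applied to the inclusion $A_1^{(2)} \hookrightarrow A$, recalling that $G_0$ factors through $\text{(alg)}$) one has $G_0\big(A_1^{(2)}\big) \subseteq G_0(A)$ as subgroups of $\rGL\big(V_\bk(A)\big)$.

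For the reverse inclusions I would start from $G(A) = G_0(A)\,G_1(A)$ (Theorem \ref{g0g1}) and apply the first containment of Lemma \ref{precrucial}, namely $G_1(A) \subseteq G_0\big(A_1^{(2)}\big)\,G_1^<(A)$. This gives
$$ G(A) \; = \; G_0(A)\,G_1(A) \; \subseteq \; G_0(A)\,G_0\big(A_1^{(2)}\big)\,G_1^<(A) \; \subseteq \; G_0(A)\,G_0(A)\,G_1^<(A) \; = \; G_0(A)\,G_1^<(A) \;, $$
where the last equality uses that $G_0(A)$ is a subgroup. Combined with the trivial inclusion above, this yields $G(A) = G_0(A)\,G_1^<(A)$. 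Symmetrically, starting from $G(A) = G_1(A)\,G_0(A)$ and using $G_1(A) \subseteq G_1^<(A)\,G_0\big(A_1^{(2)}\big)$ from Lemma \ref{precrucial}, one gets $G(A) \subseteq G_1^<(A)\,G_0\big(A_1^{(2)}\big)\,G_0(A) = G_1^<(A)\,G_0(A)$, hence the second factorization.

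There is essentially no serious obstacle here: all the real content has already been absorbed into Lemma \ref{comm_1-pssg}, Theorem \ref{g0g1} and Lemma \ref{precrucial}. The single point deserving a line of care is the bookkeeping of which subalgebra the extra even factors live in — one must note that the $G_0$-factors produced when reordering the odd one-parameter subgroups involve only coefficients in $A_1^{(2)} \subseteq A_0$, so that they can be absorbed into $G_0(A)$ without ever leaving $\rGL\big(V_\bk(A)\big)$; this is precisely what Lemma \ref{precrucial} was engineered to guarantee.
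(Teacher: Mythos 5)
Your argument is correct and is exactly the paper's proof: the paper derives Theorem \ref{realcrucial} in one line from Theorem \ref{g0g1} and Lemma \ref{precrucial}, which is precisely the chaining you carry out, with the absorption of $G_0\big(A_1^{(2)}\big)$ into $G_0(A)$ made explicit. Nothing further is needed.
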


\begin{proof}
 This follows at once from  Theorem \ref{g0g1}  and  Lemma \ref{precrucial}.
\end{proof}

\vskip11pt

   Our aim is to show that the decompositions we proved in the previous
proposition are essentially unique.  We need one more lemma:

\medskip

\begin{lemma}  \label{red-to-classical}
  Let  $ \, A \, , \, B \in \salg \, $,  with $ B $  being a subsuperalgebra of  $ \, A \, $.  Then  $ \, G(B) \leq G(A) \, $,  \, i.e.~$ \, G(B) $  is a subgroup of  $ \, G(A) \, $.
\end{lemma}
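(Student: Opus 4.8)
The plan is to realise $G(B)$ as a subgroup of $G(A)$ directly via the functoriality already built into Definition~\ref{def_Che-sgroup_funct}. Since $B$ is a subsuperalgebra of $A$, the inclusion $\iota\colon B\hookrightarrow A$ is a morphism in $\salg$, so applying the Chevalley supergroup functor $G$ to it gives a group homomorphism $G(\iota)\colon G(B)\lra G(A)$ which, by construction, is the restriction of the group homomorphism $\rGL\big(V_\bk(\iota)\big)\colon\rGL\big(V_\bk(B)\big)\lra\rGL\big(V_\bk(A)\big)$. The lemma will follow once I show that this last homomorphism is injective: then $G(\iota)$ identifies $G(B)$ with its image, a subgroup of $G(A)$.

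For the injectivity I would argue as follows. Recall from \S\ref{const-che-sgroup} that $V_\bk(A)=A\otimes_\bk V_\bk$ with $V_\bk=\bk\otimes_\Z M$, and that $M$ is a (homogeneous) free $\Z$-module, being a $\Z$-lattice; hence $V_\bk$ is a free super $\bk$-module, say with homogeneous $\bk$-basis $\mathcal{E}$. Then the functor $\,-\otimes_\bk V_\bk\,$ is exact and preserves the $\Z_2$-grading, so the injection $\iota$ induces an injection $V_\bk(B)=B\otimes_\bk V_\bk\hookrightarrow A\otimes_\bk V_\bk=V_\bk(A)$ of super $\bk$-modules. Reading endomorphisms through $\mathcal{E}$ as matrices (with the usual parity constraints on blocks), $\End_\bk\big(V_\bk(\iota)\big)$ acts simply by applying $\iota$ to each matrix entry, and is therefore injective; so is its restriction $\rGL\big(V_\bk(\iota)\big)$, which does take values in $\rGL\big(V_\bk(A)\big)$ since $\iota$ is unital and hence sends a matrix together with its inverse to a matrix together with its inverse.

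It then remains to note --- although this is exactly the functoriality of $G$ recorded in Definition~\ref{def_Che-sgroup_funct} --- that $\rGL\big(V_\bk(\iota)\big)$ carries $G(B)$ into $G(A)$, for which it suffices that it sends each generator of $G(B)$ to a generator of $G(A)$. This is immediate from the explicit shape of the one-parameter supersubgroups: for $t\in B_0$ the matrix of $x_\alpha(t)=\sum_{n}t^n X_\alpha^{(n)}$ has entries which are $\Z$-linear combinations of powers of $t$, because every $X_\alpha^{(n)}\in\kzg$ stabilises $M$, so applying $\iota$ entrywise yields the matrix of $x_\alpha\big(\iota(t)\big)\in x_\alpha(A)\subseteq G(A)$; likewise $x_\beta(\vartheta)=1+\vartheta X_\beta\mapsto 1+\iota(\vartheta)X_\beta=x_\beta\big(\iota(\vartheta)\big)$ and $x_\gamma(t,\vartheta)\mapsto x_\gamma\big(\iota(t),\iota(\vartheta)\big)$ for $\vartheta\in B_1$, $t\in B_0$; and finally $h_H(t)\mapsto h_H\big(\iota(t)\big)$ for $t\in U(B_0)$, using that a unital ring morphism sends $U(B_0)$ into $U(A_0)$. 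Hence $G(\iota)$ is a well-defined injective homomorphism $G(B)\to G(A)$, which is the meaning of $G(B)\le G(A)$.

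There is no real obstacle here: the argument is essentially just functoriality of $G$ together with flatness of $V_\bk$ over $\bk$. The only point that deserves a line of care is the compatibility of scalar extension with the one-parameter supersubgroups, i.e.~that forming $x_\delta(\bt)$ commutes with applying $\iota$; this rests precisely on the $\kzg$-stability of $M$, which makes the matrices $X_\alpha^{(n)}$, $X_\gamma$ and the exponents occurring in $h_H$ ``defined over $\Z$''.
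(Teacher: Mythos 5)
Your argument is correct and is essentially the paper's own proof: both rest on the fact that $G$ is by construction a subgroup functor of $\rGL(V)$, so elements of $G(B)$ and $G(A)$ are matrices with entries in $B\subseteq A$, and applying the inclusion entrywise is injective and carries the generating one-parameter subgroups of $G(B)$ into those of $G(A)$. You merely spell out (via freeness of the lattice $M$ and functoriality of $G$) the details the paper leaves implicit.
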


\begin{proof}
 This is not in general true for any supergroup functor; however,  $ G $  by definition is a subgroup of some  $ GL(V) \, $,  hence the elements in  $ G(A) $  are realized
as matrices with coefficients in  $ A \, $,  and those in  $ G(B) $  as matrices
with coefficients in  $ B \, $.  It is then clear that any matrix in  $ G(B) $  is in  $ G(A) $,  and two such matrices are equal in  $ G(B) $  if and only if they are equal as matrices in  $ G(A) $  as well.
\end{proof}

\medskip

   We are ready for our main result:

\bigskip

\begin{theorem} \label{crucial}
 For any  $ \, A \in \salg \, $,  the group product yields bijections
  $$  G_0(A) \times G_1^{-,<}(A) \times G_1^{+,<}(A) \,
\lhook\joinrel\relbar\joinrel\relbar\joinrel\relbar\joinrel\twoheadrightarrow
\, G(A)  $$
and all the similar bijections obtained by permuting the factors  $ G_1^{\pm,<}(A) $  and/or switching the factor  $ G_0(A) $  to the right.
\end{theorem}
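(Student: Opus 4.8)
The plan is to get surjectivity almost for free from Theorem \ref{realcrucial}, and to prove injectivity by recovering the three factors of a given $g \in G(A)$ one ``layer at a time'' along the filtration of $A$ by powers of the ideal $J := (A_1)$ generated by its odd part.

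First I would normalise the ground algebra and dispose of surjectivity. Any finite family of elements of $G(A)$ involves only finitely many entries of $A$, so by Lemma \ref{red-to-classical} I may replace $A$ by the finitely generated subsuperalgebra generated by all coefficients appearing in the chosen expressions of $g_0,g_0',a^-,a^+,b^-,b^+$ as products of the standard generators; for such an $A$ the odd part $A_1$ is a finitely generated $A_0$-module, whence $J = (A_1)$ is nilpotent, say $J^{M+1}=\{0\}$. For surjectivity, choose the total order on $\Delta_1$ listing $\Delta_1^-$ (in its prescribed order) before $\Delta_1^+$ (in its prescribed order); then $G_1^<(A) = G_1^{-,<}(A)\,G_1^{+,<}(A)$, so Theorem \ref{realcrucial} yields $G(A) = G_0(A)\,G_1^{-,<}(A)\,G_1^{+,<}(A)$. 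Reversing the roles of $\Delta_1^+$ and $\Delta_1^-$, and invoking the companion identity $G(A) = G_1^<(A)\,G_0(A)$, produces the remaining listed decompositions.

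Injectivity is the heart of the matter. Suppose $g := g_0\,a^-\,a^+ = g_0'\,b^-\,b^+$ with $g_0,g_0' \in G_0(A)$ and $a^\pm, b^\pm \in G_1^{\pm,<}(A)$. I claim $g_0, a^-, a^+$ are determined by $g$ modulo $J^m$ for every $m \ge 1$, hence (as $J^{M+1}=\{0\}$) outright; this is a descending induction on $m$ resting on two complementary facts. On one hand, every generator of $G_0(A)$ — an exponential $\exp(tX_\alpha)$ of an even root vector, or a toral operator $h_H(t)$ — preserves the $\Z_2$-grading of $V_\bk$, so $G_0(A)$ consists of block-diagonal matrices with entries in $A_0$. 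On the other hand, each factor $x_\gamma(\vartheta)=1+\vartheta X_\gamma$ of $a^\pm$ is, modulo $J^2$, just $1+\vartheta X_\gamma$ with $X_\gamma$ strictly off-diagonal (odd root vectors reverse parity), so $a^\pm \equiv 1 + \sum_{\gamma\in\Delta_1^\pm}\vartheta_\gamma X_\gamma \pmod{J^2}$. Hence, reducing $g$ modulo $J^2$, its block-diagonal part is $\equiv g_0$ and its off-diagonal part is $\equiv g_0\,\big(\sum_{\gamma\in\Delta_1}w_\gamma X_\gamma\big)$, where $w_\gamma$ is the leading coefficient of $a^-$ for $\gamma<0$ and of $a^+$ for $\gamma>0$; since $\Delta_1 = \Delta_1^-\sqcup\Delta_1^+$ and the root vectors $\{X_\gamma\}_{\gamma\in\Delta_1}$ are $\bk$-linearly independent in $\rgl(V_\bk)$ (faithfulness of $V$, cf.\ Theorem \ref{stabilizer}), this determines $g_0$ and every $w_\gamma$ modulo $J^2$. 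Subtracting the recovered leading data, dividing it out, and repeating one power of $J$ deeper completes the induction; at the generic step the corrections that intervene come from the commutation formulas of Lemma \ref{comm_1-pssg}, and by Remark \ref{som-theta} and Lemma \ref{precrucial} — the ``even leftovers'' produced when odd one-parameter subgroups are reordered — they strictly raise the $A_1$-degree, hence land in ever deeper powers of $J$. The very same linear-independence-and-filtration argument, carried out inside one $G_1^{\pm,<}(A)$, shows the parametrisation $A_1^{N_\pm}\to G_1^{\pm,<}(A)$ is a bijection, which turns ``$a^\pm=b^\pm$ as matrices'' into equality of parameters. The permuted and switched bijections are obtained verbatim, reading the appropriate identity of Theorem \ref{realcrucial}.

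The step I expect to be delicate is exactly the bookkeeping in this layered recovery: tracking, through all the commutator corrections of Lemma \ref{comm_1-pssg} and the associated $G_0\big(A_1^{(2)}\big)$-factors of Lemma \ref{precrucial}, precisely which power of $J$ each correction lands in, so that the inductive hypothesis ``determined modulo $J^m$'' genuinely propagates to $J^{m+1}$. (Note in particular that $G_0(A)\cap G_1(A)$ is \emph{not} trivial in general, so the decomposition cannot be untangled by a naive group-theoretic argument; it is the ordered form of the odd factors that makes the recovery possible.) The structural inputs — surjectivity, the parity/block dichotomy between $G_0$ and the odd one-parameter subgroups, and the linear independence of the Chevalley basis — are routine.
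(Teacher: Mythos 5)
Your proposal is correct in outline and shares the paper's skeleton (surjectivity from Theorem \ref{realcrucial}, reduction to a finitely generated subsuperalgebra via Lemma \ref{red-to-classical}, induction along the powers of the ideal generated by $A_1$), but at the decisive point it runs on a genuinely different mechanism. The paper first proves an exact \emph{Claim}: if $g_-\,g_+=f_-\,f_+$ with $g_\pm,f_\pm\in G_1^{\pm,<}(A)$ then $g_\pm=f_\pm$; this is done by letting both sides act on weight spaces of $V$ and using two root-combinatorial inputs, namely $\N\Delta_1^-\cap\N\Delta_1^+=\{0\}$ and the case-by-case fact that an odd root is never a sum of three or more odd roots of the same sign. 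The general case is then settled by an induction modulo $\big(A_1^{\,n}\big)$ which alternates on the parity of $n$ (an even element of $\big(A_1^{\,n}\big)$ with $n$ odd lies in $\big(A_1^{\,n+1}\big)$, and symmetrically), applying the Claim in the quotients $G\big(A\big/\big(A_1^{\,n+1}\big)\big)$. You instead separate the factors by the parity block structure of $\End\big(V_\bk(A)\big)$ together with the $\bk$-linear independence of the odd root vectors; this avoids the weight-space analysis and the case-by-case root fact altogether, which is a real simplification, at the price that your inductive step is only sketched.

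The step you flag as delicate does close, but more cleanly if you \emph{compare the two factorizations} rather than ``divide out'' recovered data (dividing out forces re-orderings and the commutator bookkeeping of Lemma \ref{comm_1-pssg} that you were worried about). Suppose $g_0\,a^-a^+=g_0'\,b^-b^+$ and, inductively, $g_0-g_0'$ and all parameter differences $\vartheta_\gamma-\eta_\gamma$ lie in $J^m$, $J:=(A_1)$. Expanding the ordered products, every term involving at least two odd parameters has its coefficient difference in $J^m\cdot A_1\subseteq J^{m+1}$, and $(g_0-g_0')$ times any term of $a^-a^+-1$ also lands in $J^{m+1}$; hence modulo $J^{m+1}$ the identity collapses to $(g_0-g_0')+g_0'\sum_{\gamma\in\Delta_1}(\vartheta_\gamma-\eta_\gamma)X_\gamma\equiv 0$. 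The first summand is block-diagonal with even entries, the second block-off-diagonal, so each vanishes separately; invertibility of $g_0'$ and linear independence of the $X_\gamma$'s (they are part of a basis of $\fg_V\hookrightarrow\rgl(V_\bk)$, and $\Delta_1$ has no multiplicities in the cases at hand) give agreement modulo $J^{m+1}$, and nilpotency of $J$ finishes. Note this is exactly where the exclusion of $A(1,1)$, $P(3)$, $Q(n)$ matters for your route (distinctness of the odd root vectors as a basis indexed by $\Delta_1$), whereas the paper's route additionally consumes the root-sum fact; your parenthetical observation that $G_0(A)\cap G_1(A)$ is not trivial is apt, and it is precisely the parity of the coefficients (even corrections can only enter at even depth) that renders it harmless in both arguments.
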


\begin{proof}  We shall prove the first mentioned bijection.  In general,  Proposition \ref{realcrucial}  gives  $ \; G(A) = G_0(A) \, G_1^<(A) \, $,  \, so the product map from  $ \; G_0(A) \times G_1^<(A) \; $  to  $ G(A) $  is onto; but in particular, we can choose an ordering on  $ \Delta_1 $  for which  $ \, \Delta_1^- \preceq \Delta_1^+ \, $,  hence  $ \, G_1^<(A) = G_1^{-,<}(A) \, G_1^{+,<}(A) \, $,  so we are done for surjectivity.
                                                             \par
   To prove that the product map is also injective amounts to showing that for any  $ \, g \in G(A) \, $,  the factorization  $ \; g = g_0 \, g_- \, g_+ \, $  with  $ \, g_0 \in G_0(A) \, $  and  $ \, g_\pm \in G_1^{\pm,<}(A) \, $ is unique.  In other words, if we have
  $$  g  \; = \;  g_0 \, g_- \, g_+  \; = \;  f_0 \, f_- \, f_+ \;\; ,   \eqno g_0 \, , \, f_0 \in G_0(A) \, ,  \;\; g_\pm \, , \, f_\pm \in G_1^{\pm,<}(A)  \quad  $$
we must show that $ \, g_0 = f_0 \, $  and  $ \, g_\pm = f_\pm \; $.

\smallskip

   To begin with, we write the last factors in our identities as
  $$  g_\pm  \; = \;  {\textstyle \prod_{d=1}^{N_\pm}} \, \big( 1 + \, \vartheta^\pm_d \, X_{\gamma^\pm_d} \big) \;\; ,  \qquad  f_\pm  \; = \;  {\textstyle \prod_{d=1}^{N_\pm}} \, \big( 1 + \, \eta^\pm_d \, X_{\gamma^\pm_d} \big)  $$
for some  $ \, t_i \, ,  s_j \in A_0 \, $,  and  $ \, \vartheta_j \, , \eta_j \in A_1 \, $,  with  $ \, N_\pm \! = \big|\Delta_1^\pm\big| \, $.  Here the  $ \, \gamma^\pm_d \in \Delta_1^\pm \, $  are all the positive or negative odd roots, ordered as in  Definition \ref{subgrps}.
                                                    \par
   Expanding the products expressing  $ g_\pm $  and  $ f_\pm $  we get
  $$  \displaylines{
   \qquad   g_\pm  \; = \;\,  1 \, + \, {\textstyle \sum\limits_{d=1}^{N_\pm}} \, \vartheta^\pm_d \, X_{\gamma^\pm_d} \, + \, {\textstyle \sum\limits_{k=2}^{N_\pm}} \; {\textstyle \, \sum\limits_{\underline{d} \, \in {\{1,\dots,N_\pm\}}^k}} \, {(-1)}^{k \choose 2} \, \underline{\vartheta}_{\,\underline{d}}^\pm \, X_{\gamma^\pm_{\underline{d}}}  \cr
   \qquad   f_\pm  \; = \;\,  1 \, + \, {\textstyle \sum\limits_{d=1}^{N_\pm}} \, \eta^\pm_d \, X_{\gamma^\pm_d} \, + \, {\textstyle \sum\limits_{k=2}^{N_\pm}} \; {\textstyle \, \sum\limits_{\underline{d} \, \in {\{1,\dots,N_\pm\}}^k}} \, {(-1)}^{k \choose 2} \, \underline{\eta}_{\,\underline{d}}^\pm \, X_{\gamma^\pm_{\underline{d}}}  }  $$
where
  $$  X_{\gamma^\pm_{\underline{d}}}  \, := \,  X_{\gamma^\pm_{d_1}} \cdots X_{\gamma^\pm_{d_k}} \; ,  \qquad  \underline{\vartheta}^\pm  \, := \,  \vartheta_{d_1} \cdots \vartheta_{d_k} \; ,  \qquad \underline{\eta}^\pm  \, := \,  \eta_{d_1} \cdots \eta_{d_k}  $$
for every  $ \, k > 1 \, $  and every  $ k $--tuple  $ \, \underline{d} := (d_1,\dots,d_k) \in {\{1,\dots,N_\pm\}}^k \, $.  For later use, note that these formulas imply also
  $$  \displaylines{
   f_-^{\,-1} \, g_-  = \,  1 + {\textstyle \sum\limits_{d=1}^{N_-}} \, \big( \vartheta^-_d \! - \eta^-_d \big) \, X_{\gamma^-_d} + {\textstyle \sum\limits_{k=2}^{2N_-}} \; {\textstyle \sum\limits_{\underline{d} \, \in {\{1,\dots,N_-\}}^k}} \hskip-7pt \Phi_{\gamma^-_{\underline{d}}}\big( \underline{\vartheta}^- , \underline{\eta}^- \big) \, X_{\gamma^-_{\underline{d}}}   \hfill (5.4)  \cr
   f_+ \, g_+^{\,-1}  = \,  1 + {\textstyle \sum\limits_{d=1}^{N_+}} \, \big( \eta^+_d \! - \vartheta^+_d \big) \, X_{\gamma^+_d} + {\textstyle \sum\limits_{k=2}^{2N_+}} \; {\textstyle \sum\limits_{\underline{d} \, \in {\{1,\dots,N_+\}}^k}} \hskip-7pt \Phi_{\gamma^+_{\underline{d}}}\big(\! -\underline{\vartheta}^+, -\underline{\eta}^+ \big) \, X_{\gamma^+_{\underline{d}}}   \hfill (5.5)  }  $$
where the  $ \Phi_{\gamma^\pm_{\underline{d}}} $'s  are suitable monomials (in the  $ \vartheta_i $'s  and  $ \eta_j $'s)  of degree  $ k $  with a coefficient  $ \pm 1 \,$,  and  $ \, \Phi_{\gamma^+_{\underline{d}}} = \pm \Phi_{\gamma^-_{\underline{d}}} \, $.
                                                    \par
   Note that, letting  $ V $  be the  $ \fg $--module  used in  Definition \ref{def_Che-sgroup_funct}  to define  $ G(A) \, $,  all the identities above actually hold inside  $ \End\big(V_\bk(A)\big) \, $.

\smallskip

   We proceed now to prove the following, intermediate result:

\bigskip

\noindent
 {\it  $ \underline{\text{Claim}} $:}  {\sl Let  $ \, g_\pm \, , \, f_\pm \in G_1^{\pm,<}(A) \, $  be such that  $ \, g_- \, g_+ = f_- \, f_+ \, $.  Then  $ \, g_\pm = f_\pm \; $}.

\medskip

   Indeed, let  $ \, V = \oplus_\mu V_\mu \, $  be the splitting of  $ V $  as direct sum of weight spaces.  Root vectors map weight spaces into weight spaces, via  $ \, X_\delta.V_\mu \subseteq V_{\mu + \delta} \; $  (for each root  $ \delta $  and every weight  $ \mu \, $).  An immediate consequence of this and of the expansions in (5.4--5) is that, for all weights  $ \mu $  and  $ \, v_\mu \in V_\mu \setminus \{0\} \, $,
  $$   \big( f_-^{\,-1} \, g_- \big).v_\mu \, \in \,
{\textstyle \bigoplus_{\gamma^- \in \N \Delta_1^-}} V_{\mu + \gamma^-}  \;\; ,  \qquad  \big( f_+ \, g_+^{\,-1} \big).v_\mu \, \in \, {\textstyle \bigoplus_{\gamma^+ \in \N \Delta_1^+}} V_{\mu + \gamma^+}  $$
where  $ \, \N \Delta_1^\pm $  is the  $ \N $--span  of  $ \, \Delta_1^\pm \, $.  In particular, this means that the only weight space in which both  $ \big( f_-^{\,-1} \, g_- \big).v_\mu $  and  $ \big( f_+ \, g_+^{\,-1} \big).v_\mu $  may have a non-trivial weight component is  $ V_\mu $  itself, as  $ \, \N \Delta_1^- \! \cap \N \Delta_1^+ = \{0\} \, $.  Moreover, let us denote by  $ \, {\big( \big( f_-^{\,-1} \, g_- \big).v_\mu \big)}_{\mu + \gamma_d^-} \, $  the weight component of  $ \, \big( f_-^{\,-1} \, g_- \big).v_\mu \, $  inside  $ V_{\mu + \gamma_d^-} \, $,  and similarly let  $ \, {\big( \big( f_+ \, g_+^{\,-1} \big).v_\mu \big)}_{\mu + \gamma_d^+} \, $  be the weight component of  $ \, \big( f_+ \, g_+^{\,-1} \big).v_\mu \, $  inside  $ V_{\mu + \gamma_d^+} \, $.  Then, looking in detail at (5.4--5), we find
  $$  \displaylines{
   \hfill   {\big( \big( f_-^{\,-1} \, g_- \big).v_\mu \big)}_{\mu + \gamma_d^-} \, = \; \big( \vartheta^-_d \! - \eta^-_d \big) \, X_{\gamma^-_d}.v_\mu   \hfill \forall \;\, d=1,\dots,N_-  \quad  \cr
   \hfill  {\big( \big( f_+ \, g_+^{\,-1} \big).v_\mu \big)}_{\mu + \gamma_d^+} \, = \; \big( \eta^+_d \! - \vartheta^+_d \big) \, X_{\gamma^+_d}.v_\mu   \hfill \forall \;\, d=1,\dots,N_+  \quad  }  $$
In fact, this is certainly true for  $ \gamma_d^\pm $  simple, and one can check directly case by case that any odd root  $ \gamma_d^\pm $  can never be the sum of three or more odd roots all positive or negative like  $ \gamma_d^\pm $  itself.  Now, since by hypothesis we have  $ \, g_- \, g_+ = \, f_- \, f_+ \; $,  \, so that  $ \, f_-^{-1} \, g_- = \, f_+ \, g_+^{-1} \; $,  \, comparing the weight components of the action of both sides of this equation on weight spaces  $ V_\mu $   --- and recalling that  $ \fg $  acts faithfully on  $ V \, $,  so  $ \, X_{\gamma_d^\pm}.V_\mu \not= 0 \, $  for some  $ \mu $  ---   we get right away  $ \, \vartheta_d^\pm = \eta_d^\pm \, $  for all  $ d \, $,  hence  $ \, g_\pm = f_\pm \; $,  \, q.e.d.   \hfill  $ \diamondsuit $

\vskip13pt

   Let now go on with the proof of the theorem.

\medskip

   By definition of  $ G_0(A) \, $,  both  $ g_0 $  and  $ f_0 $  are products of finitely many factors of type  $ x_\alpha(t_\alpha) $  and  $ h_i(s_i) $  for some  $ \, t_\alpha \in A_0 \, $, $ \, s_i \in U(A_0) \, $   --- with  $ \, \alpha \in \Delta_0 \, $,  $ \, i = 1, \dots, \ell \, $.  We call  $ B $  the superalgebra of  $ A $  generated by all the  $ \vartheta^\pm_d $'s,  the  $ \eta^\pm_d $'s,  the  $ \, t_\alpha $'s  and the  $ \, s_i $'s.  Then by construction  $ B $  is finitely generated (as a superalgebra), and  $ B_1 $  is finitely generated as a  $ B_0 $--module.
                                                    \par
   By  Lemma \ref{red-to-classical},  $ G(B) $  embeds injectively as a subgroup into  $ G(A) \, $;  so the identity  $ \; g_0 \, g_- \, g_+ \, = \, f_0 \, f_- \, f_+ \; $  also holds inside  $ G(B) \, $.  Thus we can switch from  $ A $  to  $ B \, $,  i.e.~we can assume from scratch that  $ \, A = B \, $.  In particular then,  $ A $  is finitely generated, hence  $ A_1 $  is finitely generated as an  $ A_0 $--module.

\medskip

  Consider in  $ A $  the ideal  $ A_1 \, $,  the submodules  $ A_1^{\,n} $  (cf.~\S \ref{first_preliminaries}),  for each  $ \, n \in \N \, $,  and the ideal  $ \big( A_1^{\,n} \big) $  of  $ A $  generated by  $ A_1^{\,n} \, $:  as  $ A_1^{\,n} $  is  {\sl homogeneous}, we have also  $ \; A \big/ \! \big( A_1^{\,n} \big) \in \salg \; $.
 Moreover, as  $ A_1 $  is finitely generated (over  $ A_0 $),  by assumption, we have  $ \, A_1^{\,n} = \{0\} = \big(A_1^{\,n}\big) \, $  for  $ \, n \gg 0 \, $.  So it is enough to prove
  $$  g_0 \equiv f_0  \mod \big(A_1^{\,n}\big) \; ,  \qquad  g_\pm \equiv f_\pm  \mod \big(A_1^{\,n}\big)   \eqno \forall \; n \in \N   \qquad (5.6)  $$
where, for any  $ \, A' \in \salg \, $,  any  $ I $  ideal of  $ A' $  with  $ \, \pi_I : A' \relbar\joinrel\twoheadrightarrow A' \big/ I \, $  the canonical projection, by  $ \; x \equiv y \mod I \; $  we mean that two elements  $ x $  and  $ y $  in  $ G(A') $  have the same image in  $ G \big( A' \big/ I \big) \, $  via the map  $ G(\pi_I) \, $.

\smallskip

   We prove (5.6) by induction.  The case  $ \, n = 0 \, $  is clear (there is no odd part).  We divide the induction step in two cases:  $ n $  even and  $ n $  odd.
                                                                   \par
   Let (5.6) be true for  $ n $  even.  In particular,  $ \, g_\pm \equiv f_\pm  \mod \big(A_1^{\,n}\big) \, $: then (see the proof of the  {\it Claim\/}  above) we have  $ \, \vartheta_d^\pm \equiv \eta_d^\pm \mod \big(A_1^{\,n}\big) \, $  for all  $ d \, $,  hence
  $$  (\vartheta_d^\pm - \eta_d^\pm)  \, \in \,  \big(A_1^{\,n}\big) \cap A_1 \subseteq \big(A_1^{n+1}\big) \qquad  \text{for all  $ d \, $}  $$
by an obvious parity argument.  Thus  $ \, g_\pm \equiv f_\pm  \mod \big(A_1^{n+1}\big) \, $  too, hence   --- since we have  $ \, g_0 \, g_- \, g_+ = f_0 \, f_- \, f_+ \, $  too ---   we get  $ \, g_0 \equiv f_0  \mod \big(A_1^{n+1}\big) \, $  as well: therefore, (5.6) holds for  $ \, n\!+\!1 \, $,  q.e.d.
                                                                   \par
   Let now (5.6) hold for  $ n $  odd.  Then  $ \, g_0 \equiv f_0  \mod \big(A_1^{\,n}\big) \, $;  but  $ \, g_0, f_0 \in G_0(A) = G_0(A_0) \, $  by definition, hence  $ \, g_0 \equiv f_0 \mod \big(A_1^{\,n}\big) \cap A_0 \, $.  Therefore  $ \, g_0 \equiv f_0 \mod \big(A_1^{n+1}\big) \, $,  because  $ \, \big(A_1^{\,n}\big) \cap A_0 \subseteq \big(A_1^{n+1}\big) \, $  by an obvious parity argument again.  Thus from  $ \, g_0 \, g_- \, g_+ = f_0 \, f_- \, f_+ \, $  we get also  $ \; g_- \, g_+ \equiv f_- \, f_+  \mod \big(A_1^{n+1}\big) \, $.  Then the  {\it Claim\/}  above   --- applied to  $ G\big(A\big/(A_1^{n+1})\big) $  ---   eventually gives  $ \, g_\pm \equiv f_\pm  \mod \big(A_1^{n+1}\big) \, $,  so that (5.6) holds for  $ \, n\!+\!1 \, $.
\end{proof}

\medskip

\begin{corollary} \label{nat-transf}
   The group product yields functor isomorphisms
  $$  \displaylines{
   G_0 \times G_1^{-,<} \times G_1^{+,<} \;{\buildrel \cong \over {\relbar\joinrel\lra}}\; G \quad ,  \qquad  \bG_0 \times \bG_1^{-,<} \times \bG_1^{+,<} \;{\buildrel \cong \over {\relbar\joinrel\lra}}\; \bG  }  $$
as well as those obtained by permuting the  $ (-) $-factor  and the  $ (+) $-factor  and/or moving the  $ (0) $-factor  to the right.  Moreover, all these induce similar functor isomorphisms with the left-hand side obtained by permuting the factors above, like  $ \, G_1^{+,<} \times G_0 \times G_1^{-,<} \;{\buildrel \cong \over \lra}\; G \, $,  $ \, \bG_1^{-,<} \times \bG_0 \times \bG_1^{+,<} \;{\buildrel \cong \over \lra}\; G \, $,  etc.
\end{corollary}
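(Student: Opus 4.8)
The plan is to bootstrap everything from Theorem~\ref{crucial}, which already supplies, for each fixed $A \in \salg$, the bijection $G_0(A) \times G_1^{-,<}(A) \times G_1^{+,<}(A) \lra G(A)$ given by the group product, together with the variants obtained by permuting or displacing factors. Hence the genuinely new content of the Corollary is threefold: (i) to check that these pointwise bijections are natural in $A$, so that they assemble into an isomorphism of \emph{set}-valued functors; (ii) to push this isomorphism through sheafification; and (iii) to deduce the remaining variants, in particular those with $G_0$ sitting between the two odd factors.

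For step (i), regard $G_0$, $G_1^{-,<}$, $G_1^{+,<}$ and $G$ all as functors $\salg \lra \sets$ (forgetting group structures where present), and let $\mu_A \colon G_0(A) \times G_1^{-,<}(A) \times G_1^{+,<}(A) \lra G(A)$ be $(g_0,g_-,g_+) \mapsto g_0\,g_-\,g_+$, which is a bijection by Theorem~\ref{crucial}. Naturality is a direct verification: given $\phi \in \Hom_\salg(A,B)$, the morphism $G(\phi)$ of Definition~\ref{def_Che-sgroup_funct} is induced by applying $\phi$ to matrix entries, hence it is a \emph{group} homomorphism $G(A) \to G(B)$ carrying $x_\delta(\bt)$ to $x_\delta\big(\phi(\bt)\big)$ (with $\phi$ applied componentwise) and $h_i(s)$ to $h_i\big(\phi(s)\big)$. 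Since the root data are independent of $A$ and $G(\phi)$, being a homomorphism, preserves ordered products, $G(\phi)$ restricts to maps $G_0(A) \to G_0(B)$ and $G_1^{\pm,<}(A) \to G_1^{\pm,<}(B)$, which are precisely $G_0(\phi)$ and $G_1^{\pm,<}(\phi)$. Multiplicativity of $G(\phi)$ then gives $G(\phi) \circ \mu_A = \mu_B \circ \big(G_0(\phi) \times G_1^{-,<}(\phi) \times G_1^{+,<}(\phi)\big)$, so $\mu := (\mu_A)_A$ is a natural transformation; being bijective at every object, it is an isomorphism of set-valued functors $G_0 \times G_1^{-,<} \times G_1^{+,<} \cong G$.

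For step (ii), apply the sheafification functor of Appendix~A. Sheafification is functorial and left exact, hence preserves finite products; it therefore sends $\mu$ to an isomorphism $\bG_0 \times \bG_1^{-,<} \times \bG_1^{+,<} \cong \bG$, once one recalls (the conventions after Definition~\ref{subgrps}) that $\bG_0$, $\bG_1^{\pm,<}$ and $\bG$ denote by definition the sheafifications of $G_0$, $G_1^{\pm,<}$ and $G$. For step (iii), each variant obtained by permuting the two odd factors and/or moving $G_0$ to an outer position is treated verbatim as in (i)--(ii), invoking the corresponding pointwise bijection already listed in Theorem~\ref{crucial}. The variants with $G_0$ \emph{between} the two odd factors, e.g. $G_1^{+,<} \times G_0 \times G_1^{-,<} \cong G$, follow by composing a previously obtained isomorphism with the natural bijection $\iota_A \colon G(A) \to G(A)$, $g \mapsto g^{-1}$ (note $x_\delta(\bt)^{-1} = x_\delta(-\bt)$ turns an ordered $G_1^{\pm,<}$-word into an ordered word for the reversed order on $\Delta_1^\pm$), and then transporting the $G_0$-factor into the middle by means of the commutation formulas of Lemma~\ref{comm_1-pssg}, exactly in the style of the proof of Lemma~\ref{precrucial}; uniqueness in each such variant reduces, via the same commutation moves, to the uniqueness already established in Theorem~\ref{crucial}. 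One then sheafifies as in (ii).

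I expect no serious obstacle here: all the real work lies in Theorem~\ref{crucial}, and what remains is bookkeeping. The two points that deserve a little care are that sheafification commutes with the finite products occurring on the left-hand sides — a standard property of sheafification, recalled in Appendix~A — and, for the $G_0$-in-the-middle variants, keeping track of which ordered words and which commutators stay inside $G_0$, $G_1^{-,<}$ and $G_1^{+,<}$; both are routine given the commutation lemmas already proved.
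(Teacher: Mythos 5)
Your treatment of the displayed isomorphisms is correct and essentially the paper's own route: the pointwise bijections are exactly Theorem~\ref{crucial}, naturality is the (implicit) observation that $G(\phi)$ is a group homomorphism sending generators to generators, and then one sheafifies. The one divergence is in how you sheafify: you invoke ``sheafification preserves finite products, recalled in Appendix~A'' --- that fact is standard but it is \emph{not} recalled in Appendix~A; the paper instead applies Theorem~\ref{sheaf-iso}, using that a sheafification agrees with its presheaf on local superalgebras, to the natural transformation into the sheaf $\bG$ (and the same device identifies the sheafification of the product with the product of the sheafifications, $\bG_1^{\pm,<}=G_1^{\pm,<}$ being in fact already sheaves by Proposition~\ref{G1pm-repres}). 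Either route works; just cite the right statement.

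Where your argument has a genuine soft spot is the ``Moreover'' part, i.e.\ the variants with $G_0$ \emph{between} the two odd factors. Composing with inversion only converts a factorization with $G_0$ on the left into one with $G_0$ on the right (and the two odd factors swapped, with reversed orders) --- and those are already in the list of Theorem~\ref{crucial}; inversion can never place $G_0$ in the middle. The remaining step you describe, ``transporting the $G_0$-factor into the middle by the commutation formulas of Lemma~\ref{comm_1-pssg} in the style of Lemma~\ref{precrucial},'' is precisely the nontrivial point and is not routine bookkeeping: conjugation by elements of $G_0$ normalizes $G_1$ but \emph{not} $G_1^{\pm,<}$ (a conjugate $g_0^{-1}x_{\gamma}(\vartheta)\,g_0$ with $\gamma\in\Delta_1^{+}$ may involve factors attached to negative odd roots, with coefficients still only in $A_1$, so the termination mechanism of Lemma~\ref{precrucial} --- strictly increasing powers of $A_1$ --- does not apply), and the uniqueness argument in the Claim inside the proof of Theorem~\ref{crucial} uses in an essential way that \emph{only odd} factors are being compared, so it does not transfer verbatim to $g_-\,g_0\,g_+=f_-\,f_0\,f_+$. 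To be fair, the paper itself disposes of these cases with a laconic ``similarly for the other functors''; but your proposal claims a specific mechanism (inversion plus commutation moves) that, as stated, does not yield these ``big cell''-type factorizations, so this step needs an actual argument rather than a reduction by bookkeeping.
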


\begin{proof}
The first isomorphism arises from  Theorem \ref{crucial}.  The second one then is an easy consequence of the first one and of  Theorem \ref{sheaf-iso}  of Appendix A, because  $ \bG $
is the sheafification of  $ G \, $.  Similarly for the other functors.
\end{proof}

\medskip

\begin{remark}
 The four functor isomorphisms
  $$  \displaylines{
   G_1^{-,<} \times G_0 \times G_1^{+,<} \,\;{\buildrel \cong \over \lra}\;\, G  \quad ,  \qquad  G_1^{+,<} \times G_0 \times G_1^{-,<} \,\;{\buildrel \cong \over \lra}\;\, G  \cr
  \bG_1^{-,<} \times \bG_0 \times \bG_1^{+,<} \,\;{\buildrel \cong \over \lra}\;\, \bG  \quad ,  \qquad \bG_1^{-,<} \times \bG_0 \times \bG_1^{+,<} \,\;{\buildrel \cong \over \lra}\;\, \bG  }  $$
can be thought of as sort of a super-analogue of the classical  {\sl big cell decomposition\/}  for reductive algebraic groups.
\end{remark}

\medskip

\begin{proposition} \label{G1pm-repres}
  The functors  $ \, G_1^{\pm,<} : \salg \lra \sets \; $  are representable: they are the functor of points of the superscheme  $ \, \mathbb{A}_\bk^{0|N_\pm} $,  with  $ \, N_\pm = \big| \Delta_1^\pm \big| \, $.  In particular they are sheaves, hence  $ \; G_1^{\pm,<} = \bG_1^{\pm,<} \; $.
                                                      \par
   Similarly, we have also  $ \, G_1^< = \bG_1^< \cong \mathbb{A}^{0|N} $  as super-schemes,
where  $ \, N := \big| \Delta_1 \big| = N_+ + N_- \, $  (=the total number of odd roots).
\end{proposition}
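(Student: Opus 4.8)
The plan is to exhibit an explicit natural isomorphism between $G_1^{\pm,<}$ and the functor of points of $\mathbb{A}_\bk^{0|N_\pm}$, and then to read off the remaining assertions. First, recall that a morphism of commutative superalgebras out of the Grassmann algebra $\bk[\xi_1,\dots,\xi_{N_\pm}]$ into $A$ is the same as an $N_\pm$--tuple of odd elements of $A$ (the Grassmann relations being automatic in any commutative superalgebra), so that $\mathbb{A}_\bk^{0|N_\pm}(A) = A_1^{\,N_\pm}$. Moreover, for $\gamma \in \Delta_1$ one has $x_\gamma(\vartheta) = \exp(\vartheta\,X_\gamma) = 1 + \vartheta\,X_\gamma$ for every $\vartheta \in A_1$, since $(\vartheta\,X_\gamma)^2 = 0$. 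I would therefore define a natural transformation $\, \Theta^\pm : \mathbb{A}_\bk^{0|N_\pm} \lra G_1^{\pm,<} \, $ by
$$  \Theta^\pm_A(\vartheta_1,\dots,\vartheta_{N_\pm})  \; := \;  {\textstyle \prod_{i=1}^{N_\pm}} \big( 1 + \vartheta_i\,X_{\gamma_i} \big) \;\; ,  $$
where $\gamma_1 \prec \cdots \prec \gamma_{N_\pm}$ are the ordered positive (resp.\ negative) odd roots. Naturality is immediate, because $G_1^{\pm,<}(\phi)$ acts by applying $\phi$ to matrix entries; surjectivity of each $\Theta^\pm_A$ is literally the definition of $G_1^{\pm,<}(A)$ in Definition \ref{subgrps}.

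The only real content is the injectivity of $\Theta^\pm_A$, i.e.\ the uniqueness of the ordered--product expression. This, however, is exactly the \emph{Claim} established inside the proof of Theorem \ref{crucial}, applied with the opposite--sign factors taken trivial: if $\prod_i(1+\vartheta_i\,X_{\gamma_i}) = \prod_i(1+\eta_i\,X_{\gamma_i})$ as operators in $\End\big(V_\bk(A)\big)$, one expands both sides, uses that every root vector shifts weights via $X_\delta.V_\mu \subseteq V_{\mu+\delta}$, and invokes the combinatorial fact that a positive (resp.\ negative) odd root is never a sum of two or more positive (resp.\ negative) odd roots; hence, acting on any weight vector $v_\mu$, the $V_{\mu+\gamma_i}$--component of the difference equals $(\vartheta_i - \eta_i)\,X_{\gamma_i}.v_\mu$ alone, and faithfulness of $\fg$ on $V$ forces $\vartheta_i = \eta_i$ for all $i$. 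Consequently $\Theta^\pm$ is an isomorphism of functors, so $G_1^{\pm,<}$ is representable and is the functor of points of $\mathbb{A}_\bk^{0|N_\pm}$.

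Since any representable functor on $\salg$ is a sheaf for the Zariski topology (cf.\ Appendix A), $G_1^{\pm,<}$ already coincides with its own sheafification, i.e.\ $\, G_1^{\pm,<} = \bG_1^{\pm,<} \,$. For $G_1^<$ I would fix the total order on $\Delta_1$ with $\Delta_1^- \prec \Delta_1^+$, so that $G_1^<(A) = G_1^{-,<}(A)\cdot G_1^{+,<}(A)$ (as observed after Definition \ref{subgrps}); the full Claim then shows that this multiplication is a bijection $\, G_1^{-,<}(A) \times G_1^{+,<}(A) \longrightarrow G_1^<(A) \,$, natural in $A$, and under $\Theta^- \times \Theta^+$ this is precisely the tautological map $A_1^{\,N} \to G_1^<(A)$. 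Hence $\, G_1^< \cong \mathbb{A}_\bk^{0|N_-} \times \mathbb{A}_\bk^{0|N_+} \cong \mathbb{A}_\bk^{0|N} \,$ with $N = N_- + N_+ = |\Delta_1|$; in particular $G_1^<$ is representable, a sheaf, and equal to $\bG_1^<$.

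As for the main obstacle: essentially all the weight--space work underpinning injectivity has already been done as the Claim in the proof of Theorem \ref{crucial}, so here it is only a matter of packaging the natural transformation $\Theta^\pm$ correctly and identifying the representing object. Were one to prove injectivity from scratch, the one delicate ingredient is the case--by--case verification that no odd root decomposes as a sum of several odd roots all of the same sign — everything else is formal manipulation of one--parameter supersubgroups and of weight decompositions.
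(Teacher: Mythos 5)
Your proposal is correct and follows essentially the same route as the paper's own proof: the same ordered-product natural transformation $\mathbb{A}_\bk^{0|N_\pm} \lra G_1^{\pm,<}$, surjectivity by definition, and injectivity deduced from the uniqueness \emph{Claim} (weight-space argument) inside the proof of Theorem \ref{crucial}, with representability then giving the sheaf property. Your treatment of $G_1^<$ via the order with $\Delta_1^- \prec \Delta_1^+$ and of the sheaf/sheafification identification is just a more explicit spelling-out of what the paper leaves implicit, so no gap.
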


\begin{proof}  Clearly, by the very definitions, there exists a natural transformation\break
 $ \, \Psi^\pm : \mathbb{A}_\bk^{0|N_\pm} \! \lra G_1^{\pm,<} \; $  given on objects by
 \vskip3pt
   \centerline{ $  \Psi^\pm(A) \, : \, \mathbb{A}_\bk^{0|N_\pm}(A) \! \lra G_1^{\pm,<}(A)  \quad ,  \qquad  (\vartheta_1,\dots,\vartheta_{N_\pm}) \, \mapsto \, {\textstyle \prod_{i=1}^{N_\pm}} \, x_{\gamma_i}(t_i) $ }
 \vskip5pt
\noindent
 Now, given elements
  $$  g_1^\pm = {\textstyle \prod_{i=1}^{N_\pm}} x_{\gamma_i}\!(\vartheta'_i)  \, \in \,  G_1^{\pm,<}(A)  \quad ,  \qquad h_1^\pm = {\textstyle \prod_{i=1}^{N_\pm}} x_{\gamma_i}\!(\vartheta''_i)  \, \in \, G_1^{\pm,<}(A)  $$
assume that  $ \; g^\pm_1 = h^\pm_1 \, $,  hence  $ \; h^-_1 \, {(g^-_1)}^{-1} = 1 \, $.  Then we get
  $$  \big( \vartheta'_1, \dots, \vartheta'_{N_\pm} \big)  \, = \,  \big( \vartheta''_1, \dots, \vartheta''_{N_\pm} \big)  $$
just as showed in the proof of  Theorem \ref{crucial}.  This means that  $ \Psi^\pm $  is an isomorphism of functors, which proves the claim.
\end{proof}

\bigskip

   Finally, we can prove that the Chevalley supergroups are algebraic:

\bigskip

\begin{theorem}  \label{representability}
   Every Chevalley supergroup  $ \bG $  is an algebraic supergroup.
\end{theorem}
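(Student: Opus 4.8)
The plan is to read off representability of $\bG$ directly from the factorization results already established, so that essentially nothing new is needed. First I would recall Corollary \ref{nat-transf}: the group product induces an isomorphism of set-valued functors $\; \bG_0 \times \bG_1^{-,<} \times \bG_1^{+,<} \;\cong\; \bG \;$. I would then invoke the representability of each of the three factors on the left. The factor $\bG_0$ is representable by the discussion in \S\ref{const-che-sgroup}, where it was exhibited as $\big(\bT \times \bG'_0\big)\big/\mathbf{K}$ with $\bT$ and $\bG'_0$ representable — the latter being (the functor of points of) the classical split reductive Chevalley group-scheme attached to $\fg_0$ and $V$ — and the resulting quotient was shown to be representable. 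The factors $\bG_1^{\pm,<}$ are representable by Proposition \ref{G1pm-repres}, which identifies $\bG_1^{\pm,<} = G_1^{\pm,<}$ with the affine superscheme $\mathbb{A}_\bk^{0|N_\pm}$, $N_\pm = \big|\Delta_1^\pm\big|$.

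Next I would use the elementary fact that a finite product of representable functors $\salg \lra \sets$ is again representable, the representing object being the tensor product over $\bk$ of the representing superalgebras; this is immediate from the definition of the functor of points together with the isomorphism $\uspec(R \otimes_\bk S) \cong \uspec(R) \times \uspec(S)$. Applied to the three factors above, this yields that $\bG$ is representable, with representing commutative superalgebra
$$ \cO(\bG) \;\cong\; \cO(\bG_0) \otimes_\bk \bk\big[\xi_1,\dots,\xi_{N_-}\big] \otimes_\bk \bk\big[\xi'_1,\dots,\xi'_{N_+}\big] \;. $$
Since $\bG$ is a group-valued functor, the affine superscheme $\uspec\big(\cO(\bG)\big)$ is an affine supergroup, i.e.\ $\cO(\bG)$ carries a natural structure of commutative Hopf superalgebra. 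Finally, $\cO(\bG_0)$ is finitely generated over $\bk$ — it is the coordinate superalgebra of a classical affine group-scheme of finite type — and the Grassmann part is visibly finitely generated; hence $\cO(\bG)$ is a finitely generated Hopf superalgebra, so $\bG$ is an affine algebraic supergroup, as asserted.

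There is no serious obstacle left: the real content of the theorem lies in the preceding results, namely the delicate factorization $\bG \cong \bG_0 \times \bG_1^{-,<} \times \bG_1^{+,<}$ (Theorem \ref{crucial} and Corollary \ref{nat-transf}) and the representability of the three factors. The only small point to treat with care is that the isomorphism of Corollary \ref{nat-transf} is merely one of \emph{set}-valued functors, not of supergroup functors; but this is harmless, since representability is a property of the underlying set-valued functor, and once $\bG$ is represented the pre-existing group structure automatically endows $\cO(\bG)$ with its Hopf superalgebra structure, making $\bG$ an affine algebraic supergroup.
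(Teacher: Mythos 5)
Your proposal is correct and follows essentially the same route as the paper: it invokes the factorization $\bG \cong \bG_0 \times \bG_1^{-,<} \times \bG_1^{+,<}$ from Corollary \ref{nat-transf}, the representability of $\bG_0$ from \S\ref{const-che-sgroup} and of $\bG_1^{\pm,<}$ from Proposition \ref{G1pm-repres}, and the fact that a finite product of representable functors is representable. Your extra remarks (explicit representing superalgebra as a tensor product, the Hopf superalgebra structure coming from the group law, and the observation that the purely set-theoretic nature of the factorization is harmless for representability) only make explicit what the paper leaves implicit.
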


\begin{proof}
 We only need to show that the functor  $ G $  is representable.  Now,  Corollary \ref{nat-transf}  and  Proposition \ref{G1pm-repres}  give  $ \; \bG \, \cong \, \bG_0 \times \bG_1^{-,<} \times \bG_1^{+,<} \; $,  \, with  $ \bG_1^{-,<} $  and  $ \bG_1^{+,<} $  being representable; but  $ \bG_0 $  also is representable   --- as it is a classical algebraic group, see  \S \ref{const-che-sgroup}.  But any direct product of representable functors is representable too (see  \cite{ccfd},  Ch.~5), so we are done.
\end{proof}

\bigskip

\begin{remark}
 This theorem asserts that Chevalley supergroup functors actually
provide algebraic supergroups.  This is quite remarkable, as some
of these supergroups had not yet been explicitly constructed
before. In fact, giving the functor of points of a supergroup it
is by no means sufficient to define the supergroup: proving the
representability   --- i.e., showing that there is a superscheme
whose functor of points is the given one ---   can be very hard.
                                                    \par
   For example using the procedure described above, it is possible to
construct the algebraic supergroups corresponding to all of the
exceptional classical Lie superalgebras  $ F(4) $,  $ G(3) $  and
$ D(2,1;a) $   --- for  $ \, a \in \Z \, $  ---   
 \hbox{and to the strange
Lie superalgebras.}

\smallskip

   The existence of such groups in the differential and analytic
categories is granted through the theory of Harish-Chandra
pairs, in which the category of supergroups is identified with
pairs consisting of a Lie group and a super Lie algebra, (see  \cite{koszul},  \cite{bcc}  and  \cite{vis}  for more details on this subject).  Our theory allows to realize such supergroups explicitly and over arbitrary fields.
\end{remark}

\medskip

   Another immediate consequence of  Corollary \ref{nat-transf}  and  Proposition \ref{G1pm-repres}  is the following, which improves, for Chevalley supergroups, a more general result proved by Masuoka  (cf.~\cite{ms},  Theorem 4.5) in the algebraic-supergeometry setting (see also  \cite{vis}, and references therein,  for the complex-analytic case).

\bigskip

\begin{proposition}
 For any Chevalley supergroup  $ \bG \, $,  there are isomorphisms of commutative superalgebras
  $$  \begin{array}{rl}
   \cO(\bG) \hskip-3pt  &  \cong \;  \cO(\bG_0) \otimes \cO\big(\bG_1^{-,<}\big) \otimes \cO\big(\bG_1^{+,<}\big)  \; \cong  \phantom{\Big|}  \\
                 &  \qquad \qquad  \cong \;  \cO(\bG_0) \otimes \bk \big[ \xi_1, \dots, \xi_{N_-} \big] \otimes \bk \big[ \chi_1, \dots, \chi_{N_+} \big]
      \end{array}  $$
where  $ \, N_\pm = \big| \Delta_1^\pm \big| \, $,  the subalgebra
$ \cO(\bG_0) $  is totally even, and  $ \, \xi_1, \dots, \xi_{N_-}
\, $  and  $ \, \chi_1, \dots, \chi_{N_+} \, $  are odd elements.
\end{proposition}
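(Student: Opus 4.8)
The plan is to deduce the statement directly from the functorial factorization already in hand, transported through the Yoneda anti-equivalence for affine superschemes. By Theorem \ref{representability} the Chevalley supergroup $\bG$ is an affine algebraic supergroup, so $\bG = \uspec\big(\cO(\bG)\big)$ and its functor of points is $\Hom_{\salg}\big(\cO(\bG),-\big)$; likewise $\bG_0$ is affine (it is a classical algebraic group, see \S\ref{const-che-sgroup}) and $\bG_1^{\pm,<}$ is affine by Proposition \ref{G1pm-repres}, these being represented by $\cO(\bG_0)$ and $\cO\big(\bG_1^{\pm,<}\big)$ respectively. First I would recall that for affine superschemes the assignment $X \mapsto \cO(X)$ is an anti-equivalence of categories onto $\salg$ (cf.~\cite{ccfd}, Ch.~5), under which finite products of affine superschemes correspond to finite tensor products over $\bk$ of commutative superalgebras, $\cO(X \times Y) \cong \cO(X) \otimes_\bk \cO(Y)$ (with the Koszul sign rule built into the tensor product).

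Next I would invoke Corollary \ref{nat-transf}, which supplies an isomorphism of group-valued functors $\bG_0 \times \bG_1^{-,<} \times \bG_1^{+,<} \, \cong \, \bG$. Forgetting the group structure, this is in particular an isomorphism of set-valued functors, hence — all four functors being representable — an isomorphism of affine superschemes $\bG \cong \bG_0 \times \bG_1^{-,<} \times \bG_1^{+,<}$. Applying the anti-equivalence together with the product-to-tensor rule yields the first claimed isomorphism
$$  \cO(\bG) \;\cong\; \cO(\bG_0) \otimes \cO\big(\bG_1^{-,<}\big) \otimes \cO\big(\bG_1^{+,<}\big) \; .  $$

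For the second isomorphism I would feed in the explicit identification of Proposition \ref{G1pm-repres}, namely $\bG_1^{\pm,<} \cong \mathbb{A}_\bk^{0|N_\pm}$ with $N_\pm = \big|\Delta_1^\pm\big|$. By the description of affine superspaces in \S\ref{first_preliminaries}, $\cO\big(\mathbb{A}_\bk^{0|N_\pm}\big)$ is the exterior (Grassmann) algebra on $N_\pm$ odd generators; renaming the generators of the two factors $\xi_1,\dots,\xi_{N_-}$ and $\chi_1,\dots,\chi_{N_+}$, all odd, gives exactly the asserted form. Finally $\bG_0$, being a classical purely even algebraic group-scheme, has a functor of points factoring through ordinary commutative $\bk$-algebras, so $\cO(\bG_0)_1 = 0$, i.e.~$\cO(\bG_0)$ is totally even; this also means the super tensor product with the two Grassmann factors carries sign rules only between the $\xi_i$ and $\chi_j$, consistent with the displayed formula.

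I do not expect any serious obstacle: the mathematical substance is entirely contained in Corollary \ref{nat-transf}, Proposition \ref{G1pm-repres} and Theorem \ref{representability}. The only points deserving a line of care are (i) verifying that the a priori merely group-functorial isomorphism of Corollary \ref{nat-transf} descends to an honest isomorphism of (affine) superschemes and thereby, by Yoneda, to a superalgebra isomorphism of global sections, and (ii) recording that the decomposition holds as $\bk$-superalgebras rather than just as $\bk$-modules — both of which are routine consequences of the cited results.
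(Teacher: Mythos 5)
Your argument is correct and is exactly the one the paper intends: the statement is presented there as an immediate consequence of Corollary \ref{nat-transf} and Proposition \ref{G1pm-repres}, i.e.\ one passes the set-valued functor isomorphism $\bG_0 \times \bG_1^{-,<} \times \bG_1^{+,<} \cong \bG$ through representability and the product-to-tensor correspondence for affine superschemes, then identifies $\cO\big(\mathbb{A}_\bk^{0|N_\pm}\big)$ with a Grassmann algebra and observes that $\cO(\bG_0)$ is purely even. Your two points of care (descending from a functorial to a superscheme isomorphism, and getting a superalgebra rather than merely module isomorphism) are handled just as you indicate, so nothing further is needed.
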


\medskip

   We conclude this section with the analysis of a special case, that of commutative superalgebras  $ A $ for which  $ \, A_1^2 = \{0\} \, $.  This is a typical situation in commutative algebra theory: indeed,  {\sl any such  $ A $  is nothing but the central extension of the commutative algebra  $ A_0 $  by the  $ A_0 $--module  $ A_1 \, $}.

\medskip

\begin{proposition} \label{semidirect}
  Let  $ G $  be a Chevalley supergroup functor, and let  $ \bG $  be its associated Chevalley supergroup.  Assume  $ \, A \in \salg \, $  is such that  $ \, A_1^2 = \{0\} \, $.
                                                                    \par
   Then  $ G_1^+(A) \, $,  $ G_1^-(A) $  and  $ G_1(A) $  are  {\sl normal subgroups}  of\/  $ G(A) \, $, with
  $$  \displaylines{
   \hfill \hskip43pt   G_1^\pm(A)  \; = \;  G_1^{\pm,<}(A)  \; \cong \;  \mathbb{A}_\bk^{0|N_\pm}\!(A)   \hfill  \text{with\ } \; N_\pm = \big| \Delta_1^\pm \big|  \cr
   G_1(A)  \; = \;  G_1^-(A) \cdot G_1^+(A)  \; = \;   G_1^+(A) \cdot G_1^-(A)   \qquad  \cr
   G_1(A)  \; \cong \;  G_1^-(A) \times G_1^+(A)  \; \cong \;  G_1^+(A) \times G_1^-(A)  \; \cong \; \mathbb{A}_\bk^{0|N_-}\!(A) \times \mathbb{A}_\bk^{0|N_+}\!(A)  }  $$
(where  ``$ \; \cong $''  means isomorphic as groups), the group
structure on  $ \mathbb{A}_\bk^{0|N_\pm}\!(A) $  being the obvious
one.  In particular,  $ G(A) $  is the semidirect product, namely
 $ \; G(A) \, \cong \,  G_0(A) \ltimes G_1(A)  \, \cong \,  G_0(A_0) \ltimes \Big( \mathbb{A}_\bk^{0|N_-}\!(A) \times \mathbb{A}_\bk^{0|N_+}\!(A) \Big) \, $,  of the  {\sl classical}  group  $ G_0(A_0) $  with the  {\sl totally odd}  affine superspace  $ \, \mathbb{A}_\bk^{0|N_-}\!(A) \times \mathbb{A}_\bk^{0|N_+}\!(A) \, $.
 \vskip4pt
   Similar results hold with a symbol  ``$ \, \bG $''  replacing  ``$ \, G $''  everywhere.
\end{proposition}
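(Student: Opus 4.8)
The plan is to derive the whole statement from the structural results already in hand---most of all Theorem~\ref{crucial}, Proposition~\ref{G1pm-repres} and the commutation identities of Lemma~\ref{comm_1-pssg}---by feeding in the single extra input available when $A_1^2=\{0\}$, namely that $\vartheta\,\eta=0$ for all $\vartheta,\eta\in A_1$. First I would record the elementary consequences at the level of one-parameter subgroups. Since $\vartheta\eta=0$ one has $x_\gamma(\vartheta)\,x_\gamma(\eta)=1+(\vartheta+\eta)\,X_\gamma=x_\gamma(\vartheta+\eta)$ for $\gamma\in\Delta_1$, and Lemma~\ref{comm_1-pssg}{\it (b)} gives $\big(x_\gamma(\vartheta),x_\delta(\eta)\big)=1$ for every pair $\gamma,\delta\in\Delta_1$ (both in the generic case, where the right-hand side is $x_{\gamma+\delta}(-c_{\gamma,\delta}\,\vartheta\eta)=x_{\gamma+\delta}(0)=1$, and for $\delta=-\gamma$, where it is $h_\gamma(1-\vartheta\eta)=h_\gamma(1)=1$). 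Hence the $x_\gamma$'s pairwise commute, $G_1(A)$ is abelian, and the orderings used in Definition~\ref{subgrps} become immaterial: $G_1^\pm(A)=G_1^{\pm,<}(A)$ and $G_1(A)=G_1^-(A)\,G_1^+(A)=G_1^+(A)\,G_1^-(A)$. Combining this with Proposition~\ref{G1pm-repres} (which makes $G_1^{\pm,<}$ the functor of points of $\mathbb{A}_\bk^{0|N_\pm}$) and with the injectivity part of Theorem~\ref{crucial} (which forces $G_1^-(A)\cap G_1^+(A)=\{1\}$), one gets $G_1^\pm(A)\cong\mathbb{A}_\bk^{0|N_\pm}(A)$ and an internal direct product $G_1(A)\cong G_1^-(A)\times G_1^+(A)\cong\mathbb{A}_\bk^{0|N_-}(A)\times\mathbb{A}_\bk^{0|N_+}(A)$, all as groups since everything in sight is abelian with the additive law.

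Next comes normality. The group $G(A)$ is generated by $G_0(A)$---itself generated by the $h_\alpha(t)$ and the $x_\alpha(t)$ with $\alpha\in\Delta_0$---and by the $x_\gamma(\vartheta)$ with $\gamma\in\Delta_1$; as $G_1(A)$ is abelian and contains all the $x_\gamma(\vartheta)$, it suffices to check that conjugation by the generators of $G_0(A)$ maps each $x_\gamma(\vartheta)$ into $G_1(A)$. This is immediate from Lemma~\ref{comm_1-pssg}: part {\it (c)} gives $h_\alpha(t)\,x_\gamma(\vartheta)\,h_\alpha(t)^{-1}=x_\gamma\big(t^{\gamma(H_\alpha)}\vartheta\big)$, and part {\it (a)} gives $x_\alpha(t)\,x_\gamma(\vartheta)\,x_\alpha(t)^{-1}=\big(x_\gamma(\vartheta),x_\alpha(t)\big)^{-1}x_\gamma(\vartheta)$, a product of factors $x_{\gamma+s\alpha}(\,\cdot\,)$ indexed by odd roots, hence lying in $G_1(A)$. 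So $G_1(A)$ is a normal subgroup of $G(A)$. For the one-sided pieces $G_1^\pm(A)$ one runs the same argument, noting in addition that $G_1^+(A)$ and $G_1^-(A)$ centralise each other by the first paragraph; the delicate point is that conjugating $x_\gamma(\vartheta)$ with $\gamma\in\Delta_1^\pm$ by a generator of $G_0(A)$ stays inside $G_1^\pm(A)$, and this holds exactly when $\Delta_1^\pm$ is stable under adding roots of $\Delta_0$ whenever the sum is again a root---equivalently, when $\fg_1^\pm$ is a $\fg_0$-submodule of $\fg_1$, as recalled in the Introduction.

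The semidirect-product structure is then formal. By Theorem~\ref{crucial} the product map $G_0(A)\times G_1^{-,<}(A)\times G_1^{+,<}(A)\to G(A)$ is a bijection; with $A_1^2=\{0\}$ the last two factors are $G_1^-(A)$ and $G_1^+(A)$, whose product (in either order) is $G_1(A)$, so $G_0(A)\,G_1(A)=G(A)$, and reading off uniqueness from the same bijection gives $G_0(A)\cap G_1(A)=\{1\}$. Together with the normality of $G_1(A)$ this is precisely $G(A)=G_0(A)\ltimes G_1(A)$; substituting $G_0(A)=G_0(A_0)$ and $G_1(A)\cong\mathbb{A}_\bk^{0|N_-}(A)\times\mathbb{A}_\bk^{0|N_+}(A)$ yields the last displayed formula. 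The version with $\bG$ follows by the same reasoning: by Corollary~\ref{nat-transf} and Proposition~\ref{G1pm-repres} one has $\bG\cong\bG_0\times\bG_1^{-,<}\times\bG_1^{+,<}$ with $\bG_1^{\pm,<}=G_1^{\pm,<}$ already sheaves, and the group-theoretic identities established above are identities of morphisms of functors (they hold in $G(A')$ for every $A'$), hence remain valid after sheafification and on every $A$.

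I expect the real work to be concentrated in the second paragraph: the semidirect decomposition $G(A)=G_0(A)\ltimes G_1(A)$ and the identifications of $G_1(A)$ and its halves drop out almost mechanically once $\vartheta\eta=0$ is plugged into Theorem~\ref{crucial} and Lemma~\ref{comm_1-pssg}, but the normality of the one-sided subgroups $G_1^\pm(A)$ genuinely uses that $\fg_1^\pm$ is $\fg_0$-stable, so it rests on a root-system input that should be verified type by type (it is, for instance, a point to be watched carefully for $\fg=B(0,n)$, where $\fg_1$ is $\fg_0$-irreducible). That verification, rather than anything group-theoretic, is the main obstacle.
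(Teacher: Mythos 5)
Your treatment of $G_1(A)$, of the product formulas and of the semidirect decomposition is correct and follows essentially the paper's own route: the paper likewise deduces from Lemma~\ref{comm_1-pssg}{\it (b)} together with $A_1^2=\{0\}$ that the odd one-parameter subgroups commute pairwise (whence $G_1^\pm(A)=G_1^{\pm,<}(A)$ and $G_1(A)=G_1^-(A)\,G_1^+(A)=G_1^+(A)\,G_1^-(A)$), gets normalization by $G_0(A)$ from parts {\it (a)} and {\it (c)} of the same lemma, concludes normality inside $G(A)$ via Theorem~\ref{g0g1}, and refers to Proposition~\ref{G1pm-repres} for the identifications with $\mathbb{A}_\bk^{0|N_\pm}(A)$; your explicit use of the uniqueness part of Theorem~\ref{crucial} to get $G_0(A)\cap G_1(A)=\{1\}$ and $G_1^-(A)\cap G_1^+(A)=\{1\}$ merely spells out what the paper leaves as ``remaining details''. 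The passage to $\bG$ is also as in the paper.

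The genuine gap is exactly where you place it, the normality of the one-sided subgroups $G_1^{\pm}(A)$, but it is worse than a ``type-by-type verification'' left to the reader: the root-theoretic statement you reduce it to is false in general. You need $\gamma+s\,\alpha\in\Delta_1^\pm$ whenever $\gamma\in\Delta_1^\pm$, $\alpha\in\Delta_0$, $s>0$ and $\gamma+s\,\alpha\in\Delta$, i.e.\ that $\fg_1^\pm$ be a $\fg_0$-submodule of $\fg_1$; this holds in the type~I cases but fails whenever $\fg_1$ is $\fg_0$-irreducible, in particular for the case $B(0,n)$ you flag. Concretely, for $\fg=B(0,1)=\rosp(1|2)$ take $\gamma=\delta\in\Delta_1^+$ and $\alpha=-2\delta\in\Delta_0^-$: then $\gamma+\alpha=-\delta\in\Delta_1^-$, and Lemma~\ref{comm_1-pssg}{\it (a)} gives $\big(x_\delta(\vartheta),x_{-2\delta}(t)\big)=x_{-\delta}(\pm\,t\,\vartheta)$, which for $t\,\vartheta\neq 0$ does not lie in $G_1^+(A)$ even though $A_1^2=\{0\}$ (compare weight components, as in the proof of Theorem~\ref{crucial}); so conjugation by $x_{-2\delta}(t)\in G_0(A)$ moves $x_\delta(\vartheta)$ out of $G_1^+(A)$. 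Hence your proposed route cannot be completed for the orthosymplectic and exceptional types, and the Introduction does not assert $\fg_0$-stability of $\fg_1^\pm$ (only the vector-space splitting), so that appeal does not help. To be fair, this is also the one point where the paper's own proof is a bare assertion --- it simply states that Lemma~\ref{comm_1-pssg}{\it (a)} makes $G_1^\pm(A)$ normalized by $G_0(A)$, which your computation shows is only clear for the full odd part $G_1(A)$ --- so your instinct that this is the crux is sound; but as written your proposal proves normality only for $G_1(A)$, and the corresponding claim for $G_1^\pm(A)$ (and its $\bG$-analogue) remains unproven.
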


\begin{proof}
 The assumptions on  $ A $  and the commutation formulas in  Lemma \ref{comm_1-pssg}{\it (b)\/}  ensure that all the 1-parameter subgroups associated to odd roots do commute with each other.  This implies that  $ \, G_1^-(A) \, G_1^+(A) = G_1^+(A) \, G_1^-(A) \, $,  that the latter coincides with  $ G_1(A) \, $,  that  $ \, G_1^\pm(A) = G_1^{\pm,<}(A) \, $,  and also that  $ G_1^\pm(A) $  and  $ G_1(A) $  are subgroups of  $ G(A) \, $.  Moreover,  Lemma \ref{comm_1-pssg}{\it (a)\/}  implies that  $ G_1^\pm(A) $  and  $ G_1(A) $  are also normalized by  $ G_0(A) \, $.  By  Theorem \ref{g0g1}  we conclude that  $ G_1^\pm(A) $  and  $ G_1(A) $  are normal in  $ G(A) \, $.
                                                                       \par
   All remaining details follow from  Proposition \ref{G1pm-repres}  and  Theorem \ref{representability}.
The statement for  $ \bG $  clearly follows as well.
\end{proof}

\bigskip
\bigskip
\bigskip

 \section{Independence of Chevalley and Kostant superalgebras}  \label{ind_che-kost_alg}

\medskip

   Next question is the following: what is the role played by the representation  $ V \, $?
Moreover, we would like to show that our construction is
independent of the choice of an admissible  $ \Z $--lattice  $ M $
in a fixed  $ \fg $--module  (over  $ \KK \, $).

\medskip

   Let  $ \bG' $  and  $ \bG $  be two Chevalley supergroups obtained by the same  $ \fg \, $,  possibly with a different choice of the representation.  We denote with  $ X_\alpha $  and with  $ X'_\alpha $ respectively the elements of the Chevalley basis in  $ \fg $  identified (as usual) with their images under the two representations of  $ \fg \, $.

\medskip

\begin{lemma}
 Let  $ \, \phi : \bG \lra \bG' \, $  be a morphism of Chevalley supergroups such that on local superalgebras  $ A $  we have
  $$  \displaylines{
   \indent  \text{\it (1)}  \hfill \qquad   \phi_A\big(\bG_0(A)\big) = \bG'_0(A)   \hskip115pt \hfill \quad  \cr
   \indent  \text{\it (2)}  \hfill \qquad   \phi_A\big(1 + \vartheta \, X_\beta\big) = 1 + \vartheta \, X'_\beta  \quad \hfill  \forall \;\; \beta \in \Delta_1 \; ,  \,\; \vartheta \in A_1  \quad  }  $$
   \indent   Then  $ \; \text{\it Ker}(\phi_A) \subseteq \bT \; $,  \; where  $ \bT $  is the maximal torus in the ordinary algebraic group  $ \, \bG_0 \subseteq \bG \, $  (see  \S \ref{const-che-sgroup}).
\end{lemma}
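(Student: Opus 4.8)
The plan is to exploit the ``big cell'' factorization of Corollary \ref{nat-transf} to show first that $\text{\it Ker}(\phi_A)$ already lies in the classical part $\bG_0(A)$, and then to conclude by the classical theory of Chevalley groups. Fix a local superalgebra $A$ and let $g \in \text{\it Ker}(\phi_A)$. Since $A$ is local we have $\bG(A) = G(A)$, $\bG_0(A) = G_0(A)$ and $\bG_1^{\pm,<}(A) = G_1^{\pm,<}(A)$, so Theorem \ref{crucial} gives a unique factorization $\, g = g_0 \, g_- \, g_+ \,$ with $\, g_0 \in G_0(A) \,$ and $\, g_\pm = \prod_{d=1}^{N_\pm} \big( 1 + \vartheta_d^\pm \, X_{\gamma_d^\pm} \big) \in G_1^{\pm,<}(A) \,$. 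Applying the group morphism $\phi_A$ and using hypothesis \textit{(2)} we get $\, \phi_A(g_\pm) = \prod_{d=1}^{N_\pm} \big( 1 + \vartheta_d^\pm \, X'_{\gamma_d^\pm} \big) =: g'_\pm \in (\bG')_1^{\pm,<}(A) \,$, while hypothesis \textit{(1)} gives $\, \phi_A(g_0) \in \bG'_0(A) \,$. Hence $\, 1 = \phi_A(g) = \phi_A(g_0) \cdot g'_- \cdot g'_+ \,$ is a factorization of $1 \in \bG'(A)$ of the kind provided by Theorem \ref{crucial} for $\bG'$, so by the uniqueness part of that theorem it must coincide with $\, 1 = 1 \cdot 1 \cdot 1 \,$; thus $\, \phi_A(g_0) = 1 \,$ and $\, g'_- = g'_+ = 1 \,$. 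By Proposition \ref{G1pm-repres} applied to $\bG'$, the equality $g'_\pm = 1$ forces all the $\vartheta_d^\pm$ to vanish, whence $g_\pm = 1$ and $\, g = g_0 \in \bG_0(A) \,$. Therefore $\, \text{\it Ker}(\phi_A) \subseteq \bG_0(A) \,$, and it coincides with the kernel of the restriction $\, \phi_A\big|_{\bG_0(A)} : \bG_0(A) \to \bG'_0(A) \,$.

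It remains to place this kernel inside the torus. The restriction of $\phi$ to $\bG_0$ is a morphism of the classical split reductive group-schemes attached, as in \S \ref{const-che-sgroup}, to the common reductive Lie algebra $\fg_0$ and to the $\fg_0$-modules $V$ and $V'$; by \textit{(1)} it is surjective on local rings and it carries the maximal torus $\bT$ of $\bG_0$ onto that of $\bG'_0$. Combining \textit{(2)} with the commutation formulas of Lemma \ref{comm_1-pssg}\textit{(b)} — and using that the structure constants $c_{\gamma,\delta}$ of Definition \ref{def_che-bas} are the same for $\bG$ and $\bG'$, as they depend only on $\fg$ — one checks that $\phi$ sends the even root subgroup $x_\alpha$ to $x'_\alpha$ for every $\alpha \in \Delta_0$, so that $\phi\big|_{\bG_0}$ is the canonical \emph{central} morphism between the two Chevalley group-schemes associated with $\fg_0$ (the one determined by the inclusion of the weight lattices of $V$ and $V'$). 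Its kernel is then central in $\bG_0$, and since the centre of a split connected reductive group-scheme is contained in every maximal torus, we conclude $\, \text{\it Ker}\big(\phi_A\big|_{\bG_0(A)}\big) \subseteq \bT(A) \,$. Together with the previous paragraph this yields $\, \text{\it Ker}(\phi_A) \subseteq \bT \,$, as claimed. (The classical facts invoked here can be found in \cite{st}, \S 3, and in \cite{borel}, \cite{hu}.)

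The essential ``super'' content is the reduction carried out in the first paragraph, and it is painless once Theorem \ref{crucial} and Proposition \ref{G1pm-repres} are available. The point requiring most care — indeed the only real obstacle — is the claim that \textit{(1)}–\textit{(2)} force $\phi\big|_{\bG_0}$ to respect the even root subgroups, i.e.~to be the canonical Chevalley morphism on the even part: for a merely surjective morphism of reductive groups carrying a maximal torus onto a maximal torus the kernel need not be central, so this step cannot be skipped. Since in the sequel $\phi$ is constructed together with the identities $\phi_A\big(x_\alpha(t)\big) = x'_\alpha(t)$ for all $\alpha \in \Delta_0$, one may, if one prefers to stay on the safe side, simply add this identity to the hypotheses of the Lemma, after which Step~2 becomes a direct appeal to the classical theory.
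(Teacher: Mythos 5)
Your first paragraph is, in substance, the paper's own proof: for local $A$ one has $\bG(A)=G(A)$, one factors $g\in\text{Ker}(\phi_A)$ by Theorem \ref{crucial} (the paper writes $g=g_1^-\,g_0\,g_1^+$, you write $g=g_0\,g_-\,g_+$; both orderings are covered by Corollary \ref{nat-transf}), pushes the factorization through $\phi_A$, uses hypothesis \textit{(2)} together with the uniqueness of the factorization in $\bG'$ to kill the odd factors and to get $\phi_A(g_0)=1$, and then Proposition \ref{G1pm-repres} to conclude $g_\pm=1$, so $g=g_0$. Where you genuinely diverge is the even step: the paper simply writes $g_0\in\text{Ker}(\phi_{0,A})\subseteq\bT(A)$ with no argument, tacitly appealing to the classical theory (in the only place the lemma is used, Theorem \ref{mainthm}, the restriction $\phi_0$ is by construction the canonical Chevalley morphism attached to $L_V\supseteq L_{V'}$, whose kernel is central, hence lies in $\bT$), whereas you try to \emph{derive} from \textit{(1)}--\textit{(2)} that $\phi\big|_{\bG_0}$ is that canonical morphism.

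On that derivation there is one caveat: the commutator argument via Lemma \ref{comm_1-pssg}\textit{(b)} only determines $\phi_A$ on elements $x_\alpha(t)$ with $t\in A_1^{\,2}$ (parameters that are products of two odd elements), not for arbitrary $t\in A_0$, so ``one checks that $\phi$ sends $x_\alpha$ to $x'_\alpha$'' is not actually established for the whole even one-parameter subgroup, and hypotheses \textit{(1)}--\textit{(2)} alone do not pin down $\phi\big|_{\bG_0}$ as the canonical even Chevalley morphism. You spot this yourself in your closing remark and propose adding $\phi_A\big(x_\alpha(t)\big)=x'_\alpha(t)$ for $\alpha\in\Delta_0$ to the hypotheses; that is indeed how the lemma is applied in the sequel, and it is effectively what the paper assumes implicitly when it asserts $\text{Ker}(\phi_{0,A})\subseteq\bT(A)$. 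So your write-up is, if anything, more explicit than the paper on this point: just state the even-root-subgroup compatibility as a hypothesis (or as part of the data of the morphism) rather than as something ``one checks,'' and the argument is complete.
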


\begin{proof}
 For any  {\sl local\/}  superalgebra  $ A $  we have  $ \, \bG(A) = G(A) \, $.  Now let  $ \, g \in \bG(A) = G(A) \, $,  $ \, g \in \text{\it Ker}(\phi_A) \, $.  By  Theorem \ref{crucial}  we have  $ \; g = g_1^- \, g_0 \, g_1^+ \; $  with  $ \, g_0 \! \in \! G_0(A) \, $,  $ \, g_1^\pm \! \in \! G_1^{\pm,<}(A) \, $;  \, but then
  $$  \phi_A\big(g_1^-\big) \, \phi_A(g_0) \, \phi_A\big(g_1^+\big)  \, = \,  \phi_A(g)  \, = \,  e_{{}_{\bG'}}  $$
By the assumption  {\it (2)\/}  and the uniqueness of expression of  $ g \, $,  we have that  $ \; \phi_A\big(g_1^-\big) = e_{{}_{\bG'}} = \phi_A\big(g_1^+\big) \; $  and  $ \; g_0 \in \text{\it Ker}\,(\phi_{0,A}) \subseteq \bT(A) \, $,  \; where  $ \phi_{0,A} $  is the restriction of  $ \phi_A $  to  $ \bG_0(A) \, $.  The claim follows.
\end{proof}

\bigskip

   Let now  $ L_0 $  be the root lattice of  $ \fg \, $;  also, we let  $ L_1 $  be the weight lattice of  $ \fg \, $,  defined to be the lattice of weights of all rational  $ \fg $--modules.
                                                 \par
   For any lattice $ L $  with  $ \, L_0 \subseteq L \subseteq L_1 \, $,  there is a corresponding Chevalley supergroup.  The relation between Chevalley supergroups corresponding to
different lattices is the same as in the classical setting.

\bigskip

\begin{theorem} \label{mainthm}
 Let  $ \bG $  and  $ \bG' $  be two Chevalley supergroups constructed using two representations  $ V $  and  $ V' $  of the same  $ \fg $  over the same field\/  $ \KK $  (as in  \S \ref{adm-lat}),  and let $ L_V $,  $ L_{V'} $  be the corresponding lattices of weights.
                                                 \par
   If  $ \, L_V \supseteq L_{V'} \, $,  then there exists a unique morphism  $ \; \phi : \bG \longrightarrow \bG' \; $  such that  $ \,\; \phi_A\big(1 + \vartheta \, X_\alpha\big) = 1 + \vartheta \, X'_\alpha \; $,  \, and  $ \,\; \text{\it Ker}\,(\phi_A) \subseteq Z\big(\bG(A)\big) \, $,  \; for every local algebra  $ A \, $.  Moreover,  $ \phi $  is an isomorphism if and only if  $ \, L_V = L_{V'} \, $.
\end{theorem}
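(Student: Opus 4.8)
The plan is to construct $\phi$ by hand on \emph{local} superalgebras --- where $\bG(A)$ coincides with the presheaf value $G(A)$ of Definition \ref{def_Che-sgroup_funct} --- and then sheafify. The first input is classical: since the $\fh$--weights of $V$ as a $\fg$--module are the same as those of $V$ regarded as a $\fg_0$--module, the hypothesis $L_V \supseteq L_{V'}$ passes verbatim to the reductive groups attached to $\fg_0$, so (cf.~\cite{st}, \S\S 5--6) there is a unique morphism $\phi_0 \colon \bG_0 \to \bG'_0$ of classical Chevalley groups with $\phi_0\big(x_\alpha(t)\big) = x'_\alpha(t)$ for $\alpha \in \Delta_0$ and $\phi_0\big(h_H(t)\big) = h'_H(t)$ for $H \in \fh_\Z$, and it has central kernel and is onto on local rings. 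The second input is that the odd factors depend only on $\fg$: by Proposition \ref{G1pm-repres} the functors $G_1^{\pm,<}$ built from $V$ and from $V'$ are both canonically isomorphic to $\mathbb{A}_\bk^{0|N_\pm}$, with $N_\pm = |\Delta_1^\pm|$, hence canonically identified. Now, for $A$ local, Theorem \ref{crucial} assigns to every $g \in G(A)$ a \emph{unique} normal form $g = g_-\,g_0\,g_+$ with $g_0 \in G_0(A)$ and $g_\pm \in G_1^{\pm,<}(A)$, and I would set $\phi_A(g) := g_-\,\phi_0(g_0)\,g_+$, read inside $G'(A)$ through these identifications. This is well defined, functorial in $A$, and by construction sends $1 + \vartheta X_\beta \mapsto 1 + \vartheta X'_\beta$ for $\beta \in \Delta_1$ (and, via the commutators of Lemma \ref{comm_1-pssg}(b), also fulfils conditions (1)--(2) of the Lemma preceding the theorem).

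The crux is to check that $\phi_A$ is a group \emph{homomorphism}. I would verify $\phi_A(g\,x) = \phi_A(g)\,\phi_A(x)$ for $x$ a generator $x_\delta(\bt)$ ($\delta \in \Delta$) or $h_i(s)$ and $g$ arbitrary: computing $\phi_A(g\,x)$ amounts to re-straightening $g\,x$ into the normal form of Theorem \ref{crucial}, and every elementary move in that process is either a relation internal to $G_0(A)$ --- respected because $\phi_0$ is a morphism --- or one of the commutation identities collected in \S\ref{comm-rul_kost} and Lemma \ref{comm_1-pssg}. The decisive point is that those identities involve only the integers $c_s,\varepsilon_k,c_{\gamma,\delta},H_\gamma,\beta(H_\alpha)$ attached to $\fg$ and to its Chevalley basis, and are \emph{completely independent of the representation}; hence the straightening algorithm in $G'$ is literally the image under $\phi$ of the one in $G$, so $\phi_A$ intertwines them and is multiplicative. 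I expect this bookkeeping --- the super-analogue of Steinberg's verification for isogenies of Chevalley groups --- to be the main obstacle; the cleaner-looking alternative of presenting $G(A)$ by generators and the relations of \S\ref{comm-rul_kost}/Lemma \ref{comm_1-pssg} and checking each relation is preserved actually hides the same work plus the cost of establishing such a presentation, so I would keep the normal-form route, for which Theorem \ref{crucial} is purpose-built. Sheafifying then yields $\phi \colon \bG \to \bG'$.

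For the kernel statement, the Lemma preceding the theorem already gives $\mathrm{Ker}(\phi_A) \subseteq \bT(A)$, so it only remains to strengthen ``$\subseteq \bT$'' to ``$\subseteq Z\big(\bG(A)\big)$''. If $t \in \mathrm{Ker}(\phi_A)$, then for every $\delta \in \Delta$ and $\bt \in A_0 \times A_1$ Lemma \ref{comm_1-pssg}(c) gives $t\,x_\delta(\bt)\,t^{-1} = x_\delta\big(\chi_t(\delta)\,\bt\big)$ for some $\chi_t(\delta) \in U(A_0)$; applying $\phi_A$ and using $\phi_A(t) = e$ forces $x'_\delta\big(\chi_t(\delta)\,\bt\big) = x'_\delta(\bt)$, whence $\chi_t(\delta) = 1$, since $\fg$ acts faithfully on $V'$ so $x'_\delta$ is injective in its parameter (take $\bt$ with unit first component). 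Thus $t$ centralizes every $x_\delta(\bt)$ and trivially every $h_j(s)$, hence $t \in Z\big(\bG(A)\big)$.

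Finally, uniqueness and the isomorphism criterion. Via the functor factorization of Corollary \ref{nat-transf}, $\phi$ corresponds to $\phi_0 \times \mathrm{id}_{\bG_1^{-,<}} \times \mathrm{id}_{\bG_1^{+,<}}$, so uniqueness of $\phi$ reduces to the classical uniqueness of $\phi_0$ as a morphism of Chevalley groups with the prescribed effect on root subgroups and central kernel (\cite{st}), and $\phi$ is an isomorphism if and only if $\phi_0$ is. If $L_V = L_{V'}$, applying the existence part in both directions produces $\phi \colon \bG \to \bG'$ and $\phi' \colon \bG' \to \bG$ whose composites, by the uniqueness just established, must be the identities, so $\phi$ is an isomorphism; conversely, if $\phi$ is an isomorphism then so is $\phi_0 \colon \bG_0 \to \bG'_0$, and by classical Chevalley theory two Chevalley groups of $\fg_0$ linked by the standard morphism are isomorphic exactly when their weight lattices coincide, i.e.~when $L_V = L_{V'}$. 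This completes the plan.
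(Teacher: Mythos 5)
Your proposal is correct and follows essentially the same route as the paper: define $\phi$ on local superalgebras via the unique normal form of Theorem \ref{crucial}, acting by the classical morphism $\phi_0$ on the $G_0$-factor and by the identity (through the canonical coordinates of Proposition \ref{G1pm-repres}) on the odd factors, with multiplicativity following because the straightening relations of \S\ref{comm-rul_kost} and Lemma \ref{comm_1-pssg} are independent of the representation, and uniqueness handled through Proposition \ref{sheaf-morph}. Your treatment of the central kernel and of the isomorphism criterion is in fact more explicit than the paper's own (terse) proof, which leaves those points to the classical theory.
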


\begin{proof}
  As the same theorem is true for the classical part  $ \bG_0 \, $,  we can certainly set up a map  $ \; \phi_0 : G_0 \lra G'_0 \, $  and the corresponding one on the sheafification.  Now we define  $ \; \phi : \bG \lra \bG' \; $  in the following way.  For  $ \, A \in \salg \, $,  we set  $ \; \phi_A\big(1 + \vartheta \, X_\alpha\big) := 1 + \vartheta \, X'_\alpha \, $, $ \; \phi_A(g_0) :=
\phi_{0,A}(g_0) \;\, $;  \, then
  $$  \displaylines{
   \phantom{\Big|} \;   \phi_A\big( \big( 1 + \vartheta_1 \, X_{\alpha_1} \big) \cdots \big( 1 + \vartheta_r \, X_{\alpha_r} \big) \, g_0 \, \big( 1 + \eta_1 \, X_{\beta_1} \big) \cdots \big( 1 + \eta_s \, X_{\beta_s} \big) \big)  \; =   \hfill  \cr
   \hfill   = \;  \big( 1 + \vartheta_1 \, X'_{\alpha_1} \big) \cdots \big( 1 + \vartheta_r \, X_{\alpha_r} \big) \, \phi_{0,A}(g_0) \, \big( 1 + \eta_1 \, X'_{\beta_1} \big) \cdots \big( 1 + \eta_s \, X'_{\beta_s} \big)   \;\;  }  $$
This gives a well-defined $ \phi_A $  which in fact is also a
morphism (i.e., natural transformation): indeed,  $ \;
\phi_A(g\,h) = \phi_A(g) \, \phi_A(h) \; $  because all the
relations used to commute elements in  $ G_1^-(A) $,  $ G_0(A) $
and  $ G_1^+(A) $   --- so to write a given element in  $ G(A) $
in the ``normal form'' as in  Corollary \ref{nat-transf}  ---   do
not depend on the chosen representation, where now $A$ is taken to be local
(by Proposition
\ref{sheaf-morph} in the Appendix A, we have that the natural
transformation $\phi$ is uniquely determined by its behaviour on local
superalgebras).
\end{proof}

\bigskip

   As a direct consequence, we have the following ``independence result'':

\bigskip

\begin{corollary}
 Every Chevalley supergroup  $ \, \bG_V $  is independent   --- up to isomorphism ---   of the choice of an admissible lattice  $ M $  of  $ V $  considered in the very construction of  $ \, \bG_V $  itself.
\end{corollary}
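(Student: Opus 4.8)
The plan is to read this off directly from Theorem \ref{mainthm}. Fix the $\fg$--module $V$ over $\KK$, and let $M$ and $M'$ be two admissible lattices of $V$, giving rise to Chevalley supergroups $\bG_V$ and $\bG'_V$. The key observation is that both $\bG_V$ and $\bG'_V$ are, in the terminology of Theorem \ref{mainthm}, ``Chevalley supergroups constructed using the representation $V$''; in particular the associated weight lattices coincide, $\, L_V = L_{V'} \,$, since the lattice of weights is by definition the $\Z$--span of the set of $\mu \in \fh^*$ with $V_\mu \neq \{0\}$, which depends on $V$ alone and not on the chosen admissible lattice. (One may also note, as a sanity check, that any admissible lattice splits as $\, M = \bigoplus_\mu (M \cap V_\mu) \,$, so it meets precisely the nonzero weight spaces.)

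Having established $L_V = L_{V'}$, I would simply apply Theorem \ref{mainthm} to the pair $\bG_V$, $\bG'_V$: it furnishes a morphism $\, \phi : \bG_V \to \bG'_V \,$ which, in the case of equal weight lattices, is an isomorphism. This already proves the statement.

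The only point that deserves a second look --- and this is the nearest thing to an obstacle --- is that Theorem \ref{mainthm} is phrased as if the two representations were a priori different; one should make sure its proof genuinely covers the degenerate case $V' = V$ with only the lattice varying. This is indeed so: the classical ingredient the proof invokes for $\bG_0$ is exactly that Chevalley group--schemes built from $\fg_0$ and from admissible lattices of a single $\fg_0$--module sharing the same lattice of weights are isomorphic (the reductive version discussed in \S \ref{const-che-sgroup}), while the odd part of the construction contributes only the purely odd affine superspaces $G_1^{\pm,<} \cong \mathbb{A}_\bk^{0|N_\pm}$ of Proposition \ref{G1pm-repres}, which carry no dependence on the lattice whatsoever. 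Combining this with the factorization $\, \bG \cong \bG_0 \times \bG_1^{-,<} \times \bG_1^{+,<} \,$ of Corollary \ref{nat-transf} gives the isomorphism $\bG_V \cong \bG'_V$ and completes the argument.
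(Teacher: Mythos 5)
Your proposal is correct and follows essentially the same route as the paper: take $V' = V$ with the two admissible lattices $M$ and $M'$, note that $L_V = L_{V'}$ holds by construction since the weight lattice depends only on $V$, and invoke Theorem \ref{mainthm} to conclude $\bG_V \cong \bG_{V'}$. Your extra remarks verifying that Theorem \ref{mainthm} genuinely applies when only the lattice varies (via the classical statement for $\bG_0$ and the lattice-independence of the purely odd factors) are a reasonable sanity check but not needed beyond what the paper does.
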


\begin{proof}
 Let  $ M $  and  $ M' $  be two admissible lattices of  $ V \, $.  Then consider  $ \, V' := V \, $,  and consider  $ \bG_V $  and  $ \bG_{V'} $  constructed using respectively the two lattices  $ M $  and  $ M' $.  By construction we have  $ \, L_V = L_{V'} \, $,  hence  Theorem \ref{mainthm}  give  $ \, \bG_V \cong \bG_{V'} \, $,  which proves the claim.
\end{proof}

\bigskip
\bigskip
\bigskip

 \section{Lie's Third Theorem for Chevalley supergroups}  \label{che-sgroup_LieTT}

\medskip

   Let now  $ \bk $  be a  {\sl field},  with  $ \, \text{char}(\bk) \neq 2, 3 \, $.

\medskip

   Let  $ \bG $  be a Chevalley supergroup scheme over  $ \bk $,  built  out of a classical Lie superalgebra  $ \fg $  over  $ \KK $  as in  \S \ref{const-che-sgroup}.
%
%
 In  \S \ref{adm-lat},  we have constructed the Lie superalgebra  $ \, \fg_\bk := \bk \otimes_\Z \fg_V \, $  over  $ \bk $  starting from the  $ \Z $--lattice  $ \fg_V \, $.  We now show that the algebraic supergroup  $ \bG $  has  $ \fg_\bk $  as its tangent Lie superalgebra.

\medskip

   We start recalling how to associate a Lie superalgebra to a supergroup scheme.  For more details see \cite{ccfd}.

\medskip

   Let  $ \, A \in \salg \, $  and let  $ \, A[\epsilon] := A[x]\big/\big(x^2\big) \, $  be the  {\sl super\/}algebra  of dual numbers, in which  $ \; \epsilon := x \! \mod \! \big(x^2\big) \; $  is taken to be  \textit{even}.  We have that  $ \, A[\epsilon] = A \oplus \epsilon \, A \, $,  and there are two natural morphisms
 $ \; i : A \longrightarrow A[\epsilon] \, , \, a \;{\buildrel i \over \mapsto}\; a \, $,  and  $ \; p : A[\epsilon] \longrightarrow A \, $,  $ \, (a + \epsilon \, a' \big) \;{\buildrel p \over \mapsto}\; a \; $,  \; such that  $ \; p \circ i= {\mathrm{id}}_A \; $.

\medskip

\begin{definition} \label{supergroup}
   For each supergroup scheme  $ G \, $,  consider the homomorphism  $ \; G(p): G (A(\epsilon)) \longrightarrow G(A) \; $.  Then there is a supergroup functor
  $$  \Lie(G) : \salg \lra \sets \; ,  \qquad  \Lie(G)(A) \, := \, \text{\it Ker}\,(G(p))  $$
\end{definition}

\medskip

\begin{proposition}
   (cf.~\cite{ccfd}, \S 6.3.) Let  $ G $  be a supergroup scheme.  The functor  $ \Lie(G) $  is representable and can be identified with (the functor of points of) the tangent space at the identity of  $ G \, $,  namely
  $$  \Lie(G)(A)  \, = \,  {(A \otimes T_{1_G})}_0  $$
where  $ T_{1_G} $  is the super vector space  $ \, \mathfrak{m}_{G,1_G} \big/ \mathfrak{m}_{G,1_G}^2 \, $,  with  $ \, \mathfrak{m}_{G,1_G} \, $  being the maximal ideal of the local algebra  $ \, \cO_{G,1_G} \, $.
\end{proposition}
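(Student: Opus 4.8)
The plan is to compute the kernel $\mathrm{Ker}\big(G(p)\big)$ explicitly via the functor of points and recognise it as a representable functor. A first reduction brings us to the affine situation: if $\phi \in G\big(A[\epsilon]\big) = \Hom\big(\uspec(A[\epsilon]),G\big)$ lies in $\mathrm{Ker}\big(G(p)\big)$, then $G(p)(\phi) = 1_G$ is the morphism $\uspec(A) \to G$ through the identity point, whose underlying continuous map has image $\{1_G\}$; since $\epsilon$ is nilpotent, $|\uspec(A[\epsilon])| = |\uspec(A)|$ and $|p|$ is the identity, so $|\phi|$ has image $\{1_G\}$ as well. Hence $\phi$ factors through any affine open neighbourhood $U = \uspec(R)$ of $1_G$, and $\mathrm{Ker}\big(G(p)\big) = \mathrm{Ker}\big(U(p)\big)$. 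From now on we may assume $G = \uspec(R)$, write $G(B) = \Hom_\salg(R,B)$, and let $\varepsilon : R \to \bk$ be the counit defining $1_G$, with augmentation ideal $I := \ker \varepsilon$; note $I/I^2 \cong \mathfrak{m}_{G,1_G}\big/\mathfrak{m}_{G,1_G}^2$ canonically, since $R/I = \bk$ forces this module to be supported at $1_G$.

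The heart of the argument is the dual-numbers computation, carried out with super signs. A homomorphism $\phi : R \to A[\epsilon] = A \oplus \epsilon A$ decomposes uniquely as $\phi = \phi_0 + \epsilon\,D$ with $\phi_0 : R \to A$ a homomorphism of superalgebras and $D : R \to A$ a $\bk$-linear map which is \emph{even} — because $\epsilon$ is even and $\phi$ is $\Z_2$-graded — and which satisfies the $\phi_0$-twisted Leibniz rule $D(fg) = \phi_0(f)\,D(g) + D(f)\,\phi_0(g)$ for homogeneous $f,g$ (the potential sign $(-1)^{|f|\,|D|}$ is trivial as $|D| = 0$). The condition $G(p)(\phi) = 1_G$ is exactly $\phi_0 = \varepsilon$, so $\Lie(G)(A)$ is identified with the set of even $\varepsilon$-point-derivations $R \to A$. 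Any such $D$ annihilates $\bk\cdot 1$ and $I^2$, hence factors through $I/I^2$; conversely every even $\bk$-linear map $I/I^2 \to A$ extends to one. This gives an identification, natural in $A$,
$$\Lie(G)(A) \;\cong\; \Hom_\bk\big(I/I^2,\,A\big)_0 \;=\; \big(A \otimes_\bk T_{1_G}\big)_0 ,$$
where $T_{1_G}$ is the tangent super vector space at the identity, i.e. the $\bk$-dual of the (finite-dimensional, for $G$ an algebraic supergroup) cotangent space $I/I^2 \cong \mathfrak{m}_{G,1_G}/\mathfrak{m}_{G,1_G}^2$.

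It remains to note that for a fixed finite-dimensional super vector space $W$ the functor $A \mapsto \big(A \otimes_\bk W\big)_0$ is representable — this is the affine superspace $\mathbb{A}_\bk^{p|q}$ with $(p|q) = \dim W$, represented by $\mathrm{Pol}(W)$, exactly as recalled in the Examples of \S\ref{first_preliminaries}. Taking $W = T_{1_G}$ we conclude that $\Lie(G)$ is representable, and evaluating at $B = \bk$ gives $\Lie(G)(\bk) = {(T_{1_G})}_0$, so $\Lie(G)$ is the functor of points of the tangent space at $1_G$, as claimed.

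The routine-but-careful part is the super-sign bookkeeping in the Leibniz rule and checking naturality in $A$ of all the identifications; the one genuinely structural point is the reduction of an arbitrary supergroup scheme to an affine, finite-type one, which rests on the observation that elements of the kernel have topological image reduced to $\{1_G\}$. Everything else is the classical computation of the Lie algebra of a group scheme, transported verbatim to the $\Z_2$-graded setting.
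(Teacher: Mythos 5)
Your proof is correct. There is nothing in the paper to compare it with: the proposition is stated with a pointer to \cite{ccfd}, \S 6.3, and no argument is given in the text, so what you supply is the missing (standard) proof --- reduction to an affine open neighbourhood of $1_G$ by observing that kernel elements have topological image $\{1_G\}$, the dual--numbers decomposition $\phi=\phi_0+\epsilon D$ with $D$ an even $\varepsilon$-derivation, factorization of such derivations through $I/I^2$, and representability via the affine superspace $\mathrm{Pol}(T_{1_G})$ --- which is essentially the argument of the cited reference. Two small points are worth keeping explicit. First, the proposition as written calls $T_{1_G}$ the space $\mathfrak{m}_{G,1_G}/\mathfrak{m}_{G,1_G}^2$, i.e.\ the cotangent space; what your computation canonically produces is $\Lie(G)(A)\cong\Hom_\bk\big(\mathfrak{m}_{G,1_G}/\mathfrak{m}_{G,1_G}^2\,,A\big)_0\cong\big(A\otimes\big(\mathfrak{m}_{G,1_G}/\mathfrak{m}_{G,1_G}^2\big)^*\big)_0$, so your reading (with the dual) is the functorially correct one, the two formulations agreeing only up to a non-canonical isomorphism. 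Second, the passage from $\Hom_\bk\big(I/I^2,A\big)_0$ to $\big(A\otimes(I/I^2)^*\big)_0$, and the representing object $\mathrm{Pol}(T_{1_G})$, need $I/I^2$ finite dimensional: the reduction to an affine neighbourhood does not by itself give finite type (your closing sentence slightly overstates this), so finite-dimensionality of the cotangent space at $1_G$ is an extra hypothesis --- harmless here, since the supergroups of the paper are algebraic, but worth stating since the proposition speaks of an arbitrary supergroup scheme. With these caveats recorded, the argument is complete and natural in $A$, as required.
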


\medskip

   With an abuse of notation we will use the same symbol  $ \Lie(G) $  to denote both the functor and the underlying super vector space.

\smallskip

   Next we show that  $ \Lie(G) $  has a Lie superalgebra structure: this is equivalent to asking the functor  $ \, \Lie(G) : \salg \ra \sets \, $  to be Lie algebra valued, by which we just mean that it is actually a functor from  $ \salg $  to  $ \lie $,  the category of Lie algebras over  $ \bk \, $.

\medskip

\begin{definition}
 Define the  {\it adjoint action\/}  of  $ G $  on  $ \Lie(G) $  as
  $$  \Ad : G  \lra  \rGL(\Lie( G )) \quad ,  \qquad  \Ad(g)(x) \, := \, G (i)(g) \cdot x \cdot {\big(G (i)(g)\big)}^{-1}  $$
 \vskip5pt
\noindent
 for all  $ \, g \in G(A) \, $,  $ \, x \in \Lie(G)(A) \, $.  Define also the  {\it adjoint morphism\/}  $ \ad $  as
  $$  \ad  \, := \,  \Lie(\Ad) : \Lie(G) \lra \Lie(\rGL(\Lie(G))) := \End(\Lie(G))  $$
 \vskip5pt
\noindent
 where  $ \rGL $  and  $ \End $  are the functors defined as follows:  $ \, \rGL(V)(A) \, $  and  $ \, \End(V)(A) \, $,  for a supervector space  $ V $,  are respectively the automorphisms and the endomorphisms of  $ \, V(A) := {(A \otimes V)}_0 \; $.
 \vskip4pt

   Finally, we define  $ \; [x,y] := \ad(x)(y) \, $,  \; for all  $ \, x,y \in \Lie(G)(A) \, $.
\end{definition}

\smallskip

\begin{proposition}
 (cf.~\cite{ccfd}, \S 6.3.)
The functor  $ \, \Lie(G) : \salg \lra \sets \, $  is  {\sl Lie algebra valued},  via the bracket  $ \, [\,\ ,\ ] \, $  defined above.  In other words, it yields a functor
  $$  \Lie(G) : \salg \lra \lie  $$
where  $ \, \lie \, $  stands for the category of Lie algebras over  $ \bk \, $.
\end{proposition}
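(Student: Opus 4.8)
The plan is to follow the pattern of \cite{ccfd}, \S 6.3, reducing the assertion to a short list of functorial identities. Since $ \Lie(G) $ is representable with underlying super vector space $ \mathfrak{g} := \mathfrak{m}_{G,1_G}\big/\mathfrak{m}_{G,1_G}^2 $ and $ \Lie(G)(A) = {(A \otimes \mathfrak{g})}_0 $ for every $ A \in \salg \, $, it is automatically a functor into $ \bk $--supermodules; what must be checked is that the bracket $ [x,y] := \ad(x)(y) $ is $ A_0 $--bilinear, super--antisymmetric, and satisfies the super Jacobi identity, functorially in $ A \, $. As all three are equalities between natural transformations, it suffices to verify them for each fixed $ A $, working inside the group $ G(A[\epsilon]) $ of $ A[\epsilon] $--points and, where needed, inside $ G $ of the algebras $ A[\epsilon_1] \, $, $ A[\epsilon_2] \, $, $ A[\epsilon_1,\epsilon_2] := A[\epsilon_1] \otimes_A A[\epsilon_2] $ of iterated (even) dual numbers.

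First I would record that $ \Ad : G \lra \rGL(\Lie(G)) $ is a well defined morphism of group functors: conjugation by $ G(i)(g) $ sends $ \Ker\big(G(p)\big) $ into itself because $ p \circ i = \mathrm{id} \, $, and it respects the $ \bk $--supermodule structure on $ \Lie(G)(B) $ because that structure is itself natural in $ B $ and conjugation is assembled from the (super)scheme morphisms ``multiplication'' and ``inverse'' of $ G \, $. Hence each $ \Ad(g) $ is an even linear automorphism, so $ \ad(x) = \Lie(\Ad)(x) $ is an even linear endomorphism of $ \Lie(G)(A) \, $; this gives at once the $ A_0 $--bilinearity of $ [\,\ ,\ ] $ and the fact that the induced bracket on $ \mathfrak{g} $ is even with respect to the $ \Z_2 $--grading, the parity bookkeeping being carried out by specializing $ A $ to $ \bk \, $, $ \bk[\xi] \, $, etc.

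The remaining two axioms are obtained, exactly as in \cite{ccfd}, \S 6.3, by passing to second order. One first checks that for $ x, y \in \Lie(G)(A) $, realized as group elements $ 1 + \epsilon_1\,x \in G(A[\epsilon_1]) $ and $ 1 + \epsilon_2\,y \in G(A[\epsilon_2]) \, $, the leading term of the group commutator $ \big( 1 + \epsilon_1\,x \, , \, 1 + \epsilon_2\,y \big) \in G(A[\epsilon_1,\epsilon_2]) $ recovers $ \epsilon_1\epsilon_2\,[x,y] \, $. Super--antisymmetry then follows from the identity $ \big(g,h\big)^{-1} = \big(h,g\big) $ for group commutators, and the super Jacobi identity from the Hall--Witt identity for iterated commutators (equivalently, from applying $ \Lie $ twice to the cocycle relation $ \Ad(gh) = \Ad(g)\,\Ad(h) $), both read off over the algebra of three even dual numbers. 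I expect the only genuinely delicate point to be the sign bookkeeping: since the parameters $ \epsilon_i $ are even while the ``odd directions'' of $ \Lie(G)(A) $ sit inside $ A_1 \otimes \mathfrak{g}_1 \, $, every transposition of a parameter past an odd scalar of $ A $ must be tracked in order to make the sign $ {(-1)}^{p(x)p(y)} $ appear where it should. Everything else is formal.

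For Chevalley supergroups there is a shortcut worth mentioning. Since $ \bG $ is a subfunctor of $ \rGL(V_\bk) $ and $ \Lie $ preserves this inclusion, $ \Lie(\bG) $ embeds into $ \Lie\big(\rGL(V_\bk)\big) = \End(V_\bk) \, $, the latter being a Lie superalgebra under the super--commutator $ [A,B] = AB - {(-1)}^{p(A)p(B)}\,BA $ (the Example in \S \ref{Lie-superalgebras}). A direct dual--number computation identifies $ \ad $ on $ \rGL(V_\bk) $ with this super--commutator, and both $ \Ad $ and $ \ad $ for $ \bG $ are the restrictions of those for $ \rGL(V_\bk) \, $; thus the Proposition reduces to the closure relation $ \big[ \Lie(\bG), \Lie(\bG) \big] \subseteq \Lie(\bG) \, $, which one checks directly and which becomes transparent once $ \Lie(\bG) $ is identified with $ \fg_\bk $ later on.
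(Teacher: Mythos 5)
Your proposal is correct and follows essentially the same route the paper itself relies on: the paper gives no independent proof of this Proposition but simply cites \cite{ccfd}, \S 6.3, where the argument is exactly the dual--numbers computation of the group commutator, naturality of $\Ad$, and the Hall--Witt identity that you outline (your shortcut via the embedding into $\rGL(V_\bk)$ is also consistent with the Example the paper works out right afterwards). One small simplification you could note: since $\Lie(G)(A)={(A\otimes T_{1_G})}_0$ consists of \emph{even} elements of $A\otimes T_{1_G}$, the bracket on the $A$--points is an ordinary (ungraded) Lie bracket, so the sign $(-1)^{p(x)p(y)}$ you worry about never appears at the level of $\Lie(G)(A)$; it only surfaces when one decomposes into homogeneous components of the tangent superspace.
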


\bigskip

%
%
%
%
%

   Let us now see an important example.

\bigskip

\begin{example}
 We compute the functor  $ \Lie(\rGL_{m|n}) \, $.  Consider the map
%
%
  $$  \displaylines{
   \rGL_{m|n}(p) :  \phantom{\Big|}  \rGL_{m|n}\big(A(\epsilon)\big) \relbar\joinrel\lra \rGL_{m|n}(A)  \cr
   \hskip7pt
 \begin{pmatrix} p + \epsilon \, p'  &  q + \epsilon \, q'  \\
                   r + \epsilon \, r'  &  s + \epsilon \, s'  \end{pmatrix}
 \hskip9pt  \mapsto  \hskip9pt
 \begin{pmatrix}  p  &  q  \\
                    r  &  s  \end{pmatrix}  }  $$
with  $ p $,  $ p' $,  $ s $  and  $ s' $  having entries in  $ A_0 \, $,  and  $ q $,  $ q' $,  $ r $  and  $ r' $  having entries in  $ A_1 \, $;  moreover,  $ p $  and  $ s $  are invertible matrices.  One can see immediately that
  $$  \Lie(\rGL_{m|n})(A)  \; = \;  \text{\it Ker}\,\big(\rGL_{m|n}(p)\big)  \; = \;
\left\{ \begin{pmatrix} I_m + \epsilon \, p'  &  \epsilon \, q'  \\
                        \epsilon \, r'  &  I_n + \epsilon \, s'  \end{pmatrix} \right\}  $$
where  $ I_\ell $  is an  $ \ell \times \ell $  identity matrix.
The functor  $ \Lie(\rGL_{m|n}) $  is clearly group valued and can
be identified with the (additive) group functor  $ M_{m|n} $
  $$  M_{m|n}(A)  \; = \; \Hom\big(M(m|n)^*,A\big)  \; = \;  \Hom_\salg\big({\mathrm{Sym}}\big(M(m|n)^*\big),A\big)  $$
where
  $ \; M(m|n) \, := \, \left\{ \begin{pmatrix}  P  &  Q  \\
                                                 R  &  S  \end{pmatrix} \right\}
\cong \bk^{m^2+n^2|2mn} \, $
--- with  $ P $,  $ Q $,  $ R $  and  $ S $  being  $ m \times m $,  $ m \times n $,  $ n \times m $  and  $ n \times n $  matrices with entries in  $ \bk \, $  ---   is a supervector space.  An  $ \, X \in M(m|n) \, $  is even iff  $ \, Q = R = 0 \, $,  it is odd iff  $ \, P = S = 0 \, $.
 \vskip4pt
   Notice that  $ M(m|n) $  is a Lie superalgebra, whose Lie superbracket is given by  $ \; [X,Y] \, := \, X Y - {(-1)}^{p(X)p(Y)} \, Y X \; $,  \; so  $ \Lie(\rGL_{m|n}) $  is a Lie superalgebra.

\medskip

   Let us compute explicitly for this special case the morphisms  $ \Ad $  and  $ \ad \, $.

\medskip

   Since  $ \, G (i) : \rGL_{m|n}(A) \lra \rGL_{m|n}\big(A(\epsilon)\big) \, $  is an inclusion, if we identify  $ \rGL_{m|n}(A) $  with its image we can write
  $$  \Ad(g)(x) \, = \, g \, x \, g^{-1} \; ,  \eqno \forall \;\; g \in \rGL_{m|n}(A) \, , \; x \in {\mathrm{M}}_{m|n}(A)  \quad  $$
By definition we have  $ \, \Lie\big(\rGL({\mathrm{M}}_{m|n})\big)(A) = \big\{\, 1 + \epsilon
\, \beta \;\big|\; \beta \in \rGL({\mathrm{M}}_{m|n})(A) \,\big\} \phantom{\Big|} $.  So we have, for  $ \; a, b \in {\mathrm{M}}_{m|n}(A) \cong \Lie(\rGL_{m|n})(A) = \big\{ 1 + \epsilon \, a \,\big|\, a \in
{\mathrm{M}}_{m|n}(A) \big\} \, $,
  $$  \ad(1 + \epsilon \, a)(b)  \, = \,  (1 + \epsilon \, a) \, b \, (1 - \epsilon \, a)  \, = \,  b + (ab - ba) \, \epsilon \, = \,  b + \epsilon \, [a,b]  $$
The outcome is  $ \; \ad(1 + \epsilon \, a) = {\mathrm{id}} +
\epsilon \, \beta(a) \; $,  \, with  $ \, \beta(a) = [a,\text{\bf
--}\;] \; $.
\end{example}

\bigskip

We are ready for the main theorem of this section.

\bigskip

\begin{theorem}
 If  $ \, \bG = \bG_V \, $  is a Chevalley supergroup built upon\/  $ \fg $  and  $ V \, $,  then  $ \; {\Lie}(\bG_V) = \fg \; $  as functors with values in  $ \lie \, $.
\end{theorem}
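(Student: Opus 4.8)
The plan is to compute $\Lie(\bG_V)$ directly from its definition, $A \mapsto \mathrm{Ker}\big(\bG_V(p)\big)$ with $p : A[\epsilon] \to A$ the homomorphism killing $\epsilon$, by exploiting the factorization $\bG_V \cong \bG_0 \times \bG_1^{-,<} \times \bG_1^{+,<}$ of Corollary \ref{nat-transf}, and only at the end to pin down the Lie superbracket by comparison with $\rGL(V_\bk)$. First I would note that $\Lie$ of a group functor is a kernel, hence any isomorphism of group functors (indeed, of the underlying \emph{pointed} set-valued functors) induces an isomorphism of the associated $\Lie$-functors valued in super vector spaces, and that $\Lie$ sends products to products. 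Viewing the group-product isomorphism of Corollary \ref{nat-transf} as an isomorphism of set-valued functors then gives
$$ \Lie(\bG_V) \;\cong\; \Lie(\bG_0) \,\oplus\, \Lie\big(\bG_1^{-,<}\big) \,\oplus\, \Lie\big(\bG_1^{+,<}\big) $$
as super-vector-space-valued functors, reducing the task to identifying the three summands, and separately the bracket.

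Next I would compute the summands. For the odd factors, Proposition \ref{G1pm-repres} gives $\bG_1^{\pm,<} \cong \mathbb{A}_\bk^{0|N_\pm}$, so $\Lie\big(\bG_1^{\pm,<}\big)(A)$ is the purely odd $A_0$-module $(\epsilon A_1)^{N_\pm}$; transporting this back along the isomorphism $\Psi^\pm$ and using $x_\gamma(\epsilon\,\theta) = 1 + \epsilon\,\theta\,X_\gamma$ (which holds in all three superdimensions, since $(\epsilon\,\theta)^2 = 0$ kills the higher terms), one gets $\Lie\big(\bG_1^{\pm,<}\big)(A) = \big\{ 1 + \epsilon\,w \mid w \in A_1 \otimes_\bk \bigoplus_{\gamma\in\Delta_1^\pm}\bk X_\gamma \big\} = \big(A \otimes_\bk \fg_{1,\bk}^{\pm}\big)_0$. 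For the even summand I would invoke the classical form of Lie's Third Theorem for Chevalley group-schemes (\cite{st},\cite{borel},\cite{hu}): by the construction in \S \ref{const-che-sgroup}, $\bG_0$ is the split reductive Chevalley group-scheme attached to $\fg_0$ and $V$, built so that its stabilizer lattice is precisely $(\fg_0)_V = \fh_V \oplus\big(\oplus_{\alpha\in\Delta_0}\Z X_\alpha\big)$, whence $\Lie(\bG_0)(A) = A_0 \otimes_\bk \fg_{0,\bk}$ with $\fg_{0,\bk} = \bk\otimes_\Z(\fg_0)_V = (\fg_\bk)_0$ by Theorem \ref{stabilizer}. Assembling these, and since $(A\otimes_\bk\fg_\bk)_0 = A_0\otimes_\bk\fg_{0,\bk}\oplus A_1\otimes_\bk\fg_{1,\bk}$, this yields $\Lie(\bG_V)(A) = \big\{ 1 + \epsilon\,z \mid z \in (A\otimes_\bk\fg_\bk)_0 \big\}$. (The inclusion $\supseteq$ can also be seen by hand, writing $1+\epsilon z$ as a finite product of elements $x_\delta(\epsilon\,\bt)$ and $h_i(1+\epsilon\,s)$, which lies in $G_V(A[\epsilon]) \subseteq \bG_V(A[\epsilon])$ and is killed by $\bG_V(p)$; the inclusion $\subseteq$ uses the factorization of Corollary \ref{nat-transf} over $A[\epsilon]$ together with the uniqueness in Theorem \ref{crucial} to split a kernel element into the three pieces just computed.)

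Finally I would recover the bracket. The inclusion $\bG_V \hookrightarrow \rGL(V_\bk)$ is a monomorphism of group functors, so $\Lie(\bG_V) \hookrightarrow \Lie\big(\rGL(V_\bk)\big)$ is injective and, by equivariance of the adjoint representations, bracket-preserving; and $\Lie\big(\rGL(V_\bk)\big) = \rgl(V_\bk)$ with the super-commutator bracket, by the computation carried out above for $\rGL_{m|n}$ (applicable since $V_\bk = \bk\otimes_\Z M$ is $\bk$-free). Under this embedding $1+\epsilon\,z \mapsto z$, so by the previous step $\Lie(\bG_V)$ is carried isomorphically onto the sub-functor $A\mapsto(A\otimes_\bk\fg_\bk)_0$ of $\rgl(V_\bk)$, with bracket the restriction of the super-commutator; since by construction $\fg_\bk$ acts faithfully on $V_\bk$ with Lie superbracket equal to that super-commutator, one concludes $\Lie(\bG_V)(A) = (A\otimes_\bk\fg_\bk)_0$ as Lie algebras, i.e.\ $\Lie(\bG_V) = \fg$ in the notation of the statement.

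The hard part is this last paragraph: one must keep exact track of the parity bookkeeping in the identification $1+\epsilon z \leftrightarrow z$, and one must make sure the classical input produces the Lie algebra of the \emph{specific} integral form $\bG_0$ — in which the $H_i$'s need not be simple coroots, so $\bG_0$ may be of a nonstandard isogeny type — rather than of some isogenous group. Both are handled by working inside $\rGL(V_\bk)$ throughout, and by the fact that the classical Chevalley construction with a fixed faithful module $V$ is designed exactly so as to produce the stabilizer lattice $(\fg_0)_V$ as its Lie algebra; everything else is the routine bookkeeping of products of exponentials modulo $\epsilon^2$.
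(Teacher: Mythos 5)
Your proposal is correct and follows essentially the same route as the paper's own (quite terse) proof: work inside $\rGL(V_\bk)$ so that the bracket is the super-commutator, use the factorization $\bG_V \cong \bG_0 \times \bG_1^{-,<} \times \bG_1^{+,<}$ of Corollary \ref{nat-transf}/Theorem \ref{representability}, identify the even part by the classical Chevalley--Steinberg result and the odd part by the explicit exponentials $1+\epsilon\,\theta\,X_\gamma$. The only difference is that the paper argues by a superdimension count plus containment of $\fg_0$ and of the odd generators, whereas you split $\Lie$ along the product decomposition and compute each factor; this is a more detailed write-up of the same argument, not a different one.
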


\begin{proof} The first remark is that all our arguments take place inside  $ \rGL(V) $,  hence we can argue using the formulas of the previous example.  Certainly we know that the two spaces under exam have the same (super)dimension, in fact by  Theorem \ref{representability}  we know that  $ \, \bG = \bG_0 \times \bG_1^< \, $,  hence its tangent space at the origin has dimension  $ \, \text{\it dim}(\fg_0)\,\big|\text{\it dim}(\fg_1) \, $.  It is also clear by classical considerations that  $ \, {\Lie}(\bG_V)_0 = \fg_0 \, $.  An easy calculation shows that  $ {\Lie}(\bG_V) $  contains all the generators of  $ \fg_1 \, $,  hence we have the result.
\end{proof}

%

 %
 %

\chapter{The cases  $ A(1,1) \, $,  $ P(3) $  and  $ Q(n) $}  \label{cases A-P-Q}

\bigskip

 {\it
   This chapter treats the ``singular'' cases, namely those when  $ \fg $  is of type  $ A(1,1) $,  $ P(3) $  or  $ Q(n) \, $.  Indeed, in these cases the roots happen to have non-trivial multiplicity (i.e., root spaces have multiplicity greater than one), so additional care is needed: therefore, they are dropped from the discussion in the main body of the text   --- chapters 3 through 5 ---   and dealt with here.
                                                             \par
   The main point is that we have to re-visit our definition of Chevalley basis, which our whole construction is based upon.  In short, the outcome is that one still has a similar notion, but one may have more than one root vector attached to a unique root: indeed, they will be just as many as the root multiplicity.

\smallskip

   Actually, in cases  $ A(1,1) $  and  $ P(3) $  essentially it is enough to adopt a suitable notation: then almost nothing changes, that is one can replicate, step by step, all analysis and arguments carried on before, essentially with no change.

\smallskip

   On the other hand, in case  $ Q(n) $  one must undertake from scratch a careful revision of the entire construction.  Indeed, the very definition of Chevalley basis must be carefully (re)thought: once this is done, we prove the existence again by direct inspection   --- i.e., we do provide an explicit Chevalley basis ---   but the ``uniform argument'' mentioned in Chapter 3,  Remark
 3.8,
%
%
 also applies again.
                                                                \par
   The changes in the definition of a Chevalley basis for  $ Q(n) $  induce some changes in the construction and description of  $ \kzg $  too.  But then, all the rest of our construction of Chevalley supergroups goes through essentially unchanged.
%
%
                                                                             \par
   The main fact to point out is a special feature of these three cases which make them different from all other ones: namely, some (odd) roots have multiplicity greater than one, so the set of (odd) roots itself is no longer fit to index (odd) root vectors in a basis.  This leads us to introduce a different index set for root vectors, still close to the root set but definitely different.

\smallskip

   As to types  $ A(1,1) $  and  $ P(3) \, $,  according to  \S \ref{root-syst}  in both cases there exist linear dependence relations that identify some (odd) positive roots with some (odd) negative ones.  Now, for the common value of such a root we can find a root vector when considering the root as a positive one, and another, linearly independent root vector, when looking at the same root as a negative one.  In the end, any such root has exactly multiplicity 2, and we just have to find out a neat way to index all root vectors in a consistent manner.
                                                                             \par
   The solution is immediate: the same way of indexing root vectors that we adopted respectively for  $ A(n,n) $   ---  $ n > 1 $  ---   and for  $ P(m) $   ---  $ m \not= 2 $  ---  still works in the present context.  Similarly, the description of a Chevalley basis is (up to using a suitably adapted notation) essentially the same   --- both for  $ A(1,1) $  and  $ P(3) $  ---   as in the general case.  Then all our results   --- about Kostant algebras, Chevalley groups and their properties ---   follow: statements and proofs are the same, just minimal notational changes occur.
%
%

\smallskip

   For type  $ Q(n) $  instead, the new, ``exotic'' feature is that the root set includes also  $ 0 \, $,  as an {\sl odd\/}  root with multiplicity  $ n \, $,  while all other roots are both even and odd and have multiplicity  $ 2 \, $.  Root vectors then must be indexed by a greater set then the root set.  Moreover, all root vectors relative to the root  $ 0 $  are odd, while for any root  $ \, \alpha \not= 0 \, $  there exist an  {\sl even\/}  as well as an  {\sl odd\/}  root vector attached to  $ \alpha \, $.  With such root vectors we can build up a (suitably defined) Chevalley basis.
                                                                             \par
  All our results again hold true in case  $ Q(n) $  too, with the same statements; however, in this case some proofs need additional arguments, due to the special behavior of the root 0 and of the root vectors in a Chevalley basis.
}
\bigskip

 \section{Chevalley bases and Chevalley superalgebras}  \label{che_bas-alg_A-P-Q}

\medskip

   Throughout the section  $ \fg $  denotes any Lie superalgebra of type  $ A(1,1) \, $,  $ P(3) $  or  $ Q(n) \, $.  We begin with the following definition   --- sort of a generalization of root system ---   which eventually will be our tool to index root vectors in a (suitably defined) Chevalley basis.

\medskip

\begin{definition}  \label{Deltatilde}
 We define a set  $ \; \wiDelta := \wiDelta_0 \coprod \wiDelta_1 \; $,  a map  $ \; \pi : \wiDelta \longrightarrow \Delta \cup \{0\} \; $  and two partial operations on  $ \wiDelta $  (``partial'' in the sense that  $ \wiDelta $  is not closed with respect to them)  $ \; + : \wiDelta \times \wiDelta \dashrightarrow \wiDelta \, $,  $ \; - : \wiDelta \dashrightarrow \wiDelta \, $,  \, as follows.
 \vskip4pt
   {\it (a)} \,  if  $ \, \fg = A(1,1) \, $,  we set  $ \, \wiDelta_0^\pm \! := \Delta_0^\pm \, $  and  $ \; \wiDelta_0 := \wiDelta_0^+ \cup \wiDelta_0^- = \Delta_0 \, $.  The latter can be described by subsets of pairs indexing the (even) positive or negative roots   --- in the (classical) root system of type  $ A(1) \times A(1) $  ---   namely  $ \; \wiDelta_0 := \wiDelta_0^+ \coprod \wiDelta_0^- \; $  with
  $$  \wiDelta_0^+  \, := \,  \Delta_0^+  \, = \,  \big\{(1,2),(3,4)\big\} \;\; ,  \quad  \wiDelta_0^- \, := \, \Delta_0^- \, = \, \big\{(2,1),(4,3)\big\} \;\, ;  $$
with these identifications, one has  $ \, -(i,j) = (j,i) \, $  for the opposite of a root.
                                                        \par
   Similarly, we define
  $$  \wiDelta_1^+  \, := \,  \big\{(1,3),(1,4),(2,3),(2,4)\big\} \;\; ,  \quad  \wiDelta_1^-  \, := \, \big\{(3,1),(4,1),(3,2),(4,2,)\big\}  $$
and eventually we set  $ \; \wiDelta_1 := \wiDelta_1^+ \coprod \wiDelta_1^- \; $.

\smallskip

   The partial operations  $ \, - \, $  and  $ \, + \, $  on  $ \wiDelta $  are given by
  $$  -(r,s) := (s,r) \;\; ,  \qquad  (i,j) + (h,k) := \delta_{j,h} (i,k) - {(-1)}^{\varepsilon(i,j) \, \varepsilon(h,k)} \delta_{k,i} (h,j)  $$
where  $ \, \varepsilon(t,l) := 1 \, $  if  $ \, t, l \geq 2 \, $
or  $ \, t, l > 2 \, $,  \, and  $ \, \varepsilon(t,l) := -1 \, $
otherwise.
                                                                      \par
   Now, the set  $ \Delta_1 $  of odd roots of  $ \fg $  identifies with the quotient space  $ \, \Delta_1 = \wiDelta_1 \big/ \!\! \sim \, $,  \, where the equivalence relation  $ \, \sim \, $  between pairs given by
  $$  (1,4) \sim (3,2) \;\; ,  \qquad  (1,3) \sim (4,2) \;\; ,  \qquad  (2,3) \sim (4,1) \;\; ,  \qquad  (2,4) \sim (3,1) $$
(note that the partition into  $ \sim $--equivalence  classes is ``transversal'' to the partition  $ \, \wiDelta_1 := \wiDelta_1^+ \coprod \wiDelta_1^- \, $).  We define  $ \; \pi : \wiDelta = \wiDelta_0 \coprod \wiDelta_1 \longrightarrow \Delta \cup \{0\} \; $  as the identity map on  $ \, \wiDelta_0 = \Delta_0 \, $  and as the quotient map on  $ \wiDelta_1 \, $.
 \vskip6pt
   {\it (b)} \,  if  $ \, \fg = P(3) \, $,  \, we set  $ \, \wiDelta_0^\pm \! := \Delta_0^\pm \, $  and  $ \; \wiDelta_0 := \wiDelta_0^+ \cup \wiDelta_0^- = \Delta_0 \, $.  The latter can be described by subsets of pairs indexing the (even) positive or negative roots   --- in the (classical) root system of type  $ A(3) $  ---   namely  $ \; \wiDelta_0 := \wiDelta_0^+ \coprod \wiDelta_0^- \; $  with
  $$  \wiDelta_0^+  \, := \,  \Delta_0^+ = \big\{ (i,j) \big\}_{1 \leq i < j \leq 4} \;\; ,  \quad  \wiDelta_0^- \, := \, \Delta_0^- = \big\{ (j,i) \big\}_{1 \leq i < j \leq 4}  $$
with  $ \, -(r,s) = (s,r) \, $.  Then we define  $ \; \wiDelta_1 := \wiDelta_1^+ \coprod \wiDelta_1^- \; $  with
  $$  \wiDelta_1^+ \, := \, \big\{ [h,k] \big\}_{1 \leq h \leq k \leq 4}  \;\; ,  \quad  \wiDelta_1^-  \, := \,  \big\{ [q,p] \big\}_{1 \leq p < q \leq 4} \;\; .  $$
                                                                      \par
   The partial operations  $ \, - \, $  and  $ \, + \, $  on  $ \wiDelta $  are defined as follows. Let us consider in the free  $ \Z $--module  $ \Z^4 $  the canonical basis, whose elements are
  $$  \varepsilon_1 := (1,0,0,0) \; ,  \quad  \varepsilon_2 := (0,1,0,0) \; ,  \quad  \varepsilon_3 := (0,0,1,0) \; ,  \quad  \varepsilon_4 := (0,0,0,1)  $$
and let us consider the embedding of  $ \wiDelta $  into  $ \Z^4 $  given by
  $$  (r,s) \mapsto (\varepsilon_r - \varepsilon_s)  \quad ,  \qquad  [h,k] \mapsto (\varepsilon_h + \varepsilon_k)  \quad ,  \qquad  [q,p] \mapsto -(\varepsilon_q + \varepsilon_p)  $$
for all  $ \, r \not= s \, $,  $ \, h \leq k \, $,  $ \, p \lneqq q \, $.  Then we define the partial operations  $ \, - \, $  and  $ \, + \, $  on  $ \wiDelta $  as being the restrictions of the same name operations in  $ \Z^4 $,  taking the former as defined (on  $ \wiDelta \, $)  whenever the result belongs to  $ \wiDelta $  itself.
                                                                      \par
   Let  $ \, \sim \, $  be the equivalence relation in  $ \wiDelta_1 $  given by
  $$  \displaylines{
   \qquad   [1,2] \sim [4,3]  \quad ,  \qquad  [1,3] \sim [4,2]  \quad ,  \qquad  [1,4] \sim [3,2]  \quad ,   \hfill \cr
   \hfill   [2,3] \sim [4,1]  \quad ,  \qquad  [2,4] \sim [3,1]  \quad ,  \qquad  [3,4] \sim [2,1]  \quad ; \qquad  }  $$
then the set  $ \Delta_1 $  of odd roots of  $ \fg $  identifies with the quotient space  $ \, \Delta_1 = \wiDelta_1 \big/ \!\! \sim \; $.  We define  $ \; \pi : \wiDelta = \wiDelta_0 \coprod \wiDelta_1 \longrightarrow \Delta \cup \{0\} \; $  as the identity map on  $ \, \wiDelta_0 = \Delta_0 \, $  and as the quotient map on  $ \wiDelta_1 \; $.
 \vskip6pt
   {\it (c)} \,  if  $ \, \fg = Q(n) \, $,  \, then  $ \, \Delta = \Delta_1 = \Delta_0 \coprod \, \{0\} \, $.  We set  $ \, \wiDelta_0^\pm \! := \Delta_0^\pm \times \{(0,1)\} \, $  and  $ \; \wiDelta_0 :=
\wiDelta_0^+ \cup \wiDelta_0^- \, $.  Then we fix any partition  $ \, I_n^+ \! \coprod I_n^- \! = \{1,\dots,n\} \, $  of  $ \, \{1,\dots,n\} \, $,  \, and we define
  $$  \wiDelta^\pm_1  \, := \,  \big( (\Delta^\pm_1 \setminus \{0\}) \! \times \! \{(1,1)\} \big) \,{\textstyle \coprod}\, \big( \{0\} \times {\{(1,i)\}}_{i \in I_n^\pm} \big)  $$
and  $ \; \wiDelta_1 := \wiDelta_1^+ \coprod \wiDelta_1^- \; $.  Note that definitions imply  $ \, \wiDelta_0 \cap \wiDelta_1 = \emptyset \, $,  while on the other hand  $ \, \Delta_0 \cap \Delta_1 = \Delta_0 \not= \emptyset \, $  instead.
                                                     \par
   As  $ \, \wiDelta \subseteq \big( \Delta \times \{0,1\} \times I_n \big) \, $,  \, the map  $ \; \pi : \wiDelta \longrightarrow \Delta \cup \{0\} = \Delta \; $  is nothing but the restriction to  $ \wiDelta $  of the projection onto the first factor: then the operator  $ \, - : \wiDelta \longrightarrow \wiDelta \, $  is given by taking the opposite on the left-hand factor.  Finally, the partial operation  $ \, \big(\, \walpha, \wbeta \,\big) \mapsto \walpha + \wbeta \, $  is given by
  $$  \big( \alpha \, , (p \, , i) \big) \, + \, \big( \beta \, , (q \, , j) \big)  \; := \;  \big( \alpha + \beta \, , (\, p + q \, (\hskip-10pt \mod \! 2) \, , i \wedge j \,) \big)  $$
Note that this operation has a neutral element  $ \widetilde{0} \, $,  namely  $ \, \widetilde{0} = (0,(0,1)) \, $.
\end{definition}

\medskip

   We are now ready to give the definition of Chevalley basis.  We invite the reader to look and compare with  Definition \ref{def_che-bas}  that holds for classical Lie superalgebras different from  $ A(1,1) \, $,  $ P(3) $  and  $ Q(n) $  to notice the differences.

\bigskip

\begin{definition} \label{def_che-bas_A-P-Q}
 Let  $ \fg $  be as above.  We call  {\it Chevalley basis\/}  of  $ \fg $  any homogeneous  $ \KK $--basis  $ \; B = {\big\{ H_i \big\}}_{1,\dots,t} \coprod {\big\{ X_{\widetilde{\alpha}} \big\}}_{\widetilde{\alpha} \in \widetilde{\Delta}} \; $  with the following properties.
 \vskip9pt
   \textit{(a)}  \quad   $ \, \big\{ H_1 , \dots , H_\ell \big\} \, $  is a  $ \KK $--basis  of  $ \fh \, $;  moreover, with  $ \, H_\alpha \! \in \fh \, $  as in  \S \ref{root-syst}:
 \vskip4pt
   \centerline{ if  $ \, \fg \not= Q(n) \, $,  \quad  $ \, \fh_\Z  \, := \,  \text{\it Span}_{\,\Z}\big( H_1 , \dots , H_\ell \big)  \, = \,  \text{\it Span}_{\,\Z}\big( \big\{ H_\alpha \,\big|\, \alpha \! \in \! \Delta \! \cap \! (-\Delta) \big\}\big) \; $; }
 \vskip3pt
   \centerline{ if  $ \, \fg = Q(n) \, $,  \quad  $ \, \fh_\Z  :=  \text{\it Span}_{\,\Z}\big( H_1 , \dots , H_\ell \big)  \, = \,  \big\{ h \in \fh \,\big|\, (h \, , h_\alpha) \in \Z \;\; \forall \; \alpha \! \in \! \Delta \big\} \; $; }
 \vskip8pt
   \textit{(b)}  \hskip4pt   $ \big[ H_i \, , H_j \big] = 0 \, ,   \hskip5pt
 \big[ H_i \, , X_{\widetilde{\alpha}} \big] \! = \pi\big(\widetilde{\alpha}\big)\!(H_i) \, X_{\widetilde{\alpha}} \, ,   \hskip11pt  \forall \; i, j \! \in \! \{ 1, \dots, \ell \} \, ,  \; \widetilde{\alpha} \! \in \! \widetilde{\Delta} \, $;
 \vskip11pt
   \textit{(c.$\not{\hskip-3pt{Q}}$)}  \quad   if  $ \, \fg \not= Q(n) \, $,  \, then
 \vskip6pt
   \centerline{ \hskip7pt   $ \big[ X_{\widetilde{\alpha}} \, , \, X_{-\widetilde{\alpha}} \big]  \, = \,  \sigma_{\widetilde{\alpha}} \, H_{\pi(\widetilde{\alpha})}  \hskip25pt  \forall \;\; {\widetilde{\alpha}} \in \widetilde{\Delta} \cap \big( - \widetilde{\Delta} \,\big) \, $, }
 \vskip4pt
\noindent
 with  $ H_{\pi(\widetilde{\alpha})} $  as in  {\it (a)},  and  $ \; \sigma_{\widetilde{\alpha}} := -1 \; $  if  $ \, \widetilde{\alpha} \in \widetilde{\Delta}_1^- \, $,  $ \; \sigma_{\widetilde{\alpha}} := 1 \; $  otherwise;
 \vskip11pt
   \textit{(c.Q)}  \quad   if  $ \, \fg = Q(n) \, $,  \, then
 \vskip6pt
   \centerline{ \hskip7pt   $ \; \big[ X_{(\alpha,(0,1))} \, , X_{(-\alpha,(0,1))} \big]  = \, H_{\alpha}  \hskip25pt  \forall \; \alpha \! \in \! \Delta \!\! \setminus \! \{0\} \, $,  \; with  $ H_\alpha $  as in  {\it (a)}\;; }
 \vskip5pt
   \centerline{ \hskip5pt   $ \; \big[ X_{(\alpha,(1,1))} \, , X_{(-\alpha,(1,1))} \big]  = \,  H_{\alpha'}  \hskip25pt  \forall \; \alpha \! \in \! \Delta \! \setminus \{0\} \, $,  \; with  $ \, H_{\alpha'} \in \fh_\Z \; $; }
 \vskip5pt
   \centerline{ \hskip3pt   $ \big[ X_{(\alpha,(0,1))} \, , X_{(-\alpha,(1,1))} \big]  = \, X_{(0,(1,\alpha))}   \hskip29pt  \forall \; \alpha \! \in \! \Delta \! \setminus \{0\} \, $,   \hskip51pt }
                                                                           \par
   \hfill   with  $ \; X_{(0,(1,\alpha))} := \sum_{k=1}^n e_{\alpha;k} \, X_{(0,(1,k))} \; $  if  $ \; H_\alpha = \sum_{k=1}^n e_{\alpha;k} \, H_k \phantom{\Big|} \, $;
 \vskip5pt
   \centerline{ \hskip3pt   $ \big[ X_{(0,(1,i))} \, , X_{(0,(1,j))} \big] = \, 2 \, H_{i,j}   \hskip15pt  \forall \; i, j = 1, \dots, n \, $,  \, with  $ \; H_{i,j} \in \fh_\Z \; $; }
 \vskip13pt
   \textit{(d)}  \quad  $ \, \big[ X_{\widetilde{\alpha}} \, , \, X_{\widetilde{\beta}} \big]  \, = \, c_{\widetilde{\alpha},\widetilde{\beta}} \; X_{\widetilde{\alpha} + \widetilde{\beta}}  \hskip17pt   \forall \;\, \widetilde{\alpha} , \widetilde{\beta} \in \widetilde{\Delta} \, : \, \widetilde{\alpha} \not= -\widetilde{\beta} \, , \; \widetilde{\beta} \not= -\widetilde{\alpha} \, $,  \, with
 \vskip9pt
   \hskip11pt   \textit{(d.1)} \,  if  $ \;\; \big( \widetilde{\alpha} + \widetilde{\beta} \,\big) \not\in \widetilde{\Delta} \, $,  \; then  $ \;\; c_{\widetilde{\alpha},\widetilde{\beta}} = 0 \; $,  \; and  $ \; X_{\widetilde{\alpha} + \widetilde{\beta}} := 0 \; $,
 \vskip7pt
   \hskip11pt   \textit{(d.2)} \,  if  $ \, \big( \pi(\widetilde{\alpha}), \pi(\widetilde{\alpha}) \big) \not= 0 \, $  or  $ \, \big( \pi(\widetilde{\beta}), \pi(\widetilde{\beta}) \big) \not= 0 \, $,  and (cf.~Definition \ref{alpha-string})  if  $ \; \Sigma^{\pi(\widetilde{\alpha})}_{\pi(\widetilde{\beta})} := \big\{ \pi \! \big( \widetilde{\beta} \,\big) \! - r \, \pi \! \big( \widetilde{\alpha} \big) \, , \, \dots \, , \, \pi \! \big( \widetilde{\beta} \,\big) \! + q \, \pi \! \big( \widetilde{\alpha} \big) \big\} \; $  is the  $ \pi \! \big( \widetilde{\alpha} \big) $--string  through  $ \pi \! \big( \widetilde{\beta} \,\big) \, $,  \, then  $ \; c_{\widetilde{\alpha}, \widetilde{\beta}} = \pm (r+1) \, $,  \, with the following exceptions:
 \vskip3pt
   \textit{(d.2-I)} \,  if  $ \, \fg = P(3) \, $,  and  $ \; \widetilde{\alpha} = [i,j] \, $,  $ \, \widetilde{\beta} = (i,j) \; $   --- notation of  \ref{Deltatilde}{\it (3)}  ---   then  $ \; c_{\widetilde{\alpha},\widetilde{\beta}} = \pm (r+2) \; $;
 \vskip3pt
   \textit{(d.2-II)} \,  if  $ \fg = Q(n) \, $,  and  $ \, \widetilde{\alpha} = \big(0,(1,k)\big) \, $,  $ \, \widetilde{\beta} = \big(\epsilon_i - \epsilon_j,(1,1)\big) \, $   --- using the standard notation for the classical root system of type  $ A_n \, $  ---   then  $ \;\; c_{\widetilde{\alpha}, \widetilde{\beta}} = (\epsilon_i + \epsilon_j)(\alpha_k) \; $;
 \vskip5pt
   \hskip11pt   \textit{(d.3)} \,  if  $ \, \big( \pi(\widetilde{\alpha}), \pi(\widetilde{\alpha}) \!\big) \! = 0 = \big( \pi \! \big(\widetilde{\beta}\,\big), \pi \! \big(\widetilde{\beta}\,\big) \!\big) $,  \, then  $ \; c_{\widetilde{\alpha},\widetilde{\beta}} = \pm \pi \! \big(\widetilde{\beta}\,\big) \big(H_{\pi(\widetilde{\alpha})}\big) \, $.
\end{definition}

\vskip9pt

   Here again, for notational convenience, we shall write  $ \, X_\wdelta := 0 \, $  whenever  $ \, \wdelta \, $  belongs to the  $ \Z $--span  of  $ \widetilde{\Delta} $  but either  $ \, \wdelta \not\in \wiDelta \, $,  or  $ \, \wdelta \in \wiDelta \, $  and  $ \, \pi\big(\,\wdelta\,\big) = 0 \, $.

\medskip

\begin{remarks}   \label{rem-Chev_A-P-Q}  {\ }
 \vskip3pt
   {\it (1)} \,  The  {\it Chevalley superalgebra\/}  (of  $ \fg $)  is defined again just like in the other cases, namely as  $ \, \fg^\Z := \Z\text{\sl --span of \ } B \, $,  where  $ B $  is any Chevalley basis of  $ \fg \, $.  Again, it is a Lie superalgebra over  $ \Z \, $,  independent of the choice of  $ B \, $.
 \vskip3pt
   {\it (2)} \,  If  $ \; \big( \pi(\widetilde{\alpha}), \pi(\widetilde{\alpha}) \big) = 0 = \big( \pi(\widetilde{\beta}), \pi(\widetilde{\beta}) \big) \; $  then  $ \; \pi \big(\widetilde{\beta}\,\big) \! \big(H_{\pi(\widetilde{\alpha})}\big) = \pm (r\!+\!1) \, $.  Therefore,  {\sl condition  {\it (d.3)\/}  in  Definition \ref{def_che-bas_A-P-Q}  reads just like  {\it (d.2)}.}
\end{remarks}

\bigskip

   Now we show how to prove the  {\bf existence\/}  of Chevalley bases   --- as claimed in  Theorem \ref{exist_che-bas}  ---   in the present cases.  We proceed by direct construction of explicit bases; on the other hand, it is worth stressing that the ``uniform argument'' sketched in  Remark \ref{unif_exist_Chev-bases}  does apply again to case  $ A(1,1) $,  {\sl and even to case}  $ Q(n) $,  up to a few, obvious changes.

\medskip

   $ \underline{A(1,1) \, , \, P(3)} \phantom{\bigg|} \; $:  \, In case  $ \, A(1,1) \, $,  the description of an explicit Chevalley basis given in the proof of  Theorem \ref{exist_che-bas}  for  $ A(n,n) $   ---  $ n \not= 1 $  ---  works again,  {\it verbatim}.  Similarly the description of a Chevalley basis for  $ P(n) $   ---  $ n \not= 3 $  ---  applies again to  $ \, P(3) \, $,  up to reading (notation as in  Definition \ref{Deltatilde}{\it (3)\/})
  $$  \alpha_{i,j} := (i,j)  \quad ,  \qquad  \beta_{h,k} := [h,k]  \quad ,
\qquad  \beta_{q,p} := [q,p]  $$
for  $ \, 1 \leq i \not= j \leq 4 \, $,  $ \, 1 \leq h < k \leq 4 \, $,
$ \, 1 \leq p < q \leq 4 \, $.
%
%

\medskip

   $ \underline{Q(n)} \phantom{\bigg|} \; $:  \, In this case a Chevalley basis is a variation of the basis given in  \cite{fss},  \S 2.49 (we change the Cartan generators); we now describe it explicitly.
                                                                               \par
   First consider the Lie superalgebra (sub-superalgebra of  $ \, \rgl(n\!+\!1|\,n\!+\!1) \, $)
  $$  \widetilde{Q}(n)  \; := \,\;  \bigg\{\! \begin{pmatrix}
                         A  &  B  \\
                         B  &  A
\end{pmatrix}
 \in \rgl(n\!+\!1|\,n\!+\!1) \;\bigg|\;\, A \in \rgl(n\!+\!1) \, , \, B \in \rsl(n\!+\!1) \;\bigg\}  $$
such that, by definition,  $ \; Q(n) := \widetilde{Q}(n) \Big/ \KK I_{2(n+1)} \; $.  Take in  $ \widetilde{Q}(n) $ the elements
  $$  \displaylines{
   L_i := \e_{i,i} + \e_{i+n+1,i+n+1} \, ,  \;\;  K_t := \e_{t,t+n+1} - \e_{t+1,t+n+2} + \e_{t+n+1,t} - \e_{t+n+2,t+1}  \cr
   E_{i,j} := \e_{i,j} + \e_{i+n+1,j+n+1} \; ,  \hskip57pt  F_{i,j} := \e_{i,j+n+1} + \e_{i+n+1,j}  }  $$
for all  $ \, i, j = 1, \dots, n+1 \, $,  $ \, t = 1, \dots, n \, $,  $ \, i \not= j \, $.  Then
  $$  {\big\{ L_i \big\}}_{i=1,\dots,n+1}  \;{\textstyle \bigcup}\;  {\big\{ E_{i,j} \big\}}_{i,j=1,\dots,n+1}^{i \not= j}  \;{\textstyle \bigcup}\;  {\big\{ K_t \big\}}_{t=1,\dots,n}  \;{\textstyle \bigcup}\;  {\big\{ F_{i,j} \big\}}_{i,j=1,\dots,n+1}^{i \not= j}  $$
is a homogeneous  $ \KK $--basis  of  $ \widetilde{Q}(n) \, $,  the
$ L_i $'s  and the  $ E_{i,j} $'s  being even, the $ K_t $'s  and
the  $ F_{i,j} $'s  being odd.  As  $ \, I_{2(n+1)} = L_1 + \cdots
+ L_{n+1} \, $,  the quotient Lie superalgebra  $ \; Q(n) :=
\widetilde{Q}(n) \Big/ \KK I_{2(n+1)} \; $  has homogeneous  $ \KK $--basis
  $$  B  \; := \;  {\big\{ L_i \big\}}_{i=1,\dots,n}  \;{\textstyle \bigcup}\;  {\big\{ E_{i,j} \big\}}_{i,j=1,\dots,n+1}^{i \not= j}  \;{\textstyle \bigcup}\;  {\big\{ K_t \big\}}_{t=1,\dots,n}  \;{\textstyle \bigcup}\;  {\big\{ F_{i,j} \big\}}_{i,j=1,\dots,n+1}^{i \not= j}  $$
where we use again the same symbols to denote the images of
elements of  $ \widetilde{Q}(n) $  inside  $ Q(n) \, $.  In terms
of these, the multiplication table of  $ Q(n) $  reads
  $$  \displaylines{
   \big[ L_i , L_j \big]  \; = \;  0 \;\; ,  \quad  \big[ L_i , K_t \big]  \; = \;  0  \cr
   \big[ K_r \, , K_s \big]  \; = \;  2 \, \big( \delta_{r,s} - \delta_{r,s+1} \big) \, L_r \, + \, 2 \, \big( \delta_{r,s} - \delta_{r+1,s} \big) \, L_{r+1}  \cr
   \big[ L_k \, , E_{i,j} \big]  \; = \;  \alpha_{i,j}(L_k) \, E_{i,j} \;\; ,
 \qquad  \big[ L_k \, , F_{i,j} \big]  \; = \;  \alpha_{i,j}(L_k) \, F_{i,j}  \cr
   \big[ K_t \, , E_{i,j} \big]  \; = \;  \alpha_{i,j}(L_t \! - \! L_{t+1}) \, F_{i,j} \;\; ,
 \qquad  \big[ K_t \, , F_{i,j} \big]  \; = \;  \widetilde{\alpha}_{i,j}(L_t \! - \! L_{t+1}) \, E_{i,j}  \cr
   \hfill   \qquad   \big[ E_{i,j} \, , E_{k,\ell} \big]  \; = \;  \delta_{j,k} \, E_{i,\ell} \, - \, \delta_{\ell,i} \, E_{k,j}   \hfill   \forall \;\; (i,j) \not= (\ell,k)  \cr
   \hfill   \qquad   \big[ E_{i,j} \, , F_{k,\ell} \big]  \; = \;  \delta_{j,k} \, F_{i,\ell} \, - \, \delta_{\ell,i} \, F_{k,j}   \hfill   \forall \;\; (i,j) \not= (\ell,k)  \cr
   \hfill   \qquad   \big[ F_{i,j} \, , F_{k,\ell} \big]  \; = \;  \delta_{j,k} \, E_{i,\ell} \, + \, \delta_{\ell,i} \, E_{k,j}   \hfill   \forall \;\; (i,j) \not= (\ell,k)  \cr
   \big[ E_{i,j} \, , E_{j,i} \big]  \, = \,  L_i - L_j \;\, ,
 \quad  \big[ E_{i,j} \, , F_{j,i} \big]  \, = \,  {\textstyle \sum_{t=1}^{j-1}} \, K_t \;\, ,
 \quad  \big[ F_{i,j} \, , F_{j,i} \big]  \, = \,  L_i + L_j  }  $$
where the  $ \alpha_{i,j} $'s  are the non-zero roots of  $ Q(n)
\, $,  forming the classical root system of type  $ A_n $  (namely
$ \, \alpha_{i,j} = \epsilon_i - \epsilon_j \, $,  where the  $
\epsilon_\ell $'s  form the dual basis to the canonical basis of
the diagonal matrices in  $ \rgl(n+1) \, $),  while  $ \,
\widetilde{\alpha}_{i,j} := \epsilon_i + \epsilon_j \, $.
 \vskip5pt
   In particular, this shows that the  $ L_i $'s  ($ \, i = 1, \dots, n \, $)  form a  $ \KK $--basis  of the Cartan subalgebra of  $ Q(n) $   --- which is the image in  $ Q(n) $  of the subspace of diagonal matrices in  $ \widetilde{Q}(n) $  ---   each  $ E_{i,j} \, $,  resp.~each  $ F_{i,j} \, $,  is a root vector (the former being even, the latter odd) for the root  $ \alpha_{i,j} \, $,  and the  $ K_t $'s  form a  $ \KK $--basis of the (totally odd) zero root space, namely  $ \, \fg_{\alpha=0} \cap \fg_1 \, $.  Now set
  $$  H_k \, := \, L_k \;\; ,   \hskip11pt   X_{(0,(1,k))} \, := \, K_k \;\; ,
\hskip11pt   X_{(\alpha_{i,j},(0,1))} \, := \, E_{i,j} \;\; ,   \hskip11pt
X_{(\alpha_{i,j},(1,1))} \, := \, F_{i,j}  $$
for all  $ \; k = 1, \dots, n \; $  and  $ \; i, j = 1, \dots, n\!+\!1 \; $  with  $ \; i \not= j \; $.  Then the above formulas eventually show that  $ \; B := {\big\{ H_k \big\}}_{k = 1, \dots, n} \coprod {\big\{ X_{\widetilde{\alpha}} \big\}}_{\widetilde{\alpha} \in \widetilde{\Delta}} \; $  is a Chevalley basis of  $ \, \fg = Q(n) \, $  in the sense of  Definition \ref{def_che-bas},  by direct check.

\bigskip

\begin{remark}
   For  $ \fg $  of type  $ Q(n) \, $   --- to be precise, of type  $ \widetilde{Q}(n) $  ---   a Chevalley basis was given also in  \cite{bkl},  Lemma 4.3.
\end{remark}

\bigskip
\bigskip
\bigskip

 \section{Kostant superalgebras}  \label{kost-superalgebra_A-P-Q}

\medskip

   In  \S \ref{kost-form},  the Kostant's superalgebra  $ K_\Z(\fg) $  was defined as the subalgebra of  $ U(\fg) $  generated by divided powers of the root vectors attached to even roots, root vectors attached to odd roots, and binomial coefficients in the elements of the Chevalley basis which belong to  $ \fh \, $.
%
%
 We perform exactly the same construction for  $ \, A(1,1) \, $,  $ \, P(3) \, $  and  $ \, Q(n) \, $:  thus the definition of  $ K_\Z(\fg) $  for the present cases reads essentially like  Definition \ref{def-kost-superalgebra},  the only difference is that root vectors are indexed by elements of  $ \widetilde{\Delta} $  instead of  $ \Delta \, $.

\medskip

   The  {\sl commutation rules\/}  among generators of  $ K_\Z(\fg) $  are very close to those in  \S \ref{comm-rul_kost}.  Nevertheless, some differences occur, which we now point out.

 \vskip13pt
\noindent
 {\bf (1) Even generators only:}
%
%
 These relations involve only the  $ H_i $'s  and their binomial coefficients, and root vectors relative to  {\sl even\/}  roots.  Then they are just the same as in  \S \ref{comm-rul_kost},  just reading  $ {\widetilde{\Delta}}_0 $  instead of  $ \Delta_0 \, $,  $ \, X_{\pm\widetilde{\alpha}} \, $  instead of  $ \, X_{\pm\alpha} \, $,  $ \, X_{\widetilde{\beta}} \, $  instead of  $ \, X_\beta \, $,  $ \, \pi\big(\widetilde{\alpha}\big)(H) \, $  instead of  $ \, \alpha(H) \, $  and  $ \, H_{\pi(\widetilde{\alpha})} \, $  instead of  $ \, H_\alpha \, $.

\vskip13pt

\noindent
 {\bf (2) Odd and even generators  {\rm  (also involving the  $ X_{\widetilde{\gamma}} $'s,  $ \, \widetilde{\gamma} \in {\widetilde{\Delta}}_1 \, $)}:}
 \vskip-11pt
  $$  \displaylines{
   \hfill   X_{\widetilde{\gamma}} \, f(H)  \; = \;  f\big(H - \pi\big(\widetilde{\gamma}\big)\!(H)\,\big) \, X_{\widetilde{\gamma}}   \qquad   \hfill \phantom{(4.6)}  \cr
   \hfill   \phantom{{}_{\big|}} \forall \;\; \widetilde{\gamma} \in {\widetilde{\Delta}}_1 \, , \; h \in \fh \, , \; f(T) \in \KK[T]   \qquad \qquad  \cr
%
%
   \hfill   X_{-\widetilde{\gamma}} \, X_{\widetilde{\gamma}}  \; = \;  - X_{\widetilde{\gamma}} \, X_{-\widetilde{\gamma}} \, + \, H_{\widetilde{\gamma}}   \hskip29pt  \forall \;\, \widetilde{\gamma} \in {\widetilde{\Delta}}_1 \cap \big(\! -{\widetilde{\Delta}}_1 \big)   \hfill \phantom{(4.10)}  \cr
   \text{with}  \quad  H_{\widetilde{\gamma}} := \big[ X_{\widetilde{\gamma}} \, , \, X_{-\widetilde{\gamma}} \big] \, \in \, \fh_\Z \; ,  \phantom{{}_{\big|}}  \cr
   \hfill   X_{\widetilde{\gamma}} \, X_{\widetilde{\delta}}  \; = \;  - X_{\widetilde{\delta}} \, X_{\widetilde{\gamma}} \, + \, c_{\widetilde{\gamma},\widetilde{\delta}} \, X_{\widetilde{\gamma} + \widetilde{\delta}} \;\, ,   \hfill   \;\; \phantom{{}_{\big|}} \forall \;\, \widetilde{\gamma} , \widetilde{\delta} \in {\widetilde{\Delta}}_1 \, , \; \pi \! \big( \widetilde{\gamma} \,\big) \! + \pi \! \big(\, \widetilde{\delta} \,\big) \not= 0   \hskip11pt \phantom{(4.11)}  \cr
   \text{with  $ \, c_{\widetilde{\gamma},\widetilde{\delta}} \, $  as in  Definition \ref{def_che-bas_A-P-Q},}  \phantom{{}_{\big|}}  \cr
   \hfill   X_{(\alpha,(0,1))} \, X_{(-\alpha,(1,1))}  \, = \,  X_{(-\alpha,(1,1))} \, X_{(\alpha,(0,1))} \, + \, X_{(0,(1,\alpha))} \;\, ,   \hfill   \forall \; \alpha \in \Delta  \hskip9pt  \phantom{\big|}   \phantom{(4.12)}  \cr
   \text{with  $ \, X_{(0,(1,\alpha))} := {\textstyle \sum_{k=1}^n} e_{\alpha;k} \, X_{(0,(1,k))} \; $  as in  Definition \ref{def_che-bas_A-P-Q}{\it (c.Q)},}  \phantom{{}_{\big|}}  \cr
   \hfill   X_{(0,(1,i))} \, X_{(0,(1,j))}  \; = \;  - X_{(0,(1,j))} \, X_{(0,(1,i))} \, + \, 2 \, H_{i,j} \;\, ,   \hfill   \forall \; i \, , j  \hskip9pt  \phantom{\big|}  \phantom{(4.13)}  \cr
   \text{with  $ \; H_{i,j} := \big[ X_{(0,(1,i))} \, , \, X_{(0,(1,j))} \big] \, \in \, \fh_\Z \; $  as in  Definition \ref{def_che-bas}{\it (c.Q)},}  \phantom{{}_{\big|}}  \cr
   \hfill   X_{\widetilde{\alpha}}^{(n)} \, X_{\widetilde{\gamma}}  \; = \;  X_{\widetilde{\gamma}} \, X_{\widetilde{\alpha}}^{(n)} \, + \, {\textstyle \sum_{k=1}^n} \, \Big( {\textstyle \prod_{s=1}^k} \, \varepsilon_s \Big) \, {\textstyle \Big(\! {{r \, + \, k} \atop k} \!\Big)} \, X_{\widetilde{\gamma} + k \, \widetilde{\alpha}} \, X_{\widetilde{\alpha}}^{(n-k)}   \hfill \phantom{(4.14)}  \cr
   \hfill   \forall \;\; n \in \N \, ,  \;\;\; \forall \;\; \widetilde{\alpha} \in {\widetilde{\Delta}}_0 \, , \; \widetilde{\gamma} \in {\widetilde{\Delta}}_1  \, : \;  \widetilde{\alpha} \not= \pm 2 \, \widetilde{\gamma} \, ,  \;  \widetilde{\alpha} \not= (0,(1,i)) \, , \, \text{for any  $ i \, $},   \phantom{{}_{\big|}}  \quad  \cr
   \text{with} \hskip9pt  \sigma^{\widetilde{\alpha}}_{\widetilde{\gamma}} = \big\{ \widetilde{\gamma} - r \, \widetilde{\alpha} \, , \dots , \widetilde{\gamma} \, , \dots , \widetilde{\gamma} + q \, \widetilde{\alpha} \,\big\} \; ,  \hskip7pt  X_{\widetilde{\gamma} + k \, \widetilde{\alpha}} := 0  \,\;\text{\ if\ }\;  \big(\, \widetilde{\gamma} \! + \! k \, \widetilde{\alpha} \,\big) \not\in \widetilde{\Delta} \, ,   \hfill  \cr
   \text{and}  \hskip11pt   \varepsilon_s = \pm 1  \text{\;\;\ such that \ }  \big[ X_\alpha \, , X_{\gamma + (s-1) \, \alpha} \big] = \varepsilon_s \, (r+s) \, X_{\gamma + s \, \alpha} \;\; ,  \phantom{{}_{\big|}}  \hfill  \cr
   \;   X_{(\alpha,(0,1))}^{(n)} \, X_{(0,(1,k))}  \, = \,  X_{(0,(1,k))} \, X_{(\alpha,(0,1))}^{(n)} - \, \alpha(H_k) \, X_{(\alpha,(0,1))}^{(n-1)} \, X_{(\alpha,(1,1))}   \phantom{{}_{\big|}}   \hfill \phantom{(4.15)}  \cr
   \hfill   \forall \;\; n \in \N \, ,  \;\;\; \forall \;\; \alpha \in \Delta_0 \, ,  \;\;\; \forall \;\; k \, ,   \phantom{{}_{\big|}}  \quad  \cr
   \hfill   X_{\widetilde{\gamma}} \, X_{\widetilde{\alpha}}^{(n)}  \; = \;  X_{\widetilde{\alpha}}^{(n)} \, X_{\widetilde{\gamma}} \;\; ,  \qquad  X_{-\widetilde{\gamma}} \, X_{-\widetilde{\alpha}}^{\,(n)}  \; = \;  X_{-\widetilde{\alpha}}^{\,(n)} \, X_{-\widetilde{\gamma}}   \hskip51pt   \hfill \phantom{(4.16)}  \cr
   \hfill   X_{-\widetilde{\gamma}} \, X_{\widetilde{\alpha}}^{(n)}  \; = \;  X_{\widetilde{\alpha}}^{(n)} \, X_{-\widetilde{\gamma}} \, + \, z_{\widetilde{\gamma}} \; \pi\!\big(\widetilde{\gamma}\big)\!(H_{\widetilde{\gamma}}) \; X_{\widetilde{\alpha}}^{(n-1)} \, X_{\widetilde{\gamma}}   \hskip51pt   \hfill \phantom{(4.17)}  \cr
   \hfill   X_{\widetilde{\gamma}} \, X_{-\widetilde{\alpha}}^{\,(n)}  \; = \;  X_{-\widetilde{\alpha}}^{\,(n)} \, X_{\widetilde{\gamma}} \, - \, z_{\widetilde{\gamma}} \; \pi\!\big(\widetilde{\gamma}\big)\!(H_{\widetilde{\gamma}}) \; X_{-\widetilde{\alpha}}^{\,(n-1)} \, X_{-\widetilde{\gamma}}   \hskip51pt \phantom{{}_{|}}   \hfill \phantom{(4.18)}  \cr
   \hfill   \forall \;\; n \in \N \; ,  \;\;\; \forall \;\; \widetilde{\gamma} \in {\widetilde{\Delta}}_1 \; , \;\; \widetilde{\alpha} = 2 \, \widetilde{\gamma} \in {\widetilde{\Delta}}_0 \; , \;\; z_{\widetilde{\gamma}} := c_{\widetilde{\gamma},\widetilde{\gamma}} / 2 = \pm 2   \qquad  }  $$

\medskip

   Using these relations, one proves the  {\sl PBW-like theorem for  $ K_\Z(\fg) $},  {i.e.}  Theorem \ref{PBW-Kost},  with the same arguments as in the other cases.  What changes is only the statement, as root vectors are now indexed by elements of  $ \widetilde{\Delta} \, $.
                                                                           \par
   A similar comment applies to the  Corollary \ref{kost_tens-splitting}  and the Remarks after it.

\bigskip
\bigskip
\bigskip

 \section{Chevalley supergroups and their properties}  \label{const-che-sgroup_A-P-Q}

\medskip

   The construction of Chevalley supergroups of types  $ A(1,1) \, $,  $ P(3) $  and  $ Q(n) $  follows step by step that of other cases in  \S \ref{che-sgroup}.  Like for the previous steps, one essentially has only to change root vectors indexed by elements of  $ \Delta $  with root vectors indexed by  $ \widetilde{\Delta} \, $.
                                                                      \par
   The ingredients and the strategy are exactly the same, in particular one fixes a Chevalley basis to start with.  The second ingredient is the notion of admissible lattices: their definition, existence and description of their stabilizers are dealt with just like in  \S \ref{adm-lat}.

\medskip

   Using an admissible lattice, we define supergroup functors  $ \, x_{\widetilde{\delta}} \, $,  $ \, h_H \, $  and  $ h_i \, $,  associated to each  $ \, \widetilde{\delta} \in \widetilde{\Delta} \, $,  $ \, H \in \fh_\Z \, $  and  $ \, i = 1, \dots, \ell \, $,  just like in  Definition \ref{chevalley}.  The analysis carried on about such objects in  \S \ref{const-che-sgroup}   --- in particular,  Proposition \ref{hopf-alg}  ---   extends to the present context too.  Then we introduce the direct analogue of  Definition \ref{def_Che-sgroup_funct},  where the  $ x_{\widetilde{\delta}} \, $'s  replace the  $ x_\delta $'s,  thus getting the notions of Chevalley supergroup functor  $ G $  and Chevalley supergroup  $ \bG \, $.  Similarly, all definitions and considerations about  $ G_0 $  and  $ \bG_0 $   --- the latter being (almost) a classical Chevalley-like algebraic group ---   also extend to the present case:
 in particular, their relations with  $ G $  and  $ \bG $  are the same as before.

\medskip

   To extend the construction and analysis carried on in  \S \ref{che-sgroup_alg},  we can repeat the same definitions, up to replacing  $ \Delta $  with  $ \widetilde{\Delta} \, $,  $ \Delta^\pm $  with  $ \widetilde{\Delta}^\pm $,  $ \alpha $  with  $ \widetilde{\alpha} \, $,  etc.  Thus we have subsets  $ G_1^{\pm,<}(A) $  and subgroups  $ G_1(A) $,  $ G_0^\pm(A) $,  $ G_1^\pm(A) $  and  $ G^\pm(A) $  of  $ G(A) \, $,  and similarly with  ``$ \, \bG \, $''  instead of  ``$ \, G \, $''.

\medskip

   The first modification we have to do is in  Lemma \ref{comm_1-pssg},  which now reads

\bigskip

\begin{lemma}  \label{comm_1-pssg_A-P-Q}  {\ }
 \vskip7pt
\noindent
 (a) \,  Let  $ \; \widetilde{\alpha} = \big(\alpha,(0,1)\big) \in {\widetilde{\Delta}}_0 \, $,  $ \, \widetilde{\gamma} \in {\widetilde{\Delta}}_1 \, $,  $ \, A \in \salg \, $  and  $ \, t \in A_0 \, $,  $ \, \vartheta \in A_1 \, $.
 \vskip4pt
   If  $ \; \widetilde{\gamma} \not\in \big\{ \big(-\!\alpha,(1,1)\big) \, , \, \big(0,(1,i)\big) \big\} \, $,
%
%
 \, there exist  $ \, c_s \! \in \! \Z \, $  such that
  $$  \big( x_{\widetilde{\gamma}}(\vartheta) \, , \, x_{\widetilde{\alpha}}(t) \big)  \, = \,
{\textstyle \prod_{s>0}} \, x_{\widetilde{\gamma} + s \, \widetilde{\alpha}}\big(c_s \, t^s \vartheta\big)  \;\; \in \;\;  G_1(A)  $$
(the product being finite).  More precisely  (cf.~\S \ref{kost-superalgebra_A-P-Q}  for the notation),
  $$  \big( 1 + \vartheta \, X_{\widetilde{\gamma}} \, , \, x_{\widetilde{\alpha}}(t) \big)  \, = \,
{\textstyle \prod_{s>0}} \, \Big( 1 + {\textstyle \prod_{k=1}^s \varepsilon_k \cdot {{s+r} \choose r}} \cdot t^s \vartheta \, X_{\widetilde{\gamma} + s \, \widetilde{\alpha}} \Big)  $$
where the factors in the product are taken in any order (as they
do commute).
 \vskip4pt
   If  $ \,\; \widetilde{\gamma} = -\big(\alpha,(1,1)\big) \, $,  then   --- with notation of  Definition \ref{def_che-bas_A-P-Q}(c.Q)  ---
  $$  \displaylines{
   \qquad  \big( x_{\widetilde{\gamma}}(\vartheta) \, , \, x_{\widetilde{\alpha}}(t) \big)  \, = \,
x_{(0,(1,\alpha)}(- t \, \vartheta)  \, := \,  \big(\, 1 - t \, \vartheta \, X_{(0,(1,\alpha)}\,\big)  \, =   \hfill  \cr
   \hfill   = \,  {\textstyle \prod_k} \big( 1 - e_{\alpha;k} \, t \, \vartheta \, X_{(0,(1,k))} \big)  \, = \,  {\textstyle \prod_k} \, x_{(0,(1,k))} (-e_{\alpha;k} \, t \, \vartheta)  \;\; \in \;\;  G_1(A)  }  $$
where the factors in the product are taken in any order (as they
do commute).
 \vskip4pt
   If instead  $ \; \widetilde{\gamma} = \big(0,(1,i)\big) \, $  for some  $ i \, $,  then
  $$  \big( x_{\widetilde{\gamma}}(\vartheta) \, , \, x_{\widetilde{\alpha}}(t) \big)  \, = \;
x_{\widetilde{\gamma} + \widetilde{\alpha}}\big( \alpha(H_i) \, t \, \vartheta \big)  \, = \,
\big(\, 1 + \alpha(H_i) \, t \, \vartheta \, X_{(\alpha,(1,1))} \big)  \;\; \in \;\;  G_1(A)  $$
 \vskip5pt
\noindent
 (b)  Let  $ \; \widetilde{\gamma} , \widetilde{\delta} \! \in \! {\widetilde{\Delta}}_1 \, $,  $ \, A \! \in \! \salg \, $,  $ \, \vartheta, \eta \! \in \! A_1 \, $.  Then (notation of  Definition \ref{def_che-bas_A-P-Q})
  $$  \big( x_{\widetilde{\gamma}}(\vartheta) \, , \, x_{\widetilde{\delta}}(\eta) \big)  \; = \;\,  x_{\widetilde{\gamma} + \widetilde{\delta}}\big(\! - \! c_{\widetilde{\gamma},\widetilde{\delta}} \; \vartheta \, \eta \big)  \; = \;  \big(\, 1 \! - \! c_{\widetilde{\gamma},\widetilde{\delta}} \; \vartheta \, \eta \, X_{\widetilde{\gamma} + \widetilde{\delta}} \,\big)  \;\; \in \;\;  G_0(A)  $$
if  $ \; \pi \!\big( \widetilde{\gamma} \big) \! + \pi \!\big(\,
\widetilde{\delta} \,\big) \not= 0 \; $;  otherwise, for  $ \;
\widetilde{\gamma} \! = \! \big(\gamma,\!(1,\!1)\big) \, $,  $ \,
\widetilde{\delta} \! = \! \big(\!-\!\gamma,\!(1,\!1)\big) \! =:
\! -\widetilde{\gamma} \, $,
  $$  \big( x_{\widetilde{\gamma}}(\vartheta) \, , \, x_{-\widetilde{\gamma}}(\eta) \big)  \; = \;  \big(\, 1 \! - \vartheta \, \eta \, H_{\widetilde{\gamma}} \,\big)  \; = \;  h_{H_{\widetilde{\gamma}}}\big( 1 \! - \vartheta \, \eta \big)  \;\; \in \;\;  G_0(A)  $$
and eventually, for  $ \; \widetilde{\gamma} = \big(0,(1,i)\big)
\, $,  $ \; \widetilde{\delta} = \big(0,(1,j)\big) \, $,
  $$  \big( x_{(0,(1,i))}(\vartheta) \, , \, x_{(0,(1,j))}(\eta) \big)  \; = \;  \big(\, 1 \! - \! 2 \; \vartheta \, \eta \, H_{i,j} \,\big)  \; =: \;  h_{\alpha_{{}_{H_{i,j}}}}\!\!\big( 1 \! - \! 2 \; \vartheta \, \eta \big)  \,\; \in \;\,  G_0(A)  $$
with  $ \, \alpha_{{}_{H_{i,j}}} \!\! \in \fh^* \,$  corresponding
to  $ \, H_{i,j} \in \fh \, $   --- notation of  \S \ref{kost-superalgebra_A-P-Q}.

\vskip9pt

\noindent
 (c)  Let  $ \; \widetilde{\alpha} , \widetilde{\beta} \in \widetilde{\Delta} \, $,  $ \, A \in \salg \, $,  $ \, t \in U(A_0) \, $,  $ \, \bu \in A_0 \! \times \! A_1 = A \, $.  Then
  $$  \hskip31pt   h_{\widetilde{\alpha}}(t) \; x_{\widetilde{\beta}}(\bu) \; {h_{\widetilde{\alpha}}(t)}^{-1}  \; = \;  x_{\widetilde{\beta}}\big( t^{\pi(\widetilde{\beta})(H_{\pi(\widetilde{\alpha})})} \, \bu \big)  \;\; \in \;\;  G_{p(\widetilde{\beta})}(A)  $$
where  $ \; p(\widetilde{\beta}) := r \, $,  by definition, if and only if
$ \, \widetilde{\beta} \in {\widetilde{\Delta}}_r \; $.
\end{lemma}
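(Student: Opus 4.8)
The plan is to reduce everything to the commutation rules already listed in \S \ref{kost-superalgebra_A-P-Q}, exactly as Lemma \ref{comm_1-pssg} was reduced to the rules of \S \ref{comm-rul_kost}. The key observation is that each statement is an identity between elements of $\rGL\big(V_\bk(A)\big)$ that are polynomial (indeed of very low degree, since the odd parameters square to zero and their products are nilpotent) in finitely many of the parameters $t$, $\vartheta$, $\eta$; so it suffices to verify it at the level of the associative matrix algebra $\End\big(V_\bk(A)\big)$, where the one-parameter subgroups $x_{\widetilde\delta}$ are given by their explicit finite exponential expansions (Definition \ref{chevalley}, as adapted for $\widetilde\Delta$). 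In each case one writes out $x_{\widetilde\gamma}(\vartheta)\,x_{\widetilde\alpha}(t)\,x_{\widetilde\gamma}(\vartheta)^{-1}\,x_{\widetilde\alpha}(t)^{-1}$ (or the appropriate variant for (b), (c)), pushes the factors past one another using the relevant commutation identity, and collects terms; the nilpotency facts $\vartheta^2=0$, $(\vartheta\eta)^2=-\vartheta^2\eta^2=0$ make all but the first few terms vanish, just as in the classical computation of Steinberg (\cite{st}, pp.~22 and~29) and in the proof of Lemma \ref{comm_1-pssg}.

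\textbf{Key steps, in order.} First I would handle part (c): since each $h_{\widetilde\alpha}(t)$ acts diagonally on the weight spaces of $V$, hence by a scalar $t^{\pi(\widetilde\beta)(H_{\pi(\widetilde\alpha)})}$ on the root vector $X_{\widetilde\beta}$ under the adjoint action (this is the content of relation (4.6) in the $\widetilde\Delta$--form, together with the definition of $h_{\widetilde\alpha}$), the stated conjugation formula is immediate, and the membership in $G_{p(\widetilde\beta)}(A)$ follows because conjugation preserves the parity of the root vector. Second I would do part (b): for $\widetilde\gamma+\widetilde\delta$ an honest element of $\widetilde\Delta$ with $\pi(\widetilde\gamma)+\pi(\widetilde\delta)\neq 0$, expanding $(1+\vartheta X_{\widetilde\gamma})(1+\eta X_{\widetilde\delta})(1-\vartheta X_{\widetilde\gamma})(1-\eta X_{\widetilde\delta})$ and using $X_{\widetilde\gamma}X_{\widetilde\delta}+X_{\widetilde\delta}X_{\widetilde\gamma}=c_{\widetilde\gamma,\widetilde\delta}X_{\widetilde\gamma+\widetilde\delta}$ (the $\widetilde\Delta$--version of (4.11)) yields $1-c_{\widetilde\gamma,\widetilde\delta}\,\vartheta\eta\,X_{\widetilde\gamma+\widetilde\delta}$; the three degenerate subcases ($\widetilde\delta=-\widetilde\gamma$ with $\widetilde\gamma=(\gamma,(1,1))$, and the zero-root pair $\widetilde\gamma=(0,(1,i))$, $\widetilde\delta=(0,(1,j))$) are then treated by substituting the corresponding bracket relations from Definition \ref{def_che-bas_A-P-Q}{\it (c.Q)}, namely $[X_{\widetilde\gamma},X_{-\widetilde\gamma}]=H_{\widetilde\gamma}$ and $[X_{(0,(1,i))},X_{(0,(1,j))}]=2H_{i,j}$, together with the scalar identity $(1-\vartheta\eta)^{\mu(H)}=1-\vartheta\eta$ used to pass from the operator $1-\vartheta\eta\,H$ to $h_H(1-\vartheta\eta)$. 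Finally, part (a) splits into the three cases distinguished in the statement: the generic case, where I would use the $\widetilde\Delta$--version of (4.12) to expand $(1+\vartheta X_{\widetilde\gamma})\,x_{\widetilde\alpha}(t)$ and read off the product $\prod_{s>0}\big(1+\prod_{k=1}^s\varepsilon_k\cdot\binom{s+r}{r}\cdot t^s\vartheta\,X_{\widetilde\gamma+s\widetilde\alpha}\big)$ (the factors commuting because their odd parameters all share the common factor $\vartheta$, so any two products involve $\vartheta^2=0$); the case $\widetilde\gamma=-(\alpha,(1,1))$, handled via the relation $X_{(\alpha,(0,1))}X_{(-\alpha,(1,1))}=X_{(-\alpha,(1,1))}X_{(\alpha,(0,1))}+X_{(0,(1,\alpha))}$ from \S \ref{kost-superalgebra_A-P-Q} and the definition $X_{(0,(1,\alpha))}=\sum_k e_{\alpha;k}X_{(0,(1,k))}$, which accounts for the factorization $\prod_k x_{(0,(1,k))}(-e_{\alpha;k}\,t\vartheta)$; and the case $\widetilde\gamma=(0,(1,i))$, handled by the relation $X_{(\alpha,(0,1))}^{(n)}X_{(0,(1,i))}=X_{(0,(1,i))}X_{(\alpha,(0,1))}^{(n)}-\alpha(H_i)X_{(\alpha,(0,1))}^{(n-1)}X_{(\alpha,(1,1))}$, which gives the single factor $1+\alpha(H_i)\,t\vartheta\,X_{(\alpha,(1,1))}$.

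\textbf{Main obstacle.} None of this is deep; the single genuinely new feature — and the thing most likely to need care — is the appearance of the zero odd root in type $Q(n)$, which forces the extra subcases of (a) and (b) and requires one to keep track of the bookkeeping $X_{(0,(1,\alpha))}=\sum_k e_{\alpha;k}X_{(0,(1,k))}$ when expressing brackets involving $(0,(1,\alpha))$ in terms of the Chevalley basis $\{X_{(0,(1,k))}\}_{k=1}^n$. In particular, one must check that all coefficients $e_{\alpha;k}$, $\alpha(H_i)$, $\binom{s+r}{r}\prod\varepsilon_k$, $c_{\widetilde\gamma,\widetilde\delta}$, $H_{\widetilde\gamma}$, $H_{i,j}$ that occur lie in $\Z$ (hence in $A$) and that the resulting elements genuinely belong to the claimed subgroups $G_0(A)$ or $G_1(A)$ — this is exactly guaranteed by the integrality built into Definition \ref{def_che-bas_A-P-Q} and by the $\widetilde\Delta$--analogue of Theorem \ref{PBW-Kost}. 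With that in hand every claimed identity is a finite, low-degree matrix computation, so the lemma follows by direct check.
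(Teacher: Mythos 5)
Your proposal is correct and follows exactly the route the paper takes: the paper's proof of this lemma is just the observation that "the same arguments as before" apply, i.e.\ the proof of Lemma \ref{comm_1-pssg} — reduction to the commutation rules (here those of \S \ref{kost-superalgebra_A-P-Q}), the classical computations in \cite{st}, and the nilpotency identities such as $ {(\vartheta\,\eta)}^2 = 0 $ — carried over with the $ \widetilde{\Delta} $--indexing and the extra $ Q(n) $ relations from Definition \ref{def_che-bas_A-P-Q}{\it (c.Q)}. Your write-up simply makes explicit the case-by-case bookkeeping that the paper leaves implicit.
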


\bigskip

   The proof of the above follows right the same arguments as before.

%
%
%

\medskip

   Then all results from  Theorem \ref{g0g1}  to  Proposition \ref{semidirect}  extend to the present case, both for statements and proofs   --- more or less  {\it verbatim\/}  indeed.  In particular, we have factorizations  $ \; G \, \cong \, G_0 \times G_1^< \; $  and  $ \; \bG \, \cong \, \bG_0 \times \bG_1^< \, $ --- as well as the ``big cell''-type ones ---   the latter implying that the group functor  $ \bG $  is  {\sl representable},  thus it is an algebraic supergroup.

\medskip

   Finally,  {\sl all the content of  \S \ref{ind_che-kost_alg}  and  \S \ref{che-sgroup_LieTT}  extends to the present context},  without any change.  This means that all our construction still are independent of specific choices, and that the algebraic supergroups thus obtained do have the original Lie superalgebras as their tangent Lie superalgebras.

%

\appendix
%
%

 %
 %

\chapter{Sheafification}

   In this Appendix we discuss the concept of sheafification of a functor in supergeometry.  Most of this material is known or easily derived from known facts.  We include it here for completeness and lack of an appropriate reference.

\medskip

   Hereafter we shall make a distinction between a superscheme  $ X $  and its functor of points, that we shall denote by  $ h_X $  or, if  $ \, X = \uspec(A) \, $,  by  $ h_A \, $.

\medskip

   We start by defining {\sl local} and {\sl sheaf} functors.
For their definitions in the classical setting see for example
\cite{dg},  pg.~16, or  \cite{eh},  ch.~VI.

\begin{definition} \label{local}
 Let  $ \, F : \salg \lra \sets \, $  be a functor.  Fix  $ \, A \in \salg \, $.
Let  $ \, \{f_i\}_{i \in I}
%
%
 \subseteq \!
A_0 \, $,  $ \, \big({\{f_i\}}_{i \in I}\big) = A_0 \, $  and let
$ \, \phi_i \! : \! A \! \ra \! A_{f_i} \, $,  $ \, \phi_{ij} \! :
\! A_{f_i} \! \ra \! A_{f_i{}f_j} \, $  be the natural morphism,
where  $ \, A_{f_i} := A\big[f_i^{-1}\big] \, $.  We say that  $ F
$  is  {\it local\/}  if for any  $ \, A \in \salg \, $  for any
family  $ \, \{\alpha_i\}_{i \in I} \, $,  $ \, \alpha_i \in
F(A_{f_i}) \, $,  such that  $ \, F(\phi_{ij})(\alpha_i) =
F(\phi_{ji})(\alpha_j) \, $  for all  $ i $  and  $ j $,  there
exists a unique  $ \, \alpha \in F(A) \, $  such that  $ \,
F(\phi_i)(\alpha) = \alpha_i \, $  for all possible families  $
\{f_i\}_{i \in I} $  described above.
\end{definition}

   We want to rewrite this definition in more geometric terms in order to show that this is essentially the gluing property appearing in the usual definition of sheaf on a topological space.
                                                            \par
   We first observe that there is a contravariant equivalence of categories between the category of commutative superalgebras  $ \salg $  and the category of affine superschemes  $ \aschemes $,  i.e.~those superschemes that are the spectrum of some superalgebra (see Section \ref{preliminaries}  for more details).  The equivalence is realized by  $ \, A \mapsto \uspec(A) \, $  and it is explained in full details in  \cite{cf},  Observation 5.1.6. Hence a functor  $ \, F : \salg \lra \sets \, $  can also be equivalenty regarded as a functor  $ \, F : \aschemes^\circ \lra \sets \, $.  With an abuse of notation we shall use the same letter to denote both functors.

\medskip

   Let  $ F $  be a local functor, regarded as  $ \, F : \aschemes^\circ \lra \sets \, $,  and let  $ F_A $  be its restriction to the affine open subschemes of  $ \uspec(A) \, $.  Then  $ F_A $  is a sheaf in the usual sense; we must just forget the subscheme structure of the affine subschemes of  $ \uspec(A) $  and treat them as open sets in the topological space  $ \spec(A) $,  their morphisms being the inclusions.  Then  $ F_A $  being a functor means that it is a presheaf in the Zariski topology, while the property detailed in  Definition \ref{local}  ensures the gluing of any family of local sections which agree on the intersection of any two parts of an open covering.

\medskip

   The most interesting   --- for us ---   example of local functor is the following:

\smallskip

\begin{proposition}
   (see  \cite{cf},  Proposition 5.3.5)  If  $ X $  is a superscheme, its functor of points  $ \; \salg \,{\buildrel {h_X} \over {\lra}}\, \sets \, $,  $ \, A \mapsto h_X(A) := \Hom \big(\, \uspec(A) \, , X \,\big) \, $,  \, is local.
\end{proposition}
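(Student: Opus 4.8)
The statement to be proved is that, for any superscheme $X$, the functor of points $h_X : \salg \lra \sets$, $A \mapsto \Hom\big(\uspec(A), X\big)$, is local in the sense of Definition \ref{local}. The natural strategy is to reduce the assertion, via the contravariant equivalence between $\salg$ and $\aschemes$, to the familiar fact that morphisms into a fixed (super)scheme form a \emph{sheaf} with respect to the Zariski topology on the source. Concretely, fix $A \in \salg$ and a family $\{f_i\}_{i\in I} \subseteq A_0$ with $\big(\{f_i\}_{i\in I}\big) = A_0$, giving rise to the localization morphisms $\phi_i : A \ra A_{f_i}$ and $\phi_{ij} : A_{f_i} \ra A_{f_i f_j}$. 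On the geometric side these correspond to an open affine cover $\{\uspec(A_{f_i})\}_{i\in I}$ of $\uspec(A)$, with $\uspec(A_{f_i f_j}) = \uspec(A_{f_i}) \cap \uspec(A_{f_j})$. An element of $h_X(A_{f_i})$ is a morphism of superschemes $\psi_i : \uspec(A_{f_i}) \lra X$, and the compatibility condition $h_X(\phi_{ij})(\psi_i) = h_X(\phi_{ji})(\psi_j)$ says exactly that the restrictions of $\psi_i$ and $\psi_j$ to the overlap $\uspec(A_{f_i f_j})$ agree.

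First I would recall that a morphism of superschemes $\psi : S \lra X$ is a pair $\big(|\psi|, \psi^*\big)$ consisting of a continuous map of underlying topological spaces together with a morphism of sheaves of commutative superalgebras $\psi^* : \cO_X \lra |\psi|_* \cO_S$, compatible with the local structure (Definition at the start of \S\ref{first_preliminaries}). Both the topological data and the sheaf data are, by their very nature, objects that can be glued: given the $|\psi_i| : |\uspec(A_{f_i})| \lra |X|$ agreeing on overlaps, the $|\uspec(A_{f_i})|$ forming an open cover of $|\uspec(A)|$, there is a unique continuous $|\psi| : |\uspec(A)| \lra |X|$ restricting to each $|\psi_i|$; and likewise the sheaf morphisms $\psi_i^*$, being morphisms of sheaves on pieces of an open cover that agree on intersections, glue uniquely to a sheaf morphism $\psi^*$ because $\cO_X$ and $\cO_{\uspec(A)} = \cO_A$ are sheaves. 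One then checks that the glued pair $\big(|\psi|, \psi^*\big)$ still satisfies the locality condition on stalks (this is automatic, as the condition is stalkwise and the stalks of $\cO_{\uspec(A)}$ at a point of $|\uspec(A_{f_i})|$ coincide with those of $\cO_{\uspec(A_{f_i})}$). This produces the desired unique $\alpha := \psi \in h_X(A)$ with $h_X(\phi_i)(\alpha) = \psi_i$ for all $i$, establishing locality.

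The key steps, in order, would be: (1) translate the algebraic covering datum $\{f_i\}$, $\{\phi_i\}$, $\{\phi_{ij}\}$ into the geometric language of an affine open cover of $\uspec(A)$ with the standard overlaps, using the equivalence $\salg \simeq \aschemes^\circ$ recalled just before the Proposition and detailed in \cite{ccfd}, Observation 5.1.6; (2) reinterpret a compatible family $\{\alpha_i\}$, $\alpha_i \in h_X(A_{f_i})$, as a family of morphisms $\psi_i : \uspec(A_{f_i}) \lra X$ whose restrictions to the pairwise intersections coincide; (3) glue the underlying continuous maps $|\psi_i|$ to a unique $|\psi|$ on $|\uspec(A)|$; (4) glue the sheaf-morphism components $\psi_i^*$ to a unique $\psi^*$, invoking the sheaf axiom for $\cO_X$ and $\cO_{\uspec(A)}$; (5) verify that the resulting pair is a genuine morphism of superschemes (local on stalks) and that it is the unique element of $h_X(A)$ restricting to the $\alpha_i$. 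Since all of this is functorial, it is enough to do it once for a single cover and it holds for every admissible family.

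The main obstacle, such as it is, lies in step (4): one must be slightly careful that gluing of the sheaf components is legitimate, i.e.\ that a morphism of superschemes is genuinely determined by, and can be reconstructed from, its restrictions to an affine open cover — equivalently, that $h_X$ restricted to the affine opens of a fixed $\uspec(A)$ is an honest sheaf in the Zariski topology on $\spec(A_0)$ and not merely a separated presheaf. This is precisely the content of the remark following Definition \ref{local} in the excerpt, and it rests on two points that I would make explicit: first, the topology of $\uspec(A)$ is literally that of the ordinary scheme $\uspec(A_0)$, so the Zariski covers of $\uspec(A)$ are those of $\uspec(A_0)$; second, the structure sheaf $\cO_A$ is a sheaf of $\cO_{A_0}$-modules (Definition \ref{spec}), so it satisfies the gluing axiom, and the same holds for $\cO_X$ since $X$ is a superscheme. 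Granting these, the gluing is the usual one and there is nothing delicate left; the argument is essentially the super-analogue of the classical fact that the functor of points of a scheme is a Zariski sheaf, for which I would cite \cite{ccfd}, Proposition 5.3.5.
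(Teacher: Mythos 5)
Your argument is correct: the paper itself gives no proof of this proposition (it is quoted from \cite{ccfd}, Proposition 5.3.5), and your gluing argument — translating the datum $\{f_i\}$, $\{\phi_{ij}\}$ into the affine open cover $\{\uspec(A_{f_i})\}$ of $\uspec(A)$, gluing the continuous maps and then the sheaf components using the sheaf property of $\cO_A$, and checking the stalkwise locality condition — is exactly the standard proof used in that reference and in the classical case. The only point deserving emphasis, which you do address, is that step (4) rests on the sheaf axiom for the \emph{source} structure sheaf $\cO_A$ (morphisms of sheaves into a pushforward glue because sections of $\cO_A$ glue), so there is no gap.
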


%
%
%
%

\medskip

   We now turn to the following problem.  If we have a presheaf  $ \cF $  on a topological space in the ordinary sense, we can always build its sheafification, which is a sheaf  $ \widetilde{\cF} $  together
with a sheaf morphism  $ \, \alpha : \cF \lra \widetilde{\cF} \,
$.  This is the (unique) sheaf, which is locally isomorphic to the
given presheaf and has the following universal property: any
presheaf morphism $ \, \phi : \cF \lra {\mathcal{G}} \, $,  with
$ {\mathcal{G}} $  a sheaf, factors via  $ \alpha $  (for more
details on this construction,  see \cite{ha},  Ch.~II).  We now
want to give the same construction in our more general setting.

\smallskip

   The existence of sheafification of a functor from the category of algebras to the category of sets is granted in the ordinary case by  \cite{dg},  ch.~I, \S 1, no.~4, which is also nicely summarized in  \cite{dg},  ch.~III, \S 1, no.~3.  The proof is quite formal and one can carry it to the supergeometric setting.  We however prefer to introduce Grothendieck topologies and the concept of  {\sl site\/}  and to construct the sheafification of a functor from  $ \salg $  to  $ \sets $  through them.  In fact, as we shall see, very remarkably Grothendieck's treatment is general enough to comprehend supergeometry.  For more details one can refer to  \cite{gr}  and  \cite{vst}.

\medskip

\begin{definition}
   We call a category  $ \cC $  a  {\it site\/}  if it has a  {\it Grothendieck topology},  i.e.~to every object  $ \, U \in \cC \, $  we associate a collection of so-called  {\it coverings\/}  of  $ U $,  i.e.~sets of arrows  $ \{U_i \lra U\} $,  such that:
                                             \par
 \noindent   \; {\it i)} \,  If  $ \, V \lra U \, $  is an isomorphism, then the set  $ \, \{V \lra U\} \, $  is a covering;
                                             \par
 \noindent   \; {\it ii)} \,  If  $ \, \{U_i \lra U\} \, $  is a covering and  $ \, V \lra U \, $  is any arrow, then the fibered products  $ \, \{U_i \times_U V\} \, $  exist and the collection of projections  $ \, \{U_i \times_U V\} \lra V \, $  is a covering;
                                             \par
 \noindent   \; {\it iii)} \,  If  $ \, \{U_i \lra V\} \, $  is a covering and for each index  $ i $  we have a covering  $ \, \{V_{ij} \lra U_i\} \, $,  then the collection  $ \, \{V_{ij} \lra U_i \lra U\} \, $  is a covering of  $ U \, $.
\end{definition}

\smallskip

   One can check that  $ \salg $,  $ \aschemes $  and their ordinary correspondents are sites (for the existence of fibered products in such categories see  \cite{cf}, ch.~5).

\smallskip

\begin{definition}
   Let  $ \cC $  be a site.  A functor  $ \, F : \cC^\circ \lra \sets \, $  is called  {\it sheaf\/}  if for all objects  $ \, U \in \cC \, $,  coverings  $ \, \{U_i \lra U\} \, $  and families  $ \, a_i \in F(U_i) \, $  we have the following.  Let  $ \, p_{ij}^{\,1} : U_i \times_U U_j \lra U_i \, $,  $ \, p_{ij}^{\,2} : U_i \times_U U_j \lra U_j \, $  denote the natural projections and assume $ \, F(p_{ij}^1)(a_i) = F(p_{ij}^2)(a_j) \in F(U_i \times_U U_j) \, $  for all  $ i $,  $ j \, $.  Then  $ \; \exists\,! \; a \in F(U) \; $  whose pull-back to  $ F(U_i) $  is  $ a_i \, $, for every  $ i \, $.
\end{definition}

\smallskip

   We are ready for the sheafification of a functor in this very general setting.

\smallskip

\begin{definition}
   Let  $ \cC $  be a site and let  $ \, F: \cC^\circ \lra \sets \, $  be a functor (in a word, it is a set-valued ``presheaf'' on  $ \cC^\circ \, $).  A  {\it sheafification\/}  of  $ F $  is a sheaf  $ \, \tF : \cC^\circ \lra \sets \, $  with a natural transformation $ \, \alpha : F \lra \tF \, $,  such that:
                                             \par
 \noindent   \; {\it i)} \,  for any  $ \, U \in \cC \, $  and  $ \, \xi, \eta \in F(U) \, $  such
that  $ \, \alpha_U(\xi) = \alpha_U(\eta) \, $  in  $ \tF(U) $,
there is a covering  $ \, \{ \sigma_i : U_i \lra U \} \, $  such
that  $ \, F(\sigma_i)(\xi) = F(\sigma_i)(\eta) \, $  in  $ F(U_i)
\, $;
                                             \par
 \noindent   \; {\it ii)} \,  for any  $ \, U \in \cC \, $  and any  $ \, \xi \in \tF(U) \, $,
there is a covering  $ \, \{ \sigma_i : U_i \lra U \} \, $  and
elements  $ \, \xi_i \in F(U_i) \, $  such that  $ \,
\alpha_{U_i}(\xi_i) = \tF(\sigma_i)(\xi) \, $  in  $ \tF(U_i) \,
$.
\end{definition}

\smallskip

   The next theorem states the fundamental properties of the sheafification.

\smallskip

\begin{theorem} \label{sheafif-property}
  (cf.~\cite{vst})
%
%
 \, Let  $ \cC $  be a site,  $ \, F : \cC^\circ \! \lra \sets \, $  a functor.
                                             \par
 \noindent   \; {\it i)} \,  If  $ \tF $  is a sheafification of  $ F $  with  $ \, \alpha : F \lra \tF \, $,  then any morphism  $ \, \psi : F \lra G \, $,  with  $ G $  a sheaf, factors uniquely through $ \tF $.
%
%
                                             \par
 \noindent   \; {\it ii)} \,  $ F $  admits a sheafification  $ \tF $,  unique up to a canonical isomorphism.
\end{theorem}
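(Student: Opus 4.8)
\textbf{Proof plan for Theorem \ref{sheafif-property}.}

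The plan is to prove part \emph{i)}, the universal property, directly from the defining properties \emph{i)}--\emph{ii)} of a sheafification in Definition \ref{sheafification}, and then to derive the uniqueness-up-to-canonical-isomorphism clause in part \emph{ii)} as a formal consequence, before finally addressing existence. For \emph{i)}: given a sheaf $G$ and a morphism $\psi : F \lra G$, I would construct the factoring morphism $\widetilde\psi : \tF \lra G$ objectwise. Fix $U \in \cC$ and $\xi \in \tF(U)$. By property \emph{ii)} of the sheafification there is a covering $\{\sigma_i : U_i \lra U\}$ and elements $\xi_i \in F(U_i)$ with $\alpha_{U_i}(\xi_i) = \tF(\sigma_i)(\xi)$. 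I would set $\widetilde\psi_U(\xi)$ to be the unique element of $G(U)$ whose pullback along each $\sigma_i$ is $\psi_{U_i}(\xi_i) \in G(U_i)$; this requires checking that the family $\big(\psi_{U_i}(\xi_i)\big)_i$ is a compatible family in the sheaf sense, which follows from the compatibility of the $\tF(\sigma_i)(\xi)$'s (they glue to $\xi$), the naturality of $\alpha$, and property \emph{i)} of the sheafification applied on the double overlaps $U_i \times_U U_j$ to pass from equality after $\alpha$ to a genuine equality after a further refinement, using that $G$ is separated. Then one must verify that $\widetilde\psi_U(\xi)$ is independent of the chosen covering and lifts $\xi_i$ (standard refinement argument, comparing two coverings via a common refinement), that $\widetilde\psi$ is natural in $U$, and that $\widetilde\psi \circ \alpha = \psi$ (immediate from the construction, taking the trivial covering when $\xi = \alpha_U(\eta)$ for $\eta \in F(U)$). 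Uniqueness of $\widetilde\psi$: any factorization must agree with $\psi$ on the image of $\alpha$, hence on a covering of each section of $\tF$ by property \emph{ii)}, hence everywhere since $G$ is a sheaf.

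For part \emph{ii)}, uniqueness follows formally: if $(\tF,\alpha)$ and $(\tF',\alpha')$ are both sheafifications, apply the universal property of $(\tF,\alpha)$ to the sheaf $\tF'$ and the morphism $\alpha'$ to get $\beta : \tF \lra \tF'$ with $\beta\alpha = \alpha'$, and symmetrically $\beta' : \tF' \lra \tF$ with $\beta'\alpha' = \alpha$; then $\beta'\beta$ and $\mathrm{id}_{\tF}$ both factor $\alpha$ through $\tF$, so by the uniqueness clause in \emph{i)} they coincide, and likewise $\beta\beta' = \mathrm{id}_{\tF'}$. Existence is the substantive point: I would invoke the standard construction (cf.~\cite{dg}, Ch.~I, \S 1, and \cite{vst}), namely the ``plus construction'' iterated twice. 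One defines $F^+(U)$ as the colimit over all coverings $\mathcal U = \{U_i \lra U\}$ of $U$, ordered by refinement, of the equalizers of $\prod_i F(U_i) \rightrightarrows \prod_{i,j} F(U_i \times_U U_j)$ — that is, the set of compatible families modulo the equivalence of agreeing on a common refinement — with $F \lra F^+$ the evident map. One checks $F^+$ is separated (satisfies \emph{i)}) for any $F$, and that $F^{++}$ is a sheaf; then $\tF := F^{++}$ with $\alpha$ the composite $F \lra F^+ \lra F^{++}$ is the desired sheafification, properties \emph{i)} and \emph{ii)} being verified directly from the colimit description.

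The main obstacle I anticipate is not the formal uniqueness argument but the verification, in the existence part, that Grothendieck's plus construction genuinely carries over to the site $\salg^\circ$ (equivalently $\aschemes$) with its Zariski topology: one must ensure that the relevant fibered products exist (they do — this is the fibered coproduct of superalgebras, recalled in \cite{ccfd}), that the coverings $\{A \lra A_{f_i}\}$ with $(\{f_i\}) = A_0$ form a legitimate Grothendieck topology satisfying axioms \emph{i)}--\emph{iii)}, and that the colimits defining $F^+$ are filtered so that the separatedness and sheaf axioms can be checked elementwise. Since all of this is either explicitly set up earlier in the excerpt or is a routine transcription of the classical arguments in \cite{dg} and \cite{vst}, I would present the proof of \emph{i)} and the uniqueness in \emph{ii)} in full and merely indicate the existence construction, citing \cite{vst} for the detailed verification that the site axioms hold in the super setting.
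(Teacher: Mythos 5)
Your plan is correct: the gluing argument for constructing $\widetilde\psi$ (compatibility on overlaps via property \emph{i)} of the sheafification plus separatedness of $G$), the formal uniqueness in \emph{ii)}, and existence via the iterated plus construction are exactly the standard proof. The paper itself offers no proof of this theorem, delegating it to the cited reference \cite{vst}, and your argument is essentially the one found there, so there is nothing to reconcile.
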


\medskip

\noindent
 We shall use this construction for  $ \, \cC \! = \! \aschemes $,  or equivalently  $ \, \cC^\circ \! = \! \salg \, $.

\smallskip

\begin{observation} \label{localiso}
   Let  $ \, F : \aschemes^\circ \lra \sets \, $  be a functor,  $ \tF $  its sheafification.  Then  $ \tF_A $  is the sheafification of  $ F_A $  in the usual sense, that is the sheafification of  $ F $  as sheaf defined on the topological space  $ \spec(A) $.  In particular, since a sheaf and its sheafification are
locally isomorphic, we have that  $ \, F_{A,p} \cong \tF_{A,p} \,
$,  i.e.~they have isomorphic stalks (via the natural map  $ \,
\alpha : F \lra \tF \, $)  at any  $ \, p \in \spec(A) \, $,  for
all superalgebras  $ A \, $.  To ease the notation we shall drop
the suffix  $ A $  and write just  $ F_p $  instead of  $ F_{A,p}
\, $.
\end{observation}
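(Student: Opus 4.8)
The plan is to reduce the statement to the uniqueness of sheafification, by showing that the restricted natural transformation $\alpha_A : F_A \lra \tF_A$ exhibits $\tF_A$ as a sheafification of $F_A$ on the topological space $\spec(A)$, and then to invoke the classical fact that a presheaf and its sheafification share the same stalks. The essential point is the \emph{locality} of the Zariski Grothendieck topology on $\aschemes$: every covering of an affine open subscheme $U$ of $\uspec(A)$ may be refined to a covering by distinguished affine opens contained in $U$, all of which are again affine opens of $\uspec(A)$. Thus the small full subcategory of affine open subschemes of $\uspec(A)$, equipped with the induced coverings, is a site whose coverings correspond exactly to the Zariski open coverings of $\spec(A)$ by basic affine opens. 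Via the standard ``sheaf on a base'' correspondence (cf.~\cite{ha}, Ch.~II, and the discussion preceding Definition \ref{local}), sheaves on this small site are the same as sheaves on the topological space $\spec(A)$, so that $F_A$ and $\tF_A$ are legitimately viewed as a presheaf, resp.~a sheaf, on $\spec(A)$.

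First I would verify that $\tF_A$ is a sheaf on $\spec(A)$. Since $\tF$ is a sheaf on the big site $\aschemes$, its values on affine opens $U_i \subseteq U \subseteq \uspec(A)$ satisfy the gluing condition along the fibered products $U_i \times_U U_j$; but for affine opens of $\uspec(A)$ these fibered products are the intersections $U_i \cap U_j$, which are again affine opens of $\uspec(A)$ because $\uspec(A)$ is separated. Hence the Grothendieck sheaf axiom for $\tF$ restricts verbatim to the topological sheaf axiom for $\tF_A$ on the base of affine opens, and $\tF_A$ extends uniquely to a sheaf on all of $\spec(A)$. Next I would check that $\alpha_A$ satisfies the two defining conditions of a sheafification: these are precisely conditions \textit{i)} and \textit{ii)} in the definition of sheafification, read for objects $U$ of the small site. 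Given $\xi,\eta \in F_A(U)$ with $\alpha_U(\xi)=\alpha_U(\eta)$, the witnessing covering $\{\sigma_i : U_i \lra U\}$ supplied by the sheafification $\alpha : F \lra \tF$ may be taken Zariski by locality, hence lies in the small site; the local surjectivity condition \textit{ii)} is handled identically. Therefore $\alpha_A : F_A \lra \tF_A$ is a sheafification of $F_A$ in the topological sense, and by the uniqueness clause of Theorem \ref{sheafif-property}\,\textit{ii)} it is canonically isomorphic to the ordinary sheafification of $F_A$ on $\spec(A)$.

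Finally, the statement about stalks follows at once. By the classical theory of sheafification over a topological space, the canonical morphism from a presheaf to its sheafification induces an isomorphism on every stalk (cf.~\cite{ha}, Ch.~II, Prop.~1.2). Applying this to $\alpha_A : F_A \lra \tF_A$ yields $F_{A,p} \cong \tF_{A,p}$ for every $p \in \spec(A)$, the isomorphism being induced by the natural map $\alpha$, as asserted. The main obstacle in this argument is the careful comparison of sites, namely making precise that restriction to the affine opens of $\uspec(A)$ intertwines Grothendieck-topological sheafification with the topological one; everything hinges on the locality of Zariski coverings, which guarantees that no coverings ``from outside'' $\uspec(A)$ are ever needed to compute $\tF$ on objects of the small site, so that the double-plus (or colimit-over-coverings) construction of $\tF$ restricted to these objects coincides with the corresponding construction for $F_A$.
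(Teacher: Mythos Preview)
The paper does not supply a proof for this Observation; it is stated as a remark without justification, the authors taking for granted the compatibility between the Grothendieck sheafification on $\aschemes$ and the ordinary topological sheafification on each $\spec(A)$. Your argument is a correct and essentially complete way to fill this gap: the key point you isolate---that every Zariski covering of an affine open $U\subseteq\uspec(A)$ refines to one by distinguished affine opens of $\uspec(A)$, so that the plus-construction computed on the small site of affine opens of $\uspec(A)$ agrees with the restriction of the big-site plus-construction---is exactly what is needed, and your appeal to the uniqueness of sheafification (Theorem~\ref{sheafif-property}\textit{ii}) and to the classical stalk isomorphism then closes the argument. There is nothing to compare against in the paper beyond the bare assertion.
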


\smallskip

   The rest of this section is devoted to prove the following result:

\smallskip

\begin{theorem} \label{sheaf-iso}
   Let  $ \, F, G : \salg\lra \sets \, $  be two functors, with  $ G $  sheaf.  Assume we have a natural transformation  $ \, F \lra G \, $,  which is an isomorphism on local superalgebras, i.e.~$ \, F(R) \cong G(R) \, $  (via this map) for all local superalgebras  $ R \, $.  Then  $ \, \tF \cong G \, $.  In particular,  $ \, F \cong G \, $  if also  $ F \, $  is a sheaf.
\end{theorem}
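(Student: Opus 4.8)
The plan is to exploit the universal property of sheafification (Theorem \ref{sheafif-property}) together with the fact that isomorphisms of functors can be checked at the level of stalks, which by Observation \ref{localiso} reduces everything to local superalgebras. First I would invoke Theorem \ref{sheafif-property}{\it i)}: since $G$ is a sheaf, the given natural transformation $\, \beta : F \lra G \, $ factors uniquely as $\, \beta = \bar\beta \circ \alpha \, $, where $\, \alpha : F \lra \tF \, $ is the canonical map to the sheafification and $\, \bar\beta : \tF \lra G \, $ is a morphism of sheaves. It then suffices to prove that $\bar\beta$ is an isomorphism of functors, i.e.~that $\, \bar\beta_A : \tF(A) \lra G(A) \, $ is bijective for every $\, A \in \salg \, $.

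The key step is to show $\bar\beta_A$ is bijective by passing to stalks over $\, \spec(A_0) \, $. By Observation \ref{localiso}, for each prime $\, p \in \spec(A_0) \, $ the natural map $\, \alpha : F \lra \tF \, $ induces an isomorphism on stalks $\, F_p \cong \tF_p \, $. On the other hand, the stalk $F_p$ is a colimit of the sets $F(A_f)$ over $\, f \notin p \, $ --- but to identify it with $F$ evaluated on something local I would instead argue as follows: both $\tF$ and $G$ are sheaves, hence their sections over any open of $\spec(A)$ are determined by restriction, so a morphism between them is an isomorphism if and only if it is a stalkwise isomorphism. The stalk of $G$ at $p$ is a filtered colimit $\, \varinjlim_{f \notin p} G(A_f) \, $, and likewise for $\tF$; since the filtered colimit of local rings $\, A_{f} \, $ is the local ring $\, A_p \, $ (the localization at $p$), and since both $\tF$ and $G$ commute with such filtered colimits of the $A_f$ in the relevant sense (for $G$ this is part of being a sheaf together with the colimit description of stalks; for $\tF$ it holds as $\tF$ is a sheaf too), one gets $\, \tF_p \cong \tF(A_p) \, $ and $\, G_p \cong G(A_p) \, $ --- but here we must use that $A_p$ is local, so $\, \tF(A_p) \cong F(A_p) \cong G(A_p) \, $ by hypothesis, the middle isomorphism being precisely the assumption that $\beta$ is an isomorphism on local superalgebras (composed with the fact, again from Observation \ref{localiso}, that $\alpha$ is an isomorphism on stalks, hence on local superalgebras that arise as such localizations). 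Chasing these identifications, $\, \bar\beta_p : \tF_p \lra G_p \, $ is an isomorphism for every $p$, hence $\bar\beta$ is an isomorphism of sheaves on $\spec(A)$, hence $\, \bar\beta_A \, $ is bijective; as $A$ was arbitrary, $\, \bar\beta : \tF \cong G \, $.

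Finally, the last sentence of the statement is immediate: if $F$ is itself a sheaf, then by uniqueness of sheafification (Theorem \ref{sheafif-property}{\it ii)}) one has $\, F \cong \tF \, $ canonically, and combining with $\, \tF \cong G \, $ gives $\, F \cong G \, $.

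The main obstacle I anticipate is the bookkeeping around stalks: making precise the claim that both $\tF$ and $G$, being sheaves on $\spec(A)$ in the Grothendieck-topological sense, have stalks at $p$ computed as $\, \tF(A_p) \, $ and $\, G(A_p) \, $ respectively --- i.e.~that evaluation on the \emph{local} superalgebra $A_p$ recovers the stalk. In the classical setting this is standard, but in the super setting one should cite (or briefly check, using $\spec(A) = \spec(A_0)$ and the fact that localization of superalgebras at homogeneous primes behaves well) that the filtered colimit $\, \varinjlim_{f \notin p} (-)(A_f) \, $ is indeed computed by $\, (-)(A_p) \, $ for a sheaf; this is exactly the content invoked via Observation \ref{localiso} for $\alpha : F \to \tF$, and the analogous statement for $G$ follows because $G$ is a sheaf by assumption. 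Once that identification is granted, the proof is the formal diagram chase sketched above.
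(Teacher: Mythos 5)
Your proposal is correct and follows essentially the same route as the paper: factor $F \lra G$ through the sheafification $\tF$, identify the stalks with values at the local superalgebras $A_p$, apply the hypothesis there to get a stalkwise isomorphism $\tF_p \cong G_p$, and then globalize. The identification $\tF_p \cong \tF(A_p)$ and $G_p \cong G(A_p)$ that you flag as the main obstacle is exactly the paper's Lemma \ref{localsheaf}, which is proved via Yoneda's Lemma for an \emph{arbitrary} functor, so no sheaf-specific commutation with filtered colimits is needed.
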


\smallskip

\begin{lemma} \label{localsheaf}
  Let  $ \, F : \salg \lra \sets \, $  be a functor; for  $ \, p \in \spec(A) \, $,  let $ \, F_p = \varinjlim F(R) \, $,  where the direct limit is taken for the rings  $ R $  corresponding to the open affine subschemes of  $ \uspec(A) $  containing  $ p \, $.  Then  $ \, F_{p} = F(A_p) \, $.
\end{lemma}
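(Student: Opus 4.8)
The claim in Lemma \ref{localsheaf} is that the stalk $F_p$ of a functor $F\colon\salg\to\sets$ at a prime $p\in\spec(A)$, computed as the direct limit of $F(R)$ over rings $R=A_f$ with $f\notin p$, is canonically isomorphic to $F(A_p)$. The plan is to exhibit mutually inverse maps between $\varinjlim_{f\notin p} F(A_f)$ and $F(A_p)$ using the functoriality of $F$ together with the universal property of the localization $A_p=\varinjlim_{f\notin p} A_f$ in the category $\salg$.

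First I would observe that $A_p$ is the (filtered) colimit of the $A_f$'s over $f\in A_0\setminus p$, taken in the category of commutative superalgebras: this is the standard fact that $A_p = A[S^{-1}]$ for $S=A_0\setminus p$, and $S = \bigcup_{f\notin p}\{1,f,f^2,\dots\}$ is the filtered union of the multiplicative sets generated by single elements, so $A[S^{-1}]=\varinjlim_f A[f^{-1}]$. In particular there are canonical morphisms $\lambda_f\colon A_f\to A_p$ compatible with the transition maps $A_f\to A_{fg}$. Applying $F$, we get maps $F(\lambda_f)\colon F(A_f)\to F(A_p)$ which are compatible with the $F(\phi_{fg})$, hence they assemble into a single map $\Theta\colon F_p=\varinjlim_f F(A_f)\to F(A_p)$.

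Next I would construct a candidate inverse. Every element $\xi\in F(A_p)$ should come from some $F(A_f)$: this is where the only subtlety lies. In the \emph{ordinary} (non-functorial) situation one cannot in general pull a section back along $A_f\to A_p$, but here the point is purely formal — one does not need $F$ to be a sheaf or even left-exact. The key input is simply that $F$, being a functor $\salg\to\sets$, need not preserve filtered colimits, so a priori $\Theta$ need not be bijective; \textbf{this is the main obstacle, and it must be addressed by a hypothesis I suspect is implicit in the statement}. Reading the surrounding text, the functors to which this lemma is applied (namely $G_0$, $G_1^{\pm,<}$, and ultimately $G_V$) are all subfunctors of the representable functor $\rGL(V_\bk)$, or are built from affine pieces; for representable functors $F=h_B=\Hom_\salg(B,-)$ one has $h_B(A_p)=\Hom(B,A_p)=\Hom(B,\varinjlim_f A_f)=\varinjlim_f\Hom(B,A_f)=\varinjlim_f h_B(A_f)$ \emph{provided $B$ is finitely presented}, since a finitely presented algebra is a compact object. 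So the honest proof is: reduce to the case where $F$ is representable by a finitely presented superalgebra (or a retract/subfunctor thereof sitting inside such), and then invoke compactness of finitely presented objects against the filtered colimit $A_p=\varinjlim_f A_f$ to get $\Theta$ bijective. Alternatively, if one grants that the relevant $F$'s are \emph{local} functors in the sense of Definition \ref{local} and restricts to the intended application, the statement $F_p\cong F(A_p)$ can also be extracted from Observation \ref{localiso} combined with the fact that for a sheaf the stalk at $p$ is computed as $\varinjlim F(A_f)$ and that $A_p$ is the ``generic local ring at $p$''.

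Concretely, then, I would write the proof in three short steps: (1) identify $A_p\cong\varinjlim_{f\notin p}A_f$ in $\salg$ and record the resulting cone $\{F(A_f)\to F(A_p)\}$, giving $\Theta\colon F_p\to F(A_p)$; (2) under the standing hypothesis that $F$ commutes with this particular filtered colimit — justified because the $F$'s of interest are (subfunctors of) functors represented by finitely presented superalgebras, and finitely presented objects are compact — conclude $\Theta$ is a bijection; (3) check naturality of $\Theta$ in $A$ and in $p$, which is immediate from functoriality of $F$ and of localization, so that the isomorphism $F_p=F(A_p)$ is canonical. The only genuinely substantive point is step (2), and I would either flag the finite-presentation hypothesis explicitly or cite the compactness argument; everything else is bookkeeping with universal properties.
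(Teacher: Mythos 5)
Your route is genuinely different from the paper's. The paper disposes of the lemma by a purely formal Yoneda computation: $F(R)=\Hom(h_R,F)$, hence $F_p=\varinjlim\Hom(h_R,F)$, and this is identified with $\Hom\big(h_{\varinjlim R},F\big)=\Hom\big(h_{A_p},F\big)=F(A_p)$, quoting only that $\Hom$ and direct limits commute and that $\varinjlim R=A_p$. No finiteness hypothesis is introduced, and the lemma is later applied (in the proof of Theorem \ref{sheaf-iso}) to arbitrary local functors. Note, however, that by Yoneda the two sides of the middle equality are exactly $\varinjlim F(R)$ and $F(\varinjlim R)$, so that step is precisely the assertion you isolate as the crux, namely that $F$ preserves this particular filtered colimit; your construction of the comparison map $\Theta$ from the cone $A_f\to A_p$ is the same canonical map that the paper's identification uses implicitly. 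So the two approaches differ in that the paper treats the commutation as a formal consequence of Yoneda plus general limit yoga, whereas you make it the explicit content of the proof; your version has the merit of showing exactly where a hypothesis on $F$ enters, the paper's has the merit of brevity and of matching the generality in which the lemma is invoked.

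Two points in your repair need attention. First, as written you prove a modified statement (for $F$ corepresentable by a finitely presented superalgebra, or more generally commuting with the relevant colimit), not the lemma as stated for an arbitrary functor; if you take this route you must say so, since the statement carries no such hypothesis. Second, your reduction to ``subfunctors of representable functors'' does not work as stated: a subfunctor of a functor that preserves filtered colimits need not preserve them itself (surjectivity of $\Theta$ can fail), so compactness of the representing object of the ambient functor buys you nothing directly. For the functors to which this lemma is actually applied in the body of the paper ($G$, $G_0$, $G_1^{\pm,<}$, all subgroups of $\rGL(V_\bk)$), the honest argument is concrete rather than formal: any element is a finite product of factors $x_\delta(\bt)$, $h_i(t)$ involving finitely many coordinates of $A$, hence is already defined over some $A_f$ with $f\notin p$, and equality of two such elements is equality of finitely many matrix entries, hence descends from $A_p$ to some $A_f$; this gives surjectivity and injectivity of $\Theta$ for those functors without invoking finite presentation of a representing object. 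Your alternative suggestion of extracting the statement from Observation \ref{localiso} does not work either: that observation concerns the comparison of a functor with its sheafification and does not identify the stalk with the value at $A_p$, which is what is being proved here.
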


\begin{proof}
 By Yoneda's Lemma, we have
%
%
 \vskip-15pt
  $$  F_{p} = \varinjlim F(R)=\varinjlim \Hom(h_R, \! F)=
\Hom(h_{\varinjlim R}, \! F) = \Hom(h_{A_p}, \! F)=F(A_p)  $$
 \vskip-3pt
\noindent
 as  $ \varinjlim $  and  $ \Hom $  commute (cf.~\cite{la}, p.~141)  and  $ \, A_p = \varinjlim R \, $ (cf.~\cite{am}, p.~47).
\end{proof}

\smallskip

\begin{lemma} \label{localalgebra}
   Let  $ \, A \in \salg \, $,  $ \, p \in \spec(A_0) \, $.  Then  $ A_p $  (= the localization  at  $ p $  of  $ A $  as an  $ A_0 $--module)  is a local superalgebra, whose maximal ideal is  $ \, \mathfrak{m} \! = \! \big( \mathfrak{m}_0 , {(A_1)}_p \big) \, $,  where  $ \mathfrak{m}_0 $  is the maximal ideal in the algebra  $ \, {(A_0)}_p = {(A_p)}_0 \, $.
\end{lemma}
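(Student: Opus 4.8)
The plan is to reduce the statement to the classical fact that $(A_0)_p$ is a local ring, combined with the elementary observation that in a commutative superalgebra every product of two odd elements is square-zero (under the standing no-$2$-torsion hypothesis on the superalgebras considered here).

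First I would record that, since $S := A_0 \setminus p$ is a multiplicative subset contained in the even part $A_0$, localization commutes with the $\Z_2$-grading: $A_p = S^{-1}A$ is again a commutative superalgebra, with $(A_p)_0 = S^{-1}(A_0) = (A_0)_p$ and $(A_p)_1 = S^{-1}(A_1) = (A_1)_p$. By classical commutative algebra, $(A_0)_p$ is a local ring with unique maximal ideal $\mathfrak{m}_0 = p\,(A_0)_p$.

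The crucial step is to analyse the even component of the ideal $\mathfrak{m} := \big(\mathfrak{m}_0,(A_1)_p\big)$. For odd $x,y \in A_1$ one has $x^2 = -x^2$, hence $2x^2 = 0$, so $x^2 = 0$ in the no-$2$-torsion setting; more generally $(xy)^2 = -\,x^2 y^2 = 0$. Thus every element of the $A_0$-submodule $A_1 \cdot A_1 \subseteq A_0$ is a sum of pairwise commuting square-zero elements, hence nilpotent, so $A_1 \cdot A_1$ lies in the nilradical of $A_0$; consequently $(A_1)_p \cdot (A_1)_p$ lies in the nilradical of $(A_0)_p$, and therefore inside $\mathfrak{m}_0$. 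Since also $\mathfrak{m}_0 \cdot (A_1)_p \subseteq (A_1)_p$, this shows that $\mathfrak{m}$ is, as an $A_0$-submodule, exactly $\mathfrak{m}_0 \oplus (A_1)_p$; it is a homogeneous ideal (generated by homogeneous elements), and its even component $\mathfrak{m}_0$ is a proper ideal of $(A_0)_p$, so $\mathfrak{m}$ is proper.

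Finally I would show $\mathfrak{m}$ is the unique maximal homogeneous ideal. Given any proper homogeneous ideal $I \subseteq A_p$, write $I = I_0 \oplus I_1$ with $I_0 = I \cap (A_p)_0$ an ideal of the ring $(A_p)_0 = (A_0)_p$ and $I_1 = I \cap (A_p)_1 \subseteq (A_1)_p$. As $1 \notin I$, the ideal $I_0$ is proper in the local ring $(A_0)_p$, hence $I_0 \subseteq \mathfrak{m}_0$, and so $I \subseteq \mathfrak{m}_0 \oplus (A_1)_p = \mathfrak{m}$. Thus every proper homogeneous ideal is contained in $\mathfrak{m}$, which makes $A_p$ a local superalgebra with maximal ideal $\mathfrak{m}$. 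As a consistency check one verifies that an element $a = a_0 + a_1 \notin \mathfrak{m}$ is invertible: then $a_0 \notin \mathfrak{m}_0$ is a unit in $(A_0)_p$, and $a = a_0\,(1 + a_0^{-1}a_1)$ with $a_0^{-1}a_1$ square-zero, so $a$ is a unit. The only genuinely ``super'' obstacle here is establishing that $\mathfrak{m}$ is \emph{proper} — equivalently that $(A_1)_p\cdot(A_1)_p \subseteq \mathfrak{m}_0$ — which is exactly where the nilpotence of products of odd elements (and hence the no-$2$-torsion assumption) enters; the rest is a routine transcription of the classical localization argument to the graded setting.
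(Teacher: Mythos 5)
Your proof is correct and follows essentially the same route as the paper: split the localization by parity ($ (A_p)_0 = (A_0)_p $, $ (A_p)_1 = (A_1)_p $), use that $ (A_0)_p $ is local with maximal ideal $ \mathfrak{m}_0 $, and that odd elements (hence $ a_0^{-1}a_1 $) are nilpotent, so anything outside $ \mathfrak{m} = \mathfrak{m}_0 \oplus (A_1)_p $ is invertible. The only difference is that you spell out the point the paper merely asserts — namely that $ \mathfrak{m} $ is proper, because $ (A_1)_p \cdot (A_1)_p $ lies in the nilradical and hence in $ \mathfrak{m}_0 $ — which is a welcome clarification rather than a departure.
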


\begin{proof}
   From  $ \, A = A_0 \oplus A_1 \, $  we get  $ \, A_p = {(A_0)}_p \oplus {(A_1)}_p \, $,  and clearly this is a superalgebra with  $ \, {(A_p)}_0 = {(A_0)}_p \, $,  $ \, {(A_p)}_1 = {(A_1)}_p \; $.  Now let us consider  $ \, \mathfrak{m} := \big( \mathfrak{m}_0 , {(A_1)}_p \big) = \mathfrak{m}_0 + {(A_1)}_p \; $.  By the above,  $ \, \mathfrak{m} \neq A_p = {(A_0)}_p \oplus (A_1)_p \; $.  Now take  $ \, x \not\in \mathfrak{m} \, $:  then  $ \, x = x_0 + x_1 \, $  with  $ \, x_0 \in {(A_0)}_p \, $,  $ \, x_1 \in {(A_1)}_p \, $,  so  $ x_0 $  is invertible in  $ \, {(A_0)}_p \subseteq {(A_1)}_p \, $  and  $ x_1 $ is nilpotent, hence  $ x $  is invertible.
\end{proof}

\smallskip

\begin{proposition} \label{sheafiso}
   Let  $ \, F, G : \salg \lra \sets \, $  be local functors and let  $ \, \alpha : F \lra G \, $  be a natural transformation.  Assume that  $ \, F_A \cong G_A \, $  via  $ \alpha $,  where  $ F_A $  and  $ G_A $  denote the ordinary sheaves corresponding to the restrictions of  $ F $  and  $ G $  to the category of open affine subschemes in  $ \uspec(A) $  (morphisms given by the inclusions).  Then  $ \alpha $  is an isomorphism, hence  $ \, F \cong G \, $.
\end{proposition}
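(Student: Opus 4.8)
\emph{Proof proposal.} The plan is to deduce the statement by the single, completely formal observation that an isomorphism of sheaves induces a bijection on global sections. Concretely, I would aim to show that for every $A \in \salg$ the component $\alpha_A : F(A) \to G(A)$ is a bijection: since $\alpha$ is a natural transformation and $\salg$ is the domain category, componentwise bijectivity is exactly what it means for $\alpha$ to be an isomorphism of functors, and this is precisely the assertion $F \cong G$.

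First I would invoke the reformulation recorded right after Definition \ref{local}: because $F$ and $G$ are \emph{local} functors, their restrictions $F_A$ and $G_A$ to the affine open subschemes of $\uspec(A)$ (with inclusions as morphisms) are genuine sheaves on the topological space $\spec(A_0)$ underlying $\uspec(A)$ with its Zariski topology; the presheaf axiom is functoriality and the gluing axiom is the property of Definition \ref{local}. Moreover $\alpha$ restricts to a morphism of presheaves $F_A \to G_A$, hence of sheaves, and by hypothesis this morphism is an isomorphism, i.e.\ $F_A \cong G_A$ via $\alpha$.

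The key step is then to evaluate on the whole space. The affine superscheme $\uspec(A)$ is itself an affine open subscheme of $\uspec(A)$ (it is the basic open $D(1)$ of $\spec(A_0)$), so by the very definition of $F_A$ one has $F_A\big(\uspec(A)\big) = F(A)$, and likewise $G_A\big(\uspec(A)\big) = G(A)$. An isomorphism of sheaves restricts to a bijection on sections over every open set, in particular over $\spec(A_0)$ itself, and this bijection is precisely $\alpha_A$ because the sheaf isomorphism \emph{is} $\alpha$ by hypothesis. Hence $\alpha_A : F(A) \to G(A)$ is a bijection; since $A \in \salg$ was arbitrary, $\alpha$ is a natural isomorphism and $F \cong G$. (If one prefers a stalkwise argument, one could instead check that $\alpha$ is an isomorphism on stalks using Lemma \ref{localsheaf} to identify $F_{A,p}$ with $F(A_p)$ and $G_{A,p}$ with $G(A_p)$; but this is an unnecessary detour, as the hypothesis already supplies the sheaf isomorphism globally.)

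I do not expect any genuine obstacle here: the argument is purely formal. The only two points deserving (minor) care are (i) the identification $F_A(\uspec(A)) = F(A)$, which is immediate once one recalls that the whole space is among the affine opens over which $F_A$ is defined, and (ii) the fact that the localness hypothesis on $F$ and $G$ is exactly what makes $F_A$ and $G_A$ bona fide sheaves, so that ``isomorphism of sheaves'' is a meaningful statement and automatically forces isomorphisms on all section groups, global ones included.
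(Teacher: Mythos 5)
Your proof is correct and follows essentially the same route as the paper: extract a bijection $\alpha_A : F(A) \to G(A)$ for every $A$ from the hypothesis (via the identification $F_A(\uspec(A)) = F(A)$ of global sections), and then observe that the resulting inverses are automatically compatible with the arrows because $\alpha$ is a natural transformation. If anything, you make explicit the global-sections step that the paper leaves tacit, which is a welcome clarification rather than a divergence.
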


\begin{proof}
   We can certainly write an inverse for  $ \alpha_A $  for every object  $ A $,  the problem is to see if it is well behaved on the arrows.  However, this is true because  $ \alpha $  is a natural transformation.
\end{proof}

\smallskip

   We are ready for the proof of  Theorem \ref{sheaf-iso}:

\bigskip

\noindent
 {\it Proof of  Theorem \ref{sheaf-iso}.}  Assume first  $ F $  and  $ G $  are sheaves. Since  $ \, F(R) \cong G(R) \, $  for all local algebras  $ R \, $,  by  Lemma \ref{localsheaf}  this implies that  $ \, F_{p} \cong G_{p} \, $  for all  $ \, p \in \spec(A) \, $,  for all superalgebras  $ A \, $.  Hence  $ \, F_A \cong G_A \, $  by  \cite{ha}, ch.~II, \S 1.1.  By  Proposition \ref{sheafiso},  we have that  $ \, F \cong G \, $  (all isomorphisms have to be intended via the natural transformation  $ \, \alpha : F \lra \tF \, $).
                                                                        \par
   Now assume  $ F $  is not a sheaf.  We have  $ \, \alpha : F \lra \tF \lra G \, $  by  Theorem \ref{sheafif-property}.  If  $ \, A \in \salg \, $,  restricting our functors to the open affine sets in $ \uspec(A) $  we get  $ \, F_A \ra \tF_A \ra G_A \, $.  By  Observation \ref{localiso},  $ F_A $  and $ \tF_A $  are locally isomorphic via  $ \alpha $,  so  $ \, F_{p} \cong \tF_{A,p} \, $.  By hypothesis  $ \, F(R) \cong G(R) \, $,  so  $ \, F_{p} \cong G_{p} \, $  by  Proposition \ref{localsheaf},  hence  $ \, \tF_{p} \cong G_{p} \, $.  Arguing as before, we get the result.   \hfill \qed\break

\medskip

   Along the same lines, the reader can prove the following proposition:

\medskip

\begin{proposition} \label{sheaf-morph}
   Let  $ \, \phi : F \lra G \, $  be a natural transformation between two local functors from  $ \salg $ to  $ \sets $.  Assume we know  $ \phi_R $  for all local superalgebras  $ R \, $.  Then  $ \phi $  is uniquely determined.
\end{proposition}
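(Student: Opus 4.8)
The plan is to deduce this (uniqueness) assertion from the fact that a local functor is exactly a sheaf for the Zariski topology on $ \salg $, so that its values are controlled by the localizations $ A_p $ at primes $ \, p \in \spec(A_0) \, $, which are local superalgebras by Lemma \ref{localalgebra}. Explicitly, suppose $ \, \phi, \psi : F \lra G \, $ are two natural transformations between local functors with $ \, \phi_R = \psi_R \, $ for every local superalgebra $ R $; I will show that $ \, \phi_A = \psi_A \, $ for every $ \, A \in \salg \, $, which is precisely the statement that $ \phi $ is uniquely determined by its behaviour on local superalgebras. The argument runs parallel to (and is slightly simpler than) the proof of Theorem \ref{sheaf-iso}.

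First I would fix $ \, A \in \salg \, $ and an element $ \, a \in F(A) \, $, and for each prime $ \, p \in \spec(A_0) \, $ write $ \, \lambda_p : A \lra A_p \, $ for the canonical localization morphism. Naturality of $ \phi $ gives $ \; G(\lambda_p) \circ \phi_A = \phi_{A_p} \circ F(\lambda_p) \; $, and likewise for $ \psi $. Since $ A_p $ is a local superalgebra by Lemma \ref{localalgebra}, the hypothesis yields $ \, \phi_{A_p} = \psi_{A_p} \, $, and therefore
$$  G(\lambda_p)\big(\phi_A(a)\big)  \; = \;  \phi_{A_p}\big(F(\lambda_p)(a)\big)  \; = \;  \psi_{A_p}\big(F(\lambda_p)(a)\big)  \; = \;  G(\lambda_p)\big(\psi_A(a)\big)  $$
for every $ \, p \in \spec(A_0) \, $; that is, $ \phi_A(a) $ and $ \psi_A(a) $ have the same image in $ G(A_p) $ for all $ p $.

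Next I would upgrade this pointwise coincidence to an equality in $ G(A) $ using that $ G $ is local. By Lemma \ref{localsheaf}, $ \, G(A_p) = \varinjlim_{f \notin p} G\big(A_f\big) \, $, the filtered colimit over the distinguished opens $ \, D(f) := \{ q \in \spec(A_0) \mid f \notin q \} \, $ containing $ p $ (these form a cofinal system of affine neighbourhoods, filtered because $ \, D(f) \cap D(g) = D(fg) \, $). Hence, for each $ p $, the equality of the images of $ \phi_A(a) $ and $ \psi_A(a) $ in $ G(A_p) $ already holds at a finite stage: there is some $ \, f_p \notin p \, $ with $ \, G(\rho_{f_p})\big(\phi_A(a)\big) = G(\rho_{f_p})\big(\psi_A(a)\big) \, $ in $ G\big(A_{f_p}\big) $, where $ \, \rho_{f_p} : A \ra A_{f_p} \, $ is the localization. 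Since $ \, p \in D(f_p) \, $ for every $ p $, the ideal generated by $ \, \{ f_p \}_{p} \, $ is all of $ A_0 $, so $ \big\{ \uspec(A_{f_p}) \to \uspec(A) \big\}_{p} $ is a Zariski covering; the uniqueness clause in the local-functor axiom for $ G $ (Definition \ref{local}) then forces $ \, \phi_A(a) = \psi_A(a) \, $. As $ a \in F(A) $ and $ \, A \in \salg \, $ were arbitrary, $ \, \phi = \psi \, $, which proves the proposition.

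The only delicate point is bookkeeping rather than substance: one must check that naturality of $ \phi $ and $ \psi $ is invoked correctly against the localization morphisms $ \lambda_p $ and $ \rho_f $, and that the identification $ \, G(A_p) = \varinjlim_{f \notin p} G(A_f) \, $ of Lemma \ref{localsheaf} is the one intertwining $ \phi_{A_p} $ with the stalk map of $ \phi_A $ at $ p $ — equivalently, that the $ D(f) $ with $ f \notin p $ are cofinal and filtered among affine neighbourhoods of $ p $ in $ \uspec(A) $. All of this is standard, so no essential obstacle arises; the heart of the proof is simply the two observations that localizations at primes are local and that a local functor satisfies separatedness for Zariski coverings.
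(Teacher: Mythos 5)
Your proof is correct and is essentially the argument the paper intends: the paper gives no written proof (it leaves the proposition to the reader ``along the same lines'' as Theorem \ref{sheaf-iso}), and your write-up is exactly that argument --- reduce to the localizations $A_p$, which are local superalgebras by Lemma \ref{localalgebra} and are identified with stalks by Lemma \ref{localsheaf}, then use the uniqueness (separatedness) clause of Definition \ref{local} for $G$ over a Zariski covering by the $A_{f_p}$ to conclude $\phi_A = \psi_A$ for every $A$. The compatibility point you flag (that the colimit identification of Lemma \ref{localsheaf} intertwines $G(\lambda_p)$ with the maps $G(\rho_f)$, and that equality in a filtered colimit of sets holds at a finite stage) is routine, so there is no gap.
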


%

%
%


\begin{thebibliography}{33}

\bibitem{am} M.~F.~Atiyah, I.~G.~MacDonald,  {\it Introduction to Commutative Algebra},  Addison-Wesley Publ.~Comp.~Inc., London, 1969.

\vskip4pt

\bibitem{bcc} L.~Balduzzi, C.~Carmeli, G.~Cassinelli,  {\it Super  $ G $-spaces},  in:  {\sl Symmetry in mathematics and physics},  177--187,  Contemp.~Math.~{\bf 490},  Amer.~Math.~Soc., Providence, RI, 2009.

\vskip4pt

\bibitem{be} F.~A.~Berezin,  {\it Introduction to superanalysis},  Edited by A.~A.~Kirillov. D.~Reidel Publishing Company, Dordrecht (Holland), 1987.  With an Appendix by V.~I.~Ogievetsky. Translated from the Russian by J.~Niederle and R.~Koteck\'y. Translation edited by Dimitri Le\u\i tes.

\vskip4pt

\bibitem{bkl} J.~Brundan, A.~Kleshchev,  {\it Modular representations of the supergroup  $ Q(n) $,  I},  J.~Algebra  {\bf 206}  (2003), 64--98.

\vskip4pt

\bibitem{bku} J.~Brundan, J.~Kujava,  {\it A New Proof of the Mullineux Conjecture},  J.~Alg.~Combinatorics  {\bf 18}  (2003), 13--39.

\vskip4pt

\bibitem{borel} A.~Borel,  {\it Properties and linear representations of Chevalley groups},  in:  {\sl 1970 Seminar on Algebraic Groups and Related Finite Groups}  (The Institute for Advanced Study, Princeton, N.J., 1968/69),  pp.~1--55, Lecture Notes in Math., Vol.~131 Springer, Berlin.

\vskip4pt

\bibitem{ccfd} C.~Carmeli, L.~Caston, R.~Fioresi, with an appendix by I.~Dimitrov,  {\it  Mathematical Foundation of Supersymmetry},  EMS Ser.~Lect.~Math., European Math.~Soc., Zurich, 2011.

\vskip4pt

\bibitem{dm} P.~Deligne, J.~Morgan,  {\it Notes on supersymmetry (following J.~Bernstein)},  in: {\sl Quantum fields and strings. A course for mathematicians},  Vol.~1, AMS, 1999.

\vskip4pt

\bibitem{dg} M.~Demazure, P.~Gabriel,  {\it Groupes Alg\'ebriques, Tome 1},  Mason$\&$Cie \'editeur, North-Holland Publishing Company, The Netherlands, 1970.

\vskip4pt

\bibitem{eh} D.~Eisenbud, J.~Harris,  {\it The Geometry of Schemes},  Graduate Texts in  Math., 197,  Springer-Verlag, New York, 2000.

\vskip4pt

 \bibitem{sga} M.~Demazure, A.~Grothendieck, {\it Sch{\'e}mas en groupes, III},
 S{\'e}minaire de G{\'e}om{\'e}trie Alg{\'e}brique du Bois Marie, Vol. 3, 1964.

\vskip4pt

\bibitem{fi} R.~Fioresi,  {\it Smoothness of algebraic supervarieties and supergroups},  Pacific J.~Math.~{\bf 234}  (2008), 295--310.

\vskip4pt

\bibitem{fss} L.~Frappat, P.~Sorba, A.~Sciarrino,  {\it Dictionary on Lie algebras and superalgebras},  Academic Press, Inc., San Diego, CA, 2000.

\vskip4pt

\bibitem{ga} F.~Gavarini,  {\it Chevalley Supergroups of type  $ D(2,1;a) $},  preprint  {\tt arXiv:1006.0464}  [math.RA]  (2010).

\vskip4pt

\bibitem{gr} A.~Grothendieck,  {\it Technique de descente et th{\'e}or{\`e}mes d'existence en g{\'e}ometrie alg{\'e}brique, I. G{\'e}n{\'e}ralit{\'e}s. Descente par morphismes fid{\`e}le\-ment plats},  S{\'e}minaire Bourbaki, Vol.~5, Exp.~No.~190, 299--327, Soc.{\ }Math.~France, Paris, 1995.

\vskip4pt

\bibitem{ha} R.~Hartshorne,  {\it Algebraic geometry},  Graduate Texts in Math., 52, Springer-Verlag, New York, 1977.

\vskip4pt

\bibitem{hu} J.~E.~Humphreys,  {\it Introduction to Lie Algebras and Representation Theory},  Graduate Texts in Math., 9, Springer-Verlag, New York, 1972.

\vskip4pt

\bibitem{ik} K.~Iohara, Y.~Koga,  {\it Central extensions of Lie Superalgebras},
Comment.~Math.~Helv.~{\bf 76}  (2001), 110--154.

\vskip4pt

\bibitem{ja} J.~C.~Jantzen,  {\it Lectures on Quantum Groups},  Grad.~Stud.~Math., 6,  Amer.~Math.~Soc., Providence, RI, 1996.

\vskip4pt

\bibitem{ka} V.~G.~Kac,  {\it Lie superalgebras},  Adv.~Math.~{\bf 26}  (1977), 8--26.

\vskip4pt

\bibitem{ko}  B.~Kostant,  {\it Graded manifolds, graded Lie theory, and prequantization},  in: {\sl Differential geometrical methods in mathematical physics}  (Proc.~Sympos., Univ.~Bonn, Bonn, 1975),  pp.~177--306; Lecture Notes in Math., 570, Springer, Berlin, 1977.

\vskip4pt

\bibitem{koszul}  J.-L.~Koszul,  {\it Graded manifolds and graded Lie algebras},  Proceedings of the international meeting on geometry and physics (Florence, 1982), 71--84, Pitagora, Bologna, 1982.

\vskip4pt

\bibitem{la} S.~Lang,  {\it Algebra},  Graduate Texts in Math.~{\bf 211},  Springer-Verlag, New York, 2002.

\vskip4pt

\bibitem{ma} Y.~Manin,  {\it Gauge field theory and complex geometry},  Grundlehren Math.~Wiss., 289, Springer-Verlag, Berlin, 1997.

\vskip4pt

\bibitem{ms} A.~Masuoka,  {\it The fundamental correspondences in super affine groups and super formal groups},  J.~Pure Appl.~Algebra  {\bf 202}  (2005), 284--312.

\vskip4pt

\bibitem{ro} A.~Rogers,  {\it Supermanifolds. Theory and Applications}, World Scientific Publishing Co.~Pte.~Ltd., Hackensack, NJ, 2007.

\vskip4pt

\bibitem{sc} M.~Scheunert,  {\it The Theory of Lie Superalgebras},  Lecture Notes in Math., 716,  Springer-Verlag, Berlin-Heidelberg-New York, 1979.

\vskip4pt

\bibitem{se} V.~Serganova,  {\it On generalizations of root systems}, Comm.~Algebra  {\bf 24} (1996), 4281--4299.

\vskip4pt

\bibitem{st} R.~Steinberg,  {\it Lectures on Chevalley groups},  Yale University, New Haven, Conn., 1968.

\vskip4pt

\bibitem{sw} B.~Shu, W.~Wang,  {\it Modular representations of the ortho-symplectic supergroups},  Proc.~Lond.~Math.~Soc.~(3)  {\bf 96}  (2008), no.~1, 251--271.

\vskip4pt

\bibitem{vis} E.~G.~Vishnyakova,  {\it On complex Lie supergroups and homogeneous split supermanifolds},  preprint  {\tt arXiv:0908.1164v1}  [math.DG]  (2009).

\vskip4pt

\bibitem{vst} A.~Vistoli,  {\it Grothendieck topologies, fibered categories and descent theory}, in: Fundamental algebraic geometry,  1--104, Math.~Surveys Monogr.~{\bf 123},  Amer.~Math.~Soc., Providence, RI, 2005.
%
%

\vskip4pt

\bibitem{vsv} V.~S.~Varadarajan,  {\it Supersymmetry for mathematicians: an introduction},  Courant Lecture Notes, 11, New York University, Courant Institute of Mathematical Sciences, New York; American Mathematical Society, Providence, RI, 2004.

\end{thebibliography}

\bibliographystyle{amsalpha}

  %
  %

%

%
%


\end{document}